\date{} 
\title{Liouville quantum gravity and the Brownian map I:\\ The $\QLE(8/3,0)$ metric}
\author{Jason Miller and Scott Sheffield}
\def\@rst #1 #2other{#1}
\newcommand\MR[1]{\relax\ifhmode\unskip\spacefactor3000 \space\fi
  \MRhref{\expandafter\@rst #1 other}{#1}}
\newcommand{\MRhref}[2]{\href{http://www.ams.org/mathscinet-getitem?mr=#1}{MR#2}}
\newcommand{\CH}{{\mathcal H}}
\newcommand{\CU}{{\mathcal U}}
\newcommand{\one}{{\bf 1}}
\newcommand{\CX}{{\mathcal X}}
\newcommand{\CY}{{\mathcal Y}}
\newcommand{\CN}{\mathcal {N}}
\newcommand{\CW}{{\mathcal W}}
\newcommand{\CF}{{\mathcal F}}
\newcommand{\CR}{{\mathcal R}}
\newcommand{\CS}{{\mathcal S}}
\newcommand{\fb}[2]{B^\bullet(#1,#2)}
\newif\ifhyper\IfFileExists{hyperref.sty}{\hypertrue}{\hyperfalse}
\ifhyper\usepackage{hyperref}\fi
\newif\ifdraft
\numberwithin{equation}{section}
\numberwithin{figure}{section}
\newtheorem{theorem}{Theorem}
\numberwithin{theorem}{section}
\newtheorem{lemma}[theorem]{Lemma}
\newtheorem{proposition}[theorem]{Proposition}
\theoremstyle{remark}\newtheorem{definition}[theorem]{Definition}
\theoremstyle{remark}\newtheorem{remark}[theorem]{Remark}
\newcommand{\R}{\mathbf{R}}
\newcommand{\Q}{\mathbf{Q}}
\newcommand{\CZ}{{\mathcal Z}}
\newcommand{\C}{\mathbf{C}}
\newcommand{\D}{\mathbf{D}}
\newcommand{\Z}{\mathbf{Z}}
\newcommand{\N}{\mathbf{N}}
\newcommand{\HH}{\mathbf{H}}
\newcommand{\h}{\HH}
\definecolor{purple}{rgb}{0.7,0,0.7}
\definecolor{gray}{rgb}{0.6,0.6,0.6}
\definecolor{dgreen}{rgb}{0.0,0.4,0.0}
\definecolor{dblue}{rgb}{0.0,0.0,0.5}
\newcommand{\X}{{\mathbf X}}
\newcommand{\re}{{\mathrm {Re}}}
\newcommand{\ol}{\overline}
\newcommand{\wh}{\widehat}
\newcommand{\wt}{\widetilde}
\newcommand{\CQ}{{\mathcal Q}}
\newcommand{\CB}{{\mathcal B}}
\newcommand{\CD}{{\mathcal D}}
\newcommand{\s}{{\mathbf S}}
\newcommand{\giv}{\,|\,}
\newcommand{\bes}{\mathrm{BES}}
\def\diam{\mathop{\mathrm{diam}}}
\def\dist{\mathop{\mathrm{dist}}}
\newcommand{\SLE}{{\rm SLE}}
\newcommand{\QLE}{{\rm QLE}}
\newcommand{\CLE}{{\rm CLE}}
\def\Ito/{It\^o}
\def \P {{\bf P}}
\def \p {{\P}}
\def \E {{\bf E}}
\newcommand{\ex}[1]{\E\!\left[#1\right]}
\newcommand{\qdist}{d_\CQ}
\newcommand{\oqdist}{\ol{d}_\CQ}
\newcommand{\SM}{\mathsf M}
\newcommand{\SN}{\mathsf N}
\newcommand{\bsphere}{\SM_{\rm BES}}
\newcommand{\lexcursion}{\SN}
\newcommand{\MstwoW}{\SM_{\mathrm{SPH,W}}^2}
\newcommand{\MstwoD}{\SM_{\mathrm{SPH,D}}^2}
\newcommand{\Mstwo}{\SM_{\mathrm{SPH}}^2}
\newcommand{\mustwo}{\mu_{\mathrm{SPH}}^2}
\newcommand{\musk}{\mu_{\mathrm{SPH}}^k}
\newcommand{\musa}{{\mu_{\mathrm{SPH}}^{A=1}}}
\newcommand{\mudonel}{\mu_{ \mathrm{DISK}}^{1, L}}
\newcommand{\mudl}{\mu_{\mathrm{DISK}}^L}
\newcommand{\cadlag}{c\`adl\`ag}
\newcommand{\qlegrowth}{\Gamma}
\newcommand{\qlegrowthleft}{\qlegrowth}
\newcommand{\qlegrowthright}{\ol{\qlegrowth}}
\newcommand{\stoppingleft}{\tau}
\newcommand{\righthit}{{\ol{\stoppingleft}}}
\newcommand{\lefthit}{\sigma}
\newcommand{\stoppingright}{{\ol{\lefthit}}}
\newcommand{\lawonall}{\Theta}
\newcommand{\strip}{{\mathscr{S}}}
\newcommand{\cyl}{\mathscr{C}}
\newcommand{\ttime}{{\mathfrak t}}
\begin{document} \maketitle

\begin{abstract}
Liouville quantum gravity (LQG) and the Brownian map (TBM) are two distinct models of measure-endowed random surfaces. LQG is defined in terms of a real parameter $\gamma$, and it has long been believed that when $\gamma = \sqrt{8/3}$, the LQG sphere should be equivalent (in some sense) to TBM. However, the LQG sphere comes equipped with a conformal structure, and TBM comes equipped with a metric space structure, and endowing either one with the other's structure has been an open problem for some time.

This paper is the first in a three-part series that unifies LQG and TBM by endowing each object with the other's structure and showing that the resulting laws agree. The present work considers a growth process called {\em quantum Loewner evolution} (QLE) on a $\sqrt{8/3}$-LQG surface $\mathcal S$ and defines $\qdist(x,y)$ to be the amount of time it takes QLE to grow from $x \in \mathcal S$ to $y \in \mathcal S$. We show that $\qdist(x,y)$ is a.s.\ determined by the triple $(\mathcal S,x,y)$ (which is far from clear from the definition of QLE) and that $\qdist$ a.s.\ satisfies symmetry (i.e., $\qdist(x,y) = \qdist(y,x)$) for a.a.\ $(x,y)$ pairs and the triangle inequality for a.a.\ triples. This implies that $\qdist$ is a.s.\ a metric on any countable sequence sampled i.i.d.\ from the area measure on $\mathcal S$.  We establish several facts about the law of this metric, which are in agreement with similar facts known for TBM. The subsequent papers will show that this metric a.s.\ extends uniquely and continuously to the entire $\sqrt{8/3}$-LQG surface and that the resulting measure-endowed metric space is TBM.
\end{abstract}

\newpage
\tableofcontents
\newpage

\parindent 0 pt
\setlength{\parskip}{0.25cm plus1mm minus1mm}

\medbreak {\noindent\bf Acknowledgments.}  We have benefited from conversations about this work with many people, a partial list of whom includes Omer Angel, Itai Benjamini, Nicolas Curien, Hugo Duminil-Copin, Amir Dembo, Bertrand Duplantier, Ewain Gwynne, Nina Holden, Jean-Fran{\c{c}}ois Le Gall, Gregory Miermont, R\'emi Rhodes, Steffen Rohde, Oded Schramm, Stanislav Smirnov, Xin Sun, Vincent Vargas, Menglu Wang, Samuel Watson, Wendelin Werner, David Wilson, and Hao Wu.

We thank an anonymous referee for providing a number of helpful comments which have led to many improvements in the exposition.

We would also like to thank the Isaac Newton Institute (INI) for Mathematical Sciences, Cambridge, for its support and hospitality during the program on Random Geometry where part of this work was completed.  J.M.'s work was also partially supported by DMS-1204894 and J.M.\ thanks Institut Henri Poincar\'e for support as a holder of the Poincar\'e chair, during which part of this work was completed.  S.S.'s work was also partially supported by DMS-1209044, DMS-1712862, a fellowship from the Simons Foundation, and EPSRC grants {EP/L018896/1} and {EP/I03372X/1}.

\section{Introduction}
\label{sec::intro}

\subsection{Overview}
\label{subsec::overview}

Brownian motion is in many ways the ``canonical'' or ``most natural''
probability measure on the space of continuous paths. It is uniquely
characterized by special properties (independence of increments,
stationarity, etc.) and it arises as a scaling limit of many kinds of
discrete random walks.

It is natural to wonder whether there is a similarly ``canonical'' or
``most natural'' probability measure on the space of two dimensional
surfaces that are topologically equivalent to the sphere. In fact,
over the past few decades, {\em two} mathematical objects have emerged
which have seemed to be equally valid candidates for the title of
``canonical random sphere-homeomorphic surface.''

The first candidate is the spherical form of {\em Liouville quantum
gravity} (LQG) \cite{DS08, SHE_WELD, dms2014mating, quantum_spheres,lqg_sphere,twoperspectives}. Roughly speaking, the LQG sphere is a random surface
obtained by exponentiating a form of a conformally invariant random
distribution called the Gaussian free field (GFF). LQG has its roots in the
physics literature, particularly the work of Polyakov \cite{pol81bosonic,pol81fermionic,pol88qg} and others in
string theory and conformal field theory in the 1980's. The definition
of LQG involves a parameter $\gamma$ that can be tuned to make
surfaces  more or less ``rough.'' The particular value $\gamma =
\sqrt{8/3}$ has long been understood to be special, and is said to
correspond to {\em pure quantum gravity}. Roughly speaking, this means
that $\sqrt{8/3}$-LQG (a.k.a.\ {\em pure LQG}) should arise as a scaling
limit of discretized random surfaces that are {\em not} decorated by
extra statistical physical structures (a.k.a.\ {\em matter fields}).

The second candidate is an object called {\em the Brownian map} (TBM)
\cite{mm2006bm, le2008scaling, legalluniqueanduniversal, miermontlimit, legalluniversallimit} which has its roots in the mathematical analysis of discretized random surfaces (a.k.a.\ {\em random planar maps}), beginning with the work of Tutte \cite{tutte63cenus} in the 1960's. Around 2000 it was noticed by Chassaing and Schaeffer \cite{cg2004super} (building on the bijections \cite{cori-vauquelin,schaeffer}) that the large scale behavior of the profile of distances from a random point on a random planar map could be encoded using the Brownian snake of Le Gall (see \cite{legall1999spatial} for a general review).  This perspective ultimately led to the definition of the Brownian map as a random metric measure space constructed from the Brownian snake, and to the proofs of Le Gall \cite{legalluniversallimit} and Miermont \cite{miermontlimit} that the Brownian map arises as the Gromov-Hausdorff limit of uniformly random planar maps.

Both LQG and TBM have been thoroughly studied and appear in hundreds
of papers in physics and mathematics. However, until now, there has
not been a direct link between these two objects, and the
corresponding literatures have been relatively disconnected.

It has long been believed that TBM and the $\sqrt{8/3}$-LQG sphere should be equivalent in some sense. Both objects are random measure-endowed, sphere-homeomorphic surfaces. The problem is that the LQG sphere comes equipped with a conformal structure and TBM comes equipped with a metric space structure, and it is far from obvious how to endow either of these objects with the {\em other's} structure. Until one does this, it is not clear how an equivalence statement can even be formulated.

This paper is the first in a three part series (also including \cite{qle_continuity,qle_determined}) that will establish the equivalence of $\sqrt{8/3}$-LQG and TBM and provide a robust unification of the corresponding theories.  Over the course of these three papers, we will show the following:
\begin{enumerate}
\item An instance of the $\sqrt{8/3}$-LQG sphere a.s.\ comes with a {\bf canonical metric space structure}, and the resulting measure-endowed metric space agrees in law with TBM.
\item Given an instance of TBM, the $\sqrt{8/3}$-LQG sphere that generates it is a.s.\ uniquely determined. This implies that an instance of TBM a.s.\ comes with a canonical (up to  M\"obius transformation) embedding in the Euclidean sphere. In other words:
\item An instance of TBM a.s.\ comes with a {\bf canonical conformal structure} and the resulting measure-endowed conformal sphere agrees in law with the $\sqrt{8/3}$-LQG sphere.
\item The canonical (up to M\"obius transformation) embedding of TBM (with its intrinsic metric) into the Euclidean sphere (with the Euclidean metric) is a.s.\ H\"older continuous with H\"older continuous inverse. This in particular implies that a.s.\ all geodesics in TBM are sent to H\"older continuous curves in the Euclidean sphere.
\end{enumerate}
In \cite{qle_continuity,qle_determined}, we will also extend these results to infinite volume surfaces (the so-called Brownian plane \cite{cl2012brownianplane} and the $\sqrt{8/3}$-LQG quantum cone \cite{SHE_WELD,dms2014mating}) and to surfaces with boundary (the Brownian disk and its LQG analog).

The main technical achievements of the current paper concern a growth process called quantum Loewner evolution (QLE), introduced in \cite{ms2013qle}. We show that QLE growth on a $\sqrt{8/3}$-LQG surface is a.s.\ {\em determined} by the starting point of the growth process and the surface it is growing on. (It is still an open question whether this remains true when $\gamma \not = \sqrt{8/3}$.) Moreover, given two typical points $x$ and $y$ on the surface, the amount of time it takes QLE to grow from $x$ to $y$ a.s.\ equals the amount of time it takes QLE to grow from $y$ to $x$. This time can be interpreted as a distance between $x$ and $y$ (this paper establishes the appropriate triangle inequality) and this is what ultimately allows us to endow $\sqrt{8/3}$-LQG surfaces with a metric space structure. However, we stress that the results in the current paper can also be appreciated as stand alone conclusions about QLE.

\subsection{Main result}

Before we begin to explain how these results will be proved, let us describe one reason one might expect them to be true.  Both TBM and the $\sqrt{8/3}$-LQG sphere are known to be $n \to \infty$ scaling limits of the uniformly random planar map with $n$ edges, albeit w.r.t.\ different topologies.\footnote{TBM is the scaling limit w.r.t.\ the Gromov-Hausdorff topology on metric spaces \cite{le2008scaling, legalluniqueanduniversal, miermontlimit, legalluniversallimit}. The $\sqrt{8/3}$-LQG sphere (decorated by $\CLE_6$) is the scaling limit of the uniformly random planar map (decorated by critical percolation) w.r.t.\ the so-called peanosphere topology, as well as a stronger topology that encodes loop lengths and intersection patterns (see \cite{2011arXiv1108.2241S, dms2014mating}, the forthcoming works \cite{finitevolumeestimates,finitevolumelimit, strongertopology}, and the brief outline in \cite{map_making}).}  One should therefore be able to use a compactness argument to show that the uniformly random planar map has a scaling limit (at least subsequentially) in the product of these topologies, and that this scaling limit is a {\em coupling} of TBM and the $\sqrt{8/3}$-LQG sphere. It is not obvious that, in this coupling, the instance of TBM and the instance of the $\sqrt{8/3}$-LQG sphere a.s.\ uniquely determine one another. But it seems reasonable to guess that this would be the case. And it is at least conceivable that one could prove this through a sophisticated analysis of the planar maps themselves (e.g., by showing that pairs of random planar maps are highly likely to be close in one topology if and only if they are close in the other topology).

Another reason to guess that an LQG sphere should have a canonical metric structure, and that TBM should have a canonical conformal structure, is that it is rather easy to formulate reasonable sounding {\em conjectures} about how a metric on an LQG sphere might be obtained as limit of {\em approximate} metrics, or how a conformal structure on TBM might be obtained as a limit of {\em approximate} conformal structures. For example, the peanosphere construction of \cite{dms2014mating} gives a space-filling curve on the LQG sphere; one might divide space into regions traversed by length-$\delta$ increments of time, declare two such regions adjacent if they intersect, and conjecture that the corresponding graph distance (suitably rescaled) converges to a continuum distance as $\delta \to 0$. Similarly, an instance of TBM comes with a natural space-filling curve; one can use this to define a graph structure as above, embed the graph in the Euclidean sphere using circle packing (or some other method thought to respect conformal structure), and conjecture that as $\delta \to 0$ these embeddings converge to a canonical  (up to M\"obius transformation) embedding of TBM in the Euclidean sphere. In both of these cases, the approximating graph can be constructed in a simple way (in terms of Brownian motion or the Brownian snake) and could in principle be studied directly.

The current series of papers will approach the problem from a completely different direction, which we believe to be easier and arguably more enlightening than the approaches suggested above. Instead of using approximations of the sort described above, we will use a combination of the quantum Loewner evolution (QLE) ideas introduced in \cite{ms2013qle}, TBM analysis that appears in \cite{map_making}, and the $\sqrt{8/3}$-LQG sphere analysis that appears in \cite{quantum_spheres}. There are approximations involved in defining the relevant form of QLE, but they seem to respect the natural symmetries of the problem in a way that the approximation schemes discussed above do not. In particular, our approach will allow us to take full advantage of an exact relationship between the LQG disks ``cut out'' by $\SLE_6$ and those cut out by a metric exploration.

In order to explain our approach, let us introduce some notation. If $(S,d)$ is a metric space, like TBM, and $x \in S$ then we let $B(x,r)$ denote the radius $r$ ball centered at $x$. If the space is homeomorphic to $\s^2$ and comes with a distinguished ``target'' point $y$, then we let $\fb{x}{r}$ denote the {\em filled} metric ball of radius $r$, i.e., the set of all points that are disconnected from $y$ by $\ol{B(x,r)}$. Note that if $0 \leq r < d(x,y)$, then the complement of $\fb{x}{r}$ contains $y$ and is homeomorphic to the unit disk $\D$.

The starting point of our approach is to let $x$ and $y$ be points on an LQG-sphere and to define a certain ``growth process'' growing from $x$ to $y$. We assume that $x$ and $y$ are ``quantum typical,'' i.e., that given the LQG-sphere itself, the points $x$ and $y$ are independent samples from the LQG measure on that sphere. The growth process is an increasing family of closed subsets of the LQG-sphere, indexed by a time parameter $t$, which we denote by $\qlegrowth_t = \qlegrowth_t^{x \to y}$. Ultimately, the set $\qlegrowth_t$ will represent the filled metric ball $\fb{x}{t}$ corresponding to an appropriately defined metric on the LQG-sphere (when $y$ is taken to be the distinguished target point). However, we will get to this correspondence somewhat indirectly.  Namely, we will {\em first} define $\qlegrowth_t= \qlegrowth_t^{x \to y}$ as a random growth process (for quantum typical points $x$ and $y$) and only show {\em a posteriori} that there is a metric for which the $\qlegrowth_t$ thus defined are a.s.\ the filled metric balls.

As presented in \cite{ms2013qle}, the idea behind this growth process (whose discrete analog we briefly review and motivate in Section~\ref{sec::rpmintro}) is that one should be able to ``reshuffle'' the $\SLE_6$ decorated quantum sphere in a particular way in order to obtain a growth process on a LQG surface that hits points in order of their distance from the origin. This process is a variant of the $\QLE(8/3,0)$ process originally constructed in \cite{ms2013qle} by starting with an $\SLE_6$ process and then ``resampling'' the tip location at small capacity time increments to obtain a type of first passage percolation on top of a $\sqrt{8/3}$-LQG surface.  The form of $\QLE(8/3,0)$ that we use here differs from that given in \cite{ms2013qle} in that we will resample the tip at ``quantum natural time'' increments as defined in \cite{dms2014mating} (i.e., time steps which are intrinsic to the surface rather than to its specific choice of embedding).  We expect that these two constructions are in fact equivalent, but we will not establish that fact here.

As discussed in \cite{ms2013qle}, the growth process $\QLE(8/3,0)$ can in some sense be understood as a continuum analog of the Eden model. The idea explained there is that in some small-increment limiting sense, the (random) Eden model growth should correspond to (deterministic) metric growth. In fact, a version of this statement for random planar maps has recently been verified in \cite{cl2014peeling}, which shows that on a random planar map, the random metric associated with an Eden model (or first passage percolation) instance closely approximates graph distance.

Once we have defined the growth process for quantum typical points $x$ and $y$, we will define the quantity $\qdist(x,y)$ to be the amount of time it takes for a $\QLE$ growth process to evolve from $x$ to $y$. This $\qdist$ is a good candidate to be a distance function, at least for those $x$ and $y$ for which it is defined. However, while our initial construction of $\QLE$ will produce the joint law of the doubly marked LQG surface and the growth process~$\qlegrowth$, it will not be obvious from this construction that $\qdist(x,y) = \qdist(y,x)$ a.s. In fact, it will not even be obvious that the growth process is a.s.\ {\em determined} by the LQG sphere and the $(x,y)$ pair, so we will {\em a priori} have to treat $\qdist(x,y)$ as a random variable whose value might depend on more than the LQG-surface and the $(x,y)$ pair.

The bulk of the current paper is dedicated to showing that if one first samples a $\sqrt{8/3}$-LQG sphere, and then samples $x_1, x_2, \ldots$ as i.i.d.\ samples from its LQG measure, and then samples conditionally independent growth processes from each $x_i$ to each $x_j$, then it is a.s.\  the case that the $\qdist$ defined from these growth processes is a metric, and that this metric is determined by the LQG sphere and the points, as stated in Theorem~\ref{thm::qle_metric} below.

Both the $\sqrt{8/3}$-LQG sphere and TBM have some natural variants that differ in how one handles the issues of total area and special marked points; these variants are explained for example in \cite{map_making,quantum_spheres}. On both sides, there is a natural unit area sphere measure $d\CS$ in which the total area measure is a.s.\ one. On both sides, one can represent a sphere of arbitrary positive and finite area by a pair $(\CS,A)$, where $\CS$ is a unit area sphere and $A$ is a positive real number. The pair represents the unit area sphere $\CS$ except with area scaled by a factor of $A$ and distance (where defined) scaled by a factor of $A^{1/4}$.  On both sides it turns out to be natural to define infinite measures on quantum spheres such that the ``total area'' marginal has the form $A^\beta dA$ for some $\beta$. In particular, on both sides, one can define a natural ``grand canonical'' quantum sphere measure on spheres with $k$ marked points (see the notation in Section~\ref{subsec::prequel_overview}). Sampling from this infinite measure amounts to
\begin{enumerate}
\item first sampling a unit area sphere $\CS$,
\item then sampling $k$ marked points i.i.d.\ from the measure on $\CS$,
\item then independently selecting $A$ from the infinite measure $A^{k-7/2}dA$ and rescaling the sphere's area by a factor of $A$ (and distance by a factor of $A^{1/4}$).
\end{enumerate}

Theorem~\ref{thm::qle_metric}, stated below, applies to all of these variants. Recall that in the context of an infinite measure, {\em almost surely} (a.s.) means outside a set of measure zero.

\begin{theorem}
\label{thm::qle_metric}
Suppose that $\CS$ is an instance of the $\sqrt{8/3}$-quantum sphere, as defined in 
\cite{dms2014mating, quantum_spheres} (either the unit area version or the ``grand canonical'' version involving one or more distinguished points).  Let $(x_n)$ be a sequence of points chosen independently from the quantum area measure on $\CS$. Then it is a.s.\ the case that for each pair $x_i, x_j$, the quantity $\qdist(x_i, x_j)$ is uniquely determined by $\CS$ and the points $x_i$ and $x_j$. Moreover, it is a.s.\ the case that
\begin{enumerate}
\item $\qdist(x_i,x_j) = \qdist(x_j,x_i) \in (0,\infty)$ for all distinct $i$ and $j$, and
\item $\qdist$ satisfies the (strict) triangle inequality $\qdist(x_i,x_k) < \qdist(x_i,x_j) + \qdist(x_j,x_k)$ for all distinct $i$, $j$, and $k$.
\end{enumerate}
\end{theorem}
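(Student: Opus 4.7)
The plan is to derive all three assertions — measurability with respect to $(\CS,x_i,x_j)$, symmetry, and the strict triangle inequality — by first analyzing a fixed pair of independent quantum-typical points $x, y$, and then passing to the countable collection. For such a pair, I would define $\qdist(x,y) := \inf\{t \geq 0 : y \in \qlegrowth_t^{x \to y}\}$. Finiteness comes from the construction of $\QLE(8/3,0)$: in quantum natural time, the boundary length of $\qlegrowth_t^{x \to y}$ evolves as a reflected \cadlag\ process of known Bessel/stable type with a.s.\ finite lifetime, and upon termination the growth has swallowed every remaining bubble, hence $y$. Positivity is immediate from the fact that $y$ is a.s.\ distinct from $x$ and separated from it by a region of positive quantum area, which the growth cannot consume in zero time.

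Determinism and symmetry form the technical heart of the argument. For determinism, the QLE growth carries, a priori, extra randomness beyond $(\CS,x,y)$ inherited from the resampling procedure; I would show that the absorption time of $y$ admits an equivalent description purely in terms of $\CS$ and the two marked points — for instance via the Poissonian collection of LQG disks cut out by $\SLE_6$ decorated with their boundary lengths, whose joint law is intrinsic to the doubly marked sphere by \cite{dms2014mating, quantum_spheres}. An approximation argument in which the resampling increments go to zero, combined with a tail-type zero-one law, should then force the absorption time to be $\sigma(\CS,x,y)$-measurable. For symmetry $\qdist(x,y) = \qdist(y,x)$, the idea is to identify both quantities with the same intrinsic functional of $(\CS,x,y)$: the total quantum natural time associated with the evolution of the filled metric ball separating $x$ from $y$. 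This identification should follow from the conformal welding characterization of the two complementary LQG disks cut out by a partially grown $\qlegrowth$, which by the exchange symmetry of the doubly marked sphere reads the same from the $x$-side and the $y$-side. Translating the a priori asymmetric growth definition into this symmetric statement is the main obstacle, and I expect it to demand a careful matching of boundary length processes together with welding identities drawn from the companion LQG-sphere analysis.

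Given the pairwise claims, the strict triangle inequality for three quantum-typical points $x,y,z$ follows from a monotonicity-plus-genericity argument. Running $\qlegrowth^{x \to z}$ for time $\qdist(x,y)$ already covers a region containing $y$, from which reaching $z$ costs at most an additional $\qdist(y,z)$ in quantum natural time; this yields the non-strict inequality $\qdist(x,z) \leq \qdist(x,y) + \qdist(y,z)$. Strictness comes from general position: three i.i.d.\ samples are a.s.\ such that the (a.s.\ unique) geodesic growth from $x$ to $z$ does not pass through the singleton $\{y\}$, so any detour through $y$ strictly wastes time. Since the sequence $(x_n)$ produces only countably many ordered pairs and triples, the almost-sure statements combine to yield the full theorem.
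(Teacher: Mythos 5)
Your proposal correctly identifies the three things that must be proved, but the two central steps are left as acknowledged obstacles rather than arguments, and the route you sketch for them would not close. For determinism and symmetry you propose to exhibit $\qdist(x,y)$ as an intrinsic functional of $(\CS,x,y)$ (via the Poissonian disk structure, welding identities, and a ``tail-type zero-one law''). But the realized growth $\qlegrowth^{x\to y}$ is built from extra randomness (the tip resamplings, and a subsequential limit thereof) that is not a tail event and affects the entire ordered sequence of bubbles cut off; only the \emph{law} of that sequence is intrinsic, not its realization, so there is no direct identification available at this stage. The paper's mechanism is entirely different and is the actual content of the theorem: one takes two conditionally independent growths $\qlegrowthleft$ (from $x$) and $\qlegrowthright$ (from $y$) under the infinite measure $\lawonall$, defines the weighted measures $\lawonall^{x\to y}=\qdist(x,y)\,d\lawonall$ and $\lawonall^{y\to x}=\oqdist(y,x)\,d\lawonall$, proves the ``figure-8'' symmetry of a growth run to a uniform time meeting the opposite growth run until first contact (Theorem~\ref{thm::qle_symmetry}, itself deduced from the $\SLE_6$ statement, Theorem~\ref{thm::typical_cut_point}, through the reshuffling), and then upgrades this to $\lawonall^{x\to y}=\lawonall^{y\to x}$ via the martingale-continuity argument of Lemma~\ref{lem::first_equivalence} handling the awkward stopping time $\lefthit$. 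Symmetry then follows from uniqueness of Radon--Nikodym derivatives, and determinism follows \emph{from} symmetry: two conditionally independent random variables given $(\CS,x,y)$ that agree a.s.\ must be $\sigma(\CS,x,y)$-measurable (Proposition~\ref{prop::distance_symmetric_and_determined}). Your plan inverts this logical order (determinism first, symmetry second) and supplies no workable substitute for the weighted-measure argument.

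The triangle inequality step also has two genuine gaps. First, the claim that after running $\qlegrowth^{x\to z}$ for time $\qdist(x,y)$ ``reaching $z$ costs at most an additional $\qdist(y,z)$'' is not available: $\qdist(y,z)$ is the time for a growth started at the point $y$, and there is no subadditivity statement comparing it with the further evolution of a growth restarted from the boundary of an already-grown hull --- this assertion is essentially the triangle inequality itself. Second, strictness via ``the a.s.\ unique geodesic from $x$ to $z$ avoids $y$'' is circular here, since no geodesics (indeed no metric on the full sphere) exist at this stage of the program. The paper instead derives additivity from the symmetry: Proposition~\ref{prop::meeting_distance_sums_up} shows $\stoppingleft+\righthit=\qdist(x,z)$ (using that both $\stoppingleft$ and $\righthit$ are uniform on $[0,\qdist(x,z)]$ and Lemma~\ref{lem::uniform_distribution_determines_f}); then one grows $\Upsilon$ from $x$ until it hits $y$ (time $\qdist(x,y)$), grows $\ol{\Upsilon}$ from $z$ until it hits $\Upsilon$ (time $\qdist(x,z)-\qdist(x,y)$ by the additivity), notes this time is at most $\qdist(y,z)$ since $y\in\Upsilon$, and gets strictness because the a.s.\ unique contact point is uniform on $\partial\Upsilon$ with respect to the quantum boundary measure and hence a.s.\ differs from $y$. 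You would need to import this additivity-plus-uniform-hitting mechanism (or an equivalent) for your final step to go through.
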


The fact that the triangle inequality in Theorem~\ref{thm::qle_metric} is a.s.\ strict implies that {\em if} the metric $\qdist$ can be extended to a geodesic metric on the entire LQG-sphere (something we will establish in the subsequent paper \cite{qle_continuity}) then in this metric it is a.s.\ the case that none of the points on the countable sequence lies on a geodesic between two other points in the sequence. This is unsurprising given that, in TBM, the measure of a geodesic between two randomly chosen points is a.s.\  zero. (This is well known and essentially immediate from the definition of TBM; see \cite{map_making} for some discussion of this point.)

The construction of the metric in Theorem~\ref{thm::qle_metric} is ``local'' in the sense that it only requires that the field near any given point is absolutely continuous with respect to a $\sqrt{8/3}$-LQG sphere.  In particular, Theorem~\ref{thm::qle_metric} yields a construction of the metric on a countable, dense subset of any $\sqrt{8/3}$-LQG surface chosen i.i.d.\ from the quantum measure.  Moreover, the results of the later papers \cite{qle_continuity,qle_determined} also apply in this generality, which allows one to define geodesic metrics on other $\sqrt{8/3}$-LQG surfaces, such as the torus constructed in \cite{drvtorus}.

The proof of Theorem~\ref{thm::qle_metric} is inspired by a closely related argument used in a paper by the second author, Sam Watson, and Hao Wu (still in preparation) to define a metric on the set of loops in a $\CLE_4$ process. To briefly sketch how the proof goes, suppose that we choose a $\sqrt{8/3}$-LQG sphere $\CS$ with marked points $x$ and $y$, and then choose a growth process $\qlegrowth$ from $x$ to $y$ and a conditionally independent growth process $\qlegrowthright$ from $y$ to $x$.  We also let $U$ be chosen uniformly in $[0,1]$ independently of everything else.  Let $\lawonall$ be the joint distribution of $(\CS,x,y,\qlegrowth,\qlegrowthright,U)$.  Since the natural measure on $\sqrt{8/3}$-LQG spheres is an infinite measure, so is $\lawonall$.  However, we can make sense of $\lawonall$ conditioned on $\CS$ as a probability measure.  Given $\CS$, we have that $\qdist(x,y)$ and $\qdist(y,x)$ are well defined as random variables denoting the respective time durations of $\qlegrowth$ and $\qlegrowthright$. As discussed above, we interpret $\qdist(x,y)$ (resp.\ $\qdist(y,x)$) as a measure of the distance from $x$ to $y$ (resp.\ $y$ to $x$).  Write $\lawonall^{x \to y}$ for the weighted measure $\qdist(x,y)d\lawonall$. In light of the uniqueness of Radon-Nikodym derivatives, in order to show that $\qdist(x,y) = \qdist(y,x)$ a.s., it will suffice to show that $\lawonall^{x \to y} = \lawonall^{y \to x}$.

\begin{figure}[ht!]
\begin{center}
\includegraphics[scale=0.85, page=4,trim={0 0 0 7.3cm},clip]{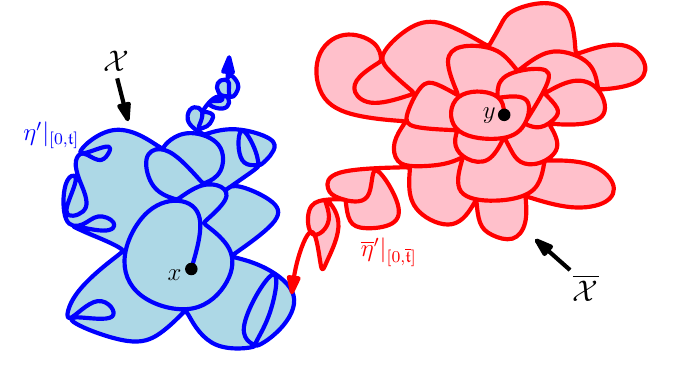}
\end{center}
\caption{\label{fig::figure_8} A ``figure-8'' consisting of meeting $\QLE(8/3,0)$ processes which illustrates the main input into the proof of Theorem~\ref{thm::qle_metric}.  Here, $\stoppingleft = U \qdist(x,y)$ and $\righthit$ is the first time that $\qlegrowthright$ hits $\qlegrowthleft_\stoppingleft$.  The blue (resp.\ red) arcs contained in the illustration of $\qlegrowth|_{[0,\stoppingleft]}$ (resp.\ $\qlegrowthright|_{[0,\righthit]}$) represent the regions which have been cut off from $y$ (resp.\ $x$).}
\end{figure}

The main input into the proof of this is Lemma~\ref{lem::mainsymmetry}, stated below (which will later be restated slightly more precisely and proved as Theorem~\ref{thm::qle_symmetry}). Suppose we sample $(\CS, x,y, \qlegrowth, \qlegrowthright,U)$ from $\lawonall^{x \to y}$, then let $\stoppingleft = U \qdist(x,y)$ --- so that $\stoppingleft$ is uniform in $[0,\qdist(x,y)]$, and then define $\righthit = \inf\{t \geq 0: \qlegrowthright_t \cap \qlegrowthleft_\stoppingleft \neq \emptyset\}$. Both $\qlegrowthleft|_{[0,\stoppingleft]}$ and $\qlegrowthright|_{[0,\righthit]}$ are understood as growth processes {\em truncated} at random times, as illustrated in Figure~\ref{fig::figure_8}.  We also let $\stoppingright = U \qdist(y,x)$ and $\lefthit = \inf\{t \geq 0 : \qlegrowthleft_t \cap \qlegrowthright_\stoppingright \neq \emptyset\}$.  Under $\lawonall^{y \to x}$, we have that $\stoppingright$ is uniform in $[0,\qdist(y,x)]$.

\begin{lemma}
\label{lem::mainsymmetry}
Using the definitions above, the $\lawonall^{x \to y}$-induced law of $(\CS,x,y,\qlegrowthleft|_{[0,\stoppingleft]}, \qlegrowthright |_{[0,\righthit]})$ is equal to the $\lawonall^{y \to x}$-induced law of $(\CS,x,y,\qlegrowthleft|_{[0,\lefthit]},\qlegrowthright|_{[0,\stoppingright]})$.
\end{lemma}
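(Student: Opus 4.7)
The plan is to reduce Lemma~\ref{lem::mainsymmetry} to a static ``figure-8 symmetry'' statement about the decomposition of $\CS$ into a pair of glued quantum disks, and then to establish that static symmetry using the $\SLE_6$/quantum-disk welding underlying the construction of $\QLE(8/3,0)$.

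First I would absorb $U$ into the time variable. Since $\lawonall^{x\to y}=\qdist(x,y)\,d\lawonall$ and $\stoppingleft=U\qdist(x,y)$ with $U$ uniform on $[0,1]$ and independent of everything else, the law of the 5-tuple under $\lawonall^{x\to y}$ is the pushforward of the infinite measure $d\lawonall\otimes ds\,\mathbf{1}_{\{s\le\qdist(x,y)\}}$ under $(\CS,x,y,\qlegrowth,\qlegrowthright,s)\mapsto(\CS,x,y,\qlegrowth|_{[0,s]},\qlegrowthright|_{[0,\righthit(s)]})$, with the analogous description holding on the $\lawonall^{y\to x}$ side. Next I would apply the built-in exchange symmetry of $\lawonall$: the involution $\iota$ swapping $(x,\qlegrowth)\leftrightarrow(y,\qlegrowthright)$ preserves $d\lawonall$, because $x,y$ are i.i.d.\ from the quantum area measure and the two growth processes are conditionally independent $\QLE$s whose laws depend only on their endpoints; moreover $\iota$ exchanges $\qdist(x,y)$ with $\qdist(y,x)$. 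Applying $\iota$ to one side converts Lemma~\ref{lem::mainsymmetry} into the assertion that, under a single unweighted reference measure, the joint law of the figure-8 configuration together with its two interior marked points is invariant under simultaneously swapping $x\leftrightarrow y$ and the two truncated growth processes.

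To establish this invariance I would appeal to the construction of $\QLE(8/3,0)$ as an $\SLE_6$-based exploration with tip resampling at quantum natural time increments, together with the disk-welding identities for $\sqrt{8/3}$-LQG from \cite{SHE_WELD,dms2014mating,ms2013qle,quantum_spheres}. Conditionally on the quantum boundary length of $\qlegrowth_{\stoppingleft}$, the complementary component $D_y$ containing $y$ is a quantum disk with the tip of $\qlegrowth$ as a marked boundary point and $y$ as a quantum-typical interior point, independent of the interior of $\qlegrowth|_{[0,\stoppingleft]}$. Growing $\qlegrowthright$ from $y$ inside $D_y$ until $\righthit$ is then the same as running a $\QLE(8/3,0)$ on $D_y$ targeted at that marked boundary point; at the first-hit moment, $\CS$ is realized as two quantum disks glued along a single boundary cut point, each carrying a quantum-typical interior marked point. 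This pair-of-disks description is manifestly symmetric under swapping the two halves, with the $\qdist(x,y)$-weighting in $\lawonall^{x\to y}$ precisely restoring the balance between ``first'' and ``second'' exploration roles that is broken in the unweighted description.

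The main obstacle will be the rigorous passage from this symmetric static description to the asymmetric dynamical one. Three technical inputs are needed: (i) showing that $\righthit<\infty$ a.s.\ and that $\qlegrowthright_{\righthit}$ meets $\partial\qlegrowth_{\stoppingleft}$ at a single boundary point, so that a genuine figure-8 is formed rather than a more complicated gluing; (ii) identifying the conditional law of the meeting time $\righthit$ given the figure-8 decomposition in terms of the boundary-length stable process driving $\qlegrowthright$ on $D_y$, and matching this against the $\qdist(x,y)$-weighted uniform law of $\stoppingleft$ to produce a joint distribution on $(\stoppingleft,\righthit)$ that is symmetric under half-swap; and (iii) verifying the reinterpretation in the previous paragraph, namely that the $\qlegrowthright$ dynamics restricted to $D_y$ up to $\righthit$ really is a $\QLE(8/3,0)$ on $D_y$ targeted at the tip. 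Each of these steps rests on the $\SLE_6$-tip-resampling construction of $\QLE(8/3,0)$ from \cite{ms2013qle} and the exactness of the $\sqrt{8/3}$-LQG disk welding, and it is in these compatibilities, rather than in the unweighting or the $\iota$-symmetry, that the genuine content of Lemma~\ref{lem::mainsymmetry} resides.
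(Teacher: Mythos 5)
Your reduction (absorbing $U$ into the time variable and applying the exchange involution $\iota$) is only a reformulation: after applying $\iota$, what remains is precisely the assertion that the $\qdist(x,y)$-weighted law of the figure-8 configuration is invariant under swapping the two lobes, and at this point your argument asserts the conclusion rather than proving it. The ``static description'' you invoke is neither accurate nor manifestly symmetric. The two lobes $\qlegrowthleft_{\stoppingleft}$ and $\qlegrowthright_{\righthit}$ are beaded (necklace-type) quantum surfaces, not quantum disks --- only the complementary region $\CB$ is a disk (Lemma~\ref{lem::hitting_set}); the growth $\qlegrowthright$ inside the $y$-containing component is targeted at $x$, not at the tip of $\qlegrowthleft$, and its first hit of $\partial\qlegrowthleft_{\stoppingleft}$ is a.s.\ \emph{not} at that tip. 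Indeed the tip asymmetry --- the tip of $\qlegrowthright_{\righthit}$ lies on $\partial\qlegrowthleft_{\stoppingleft}$ while the tip of $\qlegrowthleft_{\stoppingleft}$ sits at a boundary-typical point --- is exactly why the whole-picture symmetry is delicate (see Remarks~\ref{rem::sle_whole_picture_not_symmetric} and~\ref{rem::qle_not_symmetric}), and your step (ii), ``matching the conditional law of $\righthit$ against the weighted uniform law of $\stoppingleft$ to produce a joint law symmetric under half-swap,'' is simply a restatement of the lemma with no mechanism supplied. Nowhere do you explain why weighting by $\qdist(x,y)=\int_0^\zeta X_u^{-1}\,du$, as opposed to any other functional, produces the symmetry.

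The missing mechanism is the content of Sections~\ref{sec::cut_points} and~\ref{sec::qle_symmetry}. At the $\SLE_6$ level the symmetry comes from reversibility of whole-plane $\SLE_6$, which makes $\MstwoW$ (uniform in quantum natural time) symmetric; the distance-weighted measure $\MstwoD$ is then recovered as the limit $\epsilon^{-1}\one_{E_\epsilon}\,d\MstwoW\to d\MstwoD$ in~\eqref{eqn::m_w_to_m_d_e_eps}--\eqref{eqn::m_w_to_m_d_f_eps}, where $E_\epsilon$ is the event that the reversed path first hits within quantum boundary length $\epsilon$ of the tip and the computation $\p_u[E_\epsilon]=\tfrac{\epsilon}{u}\wedge 1$ (Lemma~\ref{lem::probability_of_hitting}) is what converts the $d\ttime$ density into the $X_\ttime^{-1}\,d\ttime$ density; making the conditioning event symmetric costs only a constant, which requires the local $\SLE_6(2;2)$ analysis near the tip (Proposition~\ref{prop::local_picture_near_hit}, Lemma~\ref{lem::conditional_probability_converges}). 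This yields Theorem~\ref{thm::typical_cut_point}. One must then transfer the symmetry through the reshuffling to $\QLE(8/3,0)$, which is not automatic: it uses conditional independence of the two subsequential-limit growths in the limit (Proposition~\ref{prop::qle_conditional_independence_in_limit_stopping_times}), the single-point intersection and removability of $\partial\CB$ (Lemma~\ref{lem::hitting_set}, Proposition~\ref{prop::figure_8_quantum_disk_and_removable}), the uniformity of the pinch point (Lemma~\ref{lem::pinch_point_glued}), and the surgery near the final $\SLE_6$ segment (Lemmas~\ref{lem::last_step_bl_change} and~\ref{lem::atom_cannot_form}) that removes the tip asymmetry before the pieces are rewelded via conformal removability (Lemmas~\ref{lem::approximations_symmetric}--\ref{lem::continuum_equivalence}, Theorem~\ref{thm::qle_symmetry}). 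Your items (i) and (iii) gesture at fragments of this, but without the $\MstwoW$-to-$\MstwoD$ conditioning identity and the tip-surgery/removability step the proposal does not constitute a proof.
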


The proof of Lemma~\ref{lem::mainsymmetry} is in some sense the heart of the paper. It is established in Section~\ref{sec::qle_symmetry} (see Theorem~\ref{thm::qle_symmetry}), using tools developed over several previous sections, which in turn rely on the detailed understanding of $\SLE_6$ processes on $\sqrt{8/3}$-LQG spheres developed in \cite{dms2014mating, quantum_spheres} as well as in \cite{MS_IMAG,MS_IMAG2,MS_IMAG3,MS_IMAG4}.  We note that the intuition behind this symmetry is also sketched at the end of \cite{map_making} in the context of TBM.

To derive $\lawonall^{x \to y} = \lawonall^{y \to x}$ from Lemma~\ref{lem::mainsymmetry}, it will suffice to show the following:

\begin{lemma}
\label{lem::lawgivenfivetuple} The $\lawonall^{x \to y}$ conditional law of $\qlegrowthleft$, $\qlegrowthright$ given $(\CS,x,y,\qlegrowthleft|_{[0,\stoppingleft]}, \qlegrowthright |_{[0,\righthit]})$ is the same as the $\lawonall^{y \to x}$ conditional law of $\qlegrowthleft$, $\qlegrowthright$ given $(\CS,x,y,\qlegrowthleft|_{[0,\lefthit]}, \qlegrowthright |_{[0,\stoppingright]})$.
\end{lemma}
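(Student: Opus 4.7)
The idea is to argue that, under either $\lawonall^{x\to y}$ or $\lawonall^{y\to x}$, the conditional law of $(\qlegrowth,\qlegrowthright)$ given the relevant five-tuple admits the same concrete description: it is the product of two independent $\QLE(8/3,0)$ growths --- one from the tip of the truncated $\qlegrowth$ to $y$ and one from the tip of the truncated $\qlegrowthright$ to $x$ --- with laws determined from the truncated data via the Markov property of $\QLE(8/3,0)$. Since this description depends only on the information in the five-tuple, the two conditional laws must coincide.

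The first step I would carry out is to exploit that the Radon--Nikodym derivative $\qdist(x,y)$ defining $\lawonall^{x\to y}$ is a function of $\qlegrowth$ alone. Under $\lawonall$ the processes $\qlegrowth$ and $\qlegrowthright$ are conditionally independent given $(\CS,x,y)$, and reweighting by a function of $\qlegrowth$ alone preserves this conditional independence and leaves the conditional law of $\qlegrowthright$ unchanged (still $\QLE(8/3,0)$ from $y$ to $x$). A short calculation then shows that, conditionally on $(\CS,x,y)$, the joint law of $(\qlegrowth,\stoppingleft)$ under $\lawonall^{x\to y}$ is obtained from the unweighted conditional law of $\qlegrowth$ by tensoring with Lebesgue measure restricted to $[0,\qdist(x,y)]$. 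In particular, once $\stoppingleft$ and $\qlegrowth|_{[0,\stoppingleft]}$ are given, the conditional law of $\qlegrowth|_{[\stoppingleft,\infty)}$ is exactly the unweighted Markov conditional law. Invoking the Markov property of $\QLE(8/3,0)$ in quantum natural time (which should be established in the earlier sections from the quantum-natural-time construction of $\QLE$), this continuation is a $\QLE(8/3,0)$ from the tip of $\qlegrowth_{\stoppingleft}$ to $y$ on the complement of $\qlegrowth_{\stoppingleft}$.

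Next I would apply the strong Markov property of $\qlegrowthright$ at $\righthit$. Because $\qlegrowthright$ is independent of $\qlegrowth$ given $(\CS,x,y)$ and $\righthit$ is measurable with respect to $\qlegrowthright$ and the already-fixed set $\qlegrowth_{\stoppingleft}$, it is a stopping time in the filtration of $\qlegrowthright$ enlarged by $(\CS,x,y,\qlegrowth|_{[0,\stoppingleft]})$, and strong Markov gives that, conditional on the full five-tuple $(\CS,x,y,\qlegrowth|_{[0,\stoppingleft]},\qlegrowthright|_{[0,\righthit]})$, the continuation $\qlegrowthright|_{[\righthit,\infty)}$ is a $\QLE(8/3,0)$ from the tip of $\qlegrowthright_{\righthit}$ toward $x$. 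Finally, since $\qlegrowth|_{[\stoppingleft,\infty)}$ is independent of the entire process $\qlegrowthright$ given $(\CS,x,y,\qlegrowth|_{[0,\stoppingleft]})$, a standard conditioning argument shows that it remains independent of $\qlegrowthright|_{[\righthit,\infty)}$ after additionally conditioning on $\qlegrowthright|_{[0,\righthit]}$; hence the two continuations are conditionally independent given the full five-tuple. The symmetric argument, with the roles of $\qlegrowth$ and $\qlegrowthright$ exchanged, yields the same product description under $\lawonall^{y\to x}$ given $(\CS,x,y,\qlegrowth|_{[0,\lefthit]},\qlegrowthright|_{[0,\stoppingright]})$.

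The main obstacle is rigorously invoking the strong Markov property of $\QLE(8/3,0)$ at the random time $\righthit$, which is defined via an independent second exploration rather than by an internal clock of the first process. This requires the quantum-natural-time parametrization to interact cleanly with the $\SLE_6/\sqrt{8/3}$-LQG surgery machinery of \cite{dms2014mating,quantum_spheres}, so that the quantum surface remaining at $\righthit$ carries precisely the data needed to restart a $\QLE(8/3,0)$ toward its target. Granted that input (which is set up in the earlier sections of the paper), the argument is a direct combination of the $\QLE$ Markov property, the observation that $\qdist(x,y)$ depends only on $\qlegrowth$, and the elementary conditional-independence manipulation above.
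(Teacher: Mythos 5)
Your first step is sound and matches the easy half of the paper's argument: since the reweighting $\qdist(x,y)$ is a function of $\qlegrowthleft$ (and the surface) alone, the $\lawonall^{x\to y}$ conditional law of the continuation of $\qlegrowthleft$ past $\stoppingleft$ given the five-tuple agrees with the unweighted $\lawonall$ conditional law given $(\CS,x,y,\qlegrowthleft|_{[0,\stoppingleft]})$; this is Proposition~\ref{prop::m_d_l_conditional_law} combined with the conditional independence of $\qlegrowthleft$ and $\qlegrowthright$ given $(\CS,x,y)$. The gap is in your next step, where you invoke a strong Markov property of $\qlegrowthright$ at the random time $\righthit$ (and, symmetrically, of $\qlegrowthleft$ at $\lefthit$). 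No such property is available: the paper states explicitly that nothing like a strong Markov property for $\QLE(8/3,0)$ at general stopping times has been proved, and the process is only constructed as a subsequential limit, so Markov-type statements are only accessible at deterministic times or at stopping times taking countably many values. The time $\righthit$ is a complicated random time determined by the \emph{other} exploration, and it is exactly the case the paper cannot handle directly; declaring this "granted as input set up in the earlier sections" assumes away the entire difficulty. (As a side point, $\QLE(8/3,0)$ has no distinguished tip --- the tip is lost in the reshuffling --- so "restart from the tip of $\qlegrowthright_{\righthit}$" is not even the right form of the continuation statement.)

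The paper's actual route shows what is needed in place of your strong Markov appeal. Setting $\ol{M}_t=\E_{\lawonall}[F(\qlegrowthright)\giv\ol{\CF}_t]$, it first proves the identity $\ol{M}_t=\E_{\lawonall^{x\to y}}[F(\qlegrowthright)\giv\CF_{\stoppingleft},\ol{\CF}_t]$ for Lebesgue-a.e.\ fixed $t$ (Proposition~\ref{prop::m_d_l_conditional_law} plus conditional independence), and then passes to the limit $t\uparrow\righthit$ along rationals (Lemma~\ref{lem::first_equivalence}). The limit step is where the real content lies: one must show that $\ol{\CF}_{\righthit}$ is generated by the $\ol{\CF}_t$, $t<\righthit$, and that the bounded martingale $\ol{M}$ does not jump at $\righthit$. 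Both facts are deduced from the symmetry result Theorem~\ref{thm::qle_symmetry}: under $\lawonall^{x\to y}$ the time $\righthit$ has the same law as $\stoppingright$ under $\lawonall^{y\to x}$, and $\stoppingright$ is uniform on $[0,\oqdist(y,x)]$, hence a.s.\ avoids the countably many exceptional (jump) times. Your proposal never uses Theorem~\ref{thm::qle_symmetry} (equivalently Lemma~\ref{lem::mainsymmetry}) at this stage, yet it is precisely the ingredient that substitutes for the missing strong Markov property; without it, the claim that conditioning on $\ol{\CF}_{\righthit}$ yields the asserted product/continuation structure is unjustified.
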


Intuitively, sampling from either conditional law should amount to just continuing the evolution of $\qlegrowthleft$ and $\qlegrowthright$ on $\CS$, beyond their given stopping times, independently of each other. However, it will take some work to make this intuition precise and we will carry this out in Section~\ref{sec::metric}.

We will see {\em a posteriori} that $\stoppingleft = \qdist(x,y) - \righthit$, which we will prove by using the fact that $\qdist(x,y) = \qdist(y,x)$ and the symmetry of Lemma~\ref{lem::mainsymmetry}, which implies that both $\righthit$ and $\stoppingleft$ are $\lawonall^{x \to y}$-conditionally uniform on $[0,\qdist(x,y)]$, once $(\CS,x,y)$ is given.  We will also use this fact to derive the triangle inequality. Note that if $z$ is a third point and we are working on the event that $\qdist(x,z) < \qdist(x,y)$, then $\qlegrowthleft_{\qdist(x,z)}$ and $\qlegrowthright_{\qdist(y,z)}$ must intersect each other, at least at the point $z$. In fact, it will not be hard to see that a.s.\ for some $\epsilon >0$ the processes $\qlegrowthleft_{\qdist(x,z)}$ and $\qlegrowthright_{\qdist(y,z) - \epsilon}$ still intersect. This implies that if $\stoppingleft = \qdist(x,z)$ then $\righthit \leq \qdist(y,z) - \epsilon < \qdist(y,z)$. Plugging in $\righthit=  \qdist(x,y) - \stoppingleft = \qdist(x,y) - \qdist(x,z)$, we obtain the strict triangle inequality.

The $\QLE(8/3,0)$ process that we construct will in fact be given as a subsequential limit of approximations which are defined by resampling the tip of an $\SLE_6$ after each $\delta > 0$ units of time.  The symmetry statements (e.g., Lemma~\ref{lem::mainsymmetry}) hold when $\qlegrowthleft$ and $\qlegrowthright$ arise as subsequential limits as $\delta \to 0$ along different subsequences.  Therefore the distance $\qdist$ that we define does not depend on the choice of subsequence.  We will not prove in this paper that the growth process $\qlegrowthleft$ itself does not depend on the choice of subsequence, but this will be a consequence of \cite{qle_continuity}.

\subsection{Observations and sequel overview}

In the course of establishing Theorem~\ref{thm::qle_metric}, it will also become clear (almost immediately from the definitions) that the growth process from $x_i$ to $x_j$ and the growth process from $x_i$ to $x_k$ a.s.\  agree up until some random time at which $x_j$ and $x_k$ are first separated from each other, after which the two processes evolve independently. Thus one can describe the full collection of growth processes from $x_i$ to all the other points in terms of a single ``branching'' growth process with countably many ``branch times'' (i.e., times at which some $x_j$ and $x_k$ are separated for the first time).

It will also become clear from our construction that when exploring from a marked point $x$ to a marked point $y$, one can make sense of the length of $\partial \fb{x}{t}$, and that as a process indexed by $t$ this evolves as the time-reversal of an excursion of a continuous state branching process (CSBP), with the jumps in this process corresponding to branch times.\footnote{We will see in \cite{qle_continuity} that if one defines a quantum-time $\QLE(8/3,0)$ on an infinite volume LQG surface, namely a $\sqrt{8/3}$ quantum cone, then the evolution of the boundary length is a process that matches the one described by Krikun (discrete) \cite{krikun2005uniform} and Curien and Le Gall \cite{cl2014hull} for the Brownian plane \cite{cl2012brownianplane}.}  We will review the definition of a CSBP in Section~\ref{subsec::csbp}. Letting $y$ vary over all of the points $x_j$, one obtains a branching version of a time-reversed CSBP excursion, and it will become clear that the law of this branching process agrees with the analogous law for TBM, as explained in \cite{map_making}.

All of this suggests that we are well on our way to establishing the equivalence of the $\sqrt{8/3}$-LQG sphere and TBM. As further evidence in this direction, note that it was established in \cite{map_making} that the time-reversed branching process (together with a countable set of real numbers indicating where along the boundary each ``pinch point'' occurs) contains all of the information necessary to reconstruct an instance of the entire Brownian map. That is, given a complete understanding of the exploration process rooted at a single point, one can a.s.\ reconstruct the distances between all pairs of points. This suggests (though we will not make this precise until the subsequent paper \cite{qle_continuity}) that, given the information described in the QLE branching process provided in this paper, one should also be able to recover an entire Brownian map instance.

In order to finish the project, the program in \cite{qle_continuity} is to:
\begin{enumerate}
\item Derive some continuity estimates and use them to show that $\qdist$ a.s.\ extends uniquely to a metric defined on the entire LQG sphere, and to establish H\"older continuity for the identify map from the sphere endowed with the Euclidean metric to the sphere endowed with the random metric, and then
\item Learn enough about the geodesics within the so-called metric net (as defined in \cite{quantum_spheres}) to allow us to show that the random metric satisfies the properties that are shown in \cite{quantum_spheres} to characterize TBM.
\end{enumerate}
This will imply that the metric space described by $\qdist$ has the law of TBM and, moreover, that the instance of TBM is a.s.\ determined by the underlying $\sqrt{8/3}$-LQG sphere.  The program in \cite{qle_determined} will be to prove that in the coupling between TBM and the $\sqrt{8/3}$-LQG sphere, the former a.s.\ determines the latter, i.e., to show that an instance of TBM a.s.\ has a canonical embedding into the sphere.  Thus we will have that the $\sqrt{8/3}$-LQG sphere and TBM are equivalent in the sense that an instance of one a.s.\ determines the other.  The ideas used in \cite{qle_determined} will be related to the arguments used in \cite{dms2014mating} to show that an instance of the peanosphere a.s.\ has a canonical embedding.

\subsection{Prequel overview}
\label{subsec::prequel_overview}

As noted in Section~\ref{subsec::overview}, both TBM and the $\sqrt{8/3}$-LQG sphere have natural infinite volume variants that in some sense correspond to grand canonical ensembles decorated by some fixed number of marked points. In this paper, because we deal frequently with exploration processes from one marked point to another, we will be particularly interested in the natural infinite measures on {\em doubly} marked spheres. We recall that
\begin{enumerate}
\item In \cite{map_making} this natural measure on doubly marked Brownian map spheres with two marked points is denoted $\mustwo$ (and more generally $\musk$ refers to the measure with $k$ marked points).
\item In \cite{quantum_spheres} the natural measure on doubly marked $\sqrt{8/3}$-LQG spheres is denoted by $\Mstwo$.
\end{enumerate}
As noted in Section~\ref{subsec::overview}, in both cases, the law of the overall area is given (up to a multiplicative constant) by $A^{-3/2} dA$. In both cases, the conditional law of the surface {\em given} $A$ is that of a sample from a probability measure on unit area surfaces (with the measure rescaled by a factor of $A$, and distance rescaled by $A^{1/4}$ --- though of course distance is not {\em a priori} defined on the LQG side). We remark that in much of the literature on TBM the unit area measure is the primary focus of attention (and it is denoted by $\musa$ in \cite{map_making}).

The paper \cite{quantum_spheres} explains how to explore a doubly marked surface $(\CS,x,y)$ sampled from $\Mstwo$ with an $\SLE_6$ curve drawn from $x$ to $y$. The paper \cite{map_making} explains how to explore a doubly marked surface $(\CS,x,y)$ sampled from $\mustwo$ by exploring the so-called ``metric net,'' which consists of the set of points that lie on the outer boundary of $\fb{x}{r}$ for some $r \in [0,d(x,y)]$.  (We are abusing notation slightly here in that $\CS$ represents a quantum surface in the first case and a metric space in the second, and these are {\em a priori} different types of objects.)  In both cases, the exploration/growth procedure ``cuts out'' a countable collection of disks, each of which comes with a well defined boundary length. Also in both cases, the process that encodes the boundary length corresponds (up to time change) to the set of jumps in the {\em time-reversal} of a $3/2$-stable L\'evy excursion with only positive jumps. Moreover, in both cases, the boundary length of each disk ``cut out'' is encoded by the length of the corresponding jump in the time-reversed $3/2$-stable L\'evy excursion. Finally, in both cases, the disks can be understood as conditionally independent measure-endowed random surfaces, given their boundary lengths.

The intuitive reason for the similarities between these two types of explorations is explained in the QLE paper \cite{ms2013qle}, and briefly reviewed in Section~\ref{sec::rpmintro}. The basic idea is that in the discrete models involving triangulations, the conditional law of the unexplored region (the component containing $y$) does not depend on the rule one uses to decide which triangle to explore next; if one is exploring via the Eden model, one picks a random location on the boundary to explore, and if one is exploring a percolation interface, one explores along a given path. The law of the set of disks cut out by the exploration is the same in both cases.

The law of a ``cut out'' disk, given that its boundary length is $L$, is referred to as $\mudl$ in \cite{map_making}. If one explores up to some stopping time before encountering $y$, then the conditional law of the unexplored region containing $y$ is that of a {\em marked} Brownian disk with boundary length $L$ (here $y$ is the marked point), and is referred to as $\mudonel$ in \cite{map_making}. It is not hard to describe how these two measures are related. If one forgets the marked point $y$, then both $\mudl$ and $\mudonel$ describe probability measures on the space of quantum disks; and from this perspective, the Radon-Nikodym derivative of $\mudonel$ w.r.t.\ $\mudl$ is given (up to multiplicative constant) by the total surface area. Given the quantum disk sampled from $\mudonel$, the conditional law of the marked point~$y$ is that of a sample from the quantum measure on the surface.

\begin{figure}[ht!]
\begin{center}
\includegraphics[scale=.85]{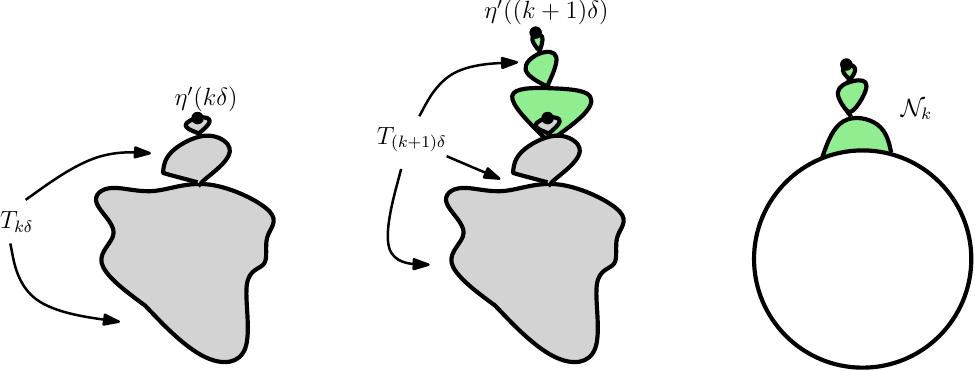}
\end{center}
\caption{\label{fig::sle_6_necklaces} {\bf Left:} A whole-plane $\SLE_6$ process $\eta'$ from $x$ to $y$ is drawn on top of a doubly-marked $\sqrt{8/3}$-LQG sphere $(\CS,x,y)$.  The grey region represents the part of $\CS$ that $\eta'([0,k\delta])$ has disconnected from $y$ at time $k \delta$, and $T_{k\delta}$ is the outer boundary of that region.  We note that $T_{k\delta}$ has cut points because whole-plane $\SLE_6$ has cut points.  {\bf Middle:} The green region indicates the additional area cut off from $y$ by $\eta'([0,(k+1)\delta])$, and $T_{(k+1)\delta}$ is the outer boundary $\eta'([0, (k+1)]\delta])$. {\bf Right:} It is conceptually useful to imagine that we ``cut'' along both $T_{k\delta}$ and $T_{(k+1)\delta}$ to produce a beaded quantum surface~$\CN_k$ (a so-called ``necklace'') attached to a loose ``string'' whose length is the length of $T_{(k+1)\delta} \cap T_{k\delta}$. The inner circle is understood to have total length equal to the length of $T_{k\delta}$, and the outer boundary has total length equal to the length of $T_{(k+1)\delta}$. }
\end{figure}

Precisely analogous statements are given in \cite{quantum_spheres} for the $\SLE_6$ exploration of a sample from $\Mstwo$.\footnote{These results are in turn consequences of the fact, derived by the authors and Duplantier in an infinite volume setting in \cite{dms2014mating}, that one can weld together two so-called L\'evy trees of $\sqrt{8/3}$-LQG disks to produce a new $\sqrt{8/3}$-LQG surface decorated by an independent $\SLE_6$ curve that represents the interface between the two trees.} The following objects are shown in \cite{quantum_spheres} well defined, and are analogous to objects produced by the measures $\mudl$ and $\mudonel$ in \cite{map_making}:
\begin{enumerate} \item {\bf The $\sqrt{8/3}$-LQG disk with boundary length $L$}. This is a random quantum surface whose law is the conditional law of a surface cut out by the $\SLE_6$ exploration, given only its boundary length (and not its embedding in the larger surface).
\item {\bf The {\it marked} $\sqrt{8/3}$-LQG disk with boundary length $L$}. This is a random quantum surface whose law is obtained by weighting the {\em unmarked} law by total area, and letting the conditional law of $y$ given the surface be that of a uniformly random sample from the area measure (normalized to be a probability measure). It represents the conditional law of the unexplored quantum component containing~$y$.
\end{enumerate}

\begin{proposition}
\label{prop::cutandspin}
Consider a doubly marked $\sqrt{8/3}$-LQG sphere decorated by an independent whole plane $\SLE_6$ path $\eta'$ from its first marked point~$x$ to its second marked point~$y$. We consider $\eta'$ to be parameterized by its quantum natural time. Fix an $s>0$ and let $T_s$ denote the outer boundary of the closed set $\eta'([0,s])$, i.e., the boundary of the $y$-containing component of the complement of $\eta'([0,s])$.  Then the conditional law of the $y$-containing region (given that its boundary length is $L_s$) is that of a marked $\sqrt{8/3}$-LQG disk with boundary length $L_s$. In particular, since this law is rotationally invariant, the overall law of the surface is unchanged by the following random operation: ``cut'' along $T_s$, rotate the disk cut out by a uniformly random number in $[0,L_s]$, and then weld this disk back to the beaded quantum surface $\eta'([0,s])$ (again matching up quantum boundary lengths).
\end{proposition}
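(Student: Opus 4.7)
The strategy is to reduce the statement to the conditional law description of the $y$-containing component that has already been established in \cite{quantum_spheres, dms2014mating}. Concretely, the results of \cite{quantum_spheres} (which in turn rest on the quantum zipper/mating-of-trees construction of \cite{dms2014mating} applied in the sphere setting) provide exactly the following ingredient: for each fixed quantum natural time $s>0$, on the event that $\eta'$ has not yet swallowed $y$, the complement of $\eta'([0,s])$ containing $y$ is a quantum surface whose law, conditional on its quantum boundary length $L_s$, is that of a marked $\sqrt{8/3}$-LQG disk of boundary length $L_s$ (with the marked point $y$ sampled from the normalized area measure). Moreover, this conditional law is independent of the ``necklace'' $\eta'([0,s])$ once $L_s$ is fixed. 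The first assertion of the proposition is therefore just a restatement of this fact, and the plan is simply to invoke it.

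The second assertion, concerning cutting and rotating, is then a formal consequence. The key observation is that the marked LQG disk law carries \emph{no distinguished boundary point}: as constructed in \cite{quantum_spheres}, it is a law on (marked) quantum surfaces, i.e., on equivalence classes of parametrized surfaces modulo conformal automorphism, and the boundary comes equipped only with its total quantum length $L_s$ and not with a basepoint. In particular, if one chooses any measurable way of specifying a boundary basepoint (say, the ``starting point of the $\SLE_6$ swallowing''), then under the conditional law given $L_s$, rotating this basepoint by a $\mathrm{Uniform}[0,L_s]$ amount independent of everything else leaves the joint law of the disk-with-basepoint invariant. This is the only nontrivial symmetry step in the argument.

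With these two inputs in hand, the cut/rotate/weld operation is executed as follows. First, use the conditional independence to factor the joint law of $(\CS, \eta'|_{[0,s]})$ as: (i) the law of the necklace $\eta'([0,s])$ together with the boundary length $L_s$, and (ii) conditional on $L_s$, an independent marked LQG disk of boundary length $L_s$, glued to the necklace by an isometric identification of quantum boundary lengths starting at the tip of $\eta'$. Replacing this isometric identification by one shifted by an independent $\mathrm{Uniform}[0,L_s]$ amount produces, by the rotational invariance just described, the same joint law. Welding the rotated disk back to the necklace (matching quantum boundary length, as in the welding theorems used in \cite{dms2014mating,quantum_spheres}) therefore reproduces the law of the original decorated sphere $(\CS,x,y,\eta'|_{[0,s]})$.

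The only real obstacle is making sure that ``boundary length parametrization'' is well-defined and measurable on the interface $T_s$, so that rotating by a uniform offset and re-welding is a bona fide measure-preserving operation. This is handled by the existence and uniqueness of the quantum length measure on $\SLE_6$-type interfaces in the $\sqrt{8/3}$-LQG sphere, which is one of the outputs of \cite{dms2014mating,quantum_spheres}; once that measurability is in place, the argument above goes through verbatim.
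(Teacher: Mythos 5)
There is a genuine gap in the second half of your argument. Knowing that the conditional law of the $y$-containing region given $L_s$ (and given the explored ``necklace'') is that of a marked quantum disk only pins down the \emph{marginal} law of the disk; it says nothing about the conditional law of the welding correspondence between $\partial$(disk) and the outer boundary of $\eta'([0,s])$. The cut/rotate/weld operation replaces that correspondence by its convolution with an independent uniform rotation, i.e.\ it forces the gluing to be Haar-distributed given the two pieces; hence the overall law is preserved \emph{if and only if} the gluing was already conditionally uniform, or equivalently, if and only if the point of the interface at which the tip $\eta'(s)$ sits is conditionally uniform with respect to the quantum boundary measure given the surfaces. Your proposed substitute for this input --- ``if one chooses any measurable way of specifying a boundary basepoint, then rotating it by an independent $\mathrm{Uniform}[0,L_s]$ amount leaves the joint law of the disk-with-basepoint invariant'' --- is false as a general principle: for a basepoint that is a.s.\ determined by the field, the unrotated joint law makes the basepoint a function of the disk while the rotated one makes it conditionally uniform, and these differ. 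What makes the argument work for the particular basepoint you name (the attachment point of the tip) is precisely the nontrivial fact from \cite{quantum_spheres} (its Proposition~6.4, recalled in Section~\ref{subsec::levy_spheres} of this paper) that $\eta'(s)$ is uniformly distributed according to the quantum boundary measure on the interface, conditionally on the explored surface. The paper's justification of this proposition rests on exactly that fact together with the conditional-independence/marked-disk statement; your write-up invokes only the latter and replaces the former by an invalid symmetry claim.

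A secondary point: the well-definedness of the re-welding is not just a matter of the boundary-length measure existing and being measurable; one needs the welded surface to be \emph{uniquely determined} by the two pieces and the length-matching identification, which is where conformal removability of the interface enters (cf.\ Proposition~\ref{prop::removability_usage} and the H\"older-domain/Jones--Smirnov argument of Proposition~\ref{prop::figure_8_quantum_disk_and_removable}). Citing the existence of the quantum length measure alone does not address this.
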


It is natural to allow $s$ to range over integer multiples of a constant $\delta$. As illustrated in Figure~\ref{fig::sle_6_necklaces}, we let $\mathcal N_k$ denote the ``necklace'' described by the union $T_{k\delta} \cup T_{(k+1)\delta} \cup \eta'([k\delta, (k+1)\delta])$, which we interpret as a {\em beaded quantum surface} (see \cite{dms2014mating} and Section~\ref{subsec::quantum_surfaces}) attached to a ``string'' of some well defined length. Applying the above resampling for each integer multiple of $\delta$ corresponds to ``reshuffling'' these necklaces in the manner depicted in Figure~\ref{slotmachine}.

\begin{proposition}
\label{prop::limitgrowthmodel}
Fix $\delta > 0$ and apply the random rotation described in Proposition~\ref{prop::cutandspin} for each $s$ that is an integer multiple of $\delta$.  Taking any subsequential limit as $\delta \to 0$, we obtain a coupling of a $\sqrt{8/3}$-quantum sphere with a growth process on that sphere, such that the law of the ordered set of disks cut out by that process is the same as in the $\SLE_6$ case.
\end{proposition}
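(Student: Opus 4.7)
The plan is to iterate Proposition~\ref{prop::cutandspin}. Fix $\delta > 0$, let $(\theta_k)_{k \geq 1}$ be i.i.d.\ uniform rotations independent of $(\CS,x,y,\eta')$, and, for each $k \geq 1$, apply to the $y$-containing disk cut out by $T_{k\delta}$ the rotation of angle $\theta_k$ prescribed by Proposition~\ref{prop::cutandspin}. Because each such rotation affects only the $y$-containing component and preserves the joint law of the doubly marked sphere together with its $\SLE_6$ decoration --- and hence with its ordered necklace sequence $(\CN_k)_{k \geq 1}$ --- an induction on $k$ shows that the resulting reshuffled configuration on a sphere $\CS^\delta$ has the same joint law as the original, with the same ordered sequence of necklaces and of disks cut out by $\eta'$ inside each necklace.

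I would then define the discrete growth process on $\CS^\delta$ by setting $\qlegrowth^\delta_t$ equal to the closed subset obtained by gluing together the first $\floor{t/\delta}$ reshuffled necklaces, so that $\qlegrowth^\delta$ is an increasing, right-continuous family of closed sets. Since the ordered sequence of necklaces is preserved by the reshuffling, the law of the ordered sequence of disks cut out by $\qlegrowth^\delta$ --- each tagged with its boundary length and its conditionally independent quantum disk structure --- is, for every $\delta>0$, the same as the law of the sequence cut out by the $\SLE_6$ exploration.

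It remains to extract a subsequential limit as $\delta \to 0$ and transfer the identification to the limit. The marginal law of $(\CS^\delta, x, y)$ equals $\Mstwo$ for every $\delta$, so its tightness is automatic. For $\qlegrowth^\delta$ itself I would encode the growth through its boundary length process $L^\delta_t := \length(\partial \qlegrowth^\delta_t)$ together with the conditionally independent disks attached to the jumps of $L^\delta$. Since $L^\delta$ is the piecewise-constant right-continuous interpolation of the continuum boundary-length process in the $\SLE_6$ case --- which by \cite{quantum_spheres} is a time-reversed $3/2$-stable L\'evy excursion --- Skorokhod tightness of $L^\delta$ is immediate, and any subsequential limit yields a coupling of a $\sqrt{8/3}$-LQG sphere with a growth process whose ordered cut-out disks match the $\SLE_6$ law. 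The main technical obstacle is choosing a topology on growth processes that is simultaneously strong enough for the cut-out-disk encoding to be continuous (so that the identification at each $\delta$ passes to the limit) and weak enough to guarantee tightness; working with the Skorokhod topology on $L^\delta$ and attaching the conditionally independent quantum disks to its jumps simultaneously achieves both.
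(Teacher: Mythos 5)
There is a genuine gap, and it is precisely at the step your proposal treats as a technicality: extracting a subsequential limit that is a growth process \emph{on the sphere}. Your proposed encoding of $\qlegrowth^\delta$ by its boundary-length process $L^\delta$ together with the abstract quantum disks attached to its jumps forgets the embedding entirely: the law of that data is (essentially by construction) the same for every $\delta$ --- the time-reversed $3/2$-stable L\'evy excursion decorated by conditionally independent disks --- so a Skorokhod limit of $L^\delta$ produces nothing beyond what you started with and, in particular, does not produce an increasing family of closed subsets of the limiting sphere coupled with that sphere. The content of the proposition is exactly this coupling, and your argument never constructs it; saying that the Skorokhod topology on $L^\delta$ ``simultaneously achieves'' tightness and continuity of the cut-out-disk encoding is circular, because the object whose cut-out disks must be identified is the limiting family of hulls, which your topology does not see. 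The paper handles this by fixing an embedding $(\C,h_\delta,0,\infty)$ (normalized so that $\D$ carries half the quantum mass), encoding $\qlegrowth^\delta$ by the driving measure $\nu_\delta$ of a radial Loewner flow on $\partial\D\times[0,\infty)$ whose second marginal is Lebesgue measure, getting tightness of $(\nu_\delta^T)$ for free from compactness of $\partial\D\times[0,T]$ and the fixed total mass $T$, passing to a joint subsequential limit with the field via Skorohod embedding, and invoking \cite[Theorem~1.1]{ms2013qle} to conclude that the associated Loewner hulls converge; this is what yields a growth process on the limiting $\sqrt{8/3}$-LQG sphere.

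Even granting a limiting growth process, the identification of the law of its ordered swallowed disks does not follow from convergence of the pre-limiting disk data alone: one must show that the disks swallowed by the \emph{limit} hulls agree with the limits of the disks swallowed by the approximations. In the paper this is Proposition~\ref{prop::qle_bubbles}, whose proof works with the finitely many bubbles of boundary length at least $\epsilon$, shows convergence of the corresponding uniformizing maps $\varphi_{j,\epsilon}^{\delta_k}$ (with tightness of $(\varphi_{j,\epsilon}^{\delta_k})'(0)$ obtained from the fact that the limiting sphere has no atom at $y$), and only then concludes that the ordered collection of swallowed surfaces has the $\SLE_6$ law in the limit. Your first two steps (iterating Proposition~\ref{prop::cutandspin} with i.i.d.\ rotations to build the $\delta$-approximation, whose cut-out disks agree in law with the $\SLE_6$ ones for each fixed $\delta$) match the paper's Section~\ref{subsec::qle_approximations}, but the limit argument needs to be redone along the lines above.
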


The growth process obtained this way is what we will call the {\em quantum natural time $\QLE(8/3,0)$} (as opposed to the {\em capacity time $\QLE(8/3,0)$} process described in \cite{ms2013qle}, which we expect but do not prove to be equivalent to the quantum time version). As already noted in Section~\ref{subsec::overview}, we will make extensive use of quantum natural time $\QLE(8/3,0)$ in this paper. When we use the term $\QLE(8/3,0)$ without a qualifier, we will mean the quantum natural time variant.

Let us highlight one subtle point about this paper.  Although we {\it a priori} construct only \emph{subsequential limits} for the growth process $\QLE(8/3,0)$ using the procedure described in Proposition~\ref{prop::limitgrowthmodel}, we ultimately show that the metric $\qdist$ defined on a countable sequence of i.i.d.\ points $(x_n)$ chosen from the quantum measure does not depend on the particular choice of subsequence.  Once we know this metric we know, for each $t \geq 0$ and each $x$ and $y$ in $(x_n)$, which points from the set $(x_n)$ lie in the set $\qlegrowth_t^{x \to y}$. Since $\qlegrowth_t^{x \to y}$ is closed, we would expect it to be given by precisely the closure of this set of points, which would imply that the growth process described in Proposition~\ref{prop::limitgrowthmodel} is a.s.\ defined as a true (non-subsequential) limit. This would follow immediately if we knew, say, that $\qlegrowth_t^{x \to y}$ was a.s.\ the closure of its interior. However, we will not prove in this paper that this is the case.  That is, we will not rule out the possibility that the boundary of $\qlegrowth_t^{x \to y}$ contains extra ``tentacles'' that possess zero quantum area and somehow fail to intersect any of the $(x_n)$ values.  Ruling out this type of behavior will be part of the program in \cite{qle_continuity}, where we establish a number of continuity estimates for $\QLE(8/3,0)$ and~$\qdist$.  Upon showing this, we will be able to remove the word ``subsequential'' from the statement of Proposition~\ref{prop::limitgrowthmodel}.

\subsection{Outline}
\label{subsec::outline}

The remainder of this article is structured as follows. In Section~\ref{sec::preliminaries} we review preliminary facts about continuous state branching processes, quantum surfaces, and conformal removability.  In Section~\ref{sec::rpmintro}, we recall some of the discrete constructions on random planar triangulations that appeared in \cite{ms2013qle}, which we use to explain and motivate our continuum growth processes. In particular, we will recall that on these triangulated surfaces {\em random metric explorations} are in some sense ``reshufflings'' of {\em percolation explorations}, and in Section~\ref{sec::spheres} we construct quantum-time $\QLE(8/3,0)$ using an analogous reshuffling of $\SLE_6$. In Section~\ref{sec::cut_points} we establish a certain symmetry property for continuum percolation explorations ($\SLE_6$) on $\sqrt{8/3}$-LQG surfaces (a precursor to the main symmetry result we require).  Then in Section~\ref{sec::qle_construction} we will give the construction of the quantum natural time variant of $\QLE(8/3,0)$.  In Section~\ref{sec::qle_symmetry}, we establish the main symmetry result we require, and in Section~\ref{sec::metric} we will finish the proof of Theorem~\ref{thm::qle_metric}.

\section{Preliminaries}
\label{sec::preliminaries}

\subsection{Continuous state branching processes}
\label{subsec::csbp}

We will now review some of the basic properties of continuous state branching processes (CSBPs) and their relationship to L\'evy processes.  CSBPs will arise in this article because they describe the time-evolution of the quantum boundary length of the boundary of a $\QLE(8/3,0)$.  We refer the reader to \cite{bertoin96levy} for an introduction to L\'evy processes and to \cite{legall1999spatial,kyp2006levy_fluctuations} for an introduction to CSBPs.

A CSBP with branching mechanism $\psi$ (or $\psi$-CSBP for short) is a Markov process~$Y$ on~$\R_+$ whose transition kernels are characterized by the property that
\begin{equation}
\label{eqn::csbp_def}
\ex{ \exp(-\lambda Y_t) \giv Y_s } = \exp(-Y_s u_{t-s}(\lambda)) \quad\text{for all}\quad t > s \geq 0
\end{equation}
where $u_t(\lambda)$, $t \geq 0$, is the non-negative solution to the differential equation
\begin{equation}
\label{eqn::csbp_diffeq}
\frac{\partial u_t}{\partial t}(\lambda) = -\psi(u_t(\lambda))\quad\text{for}\quad u_0(\lambda) = \lambda.
\end{equation}
Let
\begin{equation}
\label{eqn::phi_def}
\Phi(q) = \sup\{\theta \geq 0 : \psi(\theta) = q\}
\end{equation}
and let
\begin{equation}
\label{eqn::extinction}
\zeta = \inf\{t \geq 0: Y_t = 0\}
\end{equation}
be the extinction time for $Y$.  Then we have that \cite[Corollary~10.9]{kyp2006levy_fluctuations}
\begin{equation}
\label{eqn::csbp_exponential_integral}
\ex{ e^{-q \int_0^\zeta Y_s ds} } = e^{-\Phi(q) Y_0}.
\end{equation}

A $\psi$-CSBP can be constructed from a L\'evy process with only positive jumps and vice-versa \cite{lamp1967csbp} (see also \cite[Theorem~10.2]{kyp2006levy_fluctuations}).  Namely, suppose that $X$ is a L\'evy process with Laplace exponent $\psi$.  That is, if $X_0 = x$ then we have that
\[ \E[ e^{-\lambda (X_t-x)}] = e^{-\psi(\lambda) t}.\]
Let
\begin{equation}
\label{eqn::levy_to_csbp}
s(t) = \int_0^t \frac{1}{X_u} du \quad\text{and}\quad s^*(t) = \inf\{ r > 0 : s(r) > t\}.
\end{equation}
Then the time-changed process $Y_t = X_{s^*(t)}$ is a $\psi$-CSBP.  Conversely, if $Y$ is a $\psi$-CSBP and we let
\begin{equation}
\label{eqn::csbp_to_levy}
t(s) = \int_0^s Y_u du \quad\text{and}\quad t^*(s) = \inf\{ r > 0 : t(r) > s\}
\end{equation}
then $X_s = Y_{t^*(s)}$ is a L\'evy process with Laplace exponent $\psi$.

We will be interested in the particular case that $\psi(u) = u^\alpha$ for $\alpha \in (1,2)$.  For this choice, we note that
\begin{equation}
\label{eqn::csbp_u_form}
u_t(\lambda) = \left( \lambda^{1-\alpha} + (\alpha-1)t\right)^{1/(1-\alpha)}.
\end{equation}

\subsection{Quantum surfaces}
\label{subsec::quantum_surfaces}

Suppose that $h$ is an instance of (a form of) the Gaussian free field (GFF) on a planar domain $D$ and $\gamma \in [0,2)$ is fixed.  Then the $\gamma$-LQG surface associated with $h$ is described by the measure $\mu_h$ which is formally given by $e^{\gamma h(z)} dz$ where $dz$ denotes Lebesgue measure on~$D$.  Since the GFF $h$ does not take values at points (it is a random variable which takes values in the space of distributions), it takes some care to make this definition precise.  One way of doing so is to let, for each $\epsilon > 0$ and $z \in D$ such that $B(z,\epsilon) \subseteq D$, $h_\epsilon(z)$ be the average of $h$ on $\partial B(z,\epsilon)$ (see \cite[Section~3]{DS08} for more on the circle average process).  The process $(z,\epsilon) \mapsto h_\epsilon(z)$ is jointly continuous in $(z,\epsilon)$ and one can define $e^{\gamma h(z)} dz$ to be the weak limit as $\epsilon \to 0$ along negative powers of $2$ of $\epsilon^{\gamma^2/2} e^{\gamma h_\epsilon(z)} dz$ \cite{DS08}; the normalization factor $\epsilon^{\gamma^2/2}$ is necessary for the limit to be non-trivial.  We will often write $\mu_h$ for the measure $e^{\gamma h(z)} dz$.  In the case that $h$ has free boundary conditions, one can also construct the natural boundary length measure $\nu_h = e^{\gamma h(z)/2} dz$ in a similar manner.

The regularization procedure used to construct $\mu_h$ leads to the following change of coordinates formula \cite[Proposition~2.1]{DS08}.  Let
\begin{equation}
\label{eqn::q_def}
Q = \frac{2}{\gamma} + \frac{\gamma}{2} \quad\text{for}\quad \gamma \in (0,2).
\end{equation}
Suppose that $D_1,D_2$ are planar domains and $\varphi \colon D_1 \to D_2$ is a  conformal map.  If $h_2$ is (a form of) a GFF on $D_2$ and
\begin{equation}
\label{eqn::coord_change}
h_1= h_2 \circ \varphi + Q \log|\varphi'|
\end{equation}
then
\begin{equation}
\label{eqn::measures_equivalent}
\mu_{h_1}(A) = \mu_{h_2}(\varphi(A)) \quad\text{and}\quad \nu_{h_1}(A) = \nu_{h_2}(\varphi(A))
\end{equation}
for all Borel sets $A$.  This allows us to define an equivalence relation on pairs $(D,h)$ by declaring $(D_1,h_1)$ and $(D_2,h_2)$ to be equivalent as quantum surfaces if $h_1$ and $h_2$ are related as in~\eqref{eqn::coord_change}.  An equivalence class of such a $(D,h)$ is then referred to as a {\bf quantum surface}.  A choice of representative is referred to as an {\bf embedding} of the quantum surface.

More generally, suppose that $D_1,D_2$ are planar domains, $x_1^i,\ldots,x_n^i \in \ol{D}_i$ for $i=1,2$ are given points, and $h_i$ is a distribution on $D_i$.  Then we say that the marked quantum surfaces $(D_i,h_i,x_1^i,\ldots,x_n^i)$ are equivalent if there exists a conformal transformation $\varphi \colon D_1 \to D_2$, $\varphi(x_j^1) = x_j^2$ for each $1 \leq j \leq n$, and $h_1$, $h_2$ are related as in~\eqref{eqn::coord_change}.  Finally, if $\eta_1^i,\ldots,\eta_k^i$ is a collection of paths on $D_i$, then we say that the marked and path-decorated quantum surfaces $(D_i,h_i,x_1^i,\ldots,x_n^i,\eta_1^i,\ldots,\eta_k^i)$ are equivalent for $i=1,2$ if the marked quantum surfaces $(D_i,h_i,x_1^i,\ldots,x_n^i)$ are equivalent and $\varphi(\eta_j^1) = \eta_j^2$ for each $1 \leq j \leq k$ (where $\varphi$ is the associated conformal transformation).  We emphasize that we will refer to a quantum surface without marked points as simply a quantum surface.

In this work, we will be primarily interested in two types of quantum surfaces, namely quantum disks and spheres.  We will remind the reader of the particular construction of a quantum sphere that we will be interested in for this work in Section~\ref{subsec::levy_spheres}.  We also refer the reader to \cite{quantum_spheres} and \cite{dms2014mating} for a careful definition of a quantum disk as well as several equivalent constructions of a quantum sphere.

We will also consider so-called {\bf beaded quantum surfaces}, which can be defined as a pair $(D,h)$, modulo the equivalence relation described in \eqref{eqn::measures_equivalent}, except that $D$ is now a closed set (not necessarily homeomorphic to a disk) such that each component of its interior is homeomorphic to the disk, $h$ is only defined as distribution on each of these components, and $\varphi$ is allowed to be any homeomorphism from $D$ to another closed set that is conformal on each component of the interior of $D$.  One can also consider marked and path-decorated beaded quantum surfaces as above.

\subsection{Conformal removability}
\label{subsec::removability}

An LQG surface can be obtained by endowing a topological surface with both a good measure and a conformal structure in a random way.  (And we can imagine that these two structures are added in either order.)  Given two topological disks with boundary (each endowed with a good area measure in the interior, and a good length measure on the boundary) it is a simple matter to produce a new good-measure-endowed topological surface by taking a quotient that involves gluing (all or part of) the boundaries to each other in a boundary length preserving way.

The problem of {\em conformally welding} two surfaces is the problem of obtaining a conformal structure on the combined surface, given the conformal structure on the individual surfaces.  (See, e.g., \cite{bishop2007conformal} for further discussion and references.)  To make sense of this idea, we will draw from the theory of {\em removable sets}, as explained below.

A compact subset $K$ of a domain $D \subseteq \C$ is called (conformally) {\bf removable} if every homeomorphism from $D$ into $\C$ that is conformal on $D \setminus K$ is also conformal on all of $D$.  A Jordan domain $D \subseteq \C$ is said to be a {\bf H\"older domain} if any conformal transformation from~$\D$ to $D$ is H\"older continuous all of the way up to $\partial \D$.  It was shown by Jones and Smirnov \cite{js2000remove} that if~$K \subseteq D$ is the boundary of a H\"older domain, then $K$ is removable; it is also noted there that if a compact set $K$ is removable as a subset of $D$, then it is removable in any domain containing $K$, including all of $\C$.  Thus, at least for compact sets $K$, one can speak of removability without specifying a particular domain $D$.

The following proposition illustrates the importance of removability in the setting of quantum surfaces (see also \cite[Section~3.5]{dms2014mating}):
\begin{proposition}
\label{prop::removability_usage}
Suppose that $(D,h)$ is a quantum surface, $K \subseteq \ol{D}$ is compact such that $D \setminus K = D_1 \cup D_2$ for $D_1,D_2$ disjoint.  Suppose that $(D',h')$ is another quantum surface, $K' \subseteq \ol{D}'$ is compact such that $D' \setminus K' = D_1' \cup D_1'$ for $D_1',D_2'$ disjoint.  Assume that $(D_j,h)$ is equivalent to $(D_j',h')$ as a quantum surface for $j=1,2$ and that, furthermore the associated conformal transformations $\varphi_j \colon D_j \to D_j'$ for $j=1,2$ extend to a homeomorphism $D \to D'$.  If $K$ is conformally removable, then $(D,h)$ and $(D',h')$ are equivalent as quantum surfaces.
\end{proposition}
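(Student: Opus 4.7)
The plan is to construct an explicit witness to the equivalence of $(D,h)$ and $(D',h')$ by gluing the two conformal maps $\varphi_1, \varphi_2$ across the interface $K$, and then to verify that removability automatically promotes this gluing to a genuine conformal isomorphism which satisfies the coordinate change formula~\eqref{eqn::coord_change}.

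First I would define $\varphi \colon D \to D'$ by setting $\varphi|_{D_j} = \varphi_j$ for $j=1,2$ and extending continuously to $K$ using the homeomorphic extension guaranteed by hypothesis. Then $\varphi$ is a homeomorphism of $D$ onto $D'$ which is holomorphic on the open set $D \setminus K = D_1 \cup D_2$. Because $K$ is conformally removable, the very definition of removability forces $\varphi$ (a homeomorphism into $\mathbf{C}$, conformal off the compact removable set $K$) to be conformal on all of $D$. In particular, $\varphi'$ is a nowhere-vanishing holomorphic function on $D$, so $\log|\varphi'|$ is a smooth (harmonic) function on $D$.

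Second, I would verify the field identity $h = h' \circ \varphi + Q\log|\varphi'|$ as distributions on $D$. On each $D_j$ this is precisely the equivalence of $(D_j,h|_{D_j})$ and $(D_j',h'|_{D_j'})$ via $\varphi_j = \varphi|_{D_j}$, so the identity is already known to hold on the open dense set $D \setminus K$. To pass this across $K$, I would invoke two facts: (i)~any compact conformally removable set has zero two-dimensional Lebesgue measure (if $K$ had positive area one could cook up a non-conformal homeomorphism supported on $K$, contradicting removability); and (ii)~the quantum area measure $\mu_h$ and, where relevant, the boundary length measure $\nu_h$, almost surely assign zero mass to sets of zero Lebesgue measure. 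Consequently the pushforward measures $\varphi_*\mu_h$ and $\mu_{h'}$ on $D'$ agree on the open dense subset $D_1' \cup D_2'$ (by the coordinate change~\eqref{eqn::measures_equivalent} applied piecewise) and both give zero mass to $K'$, so they agree as measures on $D'$; the analogous argument handles $\nu_h$ on boundary arcs.

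The main obstacle in executing this plan is the last step: ensuring that the distributional identity on $D\setminus K$ actually forces equality on all of $D$, rather than differing by some exotic distribution supported on $K'$. The content of the argument above is that, for Liouville-type fields, all geometric data relevant to the equivalence class of a quantum surface --- the area measure, the boundary length measure, and the circle averages away from $K$ --- are insensitive to any hypothetical contribution concentrated on the thin removable set $K$, so the field identity holds in the sense needed for the quantum surface equivalence. Modulo this point, everything else reduces to the classical fact that conformality propagates across removable sets.
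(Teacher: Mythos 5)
Your first step is exactly the paper's proof: the paper disposes of this proposition with the single observation that it follows immediately from the definition of conformal removability, i.e.\ the homeomorphism $D \to D'$ obtained by gluing $\varphi_1$ and $\varphi_2$ is conformal off the removable compact set $K$, hence conformal on all of $D$, and this conformal map realizes the equivalence of $(D,h)$ and $(D',h')$. So on the main point you agree with the paper, and are in fact more explicit than it is.

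Your second step addresses a point the paper passes over silently, but the justification you give does not do what you claim. Equivalence of quantum surfaces is \emph{defined} by the coordinate-change identity \eqref{eqn::coord_change} for the fields, not by equality of the measures $\mu_h$, $\nu_h$. Showing that $\varphi_*\mu_h = \mu_{h'}$ (because removable sets are Lebesgue-null and the quantum measures do not charge Lebesgue-null sets) does not rule out that $h - \bigl(h'\circ\varphi + Q\log|\varphi'|\bigr)$ is a nonzero distribution supported on $K$: a distribution supported on a Lebesgue-null set need not vanish, and in this deterministic setting the area measure does not determine the field, so ``the geometric data are insensitive to $K$'' is not a proof of the field identity. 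Relatedly, the ``almost surely'' in your point (ii) is out of place, since the proposition is a deterministic statement about arbitrary distributional fields (and in the paper's applications $K$ is random and coupled to $h$, so even the probabilistic version of (ii) would need an argument). The way the proposition is actually used is the way the paper reads it: the hypothesis already gives \eqref{eqn::coord_change} on $D\setminus K$ via $\varphi$, and for the fields to which it is applied there is no extra component supported on the interface, so the removability step --- conformality of the glued map --- is the entire content. In short, your core argument matches the paper; the supplementary measure-theoretic argument is not a valid substitute for the field identity, though nothing in the paper's applications hinges on it.
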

\begin{proof}
This follows immediately from the definition of conformal removability.
\end{proof}

One example of a setting in which Proposition~\ref{prop::removability_usage} applies is when the quantum surface $(D,h)$ is given by a so-called quantum wedge and $K$ is the range of an $\SLE_\kappa$ curve $\eta$ for $\kappa \in (0,4)$ \cite{SHE_WELD,dms2014mating}.  A quantum wedge naturally comes with two marked points~$x$ and~$y$, which are also the seed and the target point of the $\SLE$ curve.  In this case, the conformal maps $\varphi_j$ (which are defined on the left and right components of $D \setminus K$) are chosen so that the quantum length of the image of a segment of $\eta$ as measured from the left and right sides matches up.  With this choice, the $\varphi_j$ extend to a homeomorphism of the whole domain and it shown in \cite{RS05} that the range of $\eta$ is a.s.\  conformally removable, so Proposition~\ref{prop::removability_usage} applies.

\section{Eden model and percolation interface}
\label{sec::rpmintro}

\begin {figure}[ht!]
\begin {center}
\includegraphics[width=4.5in]{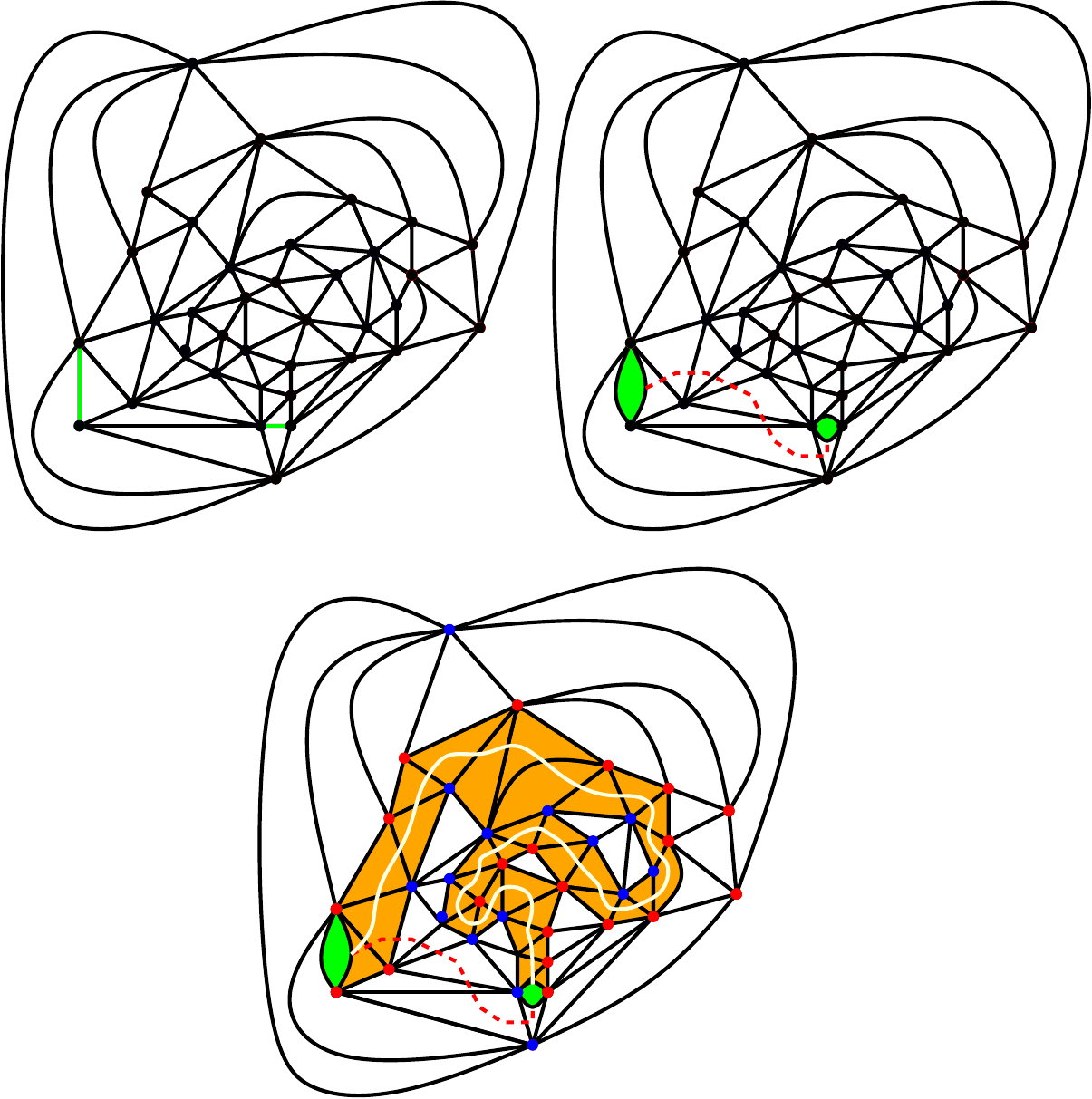}
\caption {\label{triangulation} {\bf Upper left:} a triangulation of the sphere together with two distinguished edges colored green.  {\bf Upper right:} It is conceptually useful to ``fatten'' each green edge into a $2$-gon.  We fix a distinguished non-self-intersecting dual-lattice path $p$ (dotted red line) from one $2$-gon to the other.  {\bf Bottom:} Vertices are colored red or blue with i.i.d.\ fair coins.  There is then a unique dual-lattice path from one $2$-gon to the other (triangles in the path colored orange) such that each edge it crosses either has opposite-colored endpoints and {\em does not} cross $p$, or has same-colored endpoints and {\em does} cross $p$.  The law of the orange path does not depend on the choice of $p$, since shifting $p$ across a vertex has the same effect as flipping the color of that vertex.  }
\end {center}
\end {figure}

\begin {figure}[ht!]
\begin {center}
\includegraphics [width=5.5in]{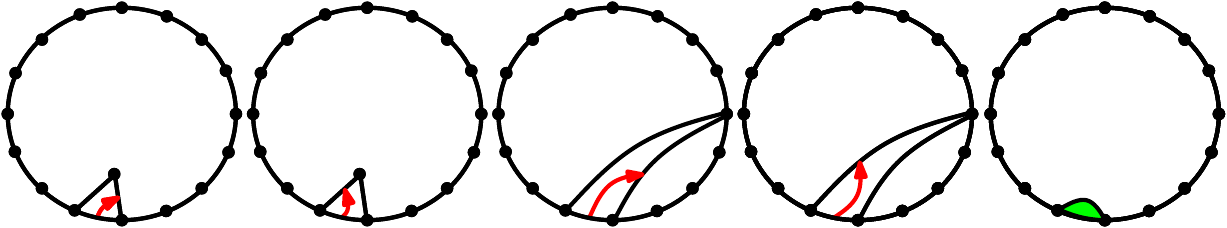}
\caption {\label{peelinstep} When ``exploring'' the polygon adjacent to a chosen edge on the boundary of the unexplored region, one encounters either a triangle whose third vertex is in the interior (leftmost two figures), a triangle whose third vertex is on the boundary (next two figures) or the terminal green $2$-gon (right figure).}
\end {center}
\end {figure}

\begin {figure}[!ht]
\begin {center}
\includegraphics [width=3.5in]{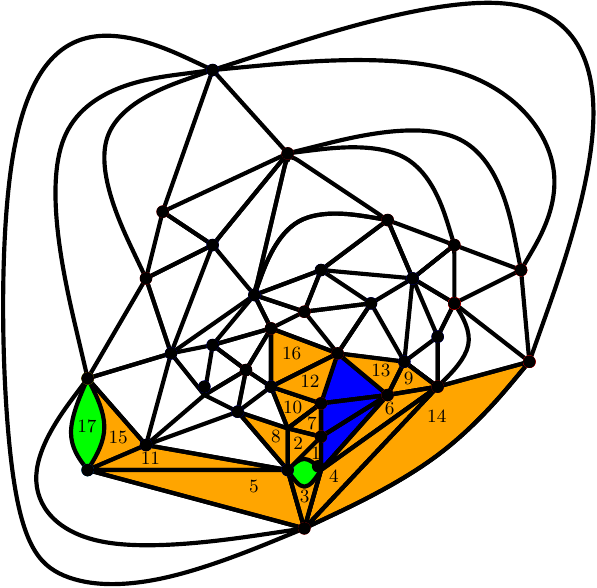}
\caption {\label{edentriangulation} Same as Figure~\ref{triangulation} except that one explores using the Eden model instead of percolation.  At each step, one chooses a uniformly random edge on the boundary of the unexplored region containing the target and explores the face incident to that edge.  The faces are numbered according to the order in which they were explored.  When the unexplored region is divided into two pieces, each with one or more triangles, the piece without the target is called a {\em bubble} and is never subsequently explored by this process.  In this figure there is only one bubble, which is colored blue.}
\end {center}
\end {figure}

In this section we briefly recall a few constructions from \cite[Section~2]{ms2013qle}, together with some figures included there.  Figure~\ref{triangulation} shows a triangulation $T$ of the sphere with two distinguished edges $e_1$ and $e_2$, and the caption describes a mechanism for choosing a random path in the dual graph of the triangulation, consisting of distinct triangles $t_1, t_2, \ldots, t_k$, that goes from $e_1$ to $e_2$.  It will be useful to imagine that we begin with a single $2$-gon and then grow the path dynamically, exploring new territory as we go.  At any given step, we keep track of the total number edges on the boundary of the already-explored region and the number of vertices remaining to be seen in the component of the unexplored region that contains the target edge.  The caption of Figure~\ref{peelinstep} explains one step of the exploration process.  The exploration process induces a Markov chain on the set of pairs $(m,n)$ with $m \geq 0$ and $n \geq 0$.  In this chain, the $n$ coordinate is a.s.\  non-increasing, and the $m$ coordinate can only increase by $1$ when the $n$ coordinate decreases by~$1$.

Now consider the version of the Eden model in which new triangles are only added to the unexplored region containing the target edge, as illustrated Figure~\ref{edentriangulation}.  In both Figure~\ref{triangulation} and Figure~\ref{edentriangulation}, each time an exploration step separates the unexplored region into two pieces (each containing at least one triangle) we refer to the one that does not contain the target as a {\em bubble}.  The exploration process described in Figure~\ref{triangulation} created two bubbles (the two small white components), and the exploration process described in Figure~\ref{edentriangulation} created one (colored blue).   We can interpret the bubble as a triangulation of a polygon, rooted at a boundary edge (the edge it shares with the triangle that was observed when the bubble was created).

The specific growth pattern in Figure~\ref{edentriangulation} is very different from the one depicted in Figure~\ref{triangulation}.  However, the analysis used in Figure~\ref{peelinstep} applies equally well to both scenarios.  The only difference between the two is that in Figure~\ref{edentriangulation} one re-randomizes the seed edge (choosing it uniformly from all possible values) after each step.

In either of these models, we can define $C_k$ to be the boundary of the target-containing unexplored region after $k$ steps.  If $(M_k, N_k)$ is the corresponding Markov chain, then the length of $C_k$ is $M_k + 2$ for each $k$.  Let $D_k$ denote the union of the edges and vertices in $C_k$, the edges and vertices in $C_{k-1}$ and the triangle and bubble (if applicable) added at step $k$, as in Figure~\ref{slotmachine}.  We refer to each $D_k$ as a {\em necklace} since it typically contains a cycle of edges together with a cluster of one or more triangles hanging off of it.  The  analysis used in Figure~\ref{peelinstep} (and discussed above) immediately implies the following:

\begin{proposition}
\label{prop::edenandpercolationinterface}
Consider a random rooted triangulation of the sphere with a fixed number $n>2$ of vertices together with two distinguished edges chosen uniformly from the set of possible edges.    If we start at one edge and explore using the Eden model as in Figure~\ref{edentriangulation}, or if we explore using the percolation interface of Figure~\ref{triangulation}, we will find that the following are the same:
\begin{enumerate}
\item The law of the Markov chain $(M_k,N_k)$ (which terminates when the target 2-gon is reached).
\item The law of the total number of triangles observed before the target is reached.
\item The law of the sequence $D_k$ of necklaces.
\end{enumerate}
Indeed, one way to construct an instance of the Eden model process is to start with an instance of the percolation interface exploration process and then randomly rotate the necklaces in the manner illustrated in Figure~\ref{slotmachine}.
\end{proposition}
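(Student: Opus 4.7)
The plan is to establish the three equalities in law simultaneously by exploiting the spatial Markov property of uniform random triangulations, which reduces both explorations to the same peeling transition kernel.

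First, I would prove the following key lemma: at every step $k$ of either exploration, conditional on the history up through step $k$, the unexplored region containing the target 2-gon is a uniformly random triangulation of the $(M_k+2)$-gon with $N_k$ interior vertices (with a uniformly chosen marked interior edge playing the role of the target). This is a direct consequence of the uniformity of the initial triangulation together with the fact that each exploration step reveals only the single triangle incident to one chosen boundary edge. Both exploration rules pick a boundary edge to peel; the choice may depend on the explored region's history and on auxiliary randomness (the vertex colors for percolation, the uniform edge selection for Eden), but it does not depend on the combinatorics of the unrevealed interior, which is therefore conditionally uniform.

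Second, given this uniformity, the combinatorial type of the necklace $D_k$ added at step $k$ (namely whether the third vertex of the revealed triangle is interior or on the boundary, and if the latter, how it splits the remaining boundary and what bubble appears) has a conditional distribution that depends only on $(M_{k-1}, N_{k-1})$. This is the analysis of Figure~\ref{peelinstep} applied to both scenarios. It immediately implies that the Markov chain $(M_k, N_k)$ has the same law in the two models, that the total number of triangles observed before reaching the target (the hitting time of a boundary state) has the same law, and that the sequence of necklaces $D_k$, considered as combinatorial objects modulo their rotational attachment to $C_{k-1}$, has the same law in both models, establishing parts (1)--(3).

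Third, for the ``moreover'' clause, I would observe that in the Eden model, given the current state and the combinatorial type of the next necklace, the position on $C_{k-1}$ at which the new triangle is attached is uniform on the $M_{k-1}+2$ boundary edges by definition of the Eden dynamics. For the percolation interface, the analogous uniformity holds: the invariance property noted in the caption of Figure~\ref{triangulation} (shifting the reference path $p$ across a boundary vertex has the same effect as flipping that vertex's coin) shows that the conditional law of the attachment position, given everything else, is invariant under cyclic rotation of the boundary cycle. Hence uniformly rerandomizing the attachment position of each necklace produced by the percolation exploration yields precisely the joint law of the Eden exploration.

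The main obstacle is formalizing the third step: identifying the correct bijection between configurations that differ by a cyclic rotation of the attachment and verifying that it is measure-preserving. Although the intuition from the $p$-shift invariance is clear, it requires keeping track of both the reference path $p$ and the relevant coin flips simultaneously, and checking that the re-randomization is consistent across the full sequence of necklaces rather than just at a single step.
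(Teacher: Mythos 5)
Your overall route is the same as the paper's: the proposition is established there by exactly the peeling/spatial--Markov argument you describe, namely that conditionally on the explored region the unexplored part containing the target is uniform given $(M_k,N_k)$, so the law of the revealed triangle (hence of the necklace type and of the chain $(M_k,N_k)$) depends only on $(M_{k-1},N_{k-1})$, the only difference between the two explorations being that the Eden model re-randomizes the seed edge uniformly after each step. Your first two steps are a fleshed-out version of this and are fine.

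The one step that would fail as written is the middle claim of your third step. The assertion that ``the analogous uniformity holds'' for the percolation interface --- that the conditional law of the attachment position of the next necklace is invariant under cyclic rotation of the boundary --- is false. In the interface exploration the next peeled edge is determined by the colors already revealed (it is the boundary edge the interface is about to cross), and since consecutive triangles of the dual path are adjacent, the attachment position of $D_{k+1}$ is essentially pinned to the location of $D_k$; it is certainly not uniform over the $M_k+2$ boundary edges, and ``given everything else'' it is deterministic. The $p$-shift invariance from the caption of Figure~\ref{triangulation} only shows that the law of the interface does not depend on the choice of the reference path $p$; it does not randomize where the interface sits on the boundary. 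Fortunately the claim is not needed: for the ``Indeed'' clause it suffices that (a) the necklace sequences agree in law (your part (3)), and (b) in the Eden model the rotations are i.i.d.\ uniform and independent of the necklace sequence, because the conditional law of the revealed necklace given the seed position depends only on $(M,N)$. Replacing the percolation rotations by fresh independent uniform ones then reproduces the Eden joint law of necklaces and rotations, hence the Eden exploration, with no invariance property of the percolation attachment positions required. With that correction your argument matches the paper's.
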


\begin {figure}[!ht]
\begin {center}
\includegraphics [width=5.5in]{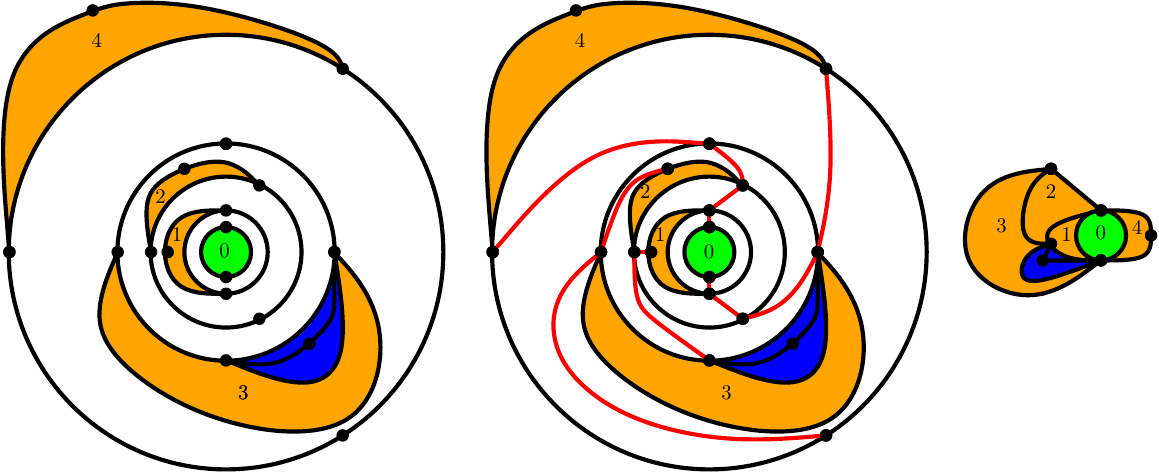}
\caption {\label{slotmachine} {\bf Left:} the first four necklaces (separated by white space) generated by an Eden model exploration.  {\bf Middle:} one possible way of identifying the vertices on the outside of each necklace with those on the inside of the next necklace outward.  {\bf Right:} The map with exploration associated to this identification.  If a necklace has $n$ vertices on its outer boundary, then there are $n$ ways to glue this outer boundary to the inner boundary of the next necklace outward.  It is natural to choose one of these ways uniformly at random, independently for each consecutive pair of necklaces.  Intuitively, we imagine that before gluing them together, we randomly spin the necklaces like the reels of a slot machine.  A fanciful interpretation of Proposition~\ref{prop::edenandpercolationinterface} is that if we take a percolation interface exploration as in Figure~\ref{triangulation} (which describes a sequence of necklaces) and we pull a slot machine lever that spins the necklaces independently (see \cite{ms2013qle}), then we end up with an Eden model exploration of the type shown in Figure~\ref{edentriangulation}.}
\end {center}
\end {figure}

\section{Infinite measures on quantum spheres}
\label{sec::spheres}

\subsection{L\'evy excursion description of doubly-marked quantum spheres}
\label{subsec::levy_spheres}

The purpose of this section is to review the results established in \cite{quantum_spheres} which are relevant for this article.  First, we let $\Mstwo$ be the measure which is defined as follows.  Suppose that $X_t$ is a $3/2$-stable L\'evy process with only upward jumps and let $I_t$ be its running infimum.  Then we let $\lexcursion$ be the It\^o excursion measure associated with the excursions that $X_t - I_t$ makes from $0$.  The law of the duration of such an excursion follows a power law. Indeed, following \cite{bertoin96levy}, the process of sampling $\lexcursion$ can be described as follows (see \cite[Section~VIII.4]{bertoin96levy}):
\begin{enumerate} 
\item Pick a lifetime $T$ from the measure $c T^{\rho-2} dT = c T^{-5/3} dT$ on $\R_+$ where $dT$ denotes Lebesgue measure and $c > 0$ is a constant.  Here, $\rho = 1-1/\alpha = 1/3$ (where $\alpha=3/2$) is the so-called positivity parameter of the process \cite[Section~VIII.1]{bertoin96levy}.
\item Given $T$, pick a unit length excursion from the normalized excursion measure ${\mathbf n}$ associated with a $3/2$-stable L\'evy process with only positive jumps and then rescale it spatially and in time so that it has length $T$.
\end{enumerate}

As explained in \cite{quantum_spheres}, we can construct a doubly-marked quantum sphere $(\CS,x,y)$ decorated by a non-crossing path $\eta'$ connecting $x$ and $y$ from such an excursion $e \colon [0,T] \to \R_+$ as follows.  For each upward jump of $e$, we sample a conditionally independent quantum disk whose boundary length is equal to the size of the jump.  We assume that each of the quantum disks has a marked boundary point sampled uniformly from its boundary measure together with a uniformly chosen orientation of its boundary.  We assume that the marked points and orientations are chosen conditionally independently given the realizations of the quantum disks.  Let $\CD$ denote the collection of marked and oriented quantum disks sampled in this way.  Then the pair $e,\CD$ together uniquely determines a doubly-marked surface $(\CS,x,y)$ which is homeomorphic to the sphere together with a non-crossing path $\eta'$ which connects $x$ and $y$.  For each $t \geq 0$, we let $U_t$ be the component of $\CS \setminus \eta'([0,t])$ which contains $y$.  Then the time-reversal of $e$ describes the evolution of the quantum boundary length of $\partial U_t$ and the jumps of $e$ describe the boundary lengths of the quantum disks that $\eta'$ cuts off from~$y$.  The time-parameterization of $\eta'$ so that the quantum boundary length of $\partial U_t$ is equal to $e(T-t)$ is the so-called {\bf quantum natural time} introduced in \cite{dms2014mating}.

One of the main results of \cite{quantum_spheres} is that a doubly-marked surface/path pair $(\CS,x,y),\eta'$ produced from $\Mstwo$ \emph{conditioned} to have quantum area equal to $1$ (although $\Mstwo$ is infinite, this conditioning yields a probability measure) has the law of the unit area quantum sphere constructed in \cite{dms2014mating}, the points $x,y$ conditional on $\CS$ are chosen uniformly at random from the quantum measure, and the conditional law of $\eta'$ given $x,y,\CS$ is that of a whole-plane $\SLE_6$ process connecting $x$ and $y$.  This holds more generally when we condition the surface to have quantum area equal to $m$ for any fixed $m > 0$ except in this setting $\CS$ is a quantum sphere of area $m$ rather than $1$.  (As noted earlier, a sample from the law of such a surface can be produced by starting with a unit area quantum sphere and then scaling its associated area measure by the factor $m$.)

The relationship between $\Mstwo$ and the law of a unit area quantum sphere decorated with an independent whole-plane $\SLE_6$ process implies that $\Mstwo$ possesses certain symmetries.  These symmetries will be important later on so we will pause for a moment to point them out.

\begin{itemize}
\item If we condition on $\CS$, then the points $x$ and $y$ are both chosen independently from the quantum area measure on $\CS$.
\item If we condition on $x$, $y$, and $\CS$, then $\eta'$ is whole-plane $\SLE_6$ from $x$ to $y$.
\item The amount of quantum natural time elapsed for $\eta'$ to travel from $x$ to $y$ is equal to $T$ (the time corresponding to the L\'evy excursion).
\end{itemize}

This also implies that $\Mstwo$ is invariant under the operation of swapping $x$ and $y$ and then reversing the time of $\eta'$ \cite{MS_IMAG4} (with the quantum natural time parameterization).  To see the symmetry of the quantum natural time parameterization under time-reversal, we have from the previous observations that the law of the ordered collection of bubbles cut off by $\eta'$ from its target point is invariant under the operation of swapping $x$ and $y$ and reversing the time of $\eta'$.  The claim thus follows because the quantum natural time parameterization can be constructed by fixing $j \in \N$, counting the number $N(e^{-j-1},e^{-j})$ of bubbles cut off by $\eta'$ with quantum boundary length in $[e^{-j-1},e^{-j}]$, and then normalizing by a constant times the factor $e^{3/2j}$.  That this is the correct normalization follows since the L\'evy measure for a $3/2$-stable L\'evy process is given by a constant times $u^{-5/2} du$ where $du$ denotes Lebesgue measure on $\R_+$.  See, for example, \cite[Section~6.2]{quantum_spheres} for additional discussion of this point as well as Remark~\ref{rem::qle_quantum_natural_defined} below in the context of the construction of $\QLE(8/3,0)$.

We also emphasize that under $\Mstwo$, we have that:
\begin{itemize}
\item $\eta'(t)$ is distributed uniformly from the quantum boundary measure on $\partial U_t$ (see \cite[Proposition~6.4]{quantum_spheres}). 
\item The components of $\CS \setminus \eta'([0,t])$, viewed as quantum surfaces, are conditionally independent given their boundary lengths.  Those components which do not contain~$y$ are quantum disks given their boundary lengths.  The component which does contain $y$ has the law of a quantum disk with the given boundary length weighted by its total quantum area.  
\end{itemize}

\subsection{Weighted measures}
\label{subsec::weighted_measures}

Throughout, we will work with the following two measures which are defined with $\Mstwo$ as the starting point.  Namely, with $T$ equal to the length of the associated L\'evy excursion and $X_t$ the quantum boundary length of the complementary component of $\eta'([0,t])$ containing $y$ we write
\begin{align}
  d \MstwoW &= \one_{[0,T]}(\ttime) d\ttime d\Mstwo \quad\text{and} \label{eqn::m_w_def}\\
  d \MstwoD &= \one_{[0,T]}(\ttime) \frac{1}{X_\ttime} d\ttime d \Mstwo = \frac{1}{X_\ttime} d \MstwoW \label{eqn::m_d_def}
\end{align}
where $d\ttime$ denotes Lebesgue measure.  We note that the marginal of $\MstwoW$ on $(\CS,x,y)$ and $\eta'$ is given by $T d\Mstwo$, i.e., by weighting $\Mstwo$ by the length of the L\'evy excursion.  (The additional subscript ``${\mathrm W}$'' is to indicate that $\MstwoW$ is a weighted measure.)  It will be convenient throughout to think of $\MstwoW$ as a measure on triples $\Bigl( \ttime$, $(\CS,x,y)$, $\eta'\Bigr)$ where $\ttime$ is a uniformly random point chosen from the total length of the L\'evy excursion.

The marginal of $\MstwoD$ on $(\CS,x,y)$ and $\eta'$ is given by $D d \Mstwo$ where
\begin{equation}
\label{eqn::d_def}
	D = \int_0^T \frac{1}{X_s} ds.
\end{equation}
As we will see later, after the $\QLE$ ``reshuffling'' this will have the interpretation of taking $\Mstwo$ and then weighting it by the amount of $\QLE(8/3,0)$ quantum distance from~$x$ to~$y$.  (The additional subscript ``${\mathrm D}$'' is to indicate that $\MstwoD$ is the ``distance weighted'' measure.)

We finish this section by recording the following proposition which relates the conditional law of~$\MstwoW$ and~$\MstwoD$ given $\ttime$ and $X_\ttime$ to $\Mstwo$.

\begin{proposition}
\label{prop::measure_properties}
\begin{enumerate}[(i)]
\item\label{it::weighted_given_time} Given~$\ttime$, the conditional distribution of~$\MstwoW$ is the same as the conditional distribution of~$\Mstwo$ when we condition on the event that the length of the L\'evy excursion is at least~$\ttime$.
\item\label{it::weighted_given_time_and_x} For both ${\mathsf m} = \MstwoW$ and ${\mathsf m} =\MstwoD$, given~$\ttime$ and $X_\ttime$, the conditional distribution of~${\mathsf m}$ is the same as the conditional distribution of~$\Mstwo$ when we condition on the event that the length of the L\'evy excursion is at least~$\ttime$ and the given value of $X_\ttime$.
\end{enumerate}
\end{proposition}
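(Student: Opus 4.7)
The plan is to establish both parts by a routine disintegration computation that uses only the definitions~\eqref{eqn::m_w_def} and~\eqref{eqn::m_d_def} together with Fubini. The essential observation is that although $\Mstwo$ is infinite, under $\Mstwo$ the excursion length $T$ has density proportional to $t^{-5/3}$, so $\Mstwo(T \geq s) \propto s^{-2/3} < \infty$ for every $s > 0$; consequently the ``conditioning on $\{T \geq s\}$'' in the statement yields genuine probability measures. Regular conditional distributions exist throughout because the underlying sample space of marked quantum spheres decorated with $\SLE_6$ is standard Borel.

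For part~\eqref{it::weighted_given_time}, view $\MstwoW$ as a measure on pairs $(\ttime, \omega)$ with $\omega = ((\CS,x,y), \eta')$ and $T = T(\omega)$. Applying Fubini to~\eqref{eqn::m_w_def} gives, for any bounded measurable $G$,
\begin{equation*}
\int G(\ttime, \omega)\, d\MstwoW = \int_0^\infty \int G(s, \omega)\, \one_{\{T(\omega) \geq s\}}\, d\Mstwo(\omega)\, ds.
\end{equation*}
Reading off the disintegration, the marginal of $\ttime$ under $\MstwoW$ is the $\sigma$-finite measure $\Mstwo(T \geq s)\, ds$, and the regular conditional law of $\omega$ given $\ttime = s$ is the probability measure proportional to $\one_{\{T \geq s\}}\, d\Mstwo$, which is precisely $\Mstwo$ conditioned on $\{T \geq s\}$.

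For part~\eqref{it::weighted_given_time_and_x} I would handle the two cases separately. In the $\MstwoW$ case, simply further disintegrate the probability measure $\Mstwo(\,\cdot\, \giv T \geq s)$ along the measurable function $\omega \mapsto X_s(\omega)$ to obtain the claim. In the $\MstwoD$ case, use that $d\MstwoD = X_\ttime^{-1}\, d\MstwoW$ by~\eqref{eqn::m_d_def} and that $X_\ttime$ is a measurable function of $(\ttime, \omega)$: once we condition jointly on $(\ttime, X_\ttime) = (s, \ell)$, the Radon-Nikodym factor $X_\ttime^{-1}$ becomes the deterministic constant $\ell^{-1}$ and drops out of the normalized conditional law. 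Hence the conditional law of $\omega$ under $\MstwoD$ given $(\ttime, X_\ttime)$ coincides with that under $\MstwoW$ given $(\ttime, X_\ttime)$, and the $\MstwoW$ analysis then identifies this law with $\Mstwo$ conditioned on $\{T \geq \ttime\}$ and on the given value of $X_\ttime$. The only point requiring real attention is keeping track of $\sigma$-finiteness versus finiteness of the various slices so that one may legitimately normalize conditional measures; this is taken care of by the tail bound $\Mstwo(T \geq s) \propto s^{-2/3}$, so there is no substantive obstacle.
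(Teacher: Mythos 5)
Your proposal is correct and follows essentially the same route as the paper: the paper likewise fixes $\ttime$ and observes that the conditional law under $\MstwoW$ is the normalized restriction of $\Mstwo$ to $\{T \geq \ttime\}$ (written there concretely via the excursion-length density $c\,T^{-5/3}\,dT$), with part~(ii) handled by "a similar argument." Your explicit Fubini disintegration and the observation that the weight $X_\ttime^{-1}$ is a function of the conditioning variables and hence cancels upon normalization is exactly the intended completion of that argument.
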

\begin{proof}
We will explain the argument in the case that ${\mathsf m} = \MstwoW$; the same argument gives part~\eqref{it::weighted_given_time_and_x}.

If we fix the value of $\ttime$, the conditional distribution of the L\'evy excursion in the definition of $\MstwoW$ is given by $c \one_{[\ttime,\infty)}(T) d{\mathbf n}_T T^{-5/3} dT$ where ${\mathbf n}_T$ is the measure on $3/2$-stable L\'evy excursions which arises by scaling ${\mathbf n}$ spatially and in time so that the excursion length is equal to $T$.  This representation clearly implies~\eqref{it::weighted_given_time}.  A similar argument gives~\eqref{it::weighted_given_time_and_x}.
\end{proof}

\subsection{Continuum scaling exponents}
\label{subsec::continuum_scaling_exponents}

We now determine the distribution of $D$ (as defined in~\eqref{eqn::d_def}) and $A$ (total quantum area of the surface) under $\Mstwo$.

\begin{proposition}
\label{prop::levy_decay}
There exists constants $c_0,c_1 > 0$ such that
\begin{align}
\Mstwo[ D \geq t ] &= \frac{c_0}{t^2} \quad\text{and} \label{eqn::levy_distance_decay}\\
\Mstwo[A \geq a] &= \frac{c_1}{a^{1/2}}. \label{eqn::levy_area_decay}
\end{align}
\end{proposition}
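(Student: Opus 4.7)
The plan is to treat the two tail bounds by quite different methods: the area bound comes essentially for free from the structural description of $\Mstwo$, while the distance bound requires a Lamperti-style reduction to a CSBP excursion measure.

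For \eqref{eqn::levy_area_decay} I would simply invoke the description of $\Mstwo$ recalled in Section~\ref{subsec::prequel_overview} (cf.\ \cite{quantum_spheres}): the marginal law of the total quantum area $A$ under $\Mstwo$ is a constant multiple of $A^{-3/2}\,dA$ (this is the $k=2$ instance of the $A^{k-7/2}\,dA$ rule for marked quantum spheres). Integrating, $\Mstwo[A\geq a]=\int_a^\infty c A^{-3/2}\,dA=c_1 a^{-1/2}$.

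For \eqref{eqn::levy_distance_decay}, the first observation is that $D=\int_0^T X_s^{-1}\,ds$ is a measurable function of the L\'evy excursion $X$ alone; the conditionally independent quantum disks welded at the jumps of $X$ do not affect it. Consequently the law of $D$ under $\Mstwo$ coincides with its law under the It\^o excursion measure $\lexcursion$ of $X_t-I_t$. Now apply the Lamperti time change recalled in Section~\ref{subsec::csbp}: setting $s(u)=\int_0^u X_v^{-1}\,dv$ and $\widetilde Y_t=X_{s^*(t)}$, the path $\widetilde Y$ becomes an excursion from $0$ of the $\psi$-CSBP with $\psi(u)=u^{3/2}$, and its lifetime equals $s(T)=D$. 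Standard arguments (reduction to CSBPs started from small $y_0>0$ and the Lamperti identity for each) show that the push-forward of $\lexcursion$ under the Lamperti map is, up to a normalization, the It\^o excursion measure $\mathbf N$ of the $\psi$-CSBP. Thus the tail of $D$ under $\Mstwo$ equals a constant times $\mathbf N[\zeta\geq t]$. To evaluate the latter I would use \eqref{eqn::csbp_u_form} with $\alpha=3/2$: $u_t(\lambda)=(\lambda^{-1/2}+t/2)^{-2}$, hence $u_t(\infty):=\lim_{\lambda\to\infty}u_t(\lambda)=4/t^2$. From \eqref{eqn::csbp_def} with $\lambda\to\infty$, $\mathbf P_{y_0}[\zeta>t]=1-e^{-y_0 u_t(\infty)}\sim y_0 u_t(\infty)$ as $y_0\to 0^+$, and the It\^o excursion measure is obtained as the $y_0\to 0$ limit $y_0^{-1}\mathbf P_{y_0}$, giving $\mathbf N[\zeta\geq t]\propto 4/t^2$. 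This yields $\Mstwo[D\geq t]=c_0/t^2$.

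The main obstacle is the push-forward statement for the It\^o excursion measures under the Lamperti map, which has to be justified carefully since the excursion measure is an infinite measure and Lamperti is usually stated for processes started from a strictly positive value. A clean route is to condition on the excursion lifetime, use the rescaling $X_u=t^{2/3}\tilde X_{u/t}$ of the normalized excursion $\mathbf n_t$, and then pass to the limit from CSBPs with small positive initial condition. As an independent sanity check on the exponent, the scaling $D|_{T=t}\stackrel{d}{=}t^{1/3}\tilde D$ with $\tilde D=\int_0^1\tilde X_v^{-1}\,dv$ and the lifetime law $c\,t^{-5/3}\,dt$ yield
\[
\lexcursion[D\geq r]=\int_0^\infty c t^{-5/3}\,\mathbf P_{\mathbf n_1}\!\left[\tilde D\geq r t^{-1/3}\right]dt=\tfrac{3c}{2}\,\mathbf E_{\mathbf n_1}[\tilde D^2]\,r^{-2}
\]
after the substitution $u=r t^{-1/3}$; the finiteness $\mathbf E_{\mathbf n_1}[\tilde D^2]<\infty$ (which one might otherwise worry about near the endpoints where $\tilde X\to 0$) is then an automatic consequence of the CSBP computation above, and the two approaches produce the same $r^{-2}$ decay with compatible constants.
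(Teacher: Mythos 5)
Your proposal is correct in substance, and for the key bound \eqref{eqn::levy_distance_decay} it ultimately rests on the same mechanism as the paper: the formula \eqref{eqn::csbp_u_form} with $\alpha=3/2$ gives $u_t(\infty)=4/t^2$, so the extinction time of a $u^{3/2}$-CSBP started from a small mass $\epsilon$ has tail $1-e^{-4\epsilon/t^2}\sim 4\epsilon/t^2$, and this is paired with the fact that the excursion measure charges excursions reaching height $\epsilon$ with mass of order $1/\epsilon$. The difference is packaging: you phrase the reduction as the statement that the Lamperti map pushes $\lexcursion$ forward to a constant multiple of the CSBP excursion (canonical) measure $\mathbf{N}$, and you correctly flag that identification as the step needing care; the paper never constructs or identifies $\mathbf{N}$, but instead argues directly at positive level $\epsilon$ --- it computes $\lexcursion[e^*\ge\epsilon]=c_0/\epsilon$ by scaling from the lifetime law, applies the Markov property at the hitting time $T_\epsilon$, observes via \eqref{eqn::levy_to_csbp} that $D^\epsilon=\int_{T_\epsilon}^T X_s^{-1}\,ds$ is the extinction time of a CSBP started from (approximately) $\epsilon$, and then sends $\epsilon\to 0$. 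In other words, your proposed repair of the push-forward step (``pass to the limit from CSBPs with small positive initial condition'') is precisely the paper's proof, and writing it that way from the start lets you bypass the excursion-theoretic statement entirely. Two smaller remarks: the scaling identity you offer as a sanity check is fine but not self-contained, since without the CSBP computation you would still need $\E_{{\mathbf n}}\bigl[\tilde D^2\bigr]<\infty$ to rule out an infinite constant; and for \eqref{eqn::levy_area_decay} your argument (integrate the $A^{-3/2}\,dA$ area marginal) is the same as the paper's, which simply cites the discussion following \cite[Theorem~1.4]{quantum_spheres}.
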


Note that the exponents in~\eqref{eqn::levy_distance_decay},~\eqref{eqn::levy_area_decay} match the corresponding exponents derived in \cite{map_making}.

\begin{proof}[Proof of Proposition~\ref{prop::levy_decay}]
We note that~\eqref{eqn::levy_area_decay} is explained just after the statement of \cite[Theorem~1.4]{quantum_spheres}.  Therefore we only need to prove~\eqref{eqn::levy_distance_decay}.

For an excursion $e \colon [0,T] \to \R_+$ sampled from $\lexcursion$ we write $e^* = \sup_{t \in [0,T]} e(t)$.  By scaling and the explicit form of $\lexcursion$ described above, it is not difficult to see by making the change of variables $u=t T^{-2/3}$ that there exists a constant $c_0 > 0$ such that
\begin{align}
\label{eqn::excursion_measure_max_tail}
     \lexcursion[ e^* \geq t ] 
&=\int_0^\infty  {\mathbf n}[ e^* \geq t T^{-2/3} ] T^{-5/3} dT =  \frac{c_0}{t}.
\end{align}
For each $\epsilon > 0$, we let $T_\epsilon = \inf\{t \geq 0 : e(t) \geq \epsilon\}$ and let
\[ D^\epsilon = \int_{T_\epsilon}^T \frac{1}{X_s} ds.\]
Suppose that $Y$ is a $u^{3/2}$-CSBP with $Y_0 = \epsilon$.  Let $\zeta = \inf\{ t \geq 0 : Y_t = 0\}$ be the extinction time of $Y$.  Then it follows from~\eqref{eqn::csbp_def} and~\eqref{eqn::csbp_u_form} that
\[ \p[ \zeta \leq t] = \lim_{\lambda \to \infty} \E[ e^{- \lambda Y_t}] = \lim_{\lambda \to \infty} e^{ -\epsilon u_t(\lambda)} = e^{-4\epsilon / t^2}.\]
Therefore
\[\p[ \zeta \geq t] = \frac{4 \epsilon}{t^2} + o(\epsilon) \quad\text{as}\quad \epsilon \to 0.\]
By combining this with~\eqref{eqn::excursion_measure_max_tail}, it therefore follows that there exists a constant $c_1 > 0$ such that
\begin{align*}
 \Mstwo[ D^\epsilon \geq t ]
&= \Mstwo[ D^\epsilon \geq t \giv e^* \geq \epsilon] \Mstwo[ e^* \geq \epsilon]\\
&= \frac{c_1 \epsilon}{t^2} \cdot \frac{c_0}{\epsilon} + o(1) = \frac{c_0 c_1}{t^2} + o(1) \quad\text{as}\quad \epsilon \to 0.
 \end{align*}
Sending $\epsilon \to 0$ implies the result.
\end{proof}

\section{Meeting in the middle}
\label{sec::cut_points}

In this section, we shall assume that we are working in the setting described in Section~\ref{subsec::weighted_measures}.  The main result is the following theorem which proves a certain symmetry statement for the measure $\MstwoD$.  This result will later be used in Section~\ref{sec::qle_symmetry} to prove an analogous symmetry result for $\QLE(8/3,0)$ which, in turn, is one of the main inputs in the proof of Theorem~\ref{thm::qle_metric}.  See Figure~\ref{fig::doubly_marked_sphere} for an illustration of the result.

\begin{figure}[ht!]
\begin{center}
\includegraphics[scale=0.85, page=1, trim={0 6.6cm 0 0},clip]{Figures/meeting_cut_point}
\end{center}
\caption{\label{fig::doubly_marked_sphere} Illustration of the setup for Theorem~\ref{thm::typical_cut_point}, the main result of Section~\ref{sec::cut_points}.  Shown is a doubly-marked quantum sphere $(\CS,x,y)$ decorated with a whole-plane $\SLE_6$ process~$\eta'$ connecting~$x$ and~$y$.  The blue path shows~$\eta'$ drawn up to a time~$\ttime$ which is uniform in the total amount of quantum distance time required by~$\eta'$ to connect $x$ and $y$ and the red path is the time-reversal $\ol{\eta}'$ drawn up until the first time $\ol{\ttime}$ that it hits $\eta'([0,\ttime])$.  Let $\CX$ (resp.\ $\ol{\CX}$) consist of $\eta'|_{[0,\ttime]}$ (resp.\ $\ol{\eta}'|_{[0,\ol{\ttime}]}$) and the part of $\CS$ separated by $\eta'([0,\ttime])$ (resp.\ $\ol{\eta}'([0,\ol{\ttime}])$) from $y$ (resp.\ $x$) and let $\CB$ be the part of $\CS$ which is not in $\CX$ and $\ol{\CX}$.  In the illustration, $\CX$ (resp.\ $\ol{\CX}$) is shown in light blue (resp.\ light red).  In Theorem~\ref{thm::typical_cut_point}, we show that the $\MstwoD$ distribution of the triple $(\CX,\ol{\CX},\CB)$ is invariant under the operation of swapping $\CX$ and $\ol{\CX}$.}
\end{figure}

\begin{theorem}
\label{thm::typical_cut_point}
Suppose that $(\CS,x,y)$ is a doubly-marked quantum sphere, $\eta'$ is a path on $\CS$ from $x$ to $y$, and $\ttime \geq 0$, where these objects are sampled from $\MstwoD$ as in Section~\ref{subsec::weighted_measures}.  Let $\ol{\eta}'$ be the time-reversal of $\eta'$ and let $\ol{\ttime}$ be the first time that $\ol{\eta}'$ hits $\eta'([0,\ttime])$. Let
\begin{itemize}
\item $\CX$ be the path-decorated and beaded quantum surface parameterized by the union of $\eta'|_{[0,\ttime]}$ and the part of $\CS$ separated from $y$ by $\eta'([0,\ttime])$,
\item $\ol{\CX}$ be the path-decorated and beaded quantum surface parameterized by the union of $\ol{\eta}'|_{[0,\ol{\ttime}]}$ and the part of $\CS$ separated from $x$ by $\ol{\eta}'([0,\ol{\ttime}])$, and
\item $\CB$ be the (disk-homeomorphic) quantum surface parameterized by $\CS \setminus ( \CX \cap \ol{\CX} )$.
\end{itemize}
We view $\CX$ (resp.\ $\ol{\CX}$) as a random variable taking values in the space of path-decorated and beaded quantum surfaces with a marked point which corresponds to $\eta'(\ttime)$ (resp.\ $\ol{\eta}'(\ol{\ttime})$) and we view $\CB$ as a random variable taking values in the space of quantum surfaces.  Under $\MstwoD$, we have that $\CX$, $\ol{\CX}$, and $\CB$ are conditionally independent given the quantum lengths of the boundaries of $\CX$ and $\ol{\CX}$ (which together determine the boundary length of $\CB$).  Moreover, the $\MstwoD$ distribution of $(\CX,\ol{\CX},\CB)$ is invariant under the operation of swapping~$\CX$ and~$\ol{\CX}$.
\end{theorem}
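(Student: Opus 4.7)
The plan is to combine the reversibility of whole-plane $\SLE_6$ on the doubly-marked quantum sphere (Section~\ref{subsec::levy_spheres}) with the L\'evy excursion description of the boundary length process to reduce the theorem to a symmetric statement about a three-interval decomposition of $\eta'$.

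The first step is to identify $\ol{\ttime}$ explicitly in terms of $\eta'$. Let $K_\ttime$ denote the closure of the complement of the $y$-containing component of $\CS \setminus \eta'([0,\ttime])$, and set
\[ t^\star := \sup\bigl\{t \in [\ttime,T] : \eta'(t) \in \partial K_\ttime \bigr\}. \]
By target-independence, $\eta'|_{[\ttime,T]}$ is, conditionally on $K_\ttime$, an $\SLE_6$ from $\eta'(\ttime)$ to $y$ in the quantum disk $\CS \setminus K_\ttime$. Since such an $\SLE_6$ leaves the boundary of its domain for a positive interval of time before reaching the interior target, one has $t^\star \in (\ttime,T)$ almost surely. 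Combined with the non-crossing of $\eta'$ and the defining relation $\ol{\eta}'(s) = \eta'(T-s)$ (in particular, $\eta'|_{[\ttime,T]}$ is contained in the $y$-side and can only touch $\eta'([0,\ttime])$ on $\partial K_\ttime$), this yields $\ol{\ttime} = T - t^\star$ almost surely. Consequently, the triple $(\CX, \CB, \ol{\CX})$ is exactly the triple of path-decorated beaded quantum surfaces carved out by $\eta'$ over the three consecutive time intervals $[0, \ttime]$, $[\ttime, t^\star]$, and $[t^\star, T]$.

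With this reduction in place, the conditional independence assertion follows from Proposition~\ref{prop::cutandspin} together with the L\'evy excursion description of Section~\ref{subsec::levy_spheres}: the quantum disks cut off by $\eta'$ are conditionally independent given their boundary lengths, and grouping them by time interval yields conditional independence of $\CX$, $\CB$, and $\ol{\CX}$ given the meeting boundary lengths $(X_\ttime, X_{t^\star})$. For the swap symmetry, the key ingredient is the reversibility identity $(\CS, x, y, \eta') \stackrel{d}{=} (\CS, y, x, \ol{\eta}')$ under $\Mstwo$: this sends the three-interval decomposition to the reversed one, exchanging the first and third intervals while preserving the middle interval; and by applying the first step to the reversed configuration one checks that the induced map on the pair $(\ttime, t^\star)$ is an involution whose action on $(\CX, \CB, \ol{\CX})$ is precisely the swap of the two outer surfaces. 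The compatibility of the density $X_\ttime^{-1} d\ttime$ with this involution is then obtained from Proposition~\ref{prop::measure_properties}: conditional on the boundary length process $(X_t)_{0\le t \le T}$, the distribution of $\ttime$ under $\MstwoD$ has density proportional to $X_\ttime^{-1}$, and the joint time-reversal symmetry of $(X_t)$ together with the involutive nature of $\ttime \leftrightarrow \ol{\ttime}$ transforms this density into the analogous density $X_{t^\star}^{-1}$ governing $\ol{\ttime}$ in the reversed parameterization.

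The main obstacle will be the identification $\ol{\ttime} = T - t^\star$ in the first step and, most importantly, verifying that the weighting described above truly is invariant under the involution: the former requires a careful treatment of the cut-point and self-touching structure of $\SLE_6$ and of how this curve interacts with the boundary of a previously-explored region, while the latter requires unpacking how the Poissonian jump structure of the L\'evy excursion interacts with the uniformly-weighted choice of $\ttime$. Once these geometric and measure-theoretic inputs are in hand, the rest of the argument is bookkeeping built upon the reversibility of $\Mstwo$ and of the $3/2$-stable L\'evy excursion.
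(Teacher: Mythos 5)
Your identification $\ol{\ttime}=T-t^\star$ (with $t^\star$ the last time $\eta'$ touches the outer boundary of its hull at time $\ttime$) is correct, but the heart of your argument fails at the symmetry step. The map $(\CS,x,y,\eta',\ttime)\mapsto(\CS,y,x,\ol{\eta}',\ol{\ttime})$ is \emph{not} an involution, and it does not act on the triple by swapping the two outer surfaces: for the reversed configuration, the first time the doubly-reversed path (i.e.\ $\eta'$ itself) hits $\ol{\eta}'([0,\ol{\ttime}])$ is a.s.\ strictly smaller than $\ttime$, because the pinch point $\ol{\eta}'(\ol{\ttime})=\eta'(t^\star)$ already lies on $\eta'([0,\ttime])$, whereas $\eta'(\ttime)$ a.s.\ does \emph{not} lie on $\ol{\eta}'([0,\ol{\ttime}])$. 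This is exactly the asymmetry recorded in Remark~\ref{rem::sle_whole_picture_not_symmetric}: the sphere decorated by the two stopped paths is not exchangeable, only the unglued triple of surfaces is, so no pathwise change of variables on $(\ttime,t^\star)$ can produce the statement. The accompanying density bookkeeping is also unjustified: the weight attached to the reversed exploration is the reciprocal boundary length of the $x$-containing complement of the reversed hull at its stopping time, and the boundary-length process of the reversed exploration is encoded by a \emph{different} L\'evy excursion than the time-reversal of $(X_t)$ (equal in law, not pathwise), so it is neither $X_{t^\star}^{-1}$ nor anything Proposition~\ref{prop::measure_properties} transports the weight $X_\ttime^{-1}$ into. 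Moreover $\ol{\ttime}$ is a first-hitting time, not an independently sampled time with a density, so there is no density to ``transform'' in the first place. You flag the weighting-invariance as the main obstacle, but the proposed mechanism for it is the part that is wrong rather than merely incomplete. A secondary issue: $\ol{\CX}$ and $\CB$ are not unions of bubbles cut off by the forward exploration over $[\ttime,t^\star]$ and $[t^\star,T]$ (the complementary components of $\ol{\eta}'([0,\ol{\ttime}])$ are not forward-exploration bubbles), so grouping the forward jumps by time interval does not give the claimed conditional independence either.

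What is actually needed—and what the paper does—is to reinterpret the $X_\ttime^{-1}$-weighting as a conditioning of the uniformly weighted measure $\MstwoW$ on the measure-zero event that $\ol{\eta}'$ first hits $\eta'([0,\ttime])$ at the tip $\eta'(\ttime)$. Concretely, $\epsilon^{-1}\one_{E_\epsilon}\,d\MstwoW\to d\MstwoD$ where $E_\epsilon$ is the event that the hitting point lies in a boundary-length-$\epsilon$ interval adjacent to the tip (Lemma~\ref{lem::probability_of_hitting}, Lemma~\ref{lem::law_given_e1}); one then replaces $E_\epsilon$ by the symmetric event $F_\epsilon$ and uses the exact swap-symmetry of $\MstwoW$ for the triple $(\CY,\ol{\CY},\CQ)$ (Proposition~\ref{prop::cw_properties}, where the reversed path is run until it hits the \emph{tip}, so $\SLE_6$ reversibility does apply directly). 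The nontrivial input is that $\p_u[F_\epsilon\giv E_\epsilon]$ converges to a positive constant, which requires the local description of the surface and the two path germs near the tip as a $\sqrt{8/3}$-quantum wedge decorated by an $\SLE_6(2;2)$-type concatenation (Proposition~\ref{prop::local_picture_near_hit}, Lemma~\ref{lem::conditional_probability_converges}); the conditional independence statement is then inherited from the $\MstwoW$ picture in this $\epsilon\to0$ limit. Some such limiting local analysis, or a genuine substitute for it, is indispensable; the reversibility of $\Mstwo$ and of the $3/2$-stable excursion alone cannot deliver the theorem.
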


\begin{remark}
\label{rem::sle_whole_picture_not_symmetric}
We emphasize that the assertion of Theorem~\ref{thm::typical_cut_point} does not give that the surface $\CS$ decorated by the paths $\eta'|_{[0,\ttime]}$ and $\ol{\eta}'|_{[0,\ol{\ttime}]}$ is invariant under swapping $\eta'|_{[0,\ttime]}$ and $\ol{\eta}'|_{[0,\ol{\ttime}]}$.  This statement does not hold because there is an asymmetry in that $\eta'(\ttime) \notin \ol{\eta}'([0,\ol{\ttime}])$ while we have that $\ol{\eta}'(\ol{\ttime}) \in \eta'([0,\ttime])$.  Rather, Theorem~\ref{thm::typical_cut_point} implies that the induced distribution on triples of $(\CX,\ol{\CX},\CB)$ is invariant under swapping~$\CX$ and~$\ol{\CX}$.  The difference between the two statements is that the first statement depends on how the surfaces which correspond to $\CX$, $\ol{\CX}$, and $\CB$ are glued together (which determines the locations of the tips) while the second statement does not depend on how everything is glued together.  In the $\QLE(8/3,0)$ analog of Theorem~\ref{thm::typical_cut_point}, which is stated as Theorem~\ref{thm::qle_symmetry} below (and will be derived as a consequence of Theorem~\ref{thm::typical_cut_point}), we do have the symmetry of the whole picture because the tips are ``lost'' in the ``reshuffling'' procedure used to construct $\QLE(8/3,0)$ from $\SLE_6$.
\end{remark}

Our ultimate aim in this section is to deduce Theorem~\ref{thm::typical_cut_point} by showing that the $\MstwoD$ distribution on $(\CX,\ol{\CX},\CB)$ can be constructed from $\MstwoW$ by ``conditioning'' on the event that $\ttime$ is a cut time for $\eta'$.  This will indeed lead to the desired result because, as we will explain in Proposition~\ref{prop::cw_properties} just below, $\MstwoW$ possesses a symmetry property which is similar to that described in Theorem~\ref{thm::typical_cut_point}.

\begin{figure}[ht!]
\begin{center}
\includegraphics[scale=0.85, page=2, trim ={0 6.6cm 0 0},clip]{Figures/meeting_cut_point}
\end{center}
\caption{\label{fig::doubly_marked_sphere2} Illustration of the setup for Proposition~\ref{prop::cw_properties} (continuation of Figure~\ref{fig::doubly_marked_sphere}).  Let $\CY$ (resp.\ $\ol{\CY}$) consist of $\eta'|_{[0,\ttime]}$ (resp.\ $\ol{\eta}'|_{[0,\ol{\tau}]}$ where $\ol{\tau}$ is the first time that $\ol{\eta}'$ hits $\eta'(\ttime)$) and the part of $\CS$ separated by $\eta'([0,\ttime])$ (resp.\ $\ol{\eta}'([0,\ol{\tau}])$) from $y$ (resp.\ $x$) and let $\CQ$ be the part of $\CS$ which is not in $\CY$ and $\ol{\CY}$.  In the illustration, $\CY$ (resp.\ $\ol{\CY}$) is shown in light blue (resp.\ light red and light green).  In Proposition~\ref{prop::cw_properties}, we show that the $\MstwoW$ distribution of the triple $(\CY,\ol{\CY},\CQ)$ is invariant under the operation of swapping~$\CY$ and~$\ol{\CY}$.}
\end{figure}

\begin{proposition}
\label{prop::cw_properties}
Suppose that $(\CS,x,y)$ is a doubly-marked quantum sphere, $\eta'$ is a path connecting $x$ to $y$, and $\ttime \geq 0$, where these objects are sampled from $\MstwoW$ as defined in Section~\ref{subsec::weighted_measures}.  Let $\ol{\tau}$ be the first time that $\ol{\eta}'$ hits $\eta'(\ttime)$. (We emphasize that $\ol{\tau}$ is not the same as $\ol{\ttime}$, the first time that $\ol{\eta}'$ hits $\eta'([0,\ttime])$.)  Let
\begin{itemize}
\item $\CY$ be the path-decorated and beaded quantum surface parameterized by the union of $\eta'|_{[0,\ttime]}$ and the part of $\CS$ which is separated from $y$ by $\eta'([0,\ttime])$,
\item $\ol{\CY}$ be the path-decorated and beaded quantum surface parameterized by the union of $\ol{\eta}'|_{[0,\ol{\tau}]}$ and the part of $\CS$ which is separated from $x$ by $\ol{\eta}'([0,\ol{\tau}])$, and
\item $\CQ$ be the quantum surface parameterized by $\CS \setminus (\CY \cup \ol{\CY})$.
\end{itemize}
We view $\CY$ (resp.\ $\ol{\CY}$) as a random variable taking values in the space of path-decorated and beaded quantum surfaces with a single marked point which corresponds to $\eta'(\ttime)$ (resp.\ $\ol{\eta}'(\ol{\tau})$) and we view $\CQ$ as a random variable taking values in the space of beaded quantum surfaces.  Then $\MstwoW$ is invariant under the operation of swapping $\CY$ and $\ol{\CY}$.  Moreover, under $\MstwoW$, we have that $\CY$, $\ol{\CY}$, and $\CQ$ are conditionally independent given the quantum lengths of the boundaries of $\CY$ and $\ol{\CY}$ (which together determine the boundary length of~$\CQ$).  
\end{proposition}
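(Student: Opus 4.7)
My plan is to derive both assertions from the $\SLE_6$ exploration results of \cite{quantum_spheres} recalled in Section~\ref{subsec::levy_spheres} combined with the time-reversal symmetry of $\Mstwo$.

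For the symmetry under swap, I would first show that $\MstwoW$ itself is invariant under the map
\[
\Phi\colon (\CS, x, y, \eta', \ttime) \longmapsto (\CS, y, x, \ol{\eta}', T-\ttime).
\]
Indeed, Section~\ref{subsec::levy_spheres} gives that $\Mstwo$ is invariant under $(\CS,x,y,\eta')\mapsto(\CS,y,x,\ol{\eta}')$ (this preserves the excursion length $T$), while $\ttime\mapsto T-\ttime$ preserves Lebesgue measure on $[0,T]$; since $d\MstwoW=\one_{[0,T]}(\ttime)\,d\ttime\,d\Mstwo$, invariance under $\Phi$ follows. Next I would verify that $\Phi$ interchanges the roles of $\CY$ and $\ol{\CY}$. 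The only subtle point is that this identification requires $\ol{\tau}=T-\ttime$ (and the analogous statement after applying $\Phi$), which in turn requires that $\eta'$ visit the point $\eta'(\ttime)$ only at time $\ttime$. The set of multiple times of $\eta'$ has Lebesgue measure zero, and $\ttime$ is conditionally uniform on $[0,T]$ given $T$, so this holds $\MstwoW$-a.s. Once this is established, one checks directly from the definitions that under $\Phi$ the new $\CY$ is the old $\ol{\CY}$ and the new $\ol{\CY}$ is the old $\CY$ (with matching marked tip points, both equal to $\eta'(\ttime)$), while $\CQ=\CS\setminus(\CY\cup\ol{\CY})$ is symmetric in the two pieces. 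Hence $\Phi$ exchanges $(\CY,\ol{\CY},\CQ)$ with $(\ol{\CY},\CY,\CQ)$, giving the swap invariance.

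For the conditional independence, I would condition on $\ttime$ and apply the $\SLE_6$ exploration structure twice. By Proposition~\ref{prop::measure_properties}\eqref{it::weighted_given_time}, $\MstwoW$ given $\ttime$ agrees with $\Mstwo$ conditioned on the excursion length exceeding $\ttime$. Under this law, Section~\ref{subsec::levy_spheres} gives that the components of $\CS\setminus\eta'([0,\ttime])$ are conditionally independent as quantum surfaces given their boundary lengths, the $y$-containing component $K_\ttime=\CS\setminus\CY$ is a marked quantum disk, and $\eta'(\ttime)$ is uniform on its quantum boundary length measure. Packaging $\eta'|_{[0,\ttime]}$ together with the disks cut off from $y$ into $\CY$, this yields that $\CY$ is conditionally independent of $K_\ttime$ given the shared boundary length $L$. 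Next, condition additionally on $\CY$ and run $\ol{\eta}'|_{[0,T-\ttime]}$ inside $K_\ttime$ from the interior marked point $y$. By reversibility of whole-plane $\SLE_6$ together with the results of \cite{quantum_spheres,dms2014mating,MS_IMAG4}, this is a quantum $\SLE_6$-type exploration of the marked quantum disk $K_\ttime$ from $y$ to the uniform boundary point $\eta'(\ttime)$, and $\ol{\tau}=T-\ttime$ a.s. by the same measure-zero argument as above. Applying the same exploration result in this disk setting, the surface $\ol{\CY}$ (i.e.\ $\ol{\eta}'|_{[0,T-\ttime]}$ together with all components it cuts off from the outer boundary of $K_\ttime$) and the remaining component $\CQ=K_\ttime\setminus\ol{\CY}$ are conditionally independent given the boundary length $\ol{L}$ of $\ol{\CY}$ (the boundary length of $\CQ$ being determined by $L$ and $\ol{L}$). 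Chaining these two conditional independence statements yields the required mutual conditional independence of $\CY$, $\ol{\CY}$, and $\CQ$ given the two boundary lengths.

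The main obstacle is the second application of the exploration structure: one must justify that $\ol{\eta}'|_{[0,T-\ttime]}$, viewed inside $K_\ttime$, genuinely has the law of a quantum $\SLE_6$ exploration on a marked quantum disk from the interior marked point to a uniformly sampled boundary point, so that the disk analogue of the Section~\ref{subsec::levy_spheres} results applies. This requires combining $\SLE_6$ reversibility from \cite{MS_IMAG4} with the description from \cite{quantum_spheres,dms2014mating} of the conditional law of $K_\ttime$ as a marked quantum disk. The auxiliary point $\ol{\tau}=T-\ttime$ a.s.\ is handled as above by noting that a uniform time along $\eta'$ a.s.\ avoids the (measure-zero) set of double times of $\SLE_6$.
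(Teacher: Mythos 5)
Your treatment of the swap invariance is correct and is essentially the paper's argument: you use that $\Mstwo$ is invariant under exchanging $x$ and $y$ and reversing $\eta'$ (whole-plane $\SLE_6$ reversibility from \cite{MS_IMAG4}, with quantum natural time intrinsic under reversal), together with the fact that $\ttime \mapsto T-\ttime$ preserves Lebesgue measure on $[0,T]$; your explicit bookkeeping that $\ol{\tau}=T-\ttime$ off the null set of multiple times of $\eta'$ is consistent with, and if anything slightly more careful than, what the paper does. The problem is in the second half.

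For the conditional independence you replace the paper's symmetry argument by a second exploration analysis inside $K_\ttime$, and this rests on an input that is not available: you need that the time-reversal of $\eta'|_{[\ttime,T]}$, viewed in the marked quantum disk $K_\ttime$, is itself a quantum $\SLE_6$ exploration from the interior marked point $y$ to a boundary point, obeying a disk analogue of the L\'evy-excursion/cut-disk independence structure. The reversibility results of \cite{MS_IMAG4} concern whole-plane (interior-to-interior) $\SLE_6$ and do not identify the law of the reversal of a radial $\SLE_6$ run from a boundary point to an interior point of a disk, and neither \cite{quantum_spheres} nor \cite{dms2014mating} states the decomposition you invoke for an exploration of a quantum disk from its interior marked point toward the boundary. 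You flag exactly this as ``the main obstacle'' but do not close it, so the chaining argument is incomplete as written. (There is also a structural subtlety you gloss over: $\ol{\CY}$ is built from the components of $\CS \setminus \ol{\eta}'([0,\ol{\tau}])$ not containing $x$, and since only the reversed path is removed, not $\eta'([0,\ttime])$, identifying these with regions ``cut off inside $K_\ttime$'' requires an argument.) The detour is unnecessary: Proposition~\ref{prop::measure_properties} and the construction of $\MstwoW$ already give that $\CY$ is conditionally independent of $(\ol{\CY},\CQ)$ given its boundary length, and the swap invariance you proved in the first half transports this statement to $\ol{\CY}$, i.e.\ $\ol{\CY}$ is conditionally independent of $(\CY,\CQ)$ given its boundary length; augmenting the conditioning by the other boundary length (which is measurable with respect to the corresponding surface) and combining the two pairwise statements factors the joint conditional law of $(\CY,\ol{\CY},\CQ)$, which is precisely how the paper concludes.
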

\begin{proof}
We start with the first assertion of the proposition.  As mentioned in Section~\ref{subsec::levy_spheres}, we know that if we condition on the quantum area $m$ of $\CS$ then the joint law of $(\CS,x,y)$ and $\eta'$ is given by a quantum sphere of quantum area $m$, $x$ and $y$ are independently and uniformly chosen from the quantum area measure, and, given $x,y$, $\eta'$ is an independent whole-plane $\SLE_6$ connecting $x$ and $y$.  By the reversibility of whole-plane $\SLE_6$ established in \cite{MS_IMAG4}, it thus follows that the joint law of $(\CS,x,y)$ and $\eta'$ is invariant under swapping $x$ and $y$ and reversing the time of $\eta'$ when $m$ is fixed.  Since $\ttime$ is uniform in the amount of quantum natural time $T$ required by $\eta'$ to connect $x$ and $y$, we have by symmetry that $\ol{\tau}$ is uniform in $[0,T]$.  This proves the first part.

It follows from the construction of $\MstwoW$ that $\CY$ is independent of $\ol{\CY}$ and $\CQ$ given its quantum boundary length (recall Proposition~\ref{prop::measure_properties}) and, by symmetry, that $\ol{\CY}$ is independent of $\CY$ and $\CQ$ given its boundary length.  This implies the second assertion of the proposition.
\end{proof}

We will establish Theorem~\ref{thm::typical_cut_point} using the following strategy.  Suppose that we have a triple $T$, $(\CS,x,y)$, $\eta'$ which consists of a number $T \geq 0$, a doubly-marked finite volume quantum surface $\CS$ homeomorphic to $\s^2$, and a non-crossing path $\eta'$ on $\CS$ connecting $x$ and $y$ such that $T$ is equal to the total amount of quantum natural time taken by $\eta'$ to go from~$x$ to~$y$.  We assume that~$\CS$ is parameterized by~$\C$ with~$0$ corresponding to~$x$ and~$\infty$ corresponding to $y$.  Let $\ttime$ be uniform in $[0,T]$ and let $U$ be the unbounded component of $\C \setminus \eta'([0,\ttime])$.  Let $\ol{\eta}'$ be the time-reversal of $\eta'$ and let $\ol{\ttime}$ be the first time that $\ol{\eta}'$ hits $\eta'([0,\ttime])$.  Fix $\epsilon > 0$ and let~$E_\epsilon$ be the event that $\ol{\eta}'(\ol{\ttime})$ is contained in the interval of~$\partial U$ starting from $\eta'(\ttime)$ and continuing in the counterclockwise direction until reaching quantum length~$\epsilon$.  (In the case that $\partial U$ has quantum length at most~$\epsilon$, we take this interval to be all of $\partial U$.)  In other words, $E_\epsilon$ is the event that $\ol{\eta}'(\ol{\ttime})$ is contained in the quantum length~$\epsilon$ interval on the outer boundary of $\eta'([0,\ttime])$ which is immediately to the left of $\eta'(\ttime)$.  Let~$\ol{\tau}$ be the first time that $\ol{\eta}'$ hits $\eta'(\ttime)$ and let $F_\epsilon$ be the event that $\ol{\eta}'([0,\ol{\tau}]) \cap \eta'([0,\ttime])$ is contained in both the interval of $\partial U$ centered at $\eta'(\ttime)$ with quantum length $2\epsilon$ and the similarly defined interval on the outer boundary of $\ol{\eta}'([0,\ol{\tau}])$ with the roles of $\eta'$ and $\ol{\eta}'$ swapped.

The main two steps in the proof of Theorem~\ref{thm::typical_cut_point} are to show that (with $\MstwoW$ viewed as a measure on $(\CY,\ol{\CY},\CQ)$ and $\MstwoD$ viewed as a measure on $(\CX,\ol{\CX},\CB)$) we have
\begin{align}
 \epsilon^{-1} \one_{E_\epsilon} d \MstwoW &\to d \MstwoD \quad\text{as}\quad \epsilon \to 0 \quad\text{and} \label{eqn::m_w_to_m_d_e_eps}\\
 \epsilon^{-1} \one_{F_\epsilon} d \MstwoW &\to c_0 d \MstwoD \quad\text{as}\quad \epsilon \to 0, \label{eqn::m_w_to_m_d_f_eps}
\end{align}
where $c_0 > 0$ is a fixed constant.  We will prove~\eqref{eqn::m_w_to_m_d_e_eps} as a step in proving~\eqref{eqn::m_w_to_m_d_f_eps} and deduce~\eqref{eqn::m_w_to_m_d_f_eps} from~\eqref{eqn::m_w_to_m_d_e_eps} by showing that the conditional probability of $F_\epsilon$ given $E_\epsilon$ converges as $\epsilon \to 0$ to the positive constant $c_0$.  (The main step in proving this is Proposition~\ref{prop::local_picture_near_hit} stated and proved just below, which constructs the scaling limit near the intersection when we condition on $E_\epsilon$ and then send $\epsilon \to 0$.)  The notion of convergence is given by weak convergence with respect to a topology that we will introduce in the proof of Theorem~\ref{thm::typical_cut_point}.  (The exact choice of topology is not important as long as it generates the full $\sigma$-algebra.)  The result then follows because, for each $\epsilon > 0$, $\epsilon^{-1} \one_{F_\epsilon} d \MstwoW$ is invariant under the operation of swapping $\CY$ and $\ol{\CY}$ since $\MstwoW$ itself is invariant under this operation and $F_\epsilon$ is defined in a manner which is symmetric in $\eta'|_{[0,\ttime]}$ and $\ol{\eta}'|_{[0,\ol{\tau}]}$.  (Moreover, given $F_\epsilon$, we note that $\ol{\tau} \to \ol{\ttime}$ as $\epsilon \to 0$.)

Throughout the remainder of this section, we will write $\p_u$ for the probability which gives the conditional law of $\eta'$ and the remaining surface given $\ttime$ and $X_\ttime =u$ under either $\MstwoW$ or $\MstwoD$.  (As we pointed out in Proposition~\ref{prop::measure_properties}, this conditional law is the same under both $\MstwoW$ and $\MstwoD$ and the conditional distribution does not depend on the value of $\ttime$.)

\begin{lemma}
\label{lem::probability_of_hitting}
We have that 
\[ \p_u[ E_\epsilon] = \frac{\epsilon}{u} \wedge 1.\]
\end{lemma}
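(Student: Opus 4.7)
The strategy is to reduce the lemma to the statement that, conditionally on the marked quantum disk $(U,y)$ of boundary length $u$ containing $y$, the point $a \defeq \eta'(\ttime)$ is uniform on $\partial U$ with respect to quantum length and is conditionally independent of $b \defeq \ol{\eta}'(\ol{\ttime})$. Granting this, the counterclockwise arc-length distance from $a$ to $b$ (modulo $u$) is the distance from a uniform point on $\partial U$ to an independent point, and is therefore itself uniform on $[0,u)$. Consequently, the event $E_\epsilon$ that $b$ lies in the counterclockwise arc of quantum length $\epsilon$ starting at $a$ (interpreted as all of $\partial U$ when $u \le \epsilon$) has conditional probability $(\epsilon/u)\wedge 1$, and averaging over the $\p_u$-law of $(U,y)$ completes the proof.

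For the uniform law of $a$ on $\partial U$ I plan to invoke Proposition~\ref{prop::cutandspin} directly at the quantum natural time $\ttime$: conditionally on the two complementary quantum surfaces $(U,y)$ and $(\CY,x)$ and their common quantum boundary length $u$, the sphere $\CS$ is recovered by gluing $\partial U$ to $\partial \CY$ via a uniformly chosen rotation, and the position of $a$ on $\partial U$ records exactly that rotation. For the conditional independence of $b$ from $a$ given $(U,y)$, I would use $\SLE_6$ locality: since $\ol{\eta}'$ is a whole-plane $\SLE_6$ from $y$ to $x$ conditionally on $(\CS,x,y)$, and since $y \in U$ while $x \notin U$, locality implies that the initial segment $\ol{\eta}'|_{[0,\ol{\ttime}]}$ has a law depending only on the marked quantum disk $(U,y)$, and not on $x$ or on the gluing with $\CY$. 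Thus $b$ is a measurable function of $(U,y)$ together with independent $\SLE_6$ randomness, and in particular is conditionally independent of the cut-and-spin rotation that produces $a$.

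The main obstacle I anticipate is carefully spelling out the locality step at the level of the joint $\SLE_6$/LQG construction --- namely, that $\ol{\eta}'|_{[0,\ol{\ttime}]}$ given $(\CS,x,y,\eta'|_{[0,\ttime]})$ has a law determined by $(U,y)$ alone. I plan to verify this by combining the deterministic-domain locality of $\SLE_6$ (in the form that whole-plane $\SLE_6$ from an interior point, stopped on first exit from a sub-domain that does not contain the target, has a law depending only on the sub-domain and the starting point) with the coupling description of whole-plane $\SLE_6$ on a quantum sphere from \cite{dms2014mating, quantum_spheres}, which also identifies the conditional law of the $y$-containing component as a marked quantum disk with boundary length $u$. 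Once locality is in place in this form, the two-point uniform computation above yields $\p_u[E_\epsilon] = (\epsilon/u) \wedge 1$ without further work.
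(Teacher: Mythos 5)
Your proposal is correct and is essentially the paper's own argument: the paper's one-sentence proof rests precisely on your first step, namely that $\eta'(\ttime)$ is uniform on the outer boundary with respect to the quantum boundary measure given the boundary length (the cut-and-spin/uniform-rotation fact). The conditional-independence step you spell out via $\SLE_6$ locality and reversibility --- that the law of $\ol{\eta}'|_{[0,\ol{\ttime}]}$ given the past depends only on the marked disk $(U,y)$, so that the hitting point is independent of the rotation placing $\eta'(\ttime)$ --- is left implicit in the paper's proof and is of the same nature as its Lemma~\ref{lem::sle_6_harmonic_measure}, so you are simply making explicit what the paper compresses.
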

\begin{proof}
This follows because the location of $\eta'(\ttime)$ on $\partial \CY$ is uniform from the quantum measure given the quantum boundary length of $\partial \CY$.
\end{proof}

In our next lemma, we show that the conditional law of $(\CX,\ol{\CX},\CB)$ given $(\ttime,X_\ttime)$ sampled from $\MstwoW$ does not change when we further condition on $E_\epsilon$.  We emphasize, however, that the conditional law of the part of $\eta'$ which connects $\eta'(\ttime)$ and $\ol{\eta}'(\ol{\ttime})$ (the green path segment in Figure~\ref{fig::doubly_marked_sphere2}) given $E_\epsilon$ is not the same as its unconditioned law.

\begin{lemma}
\label{lem::law_given_e1}
Suppose that $\CX$, $\ol{\CX}$, and $\CB$ are as in the statement of Theorem~\ref{thm::typical_cut_point}.  The joint law of $\CX$, $\ol{\CX}$, and $\CB$ under $\Mstwo$ conditional on $(\ttime,X_\ttime)$ is equal to the joint law of $\CX$, $\ol{\CX}$, and $\CB$ under $\Mstwo$ conditional on $(\ttime, X_\ttime)$ and $E_\epsilon$.  The same holds with either $\MstwoW$ or $\MstwoD$ in place of $\Mstwo$.
\end{lemma}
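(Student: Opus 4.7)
The idea is to exhibit a single rotation parameter $\theta \in [0, X_\ttime)$ that determines the event $E_\epsilon$ and that remains uniformly distributed when we condition on the triple $(\CX, \ol{\CX}, \CB)$; the desired equality of conditional laws is then equivalent to this conditional independence. By Proposition~\ref{prop::measure_properties}, the conditional laws of the underlying configuration given $(\ttime, X_\ttime)$ agree under $\Mstwo$, $\MstwoW$, and $\MstwoD$ (the Radon--Nikodym weights relating them are functions of $(\ttime, X_\ttime)$ only), so it suffices to work under $\MstwoW$.

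Under $\MstwoW$ conditional on $(\ttime, X_\ttime)$, let $\widetilde\CX = \CS \setminus \CX$ denote the $y$-containing quantum disk, with $y$ as its marked interior point and boundary length $X_\ttime$. The $\SLE_6$/LQG coupling established in \cite{quantum_spheres} (which underlies Proposition~\ref{prop::cutandspin}) yields that, conditional on $X_\ttime$, the path-decorated beaded surface $\CX$ (carrying $\eta'|_{[0,\ttime]}$ and with boundary tip $\eta'(\ttime)$) and the pointed quantum disk $(\widetilde\CX, y)$ are conditionally independent as quantum surfaces, and that the location of $\eta'(\ttime)$ on $\partial\widetilde\CX$ in quantum boundary length is uniform on $[0, X_\ttime)$ and independent of both surfaces. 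On the other hand, by the $\SLE_6$ Markov property applied to $\eta'$ at time $\ttime$, together with $\SLE_6$ reversibility and target invariance, the reverse curve $\ol{\eta}'$ run from $y$ until its first exit time $\ol{\ttime}$ from $\widetilde\CX$ has law depending only on the pointed disk $(\widetilde\CX, y)$ and not on the target $\eta'(\ttime)$; in particular, $\ol{\CX}$, $\CB$, and the point $p = \ol{\eta}'(\ol{\ttime}) \in \partial\widetilde\CX$ are measurable functions of $(\widetilde\CX, y)$ alone. Defining $\theta$ to be the counterclockwise quantum-length distance along $\partial\widetilde\CX$ from $p$ to $\eta'(\ttime)$, we conclude that $\theta$ is uniformly distributed on $[0, X_\ttime)$ conditionally on $(\ttime, X_\ttime, \CX, \ol{\CX}, \CB)$. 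Since $E_\epsilon$ is by construction the event $\{\theta \leq \epsilon \wedge X_\ttime\}$, a measurable function of $\theta$, this gives the claimed conditional independence and hence the lemma.

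The main technical point is the ``intrinsicness'' assertion: one must verify carefully that the law of $\ol{\eta}'|_{[0,\ol{\ttime}]}$, and hence the first exit point $p \in \partial\widetilde\CX$, depends only on $(\widetilde\CX, y)$ and not on where $\eta'(\ttime)$ sits on $\partial\widetilde\CX$. This combines the target invariance of $\SLE_6$ (the initial dynamics of a chordal $\SLE_6$ are insensitive to the target, away from the target itself) with the rotational symmetry of the quantum disk, and is essentially already packaged in the welding description of $\SLE_6$-decorated quantum spheres used in \cite{quantum_spheres} and Proposition~\ref{prop::cutandspin}.
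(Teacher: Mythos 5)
Your argument is correct and is essentially the paper's own proof: the paper compresses it to the observation (as in Lemma~\ref{lem::probability_of_hitting}) that the tip $\eta'(\ttime)$ is uniformly distributed according to the quantum boundary measure given everything else relevant, which is exactly your statement that the rotation parameter $\theta$ is conditionally uniform and independent of $(\CX,\ol{\CX},\CB)$, and the reduction to a single measure via Proposition~\ref{prop::measure_properties} matches the paper's setup as well. One minor wording slip: $\ol{\CX}$, $\CB$, and $p=\ol{\eta}'(\ol{\ttime})$ are not measurable functions of the pointed disk $(\wt{\CX},y)$ alone---they also depend on the (random) reversal segment---but since, as you yourself state, the conditional law of that segment given the disk, the tip location, and $\CX$ depends only on $(\wt{\CX},y)$, the conditional independence you need, and hence the lemma, still follows.
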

\begin{proof}
Recall from the end of Section~\ref{subsec::levy_spheres} that, conditionally on $\CX$, the quantum surface parameterized by the component of $\CS \setminus \CX$ which contains $y$ is a quantum disk weighted by its quantum area decorated by the path $\ol{\eta}'|_{[0,\ol{\ttime}]}$.  Moreover, by the locality property for $\SLE_6$, we have that the conditional law of $\ol{\eta}'|_{[0,\ol{\ttime}]}$ is equal to that of a whole-plane $\SLE_6$ stopped at the first time that it hits $\CX$.  (Note that this path-decorated surface determines $\CB$ and $\ol{\CX}$.)  This implies that, given $\CX$ and this path decorated-surface, $\eta'(\ttime)$ is still uniformly distributed according to the quantum measure on its boundary.  Therefore conditioning further on $E_\epsilon$ does not change its conditional law.
\end{proof}

In our next lemma, we show that the local behavior of $X_\ttime$ near $\ttime$ is that of a $3/2$-stable L\'evy process with only downward jumps.

\begin{lemma}
\label{lem::surface_locally_like_wedge}
Suppose that $Y$ is given by the time-reversal of a $3/2$-stable L\'evy excursion with only upward jumps of length at least $t$ conditioned so that $Y_t = u$.  Let $\CZ$ be the Radon-Nikodym derivative between the law of $s \mapsto Y_{t+s}$ for $s \in [0,\epsilon]$ with respect to the law of a $3/2$-stable L\'evy process with only downward jumps in $[0,\epsilon]$ starting from~$u$.  Then we have that $\CZ \to 1$ in probability as $\epsilon \to 0$.
\end{lemma}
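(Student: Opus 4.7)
The plan is to invoke the strong Markov property of the L\'evy excursion measure to obtain an explicit description of the conditional law of $(Y_{t+s})_{s \geq 0}$, and then to observe that on short time scales this law is indistinguishable from the free L\'evy process law.  I would first note that $Y$, being the time-reversal of a $3/2$-stable L\'evy excursion with only upward jumps, has the law of a $3/2$-stable L\'evy excursion with only downward jumps (time reversal negates the sign of jumps, and the L\'evy measure of the dual spectrally positive stable process is that of the spectrally negative stable process of the same index).

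The main step is the strong Markov property of the excursion measure at time $t$: conditional on survival (length at least $t$) and $Y_t = u$, the future process $(Y_{t+s})_{s \geq 0}$ has the same law as a free $3/2$-stable L\'evy process $X$ with only downward jumps started from $u$ and stopped at its first hitting time $\tau_0 = \inf\{s > 0 : X_s = 0\}$ of $0$, so that the remaining excursion length $T - t$ equals $\tau_0$.  In particular, on the event $\{\tau_0 > \epsilon\}$ the path $(Y_{t+s})_{s \in [0,\epsilon]}$ is identically distributed with $(X_s)_{s \in [0,\epsilon]}$.  Since $u > 0$ and the sample paths of $X$ are right-continuous with $X_0 = u$, one has $\p_u[\tau_0 > \epsilon] \to 1$ as $\epsilon \to 0$.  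Writing $\nu$ for the law of $(Y_{t+s})_{s \in [0,\epsilon]}$ and $\mu$ for the law of $(X_s)_{s \in [0,\epsilon]}$, these two laws therefore agree on an event of $\mu$-probability tending to $1$, so that $\CZ = d\nu/d\mu$ equals $1$ on that event and hence tends to $1$ in probability.

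The hard part will be a bookkeeping issue: $\nu$ and $\mu$ are not mutually absolutely continuous on all of path space, since under $\nu$ the excursion may terminate at some $\tau_0 < \epsilon$ and produce a path pinned at $0$ on $(\tau_0,\epsilon]$, whereas under $\mu$ the L\'evy process continues freely past $\tau_0$ (and may go negative).  I would handle this either by defining $\CZ$ through the Lebesgue decomposition of $\nu$ with respect to $\mu$ and working only on its absolutely continuous part, or, equivalently, by tightening the conditioning ``length $\geq t$'' to ``length $\geq t + \epsilon$'' and noting that the two conditionings differ on a set of $\mu$-probability tending to $0$.  Once this is cleared up, the conclusion reduces to the two elementary ingredients identified above: the Markov-property identification of the future as a free L\'evy process run until it first hits $0$, and the fact that a L\'evy process started strictly above $0$ does not reach $0$ in a vanishingly small time window.
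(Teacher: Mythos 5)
Your argument hinges on the claim that, given $\{T \geq t\}$ and $Y_t = u$, the future $(Y_{t+s})_{s \geq 0}$ is \emph{exactly} a free spectrally negative $3/2$-stable process started from $u$ and run until it reaches $0$, with the remaining excursion length equal to that passage time. This identification is false, and it is the load-bearing step of your proof. To see why it fails, look at how the two processes die. The forward excursion $e$ of the spectrally positive process reflected at its infimum a.s.\ leaves $0$ continuously (by the compensation formula, a.s.\ no jump of the underlying process occurs at a time when it sits at its running infimum) and returns to $0$ continuously (no downward jumps). Hence the reversed process $Y$, and in particular its future after time $t$, terminates by decreasing \emph{continuously} to $0$ and stays nonnegative throughout. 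By contrast, a spectrally negative $3/2$-stable process has no Gaussian component and therefore does not creep downward: its first passage into $(-\infty,0]$ a.s.\ occurs by a jump that overshoots strictly below $0$ (and with your definition $\tau_0 = \inf\{s : X_s = 0\}$ the path would a.s.\ visit negative values before time $\tau_0$). So the law you propose for the future of $Y$ assigns full mass to paths that end by a jump (indeed, to paths that go negative), while the true law assigns full mass to nonnegative paths ending continuously at $0$. The two laws are not equal; they are not even mutually absolutely continuous on the whole future, and on the fixed window $[0,\epsilon]$ they are merely mutually absolutely continuous with a genuinely nonconstant density. (Morally, the reversed excursion evolves as the dual process Doob-transformed by an $h$-function of the form $h(x) \propto x^{\alpha-2} = x^{-1/2}$, i.e.\ conditioned to reach $0$ continuously, so the Radon--Nikodym derivative on $[0,\epsilon]$ behaves like $h(Y_{t+\epsilon})/h(u)$ on the survival event --- it tends to $1$ as $\epsilon \to 0$ but is not identically $1$.) The same issue already affects your opening assertion that the time-reversal of the spectrally positive excursion ``has the law of a $3/2$-stable L\'evy excursion with only downward jumps'': the excursion of the spectrally negative process reflected at its infimum ends with a jump across $0$, so it cannot coincide with $Y$.

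In short, if your Markov-property identification were correct the lemma would be vacuous ($\CZ$ would equal $1$ outright on an event of probability tending to one), whereas the actual content of the statement is precisely that a nontrivial density tends to $1$. Establishing that requires either the $h$-transform description of the time-reversed excursion (equivalently, an explicit computation with the entrance law of the excursion measure and the transition densities, showing the density ratio converges to $1$ as $\epsilon \to 0$), or an appeal to an existing result; the paper itself simply cites \cite[Lemma~3.19]{map_making}, which is exactly this local absolute-continuity statement. Your reduction to ``the two laws agree on an event of probability tending to $1$'' does not survive once the exact identification is removed, so the proof has a genuine gap at its central step.
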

\begin{proof}
This follows, for example, from \cite[Lemma~3.19]{map_making}.
\end{proof}

We will now describe the local behavior of a surface sampled from $\p_u$ near $\eta'(\ttime)$ both conditioned on $E_\epsilon$ and unconditioned.  See Figure~\ref{fig::local_picture_near_hit} for an illustration of the setup.

\begin{figure}[ht!]
\begin{center}
\includegraphics[scale=0.85]{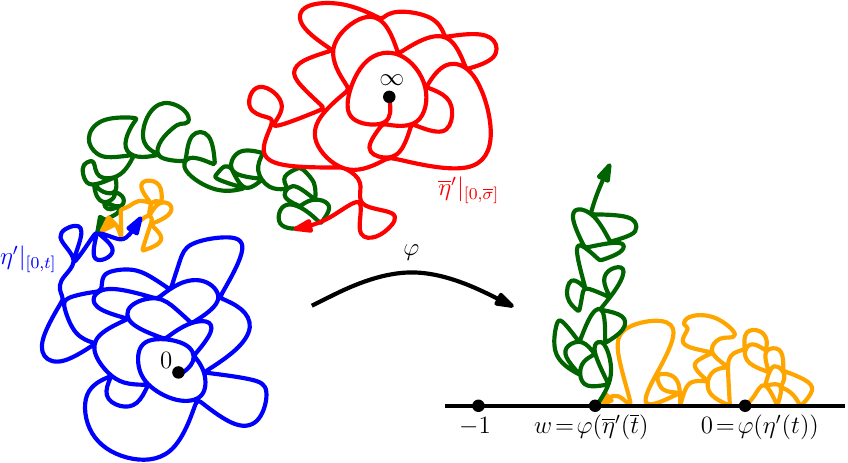}
\end{center}
\caption{\label{fig::local_picture_near_hit} Illustration of the setup of Proposition~\ref{prop::local_picture_near_hit}.  Shown on the left in blue is a path $\eta'$ on top of a doubly-marked quantum surface parameterized by $(\C,h)$ drawn up to time $t$ and in red the time-reversal $\ol{\eta}'$ of $\eta'$ up to a time $\ol{\sigma}$ before it hits $\eta'([0,t])$.  We show in Proposition~\ref{prop::local_picture_near_hit} that under $\p_u$ conditioned on $\ol{\eta}'|_{[0,\ol{\sigma}]}$ and on the event that $\ol{\eta}'$ first hits $\eta'(t)$ in the counterclockwise segment of the outer boundary of $\eta'([0,t])$ of length $\epsilon$ the local behavior of the surface near $\eta'(t)$ is described by a $\sqrt{8/3}$-quantum wedge and the local behavior $\eta'$ is described by an independent path which can be constructed by concatenating two $\SLE_6$-type curves.}
\end{figure}

\begin{proposition}
\label{prop::local_picture_near_hit}
Fix $u,\epsilon > 0$ and let $U$ be the unbounded component of $\C \setminus \eta'([0,t])$.  Let $\varphi_1 \colon U \to \h$ be the unique conformal map which takes~$\infty$ to~$i$ and $\eta'(t)$ to~$0$ and let $\varphi_2 \colon \h \to \h$ be the conformal map which corresponds to scaling so that with $\varphi = \varphi_2 \circ \varphi_1$ the quantum length assigned to $[-1,0]$ by the quantum boundary measure associated with the field $\wt{h} = h \circ \varphi^{-1} + Q\log|(\varphi^{-1})'| + \tfrac{2}{\gamma} \log \epsilon^{-1}$ is equal to $1$.  Let $\wt{\eta}' = \varphi(\eta')$.

Let $\ol{\sigma}$ be any stopping time for the filtration $\CF_s$ generated by the time-reversal of $\eta'$ and $\eta'|_{[0,t]}$, which a.s.\  occurs before the path first hits $\eta'([0,t])$.  We have that the joint law of the quantum surface $(\h,\wt{h})$ and path $\wt{\eta}'$ under $\p_u[\cdot \giv E_\epsilon, \CF_{\ol{\sigma}}]$ converges as $\epsilon \to 0$ to a pair consisting of a $\sqrt{8/3}$-quantum wedge with scaling factor chosen so that the quantum boundary length of $[-1,0]$ is equal to $1$ and an independent path $\wt{\eta}'$ whose law can be sampled from using the following steps.
\begin{itemize}
\item Sample $w \in [-1,0]$ according to Lebesgue measure
\item Sample an $\SLE_6(2;2)$ process $\wt{\eta}_w'$ from $w$ to $\infty$ with force points located at $w^-,w^+$; let $V_0$ be the component of $\h \setminus \wt{\eta}_w'$ with $0$ on its boundary
\item Given $\wt{\eta}_w'$, sample an $\SLE_6$ process $\wt{\eta}_0'$ in $V_0$ from $0$ to $w$
\item Take the concatenation of $\wt{\eta}_0'$ and $\wt{\eta}_w'$.
\end{itemize}
\end{proposition}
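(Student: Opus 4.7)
The plan is to decompose the analysis into a quantum-surface limit and a path limit, and then invoke their independence. A preliminary reduction: by Lemma~\ref{lem::law_given_e1}, further conditioning on $E_\epsilon$ given $\CF_{\ol{\sigma}}$ and $(\ttime, X_\ttime = u)$ does not alter the joint law of $(\CX, \ol{\CX}, \CB)$; hence the limit of $(\h, \wt h)$ as $\epsilon \to 0$ is governed by $(\ttime, X_\ttime)$ alone, while the event $E_\epsilon$ only selects the local law of the portion of $\eta'$ connecting $\eta'(\ttime)$ to $\ol{\eta}'(\ol{\ttime})$.

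For the surface limit, the results recalled in Section~\ref{subsec::levy_spheres} tell us that the $y$-containing component of $\C \setminus \eta'([0,\ttime])$, given its boundary length $u$, is a marked $\sqrt{8/3}$-quantum disk (with mark at $y = \infty$) and that $\eta'(\ttime)$ is an independent uniform sample from the quantum boundary measure of that disk.  The scaling $\varphi_2$ is chosen so that the counterclockwise $\epsilon$-arc from $\eta'(\ttime)$ is sent to $[-1, 0]$, and the additive shift $\tfrac{2}{\gamma} \log \epsilon^{-1}$ rescales the quantum boundary measure by $\epsilon^{-1}$; the combined transformation is exactly a zoom-in at a uniform boundary point of a quantum disk at scale $\epsilon$.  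It is a standard local-limit property of $\sqrt{8/3}$-LQG (compare \cite{DS08, dms2014mating}) that this converges to a $\sqrt{8/3}$-quantum wedge with the unique scaling factor such that $[-1,0]$ has quantum boundary length $1$.  Macroscopic features such as the image of $\ol{\eta}'|_{[0,\ol{\sigma}]}$ and the marked point $\varphi_2(i) = \lambda i$ escape to $\infty$ in the rescaled picture.

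For the path limit, target-invariance and reversibility of whole-plane $\SLE_6$ \cite{MS_IMAG4} (combined with the conditioning on $\CF_{\ol{\sigma}}$) identify the middle segment of $\eta'$ as a chordal $\SLE_6$ in the unexplored component from $\eta'(\ttime)$ to $\ol{\eta}'(\ol{\sigma})$.  Under $\varphi$, and after the $\epsilon$-rescaling, this becomes chordal $\SLE_6$ in $\h$ from $0$ to $\infty$, using locality of $\SLE_6$ to handle the fact that the rescaled target escapes to infinity.  The event $E_\epsilon$ in the rescaled picture becomes the event that the path first hits $(-\infty, 0)$ at a point $w \in [-1, 0]$.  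By Lemma~\ref{lem::surface_locally_like_wedge}, the boundary-length process near $\ttime$ is asymptotically a $3/2$-stable L\'evy process with only downward jumps, so in quantum boundary length the rescaled hit position $w$ converges to a uniform random variable on $[-1, 0]$.

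The main obstacle is identifying the conditional law of the rescaled $\SLE_6$ path, given its first boundary hit is at $w$, as the two-piece concatenation described.  The needed statement is the following fact about chordal $\SLE_6$ in $\h$ from $0$ to $\infty$: conditional on the first hit of $(-\infty, 0)$ being at $w$, the post-hit continuation from $w$ to $\infty$ is $\SLE_6(2;2)$ with force points at $w^\pm$, and, given this continuation, the pre-hit arc from $0$ to $w$ is an independent chordal $\SLE_6$ in the $0$-containing complementary component $V_0$.  This decomposition follows from the $\SLE_\kappa(\rho)$ calculus of \cite{SchrammShe10, MS_IMAG} together with the conformal Markov property of $\SLE_6$: the weights $(2; 2)$ arise because $w$ is a first-passage point and so the post-hit path must not subsequently separate $w^-$ from $w^+$, while the pre-hit excursion, conditioned to terminate at $w$, is by coordinate change a chordal $\SLE_6$ from $0$ to $w$ in $V_0$.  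Finally, the independence of the limiting surface and the limiting path follows from their pre-zoom independence (given $X_\ttime = u$) together with the continuity of the local-limit procedure in both arguments.
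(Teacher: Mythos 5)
There is a genuine gap in the path part of your argument, and it stems from misidentifying the conditioning event. In the rescaled picture, $E_\epsilon$ is \emph{not} the event that the forward path first hits $(-\infty,0)$ at a point of $[-1,0]$: it is the event that the point $\ol{\eta}'(\ol{\ttime})$ at which the \emph{time-reversal} first hits $\eta'([0,\ttime])$ --- equivalently, the \emph{last} point of $\partial U$ visited by the forward continuation --- lies in the image of the quantum-length-$\epsilon$ arc.  These are different events, and your ``needed fact'' is false as stated: for chordal $\SLE_6$ conditioned on the location of its first hit of $(-\infty,0)$, the continuation is just an $\SLE_6$ in the remaining domain from the hit point to $\infty$ and a.s.\ returns to the boundary, so it cannot be an $\SLE_6(2;2)$.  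The correct decomposition is at the reversal's first hitting point, and identifying the local limit there as an $\SLE_6(2;2)$ (with the pocket piece an $\SLE_6$ from $0$ to $w$ in $V_0$) is exactly where the paper has to work: Lemmas~\ref{lem::sle_6_first_hit} and~\ref{lem::sle_6_first_hitting} prove it via the imaginary-geometry decomposition of the counterflow line into flow lines, the excursion local-limit statement \cite[Proposition~6.7]{dms2014mating}, and the reversibility of $\SLE_6(2;2)$ from \cite{MS_IMAG3}.  The heuristic ``the weights $(2;2)$ arise because the post-hit path must not separate $w^-$ from $w^+$'' is not a substitute for this: the conditioning degenerates as $\epsilon \to 0$, and you give no limiting argument identifying the conditional law.

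A second problem is the law of $w$.  You derive uniformity ``in quantum boundary length'' from Lemma~\ref{lem::surface_locally_like_wedge}, but the proposition asserts that $w$ is uniform with respect to \emph{Lebesgue} measure on $[-1,0]$ and \emph{independent} of the limiting wedge; a point sampled from the quantum boundary measure has a different law and is correlated with the field, which would contradict the claimed independence.  The correct route is the one the paper takes: by locality the hit point of the reversal is distributed according to harmonic measure on $\partial U$ seen from $y$ (Lemma~\ref{lem::sle_6_harmonic_measure}), and since $z_0=\varphi(\infty)\to\infty$ under the rescaling, harmonic measure restricted to $[-1,0]$ and normalized converges to Lebesgue measure.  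Your surface step (zooming in at a quantum-boundary-typical point of the area-weighted disk to obtain the wedge) is a reasonable alternative to the paper's argument, which instead couples the boundary-length process of the future exploration with that of an $\SLE_6$ on a wedge via Lemma~\ref{lem::surface_locally_like_wedge} and matches the cut-off disks; but note that the paper's coupling simultaneously yields the joint convergence of the surface with the path germ at the tip, which your outline would still need to supply separately.
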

In the statement of Proposition~\ref{prop::local_picture_near_hit}, we emphasize that since $\tfrac{\kappa'}{2}-2$ is equal to $1$ for $\kappa'=6$, we have that $\wt{\eta}_w'$ a.s.\  does not hit $\R$ except at its starting point; see \cite[Remark~2.3]{MS_IMAG}.  Before we give the proof of Proposition~\ref{prop::local_picture_near_hit}, we will need to collect several intermediate results.

\begin{figure}[ht!]
\begin{center}
\includegraphics[scale=0.85]{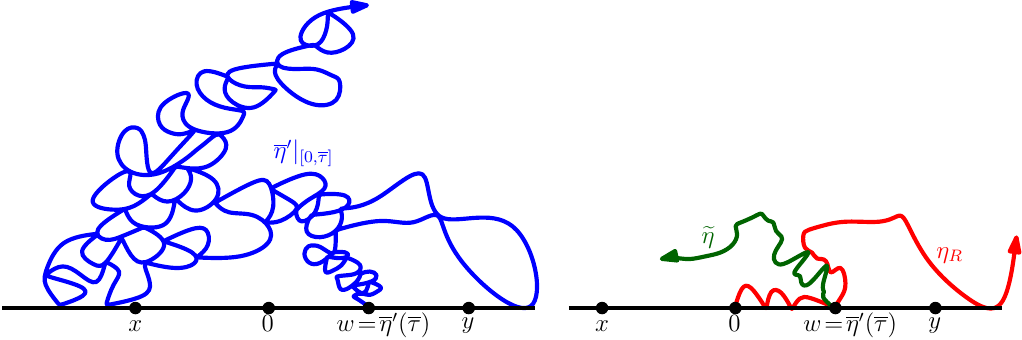}
\end{center}
\caption{\label{fig::sle_6_first_hit} Illustration of Lemma~\ref{lem::sle_6_first_hit} and the setup of its proof.  Shown is the time-reversal $\ol{\eta}'$ of an $\SLE_6$ process $\eta'$ in $\h$ from $0$ to $\infty$ stopped at the first time $\ol{\tau}$ that it hits a given bounded interval $[x,y]$ in $\R$ containing a neighborhood of $0$ on the event that $w = \ol{\eta}'(\ol{\tau}) \in [0,y]$.  We show in Lemma~\ref{lem::sle_6_first_hit} that if $\tau$ denotes the last time~$t$ that~$\eta'$ hits~$w$ then the law of $\delta^{-1}(\eta'(t)-w)$ for $t \in [\tau,\infty)$ converges as $\delta \to 0$ to an $\SLE_6(2;2)$ process.  The $\rho$ values of $(2;2)$ are special because they correspond to $\SLE_6$ conditioned not to hit $\partial \h$ in the sense described in \cite{MS_IMAG2}.  Shown on the right is an initial segment of the right boundary $\eta_R$ of $\ol{\eta}'$ as well as the left boundary $\wt{\eta}$ of $\ol{\eta}'|_{[0,\ol{\tau}]}$.  In particular, $\wt{\eta}$ and $\eta_R$ starting from the last time it hits $[0,y]$ together give the outer boundary of $\ol{\eta}'|_{[0,\ol{\tau}]}$.  If we view $\ol{\eta}'$ as the counterflow line of a GFF $h$ on $\h$ from $\infty$ to $0$ with boundary conditions $\lambda'-\pi \chi$ (resp.\ $-\lambda'+\pi \chi$)  on $\R_+$ (resp.\ $\R_-$), then $\eta_R$ is the flow line of $h$ starting from~$0$ of angle $-\pi/2$ and~$\wt{\eta}$ is the flow line of the GFF given by restricting~$h$ to the left component of $\h \setminus \eta_R$ starting from the rightmost intersection point of $\eta_R$ in $[x,y]$ with angle $\pi/2$.}
\end{figure}

\begin{lemma}
\label{lem::sle_6_first_hit}
Suppose that $\eta'$ is a chordal $\SLE_6$ process from $0$ to $\infty$ in $\h$ and let $\ol{\eta}'$ be its time-reversal.  Let $[x,y] \subseteq \R$ be an interval containing a neighborhood of $0$, let $\ol{\tau}$ be the first time $t$ that $\ol{\eta}'(t) \in [x,y]$, and let $w = \ol{\eta}'(\ol{\tau})$.  Let $\tau$ be the last time that $\eta'$ hits $w$ (so that $\eta'|_{[\tau,\infty)}$ is the time-reversal of $\ol{\eta}'|_{(0,\ol{\tau}]}$).  Then the law of $\delta^{-1} (\eta'|_{[\tau,\infty)}-w)$ reparameterized by capacity converges weakly as $\delta \to 0$ to that of an $\SLE_6(2;2)$ process in~$\h$ from~$0$ to~$\infty$ with force points located at~$0^-$ and~$0^+$ with respect to the topology of local uniform convergence.
\end{lemma}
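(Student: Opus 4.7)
The plan is to invoke the imaginary geometry framework of \cite{MS_IMAG,MS_IMAG2,MS_IMAG3,MS_IMAG4}, couple $\ol{\eta}'$ to a Gaussian free field $h$ on $\h$ as the counterflow line from $\infty$ to $0$, and then identify the local behavior of $\ol{\eta}'|_{[0,\ol{\tau}]}$ near $w$ by conformally uniformizing the complementary region to $\h$. Concretely, I would take $h$ to be a GFF on $\h$ with boundary data $-\lambda' + \pi\chi$ on $\R_-$ and $\lambda' - \pi\chi$ on $\R_+$, so that its counterflow line from $\infty$ to $0$ is $\ol{\eta}'$. As indicated in the caption of Figure~\ref{fig::sle_6_first_hit}, the right boundary $\eta_R$ of $\ol{\eta}'$ is the flow line of $h$ from $0$ of angle $-\pi/2$, and $w$ coincides with the last point of $\eta_R$ in $[x,y]$; the left boundary $\wt{\eta}$ of $\ol{\eta}'|_{[0,\ol{\tau}]}$ is the flow line of angle $\pi/2$, started at $w$, of the GFF obtained by restricting $h$ to the component of $\h \setminus \eta_R$ to the left of $\eta_R$.

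Next, by the local set property of the GFF, conditional on $\wt{\eta}$ together with the portion of $\eta_R$ between $0$ and its final hit of $[x,y]$, the restriction of $h$ to the domain $V$ bounded by these two curves is itself a GFF whose boundary data is prescribed by the imaginary geometry dictionary, and $\ol{\eta}'|_{[0,\ol{\tau}]}$ is the counterflow line of this restricted GFF in $V$ terminating at $w$. Its time-reversal is $\eta'|_{[\tau,\infty)}$, which is therefore a counterflow line in $V$ emanating from $w$. Conformally map $V$ to $\h$ by $\psi$ sending $w \mapsto 0$ and the two branches of $\partial V$ meeting at $w$ to $\R_-$ and $\R_+$. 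Under $\psi$, the discontinuities in the GFF boundary data at $0$ coming from the angles $\pm \pi/2$ of the two flow lines translate, via the standard correspondence between GFF boundary data and $\SLE_\kappa(\rho)$ force-point weights \cite{MS_IMAG}, into weights $\rho = 2$ on each side of $0$; this is the same $\rho = 2$ that corresponds to ``$\SLE_6$ conditioned not to hit $\partial \h$'' as noted just after the statement of Proposition~\ref{prop::local_picture_near_hit}. If the residual boundary data of the transported GFF on $\partial \h$ agreed exactly with the model $\SLE_6(2;2)$ boundary data, the image of $\eta'|_{[\tau,\infty)}$ under $\psi$ would be precisely $\SLE_6(2;2)$ in $\h$ from $0$ to $\infty$ with force points at $0^\pm$.

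Finally, to pass to the limit $\delta \to 0$, I would use the scale invariance of $\SLE_6(2;2)$ with force points at $0^\pm$ to reduce matters to showing that the Radon-Nikodym derivative between the law of the capacity-reparameterized $\delta$-initial segment of $\psi(\eta'|_{[\tau,\infty)})$ and that of the corresponding initial segment of the pure $\SLE_6(2;2)$ tends to $1$ in probability. Because the difference between the actual boundary data on $\partial \h$ and the model boundary data is bounded away from $0$ and has harmonic extension with oscillation tending to $0$ in a $\delta$-ball around $0$, this follows from the standard $\SLE$ Girsanov / absolute-continuity argument (cf.\ \cite[Section~6]{MS_IMAG2}), together with the regularity of $\psi$ near $w$. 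The main obstacle is carrying out this last absolute-continuity step uniformly in the random environment $(\wt{\eta},\eta_R,w)$ produced by the conditioning $\{\ol{\eta}'(\ol{\tau}) \in [0,y]\}$; in particular, the time-reversal step also invokes the reversibility of $\SLE_\kappa(\rho_1;\rho_2)$ processes established in \cite{MS_IMAG4}, and one must verify that the weak convergence so obtained takes place with respect to the topology of local uniform convergence of capacity-parameterized curves.
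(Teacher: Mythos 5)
Your setup (coupling $\ol{\eta}'$ as the counterflow line of a GFF with boundary data $\mp(\lambda'-\pi\chi)$, identifying $\eta_R$ as the angle $-\pi/2$ flow line and $\wt{\eta}$ as the angle $\pi/2$ flow line of the restricted field started from $w$) is exactly the paper's first step, but the core of your argument has a genuine gap. Conditioning on $\wt{\eta}$ and $\eta_R$ and uniformizing the region between them does \emph{not} produce (even approximately) the $\SLE_6(2;2)$ boundary data at the image of $w$. By the standard imaginary-geometry fact \cite{MS_IMAG}, given the left and right bounding flow lines of a counterflow line, its conditional law in each pocket is an $\SLE_{\kappa'}(\tfrac{\kappa'}{2}-4;\tfrac{\kappa'}{2}-4)=\SLE_6(-1;-1)$ from the opening point of the pocket (with the force points there, not at $w$); the weights are not read off from the angle gap at $w$ in the way you assert. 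Since $-1<\tfrac{\kappa'}{2}-2=1$, this conditional path touches the bounding flow lines in every neighborhood of $w$, and under your uniformizing map $\psi$ those flow lines are flattened onto $\R$. Hence the conditional, uniformized curve a.s.\ hits $\R$ arbitrarily close to its starting point, whereas $\SLE_6(2;2)$ with force points at $0^\pm$ a.s.\ does not hit $\R\setminus\{0\}$: the two laws are mutually singular near the tip, so the concluding Girsanov/absolute-continuity step (Radon--Nikodym derivative tending to $1$) cannot be carried out, even in principle. The discrepancy is not a mild perturbation of boundary data away from $0$; it is a different force-point configuration at the tip.

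The point your argument misses is that the $\SLE_6(2;2)$ law in the lemma is an \emph{unconditional} local limit: it only appears after averaging over the local geometry of $\wt{\eta}$ and $\eta_R$ near $w$, and this averaging is nontrivial because $w$ is the starting point of the excursion of $\eta_R$ straddling $y$ (equivalently, one is conditioning on $\eta_R$ not returning to $[w,y]$), a conditioning that changes the local law in a non--absolutely-continuous way as one zooms in. The paper handles this via \cite[Proposition~6.7]{dms2014mating}: the two segments of $\eta_R$ before and after $w$, together with $\wt{\eta}$, rescaled by $\delta^{-1}$, converge to a triple of flow lines of a limiting GFF on $\h$ with boundary data $\pm\sqrt{2/3}\,\pi$, whose counterflow line from $\infty$ has effective boundary data $\pm 3\lambda'$ and is therefore $\SLE_6(2;2)$ with force points at its starting point; the chordal reversibility of $\SLE_6(2;2)$ from \cite{MS_IMAG3} (not \cite{MS_IMAG4}, which treats the whole-plane/radial case) then gives the statement as seen from $w$. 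Without an ingredient of this re-rooting/zooming type at the excursion start point, the weights $(2;2)$ cannot be recovered, so your proposal as written does not close.
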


See Figure~\ref{fig::sle_6_first_hit} for an illustration of the statement of Lemma~\ref{lem::sle_6_first_hit}.  The proof of Lemma~\ref{lem::sle_6_first_hit} will make use of the ideas developed in \cite{MS_IMAG,MS_IMAG2,MS_IMAG3,MS_IMAG4}.  We will not give an in-depth introduction to imaginary geometry here, but rather refer the reader to the introductions of these articles as well as to \cite[Section~2]{MW_INTERSECTIONS} for background.  For $\kappa \in (0,4)$ and $\kappa'=16/\kappa \in (4,8)$, we will make use of the following notation (which matches that used in \cite{MS_IMAG,MS_IMAG2,MS_IMAG3,MS_IMAG4}):
\begin{equation}
\label{eqn::ig_constants}
 \lambda = \frac{\pi}{\sqrt{\kappa}}, \quad \lambda' = \frac{\pi}{\sqrt{\kappa'}} = \lambda - \frac{\pi}{2}\chi,\quad \chi = \frac{2}{\sqrt{\kappa}} - \frac{\sqrt{\kappa}}{2}.
\end{equation}
We recommend that the reader reference \cite[Figure~2.5]{MW_INTERSECTIONS} while reading the proof of Lemma~\ref{lem::sle_6_first_hit}.

\begin{proof}[Proof of Lemma~\ref{lem::sle_6_first_hit}]
We let $\kappa=8/3$, $\kappa'=16/\kappa=6$, and will write $\lambda$, $\lambda'$, and $\chi$ for the constants in~\eqref{eqn::ig_constants} with these values of $\kappa$ and $\kappa'$.  By \cite[Theorem~1.1]{MS_IMAG}, we can view $\ol{\eta}'$ as the counterflow line from $\infty$ to $0$ of a GFF $h$ on $\h$ with boundary conditions given by $-\lambda' + \pi \chi = -\lambda + \tfrac{3}{2}\pi \chi$ (resp.\ $\lambda' - \pi \chi = \lambda - \tfrac{3}{2}\pi \chi$) on $\R_-$ (resp.\ $\R_+$).  Moreover, by \cite[Theorem~1.4]{MS_IMAG} we have that the left (resp.\ right) boundary of $\ol{\eta}'$ is given by the flow line $\eta_L$ (resp.\ $\eta_R$) of $h$ starting from $0$ with angle $\pi/2$ (resp.\ $-\pi/2$).  That is, $\eta_L$ (resp.\ $\eta_R$) is the flow line of $h + \pi\chi/2$ (resp.\ $h-\pi \chi/2$) from $0$ to $\infty$.  By \cite[Theorem~1.1]{MS_IMAG}, the law of $\eta_L$ (resp.\ $\eta_R$) is that of an $\SLE_\kappa(\kappa-4;\kappa/2-2) = \SLE_{8/3}(-4/3;-2/3)$ (resp.\ $\SLE_\kappa(\kappa/2-2;\kappa-4) =\SLE_{8/3}(-2/3;-4/3)$) process in $\h$ from $0$ to $\infty$ with force points located at~$0^-$ and~$0^+$.  (See also \cite[Figure~2.5]{MW_INTERSECTIONS}.)  In particular, $\eta_L$ a.s.\  hits~$\R_-$ but not~$\R_+$ and likewise $\eta_R$ a.s.\  hits~$\R_+$ and not~$\R_-$; recall that $\kappa/2-2$ is the critical $\rho$-value at or above which $\SLE_\kappa(\rho)$ does not hit the boundary.

Since $\ol{\eta}'$ visits the points on $\eta_L$ (resp.\ $\eta_R$) in the reverse order in which they are visited by $\eta_L$ (resp.\ $\eta_R$), we have that $w$ is either equal to the last intersection of $\eta_L$ with $[x,0]$ or to the last intersection of $\eta_R$ with $[0,y]$.  These two possibilities correspond to when $w \in [x,0]$ or $w \in [0,y]$.  We shall assume without loss of generality that we are working on the event that the latter holds.  Let $\wt{\eta}_L$ be the flow line with angle $\pi/2$ of the GFF given by restricting~$h$ to the component of $\h \setminus \eta_R$ which is to the left of~$\eta_R$.  It follows\footnote{\cite[Proposition~5.7]{dms2014mating} is stated in the setting of $\SLE_\kappa(\rho)$ processes with a single boundary force point; however, the same proof goes through verbatim to describe the local behavior of the start point an excursion which straddles a given boundary point for $\SLE_\kappa(\rho_1;\rho_2)$ processes.} from \cite[Proposition~5.7]{dms2014mating} that the two segments of $\eta_R$ (which correspond to before and after the path hits $w$ or, equivalently, before and after the path starts the excursion that it makes over $y$) translated by $-w$ and rescaled by the factor $\delta^{-1}$ converge as $\delta \to 0$ to a pair of flow lines $\eta_1,\eta_2$ of a GFF on $\h$ with boundary conditions $\lambda+\pi \chi/2 = \sqrt{2/3} \pi$ (resp.\ $-3\lambda+5\pi \chi/2 = -\sqrt{2/3}\pi$) on $\R_+$ (resp.\ $\R_-$) with respective angles $\theta_1=-\pi/2$, $\theta_2=2\lambda/\chi - \pi/2=5\pi/2$.  Moreover, $\wt{\eta}_L$ translated by $-w$ and rescaled by $\delta^{-1}$ converges along with the two segments of $\eta_R$ to the flow line $\wt{\eta}$ of the same limiting GFF used to generate $\eta_1,\eta_2$ with angle $\pi/2$ (since the angle gap between $\wt{\eta}_L$ and $\eta_R$ is equal to $\pi$, so the angle gap between $\wt{\eta}$ and $\eta_1$ is the same).  In particular,
\begin{itemize}
\item The marginal law of $\eta_1$ is that of an $\SLE_\kappa(\kappa-2) = \SLE_{8/3}(2/3)$ process with a single force point located at $0^-$.
\item The marginal law of $\eta_2$ is that of an $\SLE_\kappa(\kappa-4;2) = \SLE_{8/3}(-4/3;2)$ process with force points located at $0^-$, $0^+$.
\item The conditional law of $\eta_2$ given $\eta_1$ is that of an $\SLE_\kappa(\kappa-4) = \SLE_{8/3}(-4/3)$ process with force point located at $0^-$.
\item The conditional law of $\wt{\eta}$ given $\eta_1$, $\eta_2$ is that of an $\SLE_\kappa(\kappa/2-2;-\kappa/2) = \SLE_{8/3}(-2/3 ;-4/3)$ process with force points located immediately to the left and right of its starting point.
\end{itemize}

As $\kappa=8/3$ and $\kappa'=6$, we note that 
\begin{align*}
 \left(-3\lambda + \frac{5\pi \chi}{2}\right) - \pi \chi &= -3\lambda +\frac{3 \pi \chi}{2} = -3\lambda' \quad\text{and}\\
 \left(\lambda + \frac{\pi\chi}{2} \right) + \pi \chi &=\lambda + \frac{3\pi \chi}{2} = 3\lambda'.
 \end{align*}
Therefore by \cite[Theorem~1.4]{MS_IMAG}, we have that $\wt{\eta}$ and $\eta_1$ respectively give the left and right boundaries of the counterflow line from $\infty$ to $0$ which, by the form of the boundary data, is an $\SLE_6(2;2)$ process with force points located immediately to the left and right of $\infty$.  The result thus follows by the reversibility of $\SLE_6(2;2)$ established in \cite{MS_IMAG3}.
\end{proof}

\begin{lemma}
\label{lem::sle_6_first_hitting}
Suppose that $\eta'$ is a whole-plane $\SLE_6$ process from $0$ to $\infty$ in $\C$.  Let $\tau$ be a stopping time for $\eta'$ so that $\eta'([0,\tau])$ is a.s.\  bounded and let $\ol{\sigma}$ be a stopping time for the time-reversal $\ol{\eta}'$ of $\eta'$ given $\eta'|_{[0,\tau]}$ such that $\ol{\eta}'|_{[0,\ol{\sigma}]}$ does not hit $\eta'([0,\tau])$ a.s.  Let $\ol{\tau}$ be the first time that $\ol{\eta}'$ hits $\eta'([0,\tau])$.  Let $\psi$ be the unique conformal map from the unbounded component of $\C \setminus \eta'([0,\tau])$ to $\h$ which sends $\infty$ to $i$ and $\eta'(\tau)$ to $0$.  Then law of the time-reversal of $\delta^{-1} \psi(\ol{\eta}'|_{[0,\ol{\tau}]})$ parameterized by capacity converges weakly respect to the topology of local uniform convergence as $\delta \to 0$ to that of a chordal $\SLE_6(2;2)$ process from $0$ to $\infty$ with force points located at~$0^-$ and~$0^+$.
\end{lemma}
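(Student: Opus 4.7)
The strategy is to reduce the whole-plane statement to the chordal case established in Lemma~\ref{lem::sle_6_first_hit} via the Markov property of whole-plane $\SLE_6$, conformal mapping, and local absolute continuity arguments for $\SLE_\kappa(\rho)$ processes developed in \cite{MS_IMAG, MS_IMAG2, MS_IMAG3, MS_IMAG4}. The starting observation is that, conditional on $\eta'|_{[0,\tau]}$, the target-invariance and Markov property of whole-plane $\SLE_6$ from $0$ to $\infty$ identify the continuation $\eta'|_{[\tau, \infty)}$ as a chordal $\SLE_6$ in the unbounded component $D$ of $\C \setminus \eta'([0,\tau])$ from the tip $\eta'(\tau)$ targeted at $\infty$ (the latter being an interior point of $D$ inside $\hat{\C}$). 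In particular, the first time that $\ol{\eta}'$ hits $\eta'([0,\tau])$ corresponds to the terminal time $\tau$ of $\eta'$, so that $\ol{\eta}'(\ol{\tau}) = \eta'(\tau)$ and hence $\psi(\ol{\eta}'(\ol{\tau})) = 0$.

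Next, I would apply the conformal map $\psi$ to push the chordal $\SLE_6$ $\eta'|_{[\tau, \infty)}$ forward to a chordal $\SLE_6$ in $\h$ starting from $0$ and targeted at $\psi(\infty) = i$. Up to the chordal/radial $\SLE$ reversibility \cite{MS_IMAG3, MS_IMAG4}, the time-reversal of this image (which is $\psi(\ol{\eta}'|_{[0,\ol{\tau}]})$ up to reparameterization) is an $\SLE_6$-type process in $\h$ from $i$ to $0$. The key point is that this time-reversed process, viewed on the local scale $\delta$ near $0$, is locally absolutely continuous with respect to the time-reversal of a chordal $\SLE_6$ in $\h$ from $0$ to $\infty$, since the distinction between targeting $i$ versus $\infty$ is invisible as $\delta \to 0$ (the Radon-Nikodym derivative is controlled by the standard Girsanov-type formulas for $\SLE_\kappa(\rho)$ driving functions, and tends to $1$ in probability upon restriction to the $\delta$-neighborhood of $0$).

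I would then incorporate the conditioning on $\ol{\eta}'|_{[0, \ol{\sigma}]}$. Since $\ol{\sigma}$ is a stopping time at which $\ol{\eta}'$ has almost surely not yet hit $\eta'([0,\tau])$, the curve $\psi(\ol{\eta}'|_{[0,\ol{\sigma}]})$ stays at a positive distance from $0$. Combining the Markov property of the time-reversal with the $\SLE_\kappa(\rho)$ absolute continuity results from \cite{MS_IMAG,MS_IMAG2}, the conditional law of $\psi(\ol{\eta}')$ in a $\delta$-neighborhood of $0$ is absolutely continuous with respect to its unconditional law there, with Radon-Nikodym derivatives converging to a deterministic constant as $\delta \to 0$. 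This lets me remove the additional conditioning at the level of the scaling limit.

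With these reductions in hand, the problem becomes exactly the setup of Lemma~\ref{lem::sle_6_first_hit}: the local behavior of the time-reversal of a chordal $\SLE_6$ in $\h$ near its first hit of a boundary interval around $0$. Applying that lemma (after noting that capacity reparameterization commutes with the scaling $\delta^{-1}$) yields the desired $\SLE_6(2;2)$ limit with force points at $0^-$ and $0^+$. The main obstacle is verifying rigorously the vanishing of the Radon-Nikodym derivatives in the small-$\delta$ limit under the joint conditioning on both $\eta'|_{[0,\tau]}$ and $\ol{\eta}'|_{[0,\ol{\sigma}]}$; this is the kind of bookkeeping handled by the explicit driving function formulas for $\SLE_\kappa(\rho)$ in \cite{MS_IMAG}, together with the fact that harmonic functions witnessing the force point evolutions are smooth away from the target, hence negligible on the local scale.
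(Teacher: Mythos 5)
Your overall strategy --- reduce to the chordal statement of Lemma~\ref{lem::sle_6_first_hit} via the Markov property, locality, and absolute continuity --- is the same one-line route the paper takes (the paper's proof is simply ``combine the locality property of whole-plane $\SLE_6$ with Lemma~\ref{lem::sle_6_first_hit}''). However, your write-up contains a genuine error at its central step: you assert that the first time $\ol{\eta}'$ hits $\eta'([0,\tau])$ ``corresponds to the terminal time $\tau$,'' so that $\ol{\eta}'(\ol{\tau}) = \eta'(\tau)$ and $\psi(\ol{\eta}'(\ol{\tau})) = 0$. This is false. Since $\kappa'=6>4$, the continuation $\eta'|_{[\tau,\infty)}$ is boundary-hitting and a.s.\ returns to touch $\eta'([0,\tau])$ after time $\tau$; the first hit of the time-reversal therefore occurs at the \emph{last} point at which the forward continuation touches $\eta'([0,\tau])$, which is a.s.\ distinct from the tip. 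Indeed, Lemma~\ref{lem::sle_6_harmonic_measure} states that $\ol{\eta}'(\ol{\tau})$ is distributed according to harmonic measure seen from $\infty$ on the boundary of the unbounded component, and the whole structure of Section~\ref{sec::cut_points} (the events $E_\epsilon$ and Lemma~\ref{lem::probability_of_hitting}, where the probability that the hit lands within quantum length $\epsilon$ of the tip is of order $\epsilon$) depends on this point being nondegenerate and typically far from $\eta'(\tau)$.

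This misidentification is not a cosmetic slip: it is inconsistent with the conclusion you are trying to prove. The $\SLE_6(2;2)$ limit, with force points of weight $2$ at $0^-$ and $0^+$, arises precisely because the reversed curve approaches a boundary point \emph{from the bulk}, i.e., it is an $\SLE_6$ conditioned not to hit the boundary on either side near the hitting point --- this is exactly the content of Lemma~\ref{lem::sle_6_first_hit}, whose hitting point $w$ is an interior point of the interval $[x,y]$, not the root of the forward curve. If the reversal really first hit at the tip $\eta'(\tau)$, the local picture would just be that of an ordinary $\SLE_6$ near its tip, and no $(2;2)$ force points would appear. The correct argument keeps the hitting point random: conditionally on $\eta'|_{[0,\tau]}$, map by $\psi$, use locality (and reversibility) to compare the image of the continuation to a chordal $\SLE_6$ in $\h$, and then apply Lemma~\ref{lem::sle_6_first_hit} to the time-reversal at its first hit of the relevant boundary arc, with the rescaling performed at that hitting point. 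Your remaining steps (removing the target discrepancy $i$ versus $\infty$ and the conditioning on $\ol{\eta}'|_{[0,\ol{\sigma}]}$ by local absolute continuity) are in the right spirit, but as written the proof does not go through until the identification of the hitting point is corrected.
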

\begin{proof}
This follows by combining the locality property of whole-plane $\SLE_6$ with Lemma~\ref{lem::sle_6_first_hit}.
\end{proof}

\begin{lemma}
\label{lem::sle_6_harmonic_measure}
Suppose that $\eta'$ is a whole-plane $\SLE_6$ process from $0$ to $\infty$ in $\C$.  Let $\tau$ be a stopping time for $\eta'$ such that $\eta'([0,\tau])$ is bounded a.s.  Let $\ol{\eta}'$ be the time-reversal of $\eta'$ and let $\ol{\tau}$ be the first time that $\ol{\eta}'$ hits $\eta'([0,\tau])$.  Then $\ol{\eta}'(\ol{\tau})$ is distributed according to harmonic measure as seen from $\infty$ on the boundary of the unbounded component of $\C \setminus \eta'([0,\tau])$.
\end{lemma}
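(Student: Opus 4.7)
The plan is to combine the reversibility of whole-plane $\SLE_6$ with its locality property and its rotational invariance around the origin.  By the reversibility of whole-plane $\SLE_6$ proved in \cite{MS_IMAG4}, I would first observe that $\ol{\eta}'$ is itself a whole-plane $\SLE_6$ from $\infty$ to $0$.  Let $K = \eta'([0,\tau])$ and let $U$ be the unbounded component of $\C \setminus K$; since $K$ is compact and connected, $U$ is simply connected in $\hat{\C}$ and contains $\infty$.

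Next, I would invoke the locality property of $\SLE_6$ to argue that the law of the initial segment $\ol{\eta}'|_{[0,\ol{\tau}]}$ (as an unparameterized curve) depends only on the conformal class of $(U,\infty)$, not on the location of the target $0$ inside $K$.  This is the same locality principle used in the proof of Lemma~\ref{lem::sle_6_first_hitting} above; in the whole-plane setting it can be obtained by approximating whole-plane $\SLE_6$ from $\infty$ to $0$ by radial $\SLE_6$ processes in $\C \setminus \ol{B(0,\epsilon)}$ and sending $\epsilon \to 0$, using that the law of each radial approximation stopped upon first hitting $K$ does not depend on the choice of target inside $\ol{B(0,\epsilon)}$.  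Consequently, if $\psi \colon U \to \C \setminus \ol{\D}$ is a conformal map with $\psi(\infty) = \infty$, then $\psi(\ol{\eta}'|_{[0,\ol{\tau}]})$ has the same law as a whole-plane $\SLE_6$ from $\infty$ to $0$ stopped upon first entry into $\ol{\D}$.

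The rotational invariance of whole-plane $\SLE_6$ from $\infty$ to $0$ around the origin (immediate from its definition via radial approximations centered at $0$) then forces the first-hit distribution on $\partial \D$ to be invariant under all rotations around $0$, hence uniform (Lebesgue) on $\partial \D$.  Pulling back under $\psi^{-1}$, and using that harmonic measure is preserved by conformal maps together with the fact that harmonic measure from $\infty$ in $\C \setminus \ol{\D}$ is uniform on $\partial \D$, we conclude that $\ol{\eta}'(\ol{\tau})$ is distributed according to harmonic measure from $\infty$ on $\partial U$.

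The hard part will be making the locality reduction precise for whole-plane $\SLE_6$: specifically, verifying that the law of $\ol{\eta}'|_{[0,\ol{\tau}]}$ as an unparameterized curve really is a function only of $(U,\infty)$ and is insensitive to how the target $0$ sits inside $K$.  The radial-approximation strategy reduces this to the locality property for radial $\SLE_6$, which is standard, but some care is needed in the limit $\epsilon \to 0$ to handle the fact that whole-plane $\SLE_6$ is not literally a chordal $\SLE_6$ process and to check that the first-hitting point is preserved under the limit.
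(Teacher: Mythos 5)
Your final ingredient is fine: for a \emph{fixed} compact hull $K$ with unbounded complementary component $U$, conformal invariance plus the rotational symmetry of whole-plane $\SLE_6$ from $\infty$ does give that the first hitting point of $\partial U$ is distributed as harmonic measure from $\infty$. The genuine gap is in the locality step. You apply locality to $K=\eta'([0,\tau])$ as though it were a deterministic (or independent) set, but $K$ is traced by the terminal portion of $\ol{\eta}'$ itself, so what the lemma requires is the \emph{conditional} law of $\ol{\eta}'|_{[0,\ol{\tau}]}$ given $\eta'|_{[0,\tau]}$ (equivalently, given $U$ with its embedding); that is, you are conditioning on a functional of $\ol{\eta}'$ beyond time $\ol{\tau}$. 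Locality for a fixed hull says nothing about this conditional law, and your radial-approximation reduction inherits the same problem: the approximating radial $\SLE_6$ curves are not independent of $K$ either. The ``hard part'' you flag (insensitivity to where $0$ sits inside $K$, and whole-plane versus radial technicalities) is not the real obstruction.

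The missing idea --- and one of the ingredients the paper's proof explicitly invokes alongside locality and conformal invariance --- is the domain Markov property applied to $\eta'$ at time $\tau$: conditionally on $\eta'|_{[0,\tau]}$, the continuation $\eta'|_{[\tau,\infty)}$ is a radial $\SLE_6$ in $U$ from $\eta'(\tau)$ to $\infty$, and $\ol{\eta}'(\ol{\tau})$ is exactly the last point of $\partial U$ visited by this conditional curve. This is the step that legitimately decouples $K$ from the part of the curve being reversed. After this reduction your strategy does apply, but to the conditional picture: the time-reversal of the continuation is a curve from $\infty$ targeted at the \emph{boundary} point $\eta'(\tau)$ (not at $0$), so one needs reversibility of this radial/whole-plane $\SLE_6$ together with locality/target invariance to see that, up to its first hit of $\partial U$, its law does not feel $\partial U$ or the target $\eta'(\tau)$ and agrees with that of whole-plane $\SLE_6$ from $\infty$ stopped upon hitting $\partial U$; then conformal invariance and rotational symmetry finish as you describe. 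In particular, the insensitivity that must be justified concerns the boundary target $\eta'(\tau)$, not the location of $0$ inside $K$.
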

\begin{proof}
This is a consequence of the locality for whole-plane $\SLE_6$, the domain Markov property, and conformal invariance.
\end{proof}

\begin{proof}[Proof of Proposition~\ref{prop::local_picture_near_hit}]
We will first prove a version of the proposition in which we have not conditioned on $E_\epsilon$ or $\CF_{\ol{\sigma}}$.  That is, we will first argue that as $\epsilon \to 0$ (but $u > 0$ fixed), we have that the joint law of $(\h,\wt{h})$ and $\wt{\eta}'$ under $\p_u$ converges to a pair consisting of a $\sqrt{8/3}$-quantum wedge normalized to assign quantum boundary length $1$ to $[-1,0]$ and an independent chordal $\SLE_6$ process in $\h$ from $0$ to $\infty$.  

We will establish this using Lemma~\ref{lem::surface_locally_like_wedge}.  Suppose that $\wh{\CW} = (\h,\wh{h},0,\infty)$ is a $\sqrt{8/3}$-quantum wedge and that $\wh{\eta}'$ is a chordal $\SLE_6$ process from $0$ to $\infty$ sampled independently of $\wh{h}$.  Let $(\wh{f}_t)$ denote the forward centered Loewner flow associated with~$\wh{\eta}'$ and, for each $s$, we let $\wh{X}_s$ denote the change in the boundary length of $h \circ \wh{f}_s^{-1} + Q\log| (\wh{f}_s^{-1})'|$ relative to $s=0$.  Assume that $\wh{\eta}'$ is parameterized according to quantum natural time.  By \cite[Corollary~1.19]{dms2014mating}, we have that $\wh{X}_s$ evolves as a $3/2$-stable L\'evy process with only downward jumps.  Moreover, by \cite[Theorem~1.18]{dms2014mating}, we have that conditional on the realization of the entire process $\wh{X}_s$, the components of $\h \setminus \wh{\eta}'$ (viewed as quantum surfaces) are conditionally independent quantum disks given their boundary length and the boundary length of each such disk is given by the corresponding jump of $\wh{X}_s$.

We can construct a coupling of the surface near $\eta'(\ttime)$ and $\wh{\CW}$ as follows.  Let $X_s$ denote the quantum length of the outer boundary of $\eta'([0,\ttime+s])$ relative to the quantum length of the outer boundary of $\eta'([0,\ttime])$.  In other words, we normalize so that $X_0 = 0$.  Fix $\delta > 0$.  Lemma~\ref{lem::surface_locally_like_wedge} implies that there exists $s_0 > 0$ such that we can find a coupling of $X$ and $\wh{X}$ so that the event $A = \{ X|_{[0,s_0]} = \wh{X}|_{[0,s_0]} \}$ satisfies $\p_u[A] \geq 1-\delta$.  Note that the conditional law of the quantum disks cut off by $\eta'$ in the time interval $[0,s_0]$ given $X|_{[0,s_0]}$ is the same as that for the quantum disks cut off by $\wh{\eta}'$ in the time interval $[0,s_0]$ given $\wh{X}|_{[0,s_0]}$.  Consequently, we can couple together these quantum disks so that they are the same (equal as boundary-marked quantum surfaces and with the same orientation) on $A$.  We sample the remainder of the quantum disks conditionally independently.  On $A^c$, we sample all of the quantum disks conditionally independently given $X|_{[0,s_0]}$ and $\wh{X}|_{[0,s_0]}$.

Let $\wt{K}$ (resp.\ $\wh{K}$) be the region cut off from $\infty$ by $\wt{\eta}'([0,s_0])$ (resp.\ $\wh{\eta}'([0,s_0])$).  On $A$, we have that $(\wt{K},\wt{h},0)$ and $(\wh{K},\wh{h},0)$ are equal as (marked) quantum surfaces and $(\h \setminus \wt{K},\wt{h})$ and $(\h \setminus \wh{K},\wh{h})$ are conditionally independent as quantum surfaces given $X_{s_0} = \wh{X}_{s_0}$.  Let $\wt{V}$ (resp.\ $\wh{V}$) be the component of the interior of $\wt{K}$ (resp.\ $\wh{K}$) which contains $0$.  Then there exists a conformal map $\wt{\psi} \colon \wt{V} \to \wh{V}$ which takes $0$ to $0$ since $(\wt{V},\wt{h},0)$ and $(\wh{V},\wh{h},0)$ are equal as (marked) quantum surfaces.  By scaling, we can take the embedding of $(\h,\wh{h})$ so that $\wt{\psi}'(0) = 1$.  It therefore follows that $|\wt{\psi}(z)-z| \to 0$ as $z \to 0$, hence it is easy to see that the total variation distance between the laws of the restrictions of the two surfaces to $\h \cap B(0,\delta)$ converges to $0$ as $\delta \to 0$.

We have now proved the part of the proposition which involves the behavior of the surface near $\eta'(\ttime)$.  The same proof works if we condition on $E_\epsilon$ and $\CF_{\ol{\sigma}}$ since this conditioning does not change the local behavior of the surface near $0$.  We shall now assume that we are conditioning on both $E_\epsilon$ and $\CF_{\ol{\sigma}}$.  To finish the proof, we need to describe the behavior of~$\wt{\eta}'$ near~$\wt{\eta}'(\ttime)$.  Let $z_0 = \varphi(\infty)$ and let~$\ol{\tau}$ be the first time that $\ol{\eta}'$ hits $\eta'([0,\ttime])$.  Lemma~\ref{lem::sle_6_harmonic_measure} then implies that the distribution of $\varphi(\ol{\eta}'(\ol{\tau}))$ is equal to the distribution on $[-1,0]$ induced by harmonic measure as seen from $z_0$ (normalized to be a probability).  Since we know that $z_0 \to \infty$ as $\epsilon \to 0$, it follows that the distribution of $\varphi(\ol{\eta}'(\ol{\tau}))$ converges to the uniform distribution as $\epsilon \to 0$.  Moreover, that the law of the path is as claimed follows from Lemma~\ref{lem::sle_6_first_hitting}.
\end{proof}

\begin{lemma}
\label{lem::conditional_probability_converges}
There exists $p_0 > 0$ such that for each $u > 0$ we have that
\[ \p_u[F_\epsilon \giv E_\epsilon] \to p_0 \quad\text{as}\quad \epsilon \to 0.\]
\end{lemma}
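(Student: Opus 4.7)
The plan is to apply Proposition~\ref{prop::local_picture_near_hit} to pass to a $u$-independent limiting picture and then recognize $F_\epsilon$ as a continuity set of that limit.  Under $\p_u[\cdot \giv E_\epsilon]$, Proposition~\ref{prop::local_picture_near_hit} gives that the joint law of the rescaled surface $(\h,\wt h)$ and the rescaled path $\wt\eta' = \varphi(\eta')$ converges as $\epsilon \to 0$ to a $\sqrt{8/3}$-quantum wedge (normalized so that the $\wt h$-quantum length of $[-1,0]$ equals $1$) decorated by an independent path $\wt\eta'$ obtained by concatenating an $\SLE_6$ process $\wt\eta_0'$ from $0$ to $w$ inside $V_0$ with an $\SLE_6(2;2)$ process $\wt\eta_w'$ from $w$ to $\infty$, where $w$ is uniform in $[-1,0]$.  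Crucially, this limiting law does not depend on $u$.

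The next step is to translate $F_\epsilon$ into an event on the limiting picture.  The rescaling $\varphi$ multiplies quantum boundary length by $\epsilon^{-1}$, so the quantum length $2\epsilon$ interval of $\partial U$ centered at $\eta'(\ttime)$ becomes an interval $J_\epsilon \subseteq \R$ of $\wt h$-quantum length $2$ centered at $0$, converging to a deterministic interval $J \subseteq \R$ in the limit.  The first containment in $F_\epsilon$ becomes the event $F^{(1)} := \{\wt\eta' \cap \R \subseteq J\}$; since $\wt\eta_w'$ only touches $\R$ at $w$, this reduces to a containment condition on the $\SLE_6$ hitting set $\wt\eta_0' \cap (\partial V_0 \cap \R)$.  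The second containment in $F_\epsilon$ is a statement about the $h$-quantum length of the intersection along the outer boundary of $\ol\eta'([0,\ol\tau])$ viewed from $x$; after $\varphi$ this outer boundary corresponds to one of the two sides of $\wt\eta'$ in $\h$, and the second containment becomes an analogous event $F^{(2)}$ about the portion of $\wt\eta'$'s self-intersection on that outer boundary being contained in a fixed quantum length $2$ interval centered at $0$.

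Combining these two steps, the continuous mapping theorem will yield $\p_u[F_\epsilon \giv E_\epsilon] \to p_0 := \p[F^{(1)} \cap F^{(2)}]$, where the right-hand side is computed under the limiting distribution and is therefore independent of $u$.  Positivity of $p_0$ holds because with positive probability the $\SLE_6$ hitting set $\wt\eta_0' \cap \R$ is contained in $J$ (a standard non-degeneracy estimate for $\SLE_6$ in a domain whose boundary segment has bounded quantum length), and by a symmetric argument the same holds for $F^{(2)}$; the two events are far from mutually exclusive.  The main technical obstacle will be confirming that $F^{(1)}$ and $F^{(2)}$ are continuity sets of the limiting distribution, i.e., that the limit places no mass on the event that the extent of the hitting set is exactly the length of $J$.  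This non-atomicity should follow from the absolute continuity of the quantum length measure on the $\SLE_6$ hitting set, which can be extracted from the quantum-wedge conformal welding theory of \cite{SHE_WELD,dms2014mating}.
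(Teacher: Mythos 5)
Your proposal is correct and follows essentially the same route as the paper, whose proof of this lemma is simply a one-line appeal to the representation of the configuration near the tip given in Proposition~\ref{prop::local_picture_near_hit}: under $\p_u[\cdot \giv E_\epsilon]$ the rescaled picture converges to a $u$-independent quantum-wedge-plus-path law, the events defining $F_\epsilon$ become fixed (non-degenerate) events for that limit after the $\epsilon^{-1}$ boundary-length rescaling, and hence $\p_u[F_\epsilon \giv E_\epsilon]$ tends to a positive constant $p_0$ independent of $u$. Your additional discussion of continuity sets and positivity is a reasonable fleshing-out of what the paper treats as obvious.
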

\begin{proof}
This is obvious from the representation of the conditional law of the configuration near the tip described in Proposition~\ref{prop::local_picture_near_hit}.
\end{proof}

We are now ready to combine everything to complete the proof of Theorem~\ref{thm::typical_cut_point}.

\begin{proof}[Proof of Theorem~\ref{thm::typical_cut_point}]
As mentioned above, we will complete the proof of the theorem by establishing~\eqref{eqn::m_w_to_m_d_e_eps} and~\eqref{eqn::m_w_to_m_d_f_eps}.  We now introduce the topology respect to which we will establish the weak limits in~\eqref{eqn::m_w_to_m_d_e_eps} and~\eqref{eqn::m_w_to_m_d_f_eps}.  (As mentioned earlier, the exact topology is not important.)
 
We will first define a metric on the space $\X$ of finite volume quantum surfaces which are homeomorphic to~$\D$.  We can associate with each quantum surface $\CS = (D,h)$ a probability measure on random distributions $\wt{h}$ by picking $z \in D$ according to the quantum measure, $\theta \in [0,2\pi]$ uniformly at random and independently of $z$, letting $\varphi \colon D \to \D$ be the unique conformal transformation with $\varphi(z) = 0$ and $\varphi'(0)/|\varphi'(0)| =  e^{i \theta}$, and then taking $\wt{h} = h \circ \varphi^{-1} + Q \log|(\varphi^{-1})'|$.  Clearly, if $\CS_1 = (D_1,h_1)$ and $\CS_2 = (D_2,h_2)$ are equivalent as quantum surfaces then the corresponding random distributions $\wt{h}_1$ and $\wt{h}_2$ on $\D$ have the same law.  We define our metric $d$ on~$\X$ by taking $d(\CS_1,\CS_2)$ for $\CS_1,\CS_2 \in \X$ to be given by the sum over $j \in \N$ of~$2^{-j}$ times the Prokhorov distance between the laws of the restrictions of $\wt{h}_1$ and $\wt{h}_2$ to $B(0,1-2^{-j})$.  We can also extend the definition of $\X$ to the setting of $k$-marked quantum surfaces $\X_k$ which are homeomorphic to $\D$ by taking the sum over $j \in \N$ of $2^{-j}$ times the Prokhorov distance between the joint law of the restriction of $\wt{h}_1$ to $B(0,1-2^{-j})$ and its marked points and the joint law of the restriction of $\wt{h}_2$ to $B(0,1-2^{-j})$ and its marked points.

We next recall from \cite{dms2014mating,quantum_spheres} (as well as Section~\ref{sec::spheres}) that if we draw an independent whole-plane $\SLE_6$ on top of a $\sqrt{8/3}$-LQG sphere starting from one quantum typical point and targeted at another, then the quantum boundary length of the complementary component containing the target point is given by the time-reversal of a $3/2$-stable L\'evy excursion $e$.  Moreover, the jumps of $e$ correspond to the components cut out by the $\SLE_6$ where the magnitude of a given jump gives the quantum boundary length of the corresponding component.  Given $e$, these components are conditionally independent quantum disks which are oriented depending on whether the $\SLE_6$ surrounded it clockwise or counterclockwise and are also marked by the first (equivalently last) point on the disk boundary visited by the $\SLE_6$.  It is shown in particular in \cite[Theorem~1.1]{quantum_spheres} that the $\SLE_6$-decorated quantum sphere is a.s.\ determined by $e$ and the collection of oriented and marked quantum disks.  Therefore we can view the path-decorated, beaded quantum surface which corresponds to the region separated by an $\SLE_6$ from its target point as well as the path stopped at a given time as a random variable which takes values in $\X^*$ which is the product of the space of {\cadlag} functions with only downward jumps (with the local Skorokhod topology) and $(\X_1 \times \{0,1\})^\N$; we equip $\X^*$ with the product topology.  Indeed, the time-reversed L\'evy excursion naturally is an element of the former space and the ordered collection of marked, oriented quantum disks naturally take values in the latter space (the orientation corresponds to the extra bit).

Fix $\epsilon > 0$.  For any interval $I = (x_1,x_2)$ with $0 < x_1 < x_2$ and any interval $J = (t_1,t_2)$ with $0 < t_1 < t_2$ we have that $\MstwoW[X_\ttime \in I, \ttime \in J] \in (0,\infty)$ so that $\MstwoW$ conditioned on $X_\ttime \in I$ and $\ttime \in J$ makes sense as a probability measure.  We can thus write
\[ \MstwoW[ E_\epsilon, X_\ttime \in I, \ttime \in J ] = \MstwoW[ E_\epsilon \giv X_\ttime \in I, \ttime \in J] \MstwoW[ X_\ttime \in I, \ttime \in J].\]
Lemma~\ref{lem::probability_of_hitting} implies that for all $\epsilon \in (0,x_1)$ we have that
\begin{equation}
\label{eqn::rn_bound}
\frac{\epsilon}{x_2} \leq \MstwoW[ E_\epsilon \giv X_\ttime \in I, \ttime \in J] \leq \frac{\epsilon}{x_1}.
\end{equation}

We let
\[ G_j = \{ 2^{j-1} \leq \ttime \leq 2^{j}\} \quad\text{for each}\quad j \in \Z.\]

Suppose that $f \colon \X^* \times \X^* \times \X \to \R$ is a bounded, continuous function.  Let $\CX$, $\ol{\CX}$, and~$\CB$ be as in the statement of Theorem~\ref{thm::typical_cut_point}.  Lemma~\ref{lem::law_given_e1} implies that the $\MstwoW$ conditional laws of $\CX$, $\ol{\CX}$, and $\CB$ are the same if we condition on $X_\ttime$ or if we condition on both $X_\ttime$ and $E_\epsilon$.  It therefore follows from~\eqref{eqn::rn_bound} that for each $j \in \Z$ we have
\begin{equation}
\label{eqn::cm_to_cd}
\epsilon^{-1} \int f(\CX,\ol{\CX},\CB) \one_{E_\epsilon \cap G_j} d\MstwoW \to \int f(\CX,\ol{\CX},\CB) \one_{G_j} d \MstwoD \quad\text{as}\quad \epsilon \to 0.
\end{equation}

Let $\CY$, $\ol{\CY}$, and $\CQ$ be as in the statement of Proposition~\ref{prop::cw_properties} and let $\CQ^*$ be the component of $\CQ$ with the largest quantum area (there is a.s.\  a unique such component).  Then Proposition~\ref{prop::cw_properties} implies that $\MstwoW$ is invariant under the operation of swapping $\CY$ and $\ol{\CY}$.  Since the event $F_\epsilon$ is also defined in a way which is symmetric under swapping $\CY$ and $\ol{\CY}$, we have that the measure $\one_{F_\epsilon} d\MstwoW$ is symmetric in the same sense.  Thus to finish the proof it suffices to show that there exists a constant $c_0 > 0$ such that for any $f$ as above, 
\begin{equation}
\label{eqn::cm_to_cd_e2}
\epsilon^{-1} \int f(\CY,\ol{\CY},\CQ^*) \one_{F_\epsilon \cap G_j} d \MstwoW \to c_0 \int f(\CX,\ol{\CX},\CB) \one_{G_j} d\MstwoD \quad\text{as}\quad \epsilon \to 0.
\end{equation}
Note that we in fact have that $\CX = \CY$.  Moreover, it follows from Proposition~\ref{prop::local_picture_near_hit} that for each $\delta > 0$ and $j \in \Z$ we have that
\[ \Mstwo[ d(\ol{\CX},\ol{\CY}) \geq \delta \giv E_\epsilon, G_j ] \to 0 \quad\text{as} \quad \epsilon \to 0.\]
Lemma~\ref{lem::conditional_probability_converges} implies that the same is true when we condition on $F_\epsilon$ instead of $E_\epsilon$.  Therefore for $f$ bounded and continuous as above, we have that
\[ \epsilon^{-1} \int |f(\CY,\ol{\CY},\CQ^*) - f(\CX,\ol{\CX},\CB)| \one_{F_\epsilon \cap G_j} d \MstwoW \to 0 \quad\text{as}\quad \epsilon \to 0\]
for each $j \in \Z$.  Therefore it suffices to prove~\eqref{eqn::cm_to_cd_e2} with $\CX, \ol{\CX}, \CB$ in place of $\CY,\ol{\CY},\CQ^*$.  Assume that $I$, $J$ are as above.  Then we have that
\begin{align*}
    & \MstwoW[ F_\epsilon, X_\ttime \in I, \ttime \in J ]\\
 =& \MstwoW[ F_\epsilon \giv E_\epsilon, X_\ttime \in I, \ttime \in J ] \MstwoW[ E_\epsilon \giv X_\ttime \in I, \ttime \in J] \MstwoW[ X_\ttime \in I, \ttime \in J].
\end{align*}
By Lemma~\ref{lem::conditional_probability_converges}, we have that the first term on the right hand side converges to $p_0$ as $\epsilon \to 0$.  Therefore~\eqref{eqn::cm_to_cd_e2} follows from~\eqref{eqn::cm_to_cd}.
\end{proof}

As a consequence of the analysis which gave the proof of Theorem~\ref{thm::typical_cut_point}, we will also be able to identify the conditional law of $\CB$ under $\MstwoD$ given its boundary length.  This will be rather important for us later in this article because, as we will explain now, it implies that $\partial \CB$ is a.s.\  conformally removable.

\begin{proposition}
\label{prop::figure_8_quantum_disk_and_removable}
Suppose that $\CB$ is as in Theorem~\ref{thm::typical_cut_point}.
\begin{enumerate}[(i)]
\item Conditionally on its boundary length, the law of $\CB$ under $\MstwoD$ is that of a quantum disk.
\item Suppose that $(\C,h)$ is any embedding of $(\CS,x,y)$ distributed according to $\MstwoD$.  Then the image of the embedding of $\partial \CB$ is a.s.\  conformally removable.
\end{enumerate}
\end{proposition}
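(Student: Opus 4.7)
The plan is to identify the conditional law of $\CB$ given its boundary length by passing to the limit in the weak convergence $\epsilon^{-1}\one_{F_\epsilon}\,d\MstwoW \to c_0\,d\MstwoD$ established in the proof of Theorem~\ref{thm::typical_cut_point}. Under $\MstwoW$ I will first identify the conditional law, given its boundary length, of the principal disk-homeomorphic component $\CQ^*$ of the middle region $\CQ$ of Proposition~\ref{prop::cw_properties}: applying Proposition~\ref{prop::cutandspin} at time $\ttime$ gives that the unexplored region $U$ (the $y$-containing component of $\CS\setminus\eta'([0,\ttime])$) is a marked quantum disk given its boundary length; inside $U$ the time-reversed trajectory $\ol{\eta}'|_{[0,\ol{\tau}]}$ is an $\SLE_6$ from the interior point $y$ to the boundary point $\eta'(\ttime)$, independent of the surface given the boundary length; and the $\SLE_6$/quantum-disk welding theory of \cite{dms2014mating,quantum_spheres} then identifies $\CQ^*$ as, conditional on its boundary length, a quantum disk. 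Combining this with the convergence $d(\CQ^*,\CB)\to 0$ in $\MstwoW[\,\cdot\mid F_\epsilon, G_j]$-probability (established in the course of proving Theorem~\ref{thm::typical_cut_point} via Proposition~\ref{prop::local_picture_near_hit} and Lemma~\ref{lem::conditional_probability_converges}) lets me pass the quantum-disk identification from $\CQ^*$ under $\MstwoW$ to $\CB$ under $\MstwoD$, by applying the weak convergence above to bounded continuous test functions of $\CB$ together with its boundary length.

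\textbf{Plan for (ii).} I will argue that $\partial\CB$ is a Jordan curve bounding a H\"older domain, and then invoke the Jones--Smirnov theorem \cite{js2000remove}. By construction, in any embedding of $(\CS,x,y)$, $\partial\CB$ decomposes into two arcs meeting at $\eta'(\ttime)$ and $\ol{\eta}'(\ol{\ttime})$: one is a sub-arc of the outer boundary of $\eta'([0,\ttime])$ and the other is a sub-arc of the outer boundary of $\ol{\eta}'([0,\ol{\ttime}])$. The imaginary-geometry/duality description of outer boundaries of $\SLE_{\kappa'}$ with $\kappa'\in(4,8)$ developed in \cite{MS_IMAG,MS_IMAG2,MS_IMAG3,MS_IMAG4} identifies each such arc (up to time change) as a segment of an $\SLE_\kappa(\rho_-;\rho_+)$ process with $\kappa=16/\kappa'=8/3\in(0,4)$. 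By \cite{RS05}, such curves are almost surely simple Jordan arcs, and their images bound H\"older domains; concatenating the two arcs yields that $\partial\CB$ is a Jordan curve bounding a H\"older domain in the embedding, and Jones--Smirnov then gives conformal removability.

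\textbf{Main obstacle.} The principal difficulty lies in part (i), specifically in rigorously extracting the quantum-disk description of $\CQ^*$ under $\MstwoW$ from the welding theory of \cite{quantum_spheres}: the configuration involves an $\SLE_6$ inside a marked quantum disk run from an interior point all the way to a boundary point, and one must identify which of the resulting pieces is the principal disk-homeomorphic component $\CQ^*$ and verify that it has the claimed conditional law. Once this is in place, (ii) reduces to a direct application of the imaginary-geometry identification of outer boundaries of $\SLE_6$ together with \cite{RS05} and \cite{js2000remove}.
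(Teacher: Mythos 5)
Your plan for part (i) has a gap at the point where you transfer the identification of $\CQ^*$ to $\CB$. You identify the conditional law of $\CQ^*$ given its boundary length under the \emph{unweighted} measure $\MstwoW$, but the convergence $\epsilon^{-1}\one_{F_\epsilon}\,d\MstwoW \to c_0\,d\MstwoD$ only yields the law of $\CB$ under $\MstwoD$ if you control the law of $\CQ^*$ under the \emph{$F_\epsilon$-conditioned} measure. Conditioning on $F_\epsilon$ is exactly the operation that biases the configuration near the pinch point (it is what turns $\MstwoW$ into $\MstwoD$, i.e.\ reweighting by $X_\ttime^{-1}$), so you cannot substitute the unconditioned law without an argument that the conditioning leaves the conditional law of the middle surface given $(\ttime,X_\ttime)$ unchanged --- i.e.\ an analog of Lemma~\ref{lem::law_given_e1}, combined with Lemma~\ref{lem::conditional_probability_converges} to pass from $E_\epsilon$ to $F_\epsilon$. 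The paper avoids this entirely by working directly with $\CB$: by Proposition~\ref{prop::measure_properties} its conditional law given $(\ttime,X_\ttime)$ is the same under $\Mstwo$, $\MstwoW$, and $\MstwoD$; by Lemma~\ref{lem::law_given_e1} this law is unchanged by conditioning on $E_\epsilon$; and given $E_\epsilon$ one deletes the short segment of $\eta'$ joining $\eta'(\ttime)$ to $\ol{\eta}'(\ol{\ttime})$, after which the largest remaining component $\wt{\CB}$ of $\CB$ is a complementary component of the full path $\eta'$ on $\CS$, hence a quantum disk given its boundary length, and $d(\wt{\CB},\CB)\to 0$ as $\epsilon\to 0$. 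Your welding identification of $\CQ^*$ is plausibly completable, but without the conditioning-invariance step the passage to $\MstwoD$ does not go through as written.

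Part (ii) has a more serious problem: the H\"older-domain step does not follow from the regularity of the two boundary arcs. Jones--Smirnov requires H\"older continuity up to $\partial\D$ of a uniformizing map of the domain $\CB$ itself, and this is not controlled by each arc being (locally) an $\SLE_{8/3}(\rho_-;\rho_+)$-type curve, nor by each one-sided complement of a single exploration being a H\"older domain: the delicate behavior is at the two points where the arcs meet, where $\CB$ is pinched, and an intersection of two H\"older domains need not be H\"older. Moreover \cite{RS05} treats ordinary $\SLE_\kappa$, not $\SLE_\kappa(\rho_-;\rho_+)$ arcs between two random meeting points in this sphere configuration, so even the per-arc statement would need a separate argument. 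The paper's proof instead \emph{uses} part (i): since $\CB$ is a quantum disk, its field is absolutely continuous with respect to a free-boundary GFF, so \cite[Theorem~8.1]{ms2013qle} gives H\"older continuity of the embedding map of $\CB$ into $\C$ up to the boundary (after invoking \cite[Corollary~5.8]{dms2014mating} to resample the two marked points of the strip parameterization to quantum-typical boundary points), whence the embedded $\CB$ is a H\"older domain and \cite[Corollary~2]{js2000remove} applies. In particular, in the paper (ii) is a consequence of (i), whereas your plan treats the two parts as independent and thereby loses the only available source of boundary regularity near the pinch points.
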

\begin{proof}
First of all, we know from Lemma~\ref{lem::law_given_e1} that the conditional law of $\CB$ given $\ttime$ and $X_\ttime$ is the same as the conditional law of $\CB$ given $\ttime$, $X_\ttime$, and $E_\epsilon$.  We define $d(\CS^1,\CS^2)$ as in the proof of Theorem~\ref{thm::typical_cut_point}.  The conditional law of the segment $\wt{\eta}'$ of $\eta'$ which is contained in $\CB$ and connects $\eta'(\ttime)$ to $\ol{\eta}'(\ol{\ttime})$ (i.e., the segment indicated in green in Figure~\ref{fig::doubly_marked_sphere2}) is given by a chordal $\SLE_6$ process independent of $\CB$.  Let $\wt{\CB}$ be the component of $\CB \setminus \wt{\eta}'$ with the longest quantum boundary length.  Then we know that $\wt{\CB}$ is a quantum disk conditional on its boundary length because it is a complementary component of $\eta'$ on $\CS$.  The result follows because the $d$-distance between $\wt{\CB}$ and $\CB$ tends to $0$ in probability as $\epsilon \to 0$ while, as we mentioned above, the law of $\CB$ does not change with $\epsilon$.  This proves that the conditional law of $\CB$ given its boundary length is a quantum disk.

Assume that $\CB$ is parameterized by $(\strip,\wt{h})$.  Then we know that the law of $\wt{h}$ is absolutely continuous with respect to a free boundary GFF on $[a,b] \times [0,2\pi]$ for any $a,b \in \R$ with $a < b$.  Likewise, we know that the law of $h$ in any bounded region $U \subseteq \C$ is mutually absolutely continuous with respect to the law of the restriction to $U$ of a free boundary GFF on a bounded domain with $\ol{U} \subseteq V$.  Therefore it follows from \cite[Theorem~8.1]{ms2013qle} that the map $\varphi \colon [a,b] \times [0,2\pi] \to \C$ which corresponds to the embedding of $\CB$ into $\C$ is a.s.\  H\"older continuous in $[a,b] \times [0,2\pi]$.  By \cite[Proposition~A.8]{dms2014mating}, we know that the points on $\partial \CB$ which correspond to $\pm \infty \in \partial \strip$ are uniformly distributed according to the quantum boundary measure on $\partial \CB$.  By resampling them, it follows that the embedding of $\CB$ into $\C$ is a.s.\  a H\"older domain.  Therefore it follows from \cite[Corollary~2]{js2000remove} that the embedding of $\partial \CB$ into~$\C$ is a.s.\  conformally removable.
\end{proof}

\section{Quantum natural time $\QLE(8/3,0)$}
\label{sec::qle_construction}

The purpose of this section is to construct a ``quantum natural time'' version of $\QLE(8/3,0)$.  This is a variant of the process constructed in \cite{ms2013qle} where the approximations involve resampling the tip of the $\SLE_6$ at $\delta$-units of quantum natural time as opposed to $\delta$-units of capacity time.  The construction that we will give here describes a growth process on a quantum sphere, which is also in contrast to \cite{ms2013qle} in which the process is constructed on an (unscaled) quantum cone.  The present construction also generalizes to the setting of quantum cones, but we will focus on the sphere case.  Throughout this article, whenever we refer to the process $\QLE(8/3,0)$ we mean the one constructed just below unless explicitly stated otherwise.  As in the case of the construction given in \cite{ms2013qle}, we will begin in Section~\ref{subsec::qle_approximations} by introducing the approximations to $\QLE(8/3,0)$.  We will then show in Section~\ref{subsec::qle_subsequential_limits} that the subsequential limits of these approximations have the following properties which will be important later on:
\begin{itemize}
\item The bubbles swallowed by the process have the same Poissonian structure as the bubbles swallowed by an $\SLE_6$,
\item The evolution of the quantum boundary length of the complementary component which contains the target point is the same as in the case of an $\SLE_6$ (the time-reversal of a $3/2$-stable L\'evy excursion with only positive jumps), and
\item The law of the region which contains the target point is the same as in the case of $\SLE_6$ (that of a quantum disk weighted by its area).
\end{itemize}

\subsection{Approximations to $\QLE(8/3,0)$}
\label{subsec::qle_approximations}

Fix $\delta > 0$ and suppose that $e \colon [0,T] \to \R_+$ is a sample picked from the excursion measure $\lexcursion$ for a $3/2$-stable L\'evy process with only positive jumps as described in Section~\ref{subsec::levy_spheres}.  We define the $\delta$-approximation to $\QLE(8/3,0)$ associated with the excursion $e$ as follows.  First, we let $(\CS,x,y)$ be a doubly-marked quantum sphere constructed from~$e$ and let $\eta_1' = \eta'$ be the associated whole-plane $\SLE_6$ from $x$ to $y$, as described in Section~\ref{subsec::levy_spheres}, with the quantum natural time parameterization.  For concreteness, we will take the embedding so that $(\CS,x,y)= (\C,h,0,\infty)$ where $0$ (resp.\ $\infty$) corresponds to $x$ (resp.\ $y$) and the scaling factor is determined so that the quantum area of $\D$ is equal to $1/2$ the total quantum area of~$\CS$.

We define a growth process $\qlegrowth^\delta$ inductively as follows.  First, we take $\qlegrowth_t^\delta$ to be the complement of the unbounded component of $\C \setminus \eta'([0,t])$ for each $t \in [0,\delta]$.  We also let $g_t^\delta \colon \C\setminus \qlegrowth_t^\delta \to \C\setminus \D$ be the unique conformal map with $|g_t^\delta(z)-z| \to 0$ as $z \to \infty$.  Fix $j \in \N$ and suppose that we have defined paths $\eta_1',\ldots,\eta_j'$ and a growing family of hulls $\qlegrowth^\delta$ with associated uniformizing conformal maps $(g_t^\delta)$ for $t \in [0,j\delta]$ such that the following hold:
\begin{itemize}
\item The conditional law of the surface parameterized by the complement of $\qlegrowth_{j\delta}^\delta$ given its quantum boundary length is the same as in the setting of ordinary $\SLE_6$.  That is, it is given by a quantum disk with the given boundary length weighted by its area.
\item $\eta_j'(j\delta)$ is distributed uniformly according to the quantum boundary measure on $\partial \qlegrowth_{j\delta}^\delta$ conditional on $\qlegrowth_{j \delta}^{\delta}$ (as a path-decorated beaded quantum surface).
\item The law of the components separated from the target point by time $j \delta$ is the same as in the case of whole-plane $\SLE_6$.  That is, they are given by conditionally independent quantum disks given their boundary lengths.
\end{itemize}
We then let $\eta_{j+1}'$ be an independent radial $\SLE_6$ starting from a point on $\partial \qlegrowth_{j\delta}^\delta$ which is chosen uniformly from the quantum boundary measure conditionally independently of everything else (i.e., we resample the location of the tip $\eta'(j \delta)$).  For each $t \in [j\delta,(j+1)\delta]$, we also let $\qlegrowth_t^\delta$ be the complement of the unbounded component of $\C\setminus (\qlegrowth_{j\delta}^\delta \cup \eta_{j+1}'([0,t]))$.  Then by the construction, all three properties described above are satisfied by the process up to time $(j+1)\delta$.

By resampling the surface parameterized by the complement of $\qlegrowth_{j\delta}^\delta$ at each stage, we can construct a coupling of the doubly marked quantum sphere together with the growth process $\qlegrowth^\delta$ so that the quantum boundary length of the complement of $\qlegrowth^\delta$ at each time $t$ is given by the time-reversal $e(T-t)$ of $e$.  Indeed, this follows because the procedure of resampling the starting point of the $\SLE_6$ according to quantum boundary length leaves the law of this component invariant.  In particular, the jumps of $e$ then correspond to the quantum boundary length of the quantum disks swallowed by~$\qlegrowth^\delta$.

It will be convenient to encode $\qlegrowth^\delta$ in terms of a radial Loewner flow.  That is, if for each $t \geq 0$ we let $s(t)$ be the quantum natural time elapsed by $\qlegrowth^\delta$ at the first time that the capacity as seen from $\infty$ reaches $t$ then there exists a measure $\nu_\delta$ on $\partial \D \times [0,\infty)$ whose second-coordinate marginal is given by Lebesgue measure such that
\[ g_{s(t)}^\delta(z) = z  + \iint_{\partial \D \times [0,t]} g_{s(u)}^\delta(z) \frac{w + g_{s(u)}^\delta(z)}{w - g_{s(u)}^\delta(z)} d \nu_\delta(w,u) \quad\text{for all}\quad t \geq 0.\]
Since the growth during each of the $\delta$-length time intervals is given by a segment of an independent radial $\SLE_6$ process, it follows that there exists a standard Brownian motion $B$ and a sequence of real numbers $(\xi_k)$ such that with $W = \sqrt{6} B$ we have that
\[ d\nu_\delta(\theta,t) = \sum_k \one_{[k \delta,(k+1)\delta)}(s(t)) \delta_{e^{i(W_t+ \xi_k)}} dt\]
where $\delta_x$ denotes the Dirac mass supported at $x \in \partial \D$ and $dt$ denotes Lebesgue measure on~$\R_+$.

We emphasize that the $\delta$-approximation to $\QLE(8/3,0)$ satisfies the following:
\begin{itemize}
\item The bubbles which it separates from~$y$ are conditionally independent quantum disks given their boundary lengths,
\item The boundary length of the complementary component which contains~$y$ at time~$t$ is equal to the time-reversal of $e$ at time $t$ and is conditionally independent of everything else given its boundary length, and
\item The conditional law of the region which contains~$y$ given its quantum boundary length is the same as in the case of $\SLE_6$.  That is, it is given by a quantum disk with the given boundary length weighted by its area.
\end{itemize}

\subsection{Subsequential limits}
\label{subsec::qle_subsequential_limits}

We are now going to construct subsequential limits of the $\delta$-approximations to $\QLE(8/3,0)$ described just above.  We will pick the subsequence so that the limit we obtain satisfies the three properties listed just above.

For each $\delta > 0$, we let $(\CS_\delta,x_\delta,y_\delta)$ be an instance of the doubly-marked quantum sphere decorated together with a $\delta$-approximation to $\QLE(8/3,0)$ which is encoded via a Loewner flow using the measure $\nu_\delta$, as described just above.  We take the embedding of $(\CS_\delta,x_\delta,y_\delta)$ to be given by $(\C,h_\delta,0,\infty)$, as in Section~\ref{subsec::qle_approximations}, where the scaling factor is chosen so that the quantum area of $\D$ is equal to $1/2$ the quantum area of $\CS$.  Since we will be taking a subsequential limit with respect to the weak topology (in a sense we will make more precise below), we will not specify how the surfaces $(\CS_\delta,x_\delta,y_\delta)$ decorated by $\nu_\delta$ are coupled together for different values of $\delta$.  For each $\delta,T > 0$, we let $\nu_\delta^T$ be the restriction of $\nu_\delta$ to $\partial \D \times [0,T]$.  Since $\partial \D \times [0,T]$ is compact and each $\nu_\delta^T$ has total mass equal to $T$, it follows that the law on random measures $(\nu_\delta^T)$ is tight in $\delta > 0$ (but $T$ fixed) with respect to the weak topology on measures on $\partial \D \times [0,T]$.

For each $0 < a_1 < a_2 < \infty$, we let $E_{a_1,a_2}^\delta$ be the event that the area of $\CS_\delta$ is contained in $[a_1,a_2]$.  We note that $E_{a_1,a_2}^\delta$ is an event of positive and finite $\Mstwo$ measure so that the conditional law of $(\C,h_\delta,0,\infty)$ given $E_{a_1,a_2}^\delta$ makes sense as a probability measure.  Since the law of~$h_\delta$ conditioned on $E_{a_1,a_2}^\delta$ does not depend on $\delta$, it follows that there exists a sequence $(\delta_k)$ with $\delta_k > 0$ for all $k$ and $\delta_k \to 0$ as $k \to \infty$ such that the joint law of $h_\delta$ and $\nu_{\delta_k}^T$ given $E_{a_1,a_2}^{\delta_k}$ converges weakly to a limiting law as $k \to \infty$.  (We take the notion of convergence of $h_\delta$ to be that its coordinates converge when expanded in terms of an orthonormal basis of $H(\C)$ consisting of $C_0^\infty(\C)$ functions.)  The Skorokhod representation theorem implies that we can find a coupling of the laws of $h_{\delta_k}$ and $\nu_{\delta_k}^T$ given $E_{a_1,a_2}^{\delta_k}$ (as $k$ varies) with a doubly-marked quantum sphere $(\C,h,0,\infty)$ conditioned on the event $E_{a_1,a_2}$ that its area is contained in $[a_1,a_2]$ together with a measure $\nu^T$ on $\partial \D \times [0,T]$ whose second coordinate marginal is given by Lebesgue measure such that $h_{\delta_k} \to h$ as described above and $\nu_{\delta_k}^T \to \nu^T$ weakly a.s.\  as $k \to \infty$.  By passing to a further (diagonal) subsequence, we can find a coupling of the laws of the $(\C,h_{\delta_k},0,\infty)$ (unconditioned) and $(\nu_{\delta_k})$ and $(\C,h,0,\infty)$ and a measure $\nu$ on $\partial \D \times [0,\infty)$ whose second coordinate marginal is given by Lebesgue measure so that with $\nu^T$ equal to the restriction of $\nu$ to $\partial \D\times [0,T]$ for each $T \geq 0$ we have that $h_{\delta_k} \to h$ and $\nu_{\delta_k}^T \to \nu^T$ weakly a.e.\ for all $T > 0$.  By \cite[Theorem~1.1]{ms2013qle}, it follows that the radial Loewner flows driven by the $\nu_{\delta_k}$ converge locally uniformly in time and space to the radial Loewner flow driven by $\nu$ as $k \to \infty$ as well.

\begin{definition}
\label{def::qle_def}
Suppose that $(\CS,x,y) = (\C,h,0,\infty)$ is a doubly-marked quantum surface and that $\qlegrowth$ is an increasing family of compact hulls with $K_0 = \{0\}$ so that the joint law of $(\CS,x,y)$ and $(K_t)$ is equal to any one of the subsequential limits constructed above.  Then we refer to the increasing family $\qlegrowth$ (modulo time parameterization) as $\QLE(8/3,0)$.  We say that two $\QLE(8/3,0)$-decorated quantum surfaces $(\CS,x,y)$, $\qlegrowth$ and $(\wt{\CS},\wt{x},\wt{y})$, $\wt{\qlegrowth}$ are equivalent if $(\CS,x,y)$ and $(\wt{\CS},\wt{x},\wt{y})$ are equivalent as doubly-marked quantum surfaces and the corresponding conformal transformation takes $\qlegrowth$ to $\wt{\qlegrowth}$ (modulo time parameterization).
\end{definition}

As explained in the introduction, it will be a consequence of the results of this work and \cite{qle_continuity} that it is not necessary to pass along a subsequence $(\delta_k)$ in the construction of $\QLE(8/3,0)$.

\begin{remark}
\label{rem::time_parameterizations}
As we will explain below, there are several natural parameterizations of time for $\QLE(8/3,0)$.  We emphasize that the definition of $\QLE(8/3,0)$ given in Definition~\ref{def::qle_def} does not carry with it a time parameterization.  We in particular do not build the capacity parameterization into the definition because it depends on the embedding of the ambient surface into $\C$.  Later, we will want to be able to cut a $\QLE(8/3,0)$ out of a surface and glue it into another and such an operation in general does not preserve capacity because it can lead to a different embedding.  The time-parameterizations that we introduce shortly are, however, intrinsic to $\QLE(8/3,0)$ viewed as a growing family of quantum surfaces.

Definition~\ref{def::qle_def} includes a notion of equivalence of $\QLE(8/3,0)$-decorated quantum surfaces.  We will later introduce a notion of equivalence for a $\QLE(8/3,0)$ which is stopped at a certain type of stopping time.
\end{remark}

Suppose that $\qlegrowth$ is a $\QLE(8/3,0)$ on a doubly-marked sphere $(\CS,x,y)$.  We say that a region $U$ is swallowed by $\qlegrowth$ if $U$ is equal to the interior of $\qlegrowth_t \setminus \qlegrowth_{t^-}$ for some $t$.  The following proposition will imply that we can make sense of the quantum natural time parameterization of $\qlegrowth$.

\begin{proposition}
\label{prop::qle_bubbles}
Suppose that $(\CS,x,y)$ is a doubly-marked quantum sphere with distribution $\Mstwo$ and that $\qlegrowth$ is a $\QLE(8/3,0)$ on $\CS$ from $x$ targeted at $y$.  Then the joint law of the regions swallowed by $\qlegrowth$, ordered by the time at which they are swallowed and viewed as quantum surfaces, is the same as for a whole-plane $\SLE_6$ on $\CS$ connecting~$x$ to~$y$.
\end{proposition}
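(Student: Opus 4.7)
The plan is to transfer the claim from the $\delta$-approximations $\qlegrowth^\delta$, for which it holds essentially by construction, to the subsequential limit $\qlegrowth$ defined in Section~\ref{subsec::qle_subsequential_limits}. First, I would verify that the statement holds for each fixed $\delta > 0$. The inductive construction of $\qlegrowth^\delta$ is rigged so that the three bulleted properties listed just after its definition hold for every $j$: at each step, the surface remaining in the target component is a quantum disk weighted by area given its boundary length; we run an independent radial $\SLE_6$ on it started from a uniformly resampled tip; and the joint law of the bubbles swallowed during that step matches the corresponding time-interval of whole-plane $\SLE_6$. Since the resampling is arranged so that the quantum boundary length of the target-containing component at time~$t$ equals the time-reversal $e(T-t)$ of the driving $3/2$-stable L\'evy excursion $e$, the ordered collection of bubbles of $\qlegrowth^\delta$, viewed as quantum surfaces, has exactly the joint law of the $\SLE_6$ bubbles associated with the same excursion $e$.

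Next, I would pass to the subsequential limit along $(\delta_k)$. In the Skorohod coupling, $h_{\delta_k} \to h$ as distributions and $\nu_{\delta_k}^T \to \nu^T$ weakly for every $T>0$, and \cite[Theorem~1.1]{ms2013qle} then gives that the radial Loewner flows converge locally uniformly in space-time, so that the hulls $\qlegrowth^{\delta_k}_t \to \qlegrowth_t$ in the Carath\'eodory sense, uniformly in $t$ on compact intervals. Because the same excursion $e$ drives the boundary length process for every~$k$, each jump $(s,\ell_s)$ of $e$ corresponds in the $\delta_k$-approximation to a bubble $U^{(k)}_s$ of quantum boundary length $\ell_s$ swallowed at time $T-s$. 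The convergence of $h_{\delta_k}$ combined with the Carath\'eodory convergence of the hulls produces a subsequential limit $U_s$ of $U^{(k)}_s$ which is a complementary component of $\qlegrowth_{T-s}$ on the limiting surface, still of boundary length $\ell_s$, and still distributed conditionally as a quantum disk given $\ell_s$. Finally, a mass-conservation argument (the total quantum area swallowed up to time~$t$ plus the area of the target component equals the total area of~$\CS$ in every approximation, and these quantities are continuous in the coupling) rules out the possibility of extra bubbles appearing or being lost in the limit, so the identification ``jump of $e$ $\leftrightarrow$ bubble of $\qlegrowth$'' is a bijection and the joint law of the limit bubbles coincides with the $\SLE_6$ law.

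The main obstacle will be the bookkeeping in the second step: showing that the subsequential limits of the $U^{(k)}_s$ are well-defined as marked quantum surfaces (and not merely as planar domains), that distinct jumps of~$e$ give rise to distinct limit bubbles, and that no spurious bubbles are produced either by two components colliding in the limit or by zero-area ``tentacles'' of $\qlegrowth_t$ developing as in the caveat discussed at the end of Section~\ref{subsec::prequel_overview}. These issues are controlled by the rigidity that boundary lengths and total area are exact invariants across the approximating sequence: the former is given by the fixed excursion~$e$, the latter is preserved throughout the coupling, and both must agree with the corresponding quantities on the limiting surface $(\C,h,0,\infty)$. Once these identifications are made, the conditional independence of the bubbles given their boundary lengths passes to the limit directly from the corresponding statement at each $\delta_k$-level, completing the proof.
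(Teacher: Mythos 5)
Your overall strategy coincides with the paper's: the claim holds for each $\delta$-approximation by construction, and one passes to the subsequential limit in the Skorohod coupling used to define $\QLE(8/3,0)$ (your additional device of coupling all approximations to a common excursion $e$ is legitimate, since the law of $e$ does not depend on $\delta$, and a similar coupling is used in the paper's proof of the boundary-length statement that follows). The weak point is exactly the step you flag as ``the main obstacle,'' and your proposed resolution is not yet an argument. The paper deliberately avoids having to prove that quantum areas or quantum boundary lengths of the bubbles behave continuously under the coupling: it observes that the ordered collection of swallowed regions with boundary length at least $\epsilon$, viewed as \emph{abstract} quantum surfaces, has a law that is the same for every $k$, so after recoupling it converges trivially. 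All of the real work then goes into showing that the \emph{embeddings} of these surfaces into the limiting sphere do not degenerate: each bubble $U_{j,\epsilon}^{\delta_k}$ is uniformized by the conformal map $\varphi_{j,\epsilon}^{\delta_k} \colon \D \to U_{j,\epsilon}^{\delta_k}$ centered at a point sampled from its quantum measure; tightness of $(\varphi_{j,\epsilon}^{\delta_k})'(0)$ is obtained because otherwise a uniformly positive amount of mass would sit near $y$ and produce an atom there in the limit; and the limiting derivative is nonzero because the limiting quantum sphere a.s.\ has no atoms, so a bubble carrying positive quantum area cannot collapse to a point.

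Your appeal to ``rigidity of boundary lengths and total area'' does not by itself close this: conservation of area is perfectly consistent with the area of a fixed bubble concentrating at a single point of the limiting surface (a degenerate embedding), and with the quantum boundary length of a Euclidean-small complementary component failing to converge to $\ell_s$ under Carath\'eodory convergence alone. What excludes these scenarios is precisely the non-atomicity of the $\sqrt{8/3}$-LQG area measure together with the tightness statement preventing mass from escaping toward $y$ --- i.e., the paper's argument via the uniformizing maps. If you replace your ``mass-conservation'' paragraph with that argument (or, more simply, work with the abstract surfaces of $k$-independent law and only prove non-degeneracy of the embeddings), your proof becomes essentially the paper's.
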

\begin{proof}
We shall assume that we are in the setting described at the beginning of this subsection.  In particular, we take the embedding of $(\CS,x,y)$ to be given by $(\C,h,0,\infty)$.

For each $k \in \N$, we let $\qlegrowth^{\delta_k}$ be the growth process associated with $\nu_{\delta_k}$.  We take $\qlegrowth^{\delta_k}$ to be parameterized by capacity as seen from $\infty$.  By the construction of the $\delta_k$-approximation to $\QLE(8/3,0)$, we know the law of the regions swallowed by $\qlegrowth^{\delta_k}$ ordered by the time at which they are swallowed and viewed as quantum surfaces is the same as for a whole-plane $\SLE_6$ on $(\CS,x,y)$ connecting $x$ to~$y$.  Our aim is to show that this result holds in the limit as $k \to \infty$.

For each $k$, we let $\CU^{\delta_k}$ be the collection of surfaces swallowed by $\qlegrowth^{\delta_k}$ ordered by the time at which they are swallowed.  We note that for each $\epsilon > 0$, the collection $\CU_\epsilon^{\delta_k}$ which consists of those elements of $\CU^{\delta_k}$ with quantum boundary length at least $\epsilon$ is finite a.s.  Indeed, this follows because for $\lexcursion$ a.e.\ $e$ we have that the number of jumps made by $e$ of size at least $\epsilon$ is finite.  Since the law of each of the $\CU^{\delta_k}$ is the same for each $k$, by possibly passing to a further subsequence (and recoupling the laws using the Skorokhod representation theorem so that we have a.s.\ convergence) we have that each $|\CU_\epsilon^{\delta_k}|$ converges a.s.\  to a finite limit as $k \to \infty$.

For each $j$ we let $U_{j,\epsilon}^{\delta_k}$ be the $j$th element of $\CU_\epsilon^{\delta_k}$, $z_{j,\epsilon}^{\delta_k}$ be a point chosen uniformly at random from the quantum measure restricted to $U_{j,\epsilon}^{\delta_k}$, and let $\varphi_{j,\epsilon}^{\delta_k} \colon \D \to U_{j,\epsilon}^{\delta_k}$ be the unique conformal map with $\varphi_{j,\epsilon}^{\delta_k}(0) = z_{j,\epsilon}^{\delta_k}$ and $(\varphi_{j,\epsilon}^{\delta_k})'(0) > 0$.  (If $j > |\CU_\epsilon^{\delta_k}|$ then we take $\varphi_{j,\epsilon}^{\delta_k} \equiv 0$.)  Fix an orthonormal basis $(\phi_n)$ of $H(\D)$ consisting of $C_0^\infty(\D)$ functions.  Each element of $\CU_\epsilon^{\delta_k}$ is a quantum disk hence can be described by a distribution on $\D$ given by $h_{\delta_k} \circ \varphi_{j,\epsilon}^{\delta_k} + Q \log |(\varphi_{j,\epsilon}^{\delta_k})'|$.  We can express this distribution in terms of coordinates with respect to the orthonormal basis $(\phi_n)$.  Therefore by possibly passing to a further, diagonal subsequence (and recoupling the laws using the Skorokhod representation theorem so that we have a.s.\ convergence) we have for each fixed $j$ that the $j$th element of $\CU_\epsilon^{\delta_k}$ a.s.\  converges to a limiting distribution on $\D$ in the sense that each of its coordinates with respect to $(\phi_n)$ converge a.s.\  (if $j$ is larger than the number of elements of $\CU_\epsilon^{\delta_k}$ then we take the $j$th element to be the zero distribution on $\D$).  Combining, we have that $\CU_\epsilon^{\delta_k}$ converges a.s.\  to a limit $\CU_\epsilon$ which has the same law as each of the $\CU_\epsilon^{\delta_k}$ in the sense described just above (number of elements converges and each element converges weakly a.s.\ when parameterized by $\D$).

We note that the laws of the $(\varphi_{j,\epsilon}^{\delta_k})'(0)$ are tight as $k \to \infty$.  Indeed, if there was a uniformly positive chance that one of the $(\varphi_{j,\epsilon}^{\delta_k})'(0)$ is arbitrarily large for large $k$, then we would have that there is a uniformly positive chance that $(\CS^k,x^k,y^k) = (\C,h_{\delta_k},0,\infty)$ assigns a uniformly positive amount of area to $\C \setminus B(0,R)$ for each $R > 0$ and $k$ large enough.  This, in turn, would lead to the contradiction that the limiting surface $(\CS,x,y)$ would have an atom at $y$ with positive probability.  It therefore follows that by passing to a further subsequence if necessary (and recoupling the laws using the Skorokhod representation theorem so that we have a.s.\ convergence), we can arrange so that each of the conformal maps $\varphi_{j,\epsilon}^{\delta_k}$ converge locally uniformly to a limiting conformal map $\varphi_{j,\epsilon}$.  Since $(\CS,x,y)$ a.s.\  does not have atoms, it follows that each of the limiting $\varphi_{j,\epsilon}$ with $j \leq |\CU_\epsilon|$ satisfy $\varphi_{j,\epsilon}'(0) \neq 0$.

For each $j,\epsilon$, we let $U_{j,\epsilon} = \varphi_{j,\epsilon}(\D) \in \CU_\epsilon$.  Then we note that each of the $U_{j,\epsilon}$ is swallowed by $\Gamma$.  Indeed, let $\tau_{j,\epsilon}^{\delta_k}$ denote the time at which $U_{j,\epsilon}^{\delta_k}$ is swallowed by $\Gamma^{\delta_k}$.  Then by passing to a further subsequence if necessary (and recoupling the laws using the Skorokhod representation theorem so that we have a.s.\ convergence), we may assume that $\tau_{j,\epsilon}^{\delta_k}$ a.s.\ converges to a limit $\tau_{j,\epsilon}$ for all $j,\epsilon$.  It is not difficult to see that $\Gamma_{\tau_{j,\epsilon}}$ contains $U_{j,\epsilon}$ for each $j,\epsilon$ as $\Gamma_{\tau_{j,\epsilon}^{\delta_k}}^{\delta_k}$ contains $U_{j,\epsilon}^{\delta_k}$ and we have the local uniform convergence of $\varphi_{j,\epsilon}^{\delta_k}$ to $\varphi_{j,\epsilon}$.  Let $\sigma_{j,\epsilon} = \inf\{t \geq 0 : U_{j,\epsilon} \subseteq \Gamma_t\} \leq \tau_{j,\epsilon}$.  Note that if $\zeta > 0$, then $\sigma_{j,\epsilon} - \zeta$ is strictly smaller than $\tau_{j,\epsilon}^{\delta_k}-\zeta/2$ for all $k$ large enough.  Thus as $\Gamma_{\tau_{j,\epsilon}^{\delta_k}-\zeta/2}^{\delta_k}$ is disjoint from $U_{j,\epsilon}^{\delta_k}$ for all $k$ and $\Gamma_{\sigma_{j,\epsilon}-\zeta}$ does not contain $U_{j,\epsilon}$, it must be that $\Gamma_{\sigma_{j,\epsilon}-\zeta}$ is disjoint from $U_{j,\epsilon}$.  Since $\zeta > 0$ was arbitrary, it therefore follows that $U_{j,\epsilon}$ is contained in the interior of $\Gamma_{\sigma_{j,\epsilon}} \setminus \Gamma_{\sigma_{j,\epsilon}^-}$.

Since the $U_{j,\epsilon}$ as $j,\epsilon$ vary cover all of the quantum area, it follows that the regions swallowed by $\Gamma$ at the times $\sigma_{j,\epsilon}$ are made up entirely of the interior of the closure of the union of families of such sets (if there was an open ball which was contained in a region swallowed by $\Gamma$ which was disjoint from all of the $U_{j,\epsilon}$ we would have a contradiction since it would have to contain a positive amount of mass).  However, one could worry that there exists such a set $U_{j',\epsilon'}$ distinct from $U_{j,\epsilon}$ which is contained in the interior of $\Gamma_{\sigma_{j,\epsilon}} \setminus \Gamma_{\sigma_{j,\epsilon}^-}$.  Suppose that there is such a set; we will derive a contradiction which will imply that $U_{j,\epsilon}$ is precisely the set swallowed by $\Gamma$ at time $\sigma_{j,\epsilon}$.  Fix $\zeta > 0$.  Define times $u_i^{\delta_k}$ so that $u_0^{\delta_k} = 0$ and the quantum natural time elapsed by $\Gamma^{\delta_k}$ in $[u_i^{\delta_k},u_{i+1}^{\delta_k}]$ is equal to $\zeta$.  Let $i_0 \in \N_0$ be the largest such that $u_{i_0}^{\delta_k} \leq \sigma_{j,\epsilon}$ and let $u^{\delta_k} = u_{i_0-1}^{\delta_k}$, $v^{\delta_k} = u_{i_0+2}^{\delta_k}$.  On the event that $U_{j,\epsilon}$, $U_{j',\epsilon'}$ are both swallowed by $\Gamma$ at the time $\sigma_{j,\epsilon}$, we claim that the probability that $U_{j,\epsilon}^{\delta_k}$, $U_{j',\epsilon'}^{\delta_k}$ are not both swallowed in the interval $[u^{\delta_k},v^{\delta_k}]$ tends to $0$ as $k \to \infty$.  To see this, recall that the quantum surface parameterized by $\C \setminus \Gamma_{u_i^{\delta_k}}^{\delta_k}$ is a quantum disk weighted by its quantum area.  Therefore the probability that $h_{\delta_k} \circ (g_{u_i^{\delta_k}}^{\delta_k})^{-1} + Q\log| (g_{u_i^{\delta_k}}^{\delta_k})^{-1})'|$ assigns quantum area at least $a > 0$ to $B(0,1+b) \setminus \ol{\D}$ tends to $0$ as $b \to 0$ with $a > 0$ fixed.  By taking a union bound over $i \in \N_0$ (note that the number of $i \in \N_0$ such that $\C \setminus \Gamma_{u_i^{\delta_k}}^{\delta_k} \neq \emptyset$ is tight) implies that (uniformly in $k$) the probability that there exists an $i \in \N_0$ so that $h_{\delta_k} \circ (g_{u_i^{\delta_k}}^{\delta_k})^{-1} + Q\log| (g_{u_i^{\delta_k}}^{\delta_k})^{-1})'|$ assigns mass at least $a > 0$ to $B(0,1+b) \setminus \ol{\D}$ tends to $0$ as $b \to 0$ with $a > 0$ fixed.  This implies that the probability that $U_{j,\epsilon}^{\delta_k}$, $U_{j',\epsilon'}^{\delta_k}$ are not both swallowed in $[u^{\delta_k},v^{\delta_k}]$ tends to $0$ as $k \to \infty$ for otherwise it would be a positive probability event that for every $b > 0$ and $k \in \N$ large enough there exists $i \in \N_0$ and $a > 0$ so that $h_{\delta_k} \circ (g_{u_i^{\delta_k}}^{\delta_k})^{-1} + Q\log| (g_{u_i^{\delta_k}}^{\delta_k})^{-1})'|$ assigns mass at least $a > 0$ to $B(0,1+b) \setminus \ol{\D}$.  This, in turn, leads to the desired contradiction because as $\zeta \to 0$, the probability that the boundary length process for $\Gamma^{\delta_k}$ makes two macroscopic downward jumps in a (quantum natural time) interval of length $\zeta > 0$ tends to $0$.

Combining everything implies the result, as the law of~$\CU_\epsilon^{\delta_k}$ is the same as the law of each of the~$\CU_\epsilon$.
\end{proof}

\begin{remark}
\label{rem::qle_quantum_natural_defined}
Suppose that $Y_t$ is a $3/2$-stable L\'evy process with only positive jumps and that $\Lambda$ is a Poisson point process on $[0,t] \times \R_+$ with intensity measure $c ds \otimes u^{-5/2} du$ where $ds, du$ both denote Lebesgue measure on $\R_+$ and $c > 0$ is a constant.  Then there exists a value of $c > 0$ such that $\Lambda$ is equal in distribution to the set which consists of the pairs $(t,u)$ where $t$ is the time at which $Y$ makes a jump and $u$ is the size of the jump.  This implies that if we observe only the jumps made by $Y$ up to a random time $t$, then we can determine $t$ by counting the number of jumps that $Y$ has made with size between $e^{-j-1}$ and $e^{-j}$, dividing by the factor $c_0 e^{3 j/2}$ where $c_0 = \tfrac{2 c}{3}(e^{3/2}-1)$, and then sending $j \to \infty$.  Using the same principle, we can a.s.\  determine the length of time that the time-reversal of a $3/2$-stable L\'evy excursion $e \colon [0,T] \to \R_+$ has been run if we only observe its jumps.  Combining this with Proposition~\ref{prop::qle_bubbles}, this allows us to make sense of the quantum natural time parameterization of $\qlegrowth$ where the jumps are provided by the quantum boundary lengths of the bubbles cut off by $\qlegrowth$.
\end{remark}

Building on Remark~\ref{rem::qle_quantum_natural_defined}, we have that the ordered collection of components cut off by a $\QLE(8/3,0)$ from its target point a.s.\  determines a $3/2$-stable L\'evy excursion.  We will now show that the time-reversal of this excursion is a.s.\  equal to the process which gives the boundary length of $\qlegrowth$ when parameterized using the quantum natural time parameterization (in the same way that the ordered sequence of bubbles cut off by an $\SLE_6$ a.s.\  determines the evolution of the quantum length of its outer boundary) and that the conditional law of the region (viewed as a quantum surface) given its quantum boundary length which contains the target point is the same as for an $\SLE_6$ --- a quantum disk with the given boundary length weighted by its quantum area.

\begin{proposition}
\label{prop::qle_boundary_length}
Suppose that $(\CS,x,y)$ is a doubly-marked quantum sphere with distribution $\Mstwo$ and that $\qlegrowth$ is a $\QLE(8/3,0)$ on $\CS$ from $x$ to $y$.  We take $\qlegrowth$ to be parameterized by quantum natural time as described in Remark~\ref{rem::qle_quantum_natural_defined}.  For each fixed $t$, the conditional law of the (unique) complementary component of $\qlegrowth_t$ viewed as a quantum surface given its boundary length is equal to the conditional law of the complementary component containing $y$ of a whole-plane $\SLE_6$ on $\CS$ from $x$ to $y$ viewed as a quantum surface given its boundary length.  That is, it is given by a quantum disk with the given boundary length weighted by its area.  As $t$ varies, the quantum boundary length of the complementary component of~$\qlegrowth_t$ evolves in the same manner as for a whole-plane $\SLE_6$ (i.e., the time-reversal of a $3/2$-stable L\'evy excursion with only upward jumps) and is equal to the time-reversal of the L\'evy excursion whose ordered sequence of jumps is given by the boundary lengths of the components swallowed by $\qlegrowth$.
\end{proposition}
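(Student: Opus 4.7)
The plan is to transfer the three defining properties of the $\delta$-approximations directly to the limit $\qlegrowth$ using the Skorohod coupling set up in Section~\ref{subsec::qle_subsequential_limits}, together with Proposition~\ref{prop::qle_bubbles}. By construction, for each approximating process $\qlegrowth^{\delta_k}$ and each discrete time $j\delta_k$, the conditional law of the complementary component of $\qlegrowth^{\delta_k}_{j\delta_k}$ containing $y$ given its quantum boundary length is that of a quantum disk weighted by area, this boundary length equals $e^{\delta_k}(T^{\delta_k} - j\delta_k)$ for the $3/2$-stable L\'evy excursion $e^{\delta_k}$ that generated the $\delta_k$-approximation, and the downward jumps of $t \mapsto e^{\delta_k}(T^{\delta_k} - t)$ correspond to the quantum boundary lengths of the swallowed bubbles.

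First I would dispatch assertion~(III). Proposition~\ref{prop::qle_bubbles} already shows that the ordered sequence of bubbles swallowed by $\qlegrowth$, viewed as quantum surfaces, has the same joint law as in the whole-plane $\SLE_6$ case. Since a $3/2$-stable L\'evy excursion with only upward jumps is a pure-jump process, it is almost surely determined by the ordered sequence of its jump sizes. Hence the bubble boundary lengths reconstruct a L\'evy excursion $e$ whose law matches the one from the $\SLE_6$ setting, and Remark~\ref{rem::qle_quantum_natural_defined} then intrinsically identifies the quantum natural time parameterization. This also gives the marginal law claim in~(II).

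Next, for~(I) and the pointwise identification of the boundary length in~(II), I would fix $t > 0$ and let $j_k = \lfloor t/\delta_k \rfloor$ so that $j_k \delta_k \to t$. Working in the Skorohod coupling from Section~\ref{subsec::qle_subsequential_limits}, the hulls $\qlegrowth^{\delta_k}_{j_k \delta_k}$ converge to $\qlegrowth_t$ and the ordered bubble sequences converge, so $e^{\delta_k}(T^{\delta_k} - j_k \delta_k) \to e(T-t)$ using the almost sure continuity of $e$ at a fixed $t$. In each approximation, the complement of $\qlegrowth^{\delta_k}_{j_k \delta_k}$ given its boundary length is a quantum disk weighted by area; since this conditional law does not depend on $k$, it passes to the limit and the complement of $\qlegrowth_t$, as a quantum surface, is conditionally a quantum-area-weighted quantum disk given its boundary length $e(T-t)$.

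The main technical obstacle is that we have not ruled out that $\partial \qlegrowth_t$ contains ``spikes'' with zero quantum area, so ``the quantum boundary length of $\qlegrowth_t$'' is not \emph{a priori} intrinsically well-defined on $\qlegrowth_t$ itself. I would handle this by taking the boundary length to be the quantum length of the boundary of the $y$-containing complementary component $U_t$ of $\qlegrowth_t$: this surface is a quantum disk, hence is conformally removable with an intrinsic boundary length measure (by an argument analogous to Proposition~\ref{prop::figure_8_quantum_disk_and_removable}), and this intrinsic length agrees almost surely with $e(T-t)$ because in the approximation the boundary lengths are well-defined, they converge to $e(T-t)$, and the complement-as-quantum-surface converges jointly. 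Finally, the identification of the boundary length process (not merely its marginals) as the time-reversal of $e$ follows from the joint convergence of the hull processes together with the \cadlag\ structure of the boundary length evolution.
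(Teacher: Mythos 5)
There is a genuine gap at the central step of your argument for (I) and the pointwise part of (II): the claim that, in the Skorohod coupling of Section~\ref{subsec::qle_subsequential_limits}, the hulls $\qlegrowth^{\delta_k}_{j_k\delta_k}$ (at \emph{their own} quantum natural time $j_k\delta_k$) converge to $\qlegrowth_t$ (at quantum natural time $t$), with the corresponding convergence of boundary lengths $e^{\delta_k}(T^{\delta_k}-j_k\delta_k) \to e(T-t)$. The coupling only gives $h_{\delta_k}\to h$ and $\nu_{\delta_k}^T\to\nu^T$ weakly, hence locally uniform convergence of the radial Loewner flows in the \emph{capacity} parameterization. To pass from this to convergence at a fixed quantum natural time you would need the (random) time changes between quantum natural time and capacity time to converge along the coupling, and the limiting time change is exactly what is not controlled at this stage: the quantum natural time of the limit $\qlegrowth$ is only defined a posteriori through its swallowed bubbles (Remark~\ref{rem::qle_quantum_natural_defined}), and even basic regularity of the conversion between intrinsic times and capacity is left open (cf.\ Remark~\ref{rem::capacity_vs_quantum_distance}, which cannot rule out jumps). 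In particular, nothing in Proposition~\ref{prop::qle_bubbles} or the construction tells you that the capacity times at which the approximations reach quantum natural time $j_k\delta_k$ converge to the capacity time at which $\qlegrowth$ reaches quantum natural time $t$; a priori the limit could, say, swallow its bubbles on a strict subset of capacity times while the complementary surface and its boundary length evolve in between, and your argument would not detect this. The subsequent step (``the conditional law does not depend on $k$, so it passes to the limit'') is fine \emph{once} joint convergence of the pair (complementary surface, boundary length) at the fixed quantum natural time is known, but that premise is precisely the missing ingredient. The same missing convergence is what your final sentence about identifying the whole boundary-length process leans on.

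The paper's proof is organized to avoid fixed quantum natural times altogether. It works with the stopping times $\tau_{\epsilon,j}$ at which exactly $j$ bubbles of quantum boundary length at least $\epsilon$ have been swallowed; these times are defined intrinsically in terms of the swallowed bubbles, so they are compatible with the convergence of the ordered bubble sequence established in (the argument of) Proposition~\ref{prop::qle_bubbles}. Re-applying the Skorohod re-coupling at these times, one couples $\qlegrowth$ with an actual $\SLE_6$ exploration of a doubly-marked sphere so that the unexplored regions are \emph{equal} (as quantum surfaces) at all times $\tau_{\epsilon,j}$ simultaneously, and so that the two processes have the same ordered bubble sequence; the proposition then follows because the times $\tau_{\epsilon,j}$ are dense in the quantum natural time parameterization. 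If you want to salvage your route, you would need either to prove convergence of the time changes (not available here), or to replace your deterministic times $j_k\delta_k\approx t$ by bubble-counting stopping times as the paper does. Your treatment of (III) via Proposition~\ref{prop::qle_bubbles} and the reconstruction of the excursion from its jumps is fine as far as the \emph{law} of the excursion goes, but identifying it with the boundary-length evolution of $\qlegrowth$ again requires the coupling at the $\tau_{\epsilon,j}$, not just convergence of the bubbles.
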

\begin{proof}
We shall assume that $(\CS,x,y) = (\C,h,0,\infty)$ and that $\qlegrowth$ is parameterized by capacity as in the construction of the subsequential limits in the beginning of this subsection.  Let $(\delta_k)$ be a sequence as in the beginning of this section and, for each $k$, let $\qlegrowth^{\delta_k}$ be the $\delta_k$-approximation to $\QLE(8/3,0)$.  We assume that each of the $\qlegrowth^{\delta_k}$ are parameterized by capacity.  For each $\epsilon > 0$, we let $\tau_{j,\epsilon}^{\delta_k}$ and $\tau_{j,\epsilon}$ be as in the proof of Proposition~\ref{prop::qle_bubbles} (i.e., passing to a further subsequence and recoupling the laws of the $\delta_k$-approximations using the Skorokhod representation theorem).  Then we know that the law of the quantum surface parameterized by $\C \setminus \qlegrowth_{\tau_{j,\epsilon}^{\delta_k}}^{\delta_k}$ is equal in distribution to the corresponding surface in the case of an $\SLE_6$ exploration of a doubly-marked quantum sphere sampled from $\Mstwo$.  Repeating the argument of Proposition~\ref{prop::qle_bubbles} (i.e., passing to a further subsequence and recoupling the laws of the $\delta_k$-approximations using the Skorokhod representation theorem), we can construct a coupling with an $\SLE_6$ exploration of a doubly-marked quantum sphere so that the unexplored region for the $\QLE(8/3,0)$ at the time $\tau_{j,\epsilon}$ is equal to the unexplored region at the corresponding time for an $\SLE_6$.  Moreover, we can arrange so that the ordered sequence of bubbles cut off by $\qlegrowth$ by the time $\tau_{j,\epsilon}$ is equal to the ordered sequence of bubbles cut off by the $\SLE_6$ by the corresponding time.  Remark~\ref{rem::qle_quantum_natural_defined} implies that the boundary length of the unexplored region for the $\SLE_6$ at the time corresponding to $\tau_{j,\epsilon}$ is determined by the ordered sequence of bubbles cut off by this time.  By our choice of coupling, the same is also true for the $\QLE(8/3,0)$.  This implies the result because the times of the form $\tau_{j,\epsilon}$ are dense in the quantum natural time parameterization.
\end{proof}

\begin{proposition}
\label{prop::qle_outer_boundary}
Suppose that we have the same setup as in Proposition~\ref{prop::qle_boundary_length} where we have taken $(\CS,x,y) = (\C,h,0,\infty)$ with $\qlegrowth$ parameterized by quantum natural time.  Then for each time $t$, we have that $\C \setminus \qlegrowth_t$ is a H\"older domain for each $t > 0$ fixed.  In particular, $\partial \qlegrowth_t$ is a.s.\  conformally removable.
\end{proposition}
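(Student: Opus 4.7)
The plan is to imitate the proof of Proposition~\ref{prop::figure_8_quantum_disk_and_removable} almost verbatim, now using Proposition~\ref{prop::qle_boundary_length} in place of the identification of the conditional law of $\CB$ there. Let $U$ denote the unique unbounded component of $\C \setminus \qlegrowth_t$; it suffices to show that $U$ is a H\"older domain, since the conformal removability of $\partial \qlegrowth_t = \partial U$ will then follow from the Jones-Smirnov theorem \cite[Corollary~2]{js2000remove} recalled in Section~\ref{subsec::removability}.

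First I would condition on the quantum boundary length $L$ of $\partial U$ and invoke Proposition~\ref{prop::qle_boundary_length} to identify the conditional law of $U$, viewed as a quantum surface, as a quantum disk of boundary length $L$ weighted by its total quantum area. I would then embed this quantum disk into the strip $\strip = \R \times [0,2\pi]$ with its two marked boundary points sent to $\pm \infty$ in the standard way, obtaining a field $\wt{h}$ on $\strip$ and a conformal map $\varphi \colon \strip \to U$. The law of $\wt{h}$ is absolutely continuous with respect to that of a free-boundary GFF on every bounded rectangle $[a,b] \times [0,2\pi] \subseteq \strip$; likewise the law of $h$ restricted to any bounded open $V \subseteq \C$ is mutually absolutely continuous with respect to the restriction of a free-boundary GFF on a bounded domain containing $\overline{V}$.

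These two absolute continuity statements allow one to apply \cite[Theorem~8.1]{ms2013qle} to $\wt{h}$ and to $h$ and conclude that $\varphi$ is almost surely H\"older continuous on every bounded sub-strip $[a,b] \times [0,2\pi]$. The only possible obstruction to $U$ being a H\"older domain comes from the two ``ends'' $\pm \infty$ of $\strip$, which correspond to two distinguished points $p_1, p_2 \in \partial U$ chosen by the disk embedding. By \cite[Corollary~5.8]{dms2014mating}, conditional on the quantum surface $U$, the pair $(p_1, p_2)$ is distributed uniformly (in quantum boundary measure) on $\partial U$. Resampling them shows that, almost surely, every point of $\partial U$ admits a neighborhood on which the inverse of $\varphi$ (or equivalently a conformal map from a reference disk onto $U$) is H\"older continuous; patching these local estimates yields that $U$ is globally a H\"older domain. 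An application of \cite[Corollary~2]{js2000remove} then gives the conformal removability of $\partial \qlegrowth_t$.

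The main point requiring some care is the resampling and patching step, where one must verify that the local H\"older exponents and constants combine to yield a genuine global H\"older property at the two marked points (and not merely away from them). This, however, is the same verification that is carried out at the end of the proof of Proposition~\ref{prop::figure_8_quantum_disk_and_removable}, and the argument there transfers to the present situation without modification.
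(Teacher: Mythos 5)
Your proposal is correct and follows essentially the same route as the paper: the paper's proof simply observes (via Proposition~\ref{prop::qle_boundary_length}) that $\C \setminus \qlegrowth_t$ has the same law as the unexplored region in the $\SLE_6$ case and then invokes the argument of Proposition~\ref{prop::figure_8_quantum_disk_and_removable}, which is exactly the absolute-continuity/\cite[Theorem~8.1]{ms2013qle}/\cite[Corollary~5.8]{dms2014mating}/\cite[Corollary~2]{js2000remove} chain you spell out, including the resampling of the two marked points.
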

\begin{proof}
This follows because the law of the surface parameterized by $\C \setminus \qlegrowth_t$ has the same law as the corresponding surface for an $\SLE_6$, hence we can apply the argument of Proposition~\ref{prop::figure_8_quantum_disk_and_removable}.
\end{proof}

\subsection{Time parameterizations}
\label{subsec::qle_tim}

In this article, we consider three different time parameterizations for $\QLE(8/3,0)$:
\begin{enumerate}
\item Capacity time,
\item Quantum natural time, and
\item Quantum distance.
\end{enumerate}
We have already introduced the first two parameterizations.  We will now give the definition of the third.  Suppose that $\qlegrowth$ is a $\QLE(8/3,0)$ parameterized by quantum natural time and let $X_t$ be the quantum boundary length of the complementary component of $\qlegrowth_t$.  We then set
\[ s(t) = \int_0^t \frac{1}{X_u} du.\]
We refer to the time-parameterization given by $s(t)$ as the {\bf quantum distance} parameterization for $\QLE(8/3,0)$.  The reason for this terminology is that, as we shall see in the next section, those points which are swallowed by $\qlegrowth$ at quantum distance time $d$ have distance $d$ from $x$ in the metric space that we construct.  We also note that this time parameterization is particularly natural from the perspective that $\QLE(8/3,0)$ represents a continuum form of the Eden model.  Recall that quantum natural time for $\SLE_6$ is the continuum analog of the time parameterization for percolation in which each edge is traversed in each unit of time.  The quantum distance time therefore corresponds to the time parameterization in which the number of edges traversed in a unit of time is proportional to the boundary length of the cluster.  This is the usual time parameterization for the Eden model.  Let $\zeta = \inf\{t > 0: X_t = 0\}$ and let
\begin{equation}
\label{eqn::dl_def}
\qdist(x,y) = \int_0^\zeta \frac{1}{X_u} du = s(\zeta).
\end{equation}
Then $\qdist(x,y)$ measures the quantum distance ``from the left'' of $x$ and $y$ (in the sense that $x$ is the ``left'' argument of $\qdist(x,y)$ and $y$ is the ``right'' argument).  One of the main inputs into the proof of Theorem~\ref{thm::qle_metric} given in Section~\ref{subsec::coupling_explorations} is that $\qdist(x,y)$ is symmetric in $x$ and $y$ and in fact a.s.\  determined by the underlying surface $\CS$.

\begin{lemma}
\label{lem::capacity_increases_qd}
Suppose that~$\qlegrowth$ is a $\QLE(8/3,0)$ with either the quantum distance or quantum natural time parameterization.  We a.s.\ have for all $0 \leq s < t$ that~$\qlegrowth_s$ is a strict subset of~$\qlegrowth_t$.
\end{lemma}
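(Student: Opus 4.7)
The plan is to reduce the claim to the statement that the quantum boundary length process $X$, giving the length of the boundary of the complementary component of $\qlegrowth_t$ containing $y$, has no flat subinterval almost surely. To do this, I would first observe that $\qlegrowth$ is monotone nondecreasing in $t$ by construction, so if $\qlegrowth_s = \qlegrowth_t$ were to hold for some $s < t$, then monotonicity would force $\qlegrowth_u = \qlegrowth_s$ for every $u \in [s,t]$. The complementary components of the $\qlegrowth_u$ would then coincide as subsurfaces of $\CS$ throughout $[s,t]$, and because quantum boundary length is a functional of the quantum surface, this would give $X_u = X_s$ for all $u \in [s,t]$. Hence it is enough to show that $X$ is a.s.\ not constant on any non-degenerate subinterval of $[0,\zeta]$.

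For the quantum natural time parameterization, Proposition~\ref{prop::qle_boundary_length} identifies $X$ as the time-reversal of a $3/2$-stable L\'evy excursion with only positive jumps. The associated L\'evy measure $c u^{-5/2}\,du$ has infinite mass near the origin, so the underlying $3/2$-stable L\'evy process almost surely has infinitely many jumps in every non-empty open interval, and in particular admits no flat subinterval. This property transfers to its time-reversed excursion via the It\^o excursion decomposition, or equivalently via the absolute continuity of the excursion law with respect to the unconditioned L\'evy law on any compact subinterval bounded away from the endpoints of the excursion.

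For the quantum distance parameterization, I would note that $X_u > 0$ for $u \in (0,\zeta)$, so $s(t) = \int_0^t X_u^{-1}\,du$ is strictly increasing and continuous on $(0,\zeta)$, furnishing a homeomorphism between $(0,\zeta)$ and $(0,\qdist(x,y))$; strict monotonicity of $\qlegrowth$ under the quantum natural time parameterization therefore transfers automatically. The step requiring the most care is the initial reduction, where one must verify that any failure of strict inclusion at a pair $(s,t)$ really forces the excursion $X$ to be constant on an entire subinterval (rather than merely agreeing at two points); this is handled by combining the monotonicity of $\qlegrowth$ with the fact that $X_u$ depends on $\qlegrowth_u$ only through the complementary quantum surface.
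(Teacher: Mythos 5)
Your proof is correct and rests on exactly the same fact as the paper's one-line argument: the infinite activity of the $3/2$-stable L\'evy excursion means that in every open interval of quantum natural time a bubble is swallowed (equivalently, the boundary-length process $X$ jumps, so it cannot be flat), which forces strict growth, and the strictly increasing continuous time change $s(t)$ transfers this to the quantum distance parameterization. You have merely phrased the bubble-swallowing argument contrapositively through the constancy of $X$, so this is essentially the paper's proof with more detail spelled out.
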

\begin{proof}
This follows because it is a.s.\ the case that in each open interval of time in the quantum natural time parameterization $\qlegrowth$ swallows a bubble and the same is also true for the quantum distance parameterization.
\end{proof}

\begin{remark}
\label{rem::capacity_vs_quantum_distance}
Lemma~\ref{lem::capacity_increases_qd} implies that the function which converts from the quantum natural time (resp.\ distance) parameterization to the capacity time parameterization is a.s.\  strictly increasing.  It does not, however, rule out the possibility that this time change has jumps.
\end{remark}

\section{Symmetry}
\label{sec::qle_symmetry}

The purpose of this section is to establish an analog of Theorem~\ref{thm::typical_cut_point} for $\QLE(8/3,0)$, stated as Theorem~\ref{thm::qle_symmetry} below.  This will be a critical ingredient for our proof of Theorem~\ref{thm::qle_metric} given in Section~\ref{sec::metric}.  We begin by introducing a $\QLE(8/3,0)$ analog of the measure $\MstwoD$ from Section~\ref{subsec::weighted_measures}.  To define this measure, we let $(\delta_k)$ and $(\ol{\delta}_k)$ be two sequences as in Section~\ref{subsec::qle_subsequential_limits} along which the $\delta$-approximations to $\QLE(8/3,0)$ converge.  We do not assume that $(\delta_k)$ and $(\ol{\delta}_k)$ are the same.  We then let $\lawonall$ be the measure on quadruples consisting of a doubly-marked quantum sphere $(\CS,x,y)$, $\QLE(8/3,0)$ growth processes $\qlegrowthleft$ and $\qlegrowthright$ from $x$ to $y$ (resp.\ $y$ to $x$), and $U \in [0,1]$.  We assume that $\qlegrowthleft$ (resp.\ $\qlegrowthright$) is produced from the limiting law associated with $(\delta_k)$ (resp.\ $(\ol{\delta}_k)$).  A sample from $\lawonall$ is produced by:
\begin{enumerate}
\item Picking $(\CS,x,y)$ from $\Mstwo$,
\item Picking $\qlegrowthleft$ and $\qlegrowthright$ conditionally independently given $(\CS,x,y)$, and
\item Taking $U$ to be uniform in $[0,1]$ independently of everything else.
\end{enumerate}
We assume that both $\qlegrowthleft$ and $\qlegrowthright$ have the quantum distance parameterization.  Let $\qdist(x,y)$ be as in~\eqref{eqn::dl_def} for $\qlegrowthleft$ and $\oqdist(y,x)$ be as in~\eqref{eqn::dl_def} for $\qlegrowthright$.  We note that it is not \emph{a priori} clear that the quantities $\qdist(x,y)$ and $\oqdist(y,x)$ should be related because the former is a measurement of distance between $x$ and $y$ from the ``left'' while the latter measures distance from the ``right.''  Moreover, $\qdist(x,y)$ and $\oqdist(y,x)$ are defined with what \emph{a priori} might be different subsequential limits of $\QLE(8/3,0)$.

We also let $\stoppingleft = U \qdist(x,y)$, $\stoppingright = U \oqdist(y,x)$,
\[ \righthit = \inf\{t \geq 0 : \qlegrowthright_t \cap \qlegrowthleft_\stoppingleft \neq \emptyset\}, \quad\text{and}\quad \lefthit = \inf\{t \geq 0 : \qlegrowthleft_t \cap \qlegrowthright_\stoppingright \neq \emptyset\}.\]
Note that $\stoppingleft$ (resp.\ $\stoppingright$) is uniform in $[0,\qdist(x,y)]$ (resp.\ $[0,\oqdist(y,x)]$).  We define measures $\lawonall^{x\to y}$ and $\lawonall^{y \to x}$ by setting
\begin{equation}
\label{eqn::dm_d_q_def}
\frac{ d \lawonall^{x \to y}}{d \lawonall} = \qdist(x,y) \quad\text{and}\quad \frac{ d \lawonall^{y \to x}}{d \lawonall} = \oqdist(y,x).
\end{equation}

\begin{figure}[ht!]
\begin{center}
\includegraphics[scale=0.85, page=3]{Figures/meeting_cut_point}
\end{center}
\caption{\label{fig::qle_symmetry} Illustration of the argument used to prove Theorem~\ref{thm::qle_symmetry}.  The top shows the configuration of paths as in Figure~\ref{fig::sle_6_first_hit}.  By Theorem~\ref{thm::typical_cut_point}, we know that the distribution of the path-decorated quantum surfaces parameterized by the light blue and red regions on the top and marked by the tip of the corresponding $\SLE_6$ segments is symmetric under $\MstwoD$.  Reshuffling the two $\SLE_6$ processes on the top yields the pair of $\QLE(8/3,0)$ growths on the bottom.  We will deduce our symmetry result for the surfaces on the bottom from the symmetry of the surfaces on the top.  Part of this involves arguing that the asymmetry in the top which arises from knowing that $\ol{\eta}'(\ol{\ttime}) \in \eta'([0,\ttime])$ disappears in the limiting procedure used to construct $\QLE(8/3,0)$.}
\end{figure}

\begin{theorem}
\label{thm::qle_symmetry}
Suppose that we have the setup described just above.  Then the $\lawonall^{x \to y}$ distribution of $\qlegrowthleft|_{[0,\stoppingleft]}$, $\qlegrowthright|_{[0,\righthit]}$,$(\CS,x,y)$ is equal to the $\lawonall^{y \to x}$ distribution of $\qlegrowthleft|_{[0,\lefthit]}$, $\qlegrowthright|_{[0,\stoppingright]}$, $(\CS,x,y)$.
\end{theorem}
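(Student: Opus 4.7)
The proof proceeds by transporting the $\SLE_6$-level symmetry of Theorem~\ref{thm::typical_cut_point} through the reshuffling construction of $\QLE(8/3,0)$ given in Section~\ref{sec::qle_construction}, in the spirit indicated by Figure~\ref{fig::qle_symmetry}. Two measure-theoretic identifications drive the argument: (i) the measure $\MstwoD$ corresponds, via the quantum-natural-time to quantum-distance time change (whose Jacobian is $X_\ttime^{-1}$, cf.\ \eqref{eqn::m_d_def}), to the $\qlegrowth$-marginal of $\lawonall^{x \to y}$ at the level of the reshuffling approximations; and (ii) the swap $\CX \leftrightarrow \ol\CX$ of Theorem~\ref{thm::typical_cut_point}, which lives at the level of beaded quantum surfaces, lifts to the swap of the pair $(\qlegrowth,\qlegrowthright)$ at the $\QLE$ level once the $\SLE_6$ tips --- the source of the asymmetry highlighted in Remark~\ref{rem::sle_whole_picture_not_symmetric} --- have been erased by the reshuffling.

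I will first set up a joint $\delta$-approximation. Fix $\delta,\ol\delta>0$, sample $(\CS,x,y,\eta',\ttime)$ from $\MstwoD$, let $\ol\eta'$ be the time-reversal of $\eta'$, and let $\ol\ttime$ be the first time $\ol\eta'$ hits $\eta'([0,\ttime])$. Let $\CX,\ol\CX,\CB$ be as in Theorem~\ref{thm::typical_cut_point}. Partition $\eta'|_{[0,\ttime]}$ into its sequence of $\delta$-necklaces (with a possibly incomplete final necklace) and, using independent uniform rotations $\CR^\delta$, reshuffle as in Section~\ref{sec::qle_construction} to produce $\qlegrowth^{\delta}|_{[0,\stoppingleft^\delta]}$, where $\stoppingleft^\delta$ is the accumulated quantum-distance time corresponding to $\ttime$. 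Do the same, independently, for $\ol\eta'|_{[0,\ol\ttime]}$ with parameter $\ol\delta$ and randomness $\ol\CR^{\ol\delta}$, producing $\qlegrowthright^{\ol\delta}|_{[0,\righthit^{\ol\delta}]}$. The region $\CB$ is not touched by either reshuffling.

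The symmetry transfer is then clean at the approximation level. The output $\qlegrowth^{\delta}|_{[0,\stoppingleft^\delta]}$ is a deterministic function of $(\CX,\CB,\CR^\delta)$ that depends on $\CX$ only through its beaded-surface structure (not its distinguished tip, which the reshuffling randomizes away), and analogously $\qlegrowthright^{\ol\delta}|_{[0,\righthit^{\ol\delta}]}$ depends only on $(\ol\CX,\CB,\ol\CR^{\ol\delta})$. Because $\CR^\delta$ and $\ol\CR^{\ol\delta}$ are independent of $(\CX,\ol\CX,\CB)$ and of each other, the law-preserving swap of Theorem~\ref{thm::typical_cut_point} --- which exchanges $\CX$ and $\ol\CX$ while keeping $\CB$ fixed --- lifts to a law-preserving swap of the pair $(\qlegrowth^{\delta}|_{[0,\stoppingleft^\delta]},\qlegrowthright^{\ol\delta}|_{[0,\righthit^{\ol\delta}]})$ under the $\delta$-approximation of $\lawonall^{x \to y}$, with the roles of $\stoppingleft^\delta$ and $\righthit^{\ol\delta}$ exchanging with $\lefthit^\delta$ and $\stoppingright^{\ol\delta}$ respectively under the $\ol\delta$-approximation of $\lawonall^{y \to x}$. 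The conformal removability of the boundary $\partial \CB$ from Proposition~\ref{prop::figure_8_quantum_disk_and_removable} ensures that the welded picture is unambiguously determined by the three beaded pieces.

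Finally I will pass to the limit along the subsequences $(\delta_k),(\ol\delta_k)$ from Section~\ref{subsec::qle_subsequential_limits}, using Proposition~\ref{prop::qle_bubbles} and Proposition~\ref{prop::qle_boundary_length} to identify the limiting bubble structure and boundary-length evolution with those of a genuine $\QLE(8/3,0)$. The main obstacle lies in the joint limit, and is twofold. First, I must verify that the hitting time $\righthit^{\ol\delta_k}$ converges to $\righthit$ (equivalently, that the first-meeting configuration of the two $\QLE$ growths is continuous in the subsequential limit); for this I will use the conformal removability of $\partial\qlegrowth_t$ from Proposition~\ref{prop::qle_outer_boundary} together with the strict monotonicity from Lemma~\ref{lem::capacity_increases_qd} to rule out degenerate meeting. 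Second, I must justify that reshuffling $\eta'$ and its deterministically-related reversal $\ol\eta'$ yields, in the limit, the same joint law as reshuffling two conditionally independent $\SLE_6$'s (as in the definition of $\lawonall$). This is the deeper point: because the reshuffled growths depend on $\eta',\ol\eta'$ only through the ordered necklace decompositions (which determine the same beaded surfaces in either case, given the fixed stopping times $\ttime$ and $\ol\ttime$ that pin down the pair $(\CX,\ol\CX)$) together with independent rotations, the tip-dependence that distinguishes the two constructions disappears in the limit, exactly as flagged in Remark~\ref{rem::sle_whole_picture_not_symmetric}.
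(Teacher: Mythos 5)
Your skeleton coincides with the paper's strategy at a high level (transport the symmetry of Theorem~\ref{thm::typical_cut_point} through the necklace reshuffling, use conformal removability to make the regluing well defined, pass to the subsequential limits), but the two points you defer are exactly where the content of the proof lies, and one of your intermediate claims is incorrect as stated. At finite $\delta$ the swap does \emph{not} lift cleanly to the glued, stopped pair: the tip of $\qlegrowthright$ at the hitting time $\righthit$ lies on $\partial \qlegrowthleft_{\stoppingleft}$, whereas the tip of $\qlegrowthleft$ at time $\stoppingleft$ is uniform on that boundary --- this is the asymmetry of Remark~\ref{rem::sle_whole_picture_not_symmetric}, and it survives the reshuffling at every finite approximation level (the paper makes this explicit in Remark~\ref{rem::qle_not_symmetric}). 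What is symmetric for finite $\delta$ is only the law of the \emph{triple of abstract quantum surfaces} $(\qlegrowthleft^k|_{[0,\stoppingleft]},\qlegrowthright^k|_{[0,\righthit]},\CB^k)$, which is the paper's Lemma~\ref{lem::approximations_symmetric}; the symmetry of the marked, glued sphere only emerges in the limit, and showing that the tip asymmetry genuinely vanishes there is the heart of the argument. The paper does this via a local surgery near the tip (Lemma~\ref{lem::last_step_bl_change}), a CSBP estimate ruling out formation of an atom as the surgery window shrinks (Lemma~\ref{lem::atom_cannot_form}), a coupling in which the boundary-length changes are matched so that the two surface maps agree on the relevant boundary arc and can be glued by removability into a single conformal map (Lemma~\ref{lem::continuum_equivalence}), plus the facts that the two growths meet at a single point (Lemma~\ref{lem::hitting_set}) and that the pinch point is uniform on both boundaries (Lemma~\ref{lem::pinch_point_glued}), so that the \emph{gluing data}, and not just the three pieces, is symmetric in law. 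Your proposal compresses all of this into the assertion that the tip-dependence ``disappears in the limit.''

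Your second deferred point --- that reshuffling $\eta'$ and its deterministically related time-reversal yields, in the limit, the same joint law as the conditionally independent pair appearing in the definition of $\lawonall$ --- also does not follow from ``depends only on the necklace decompositions plus independent rotations.'' Conditional independence is not in general preserved under weak limits; the paper proves that it survives here by an explicit Radon--Nikodym computation for the GFF on disjoint neighborhoods (Lemma~\ref{lem::quantum_sphere_rn}), a continuity/uniform-integrability criterion for conditional independence under weak convergence (Lemma~\ref{lem::cond_ind_limit}), and the locality of $\SLE_6$, culminating in Proposition~\ref{prop::qle_conditional_independence_in_limit_stopping_times}, which is what identifies the constructed limit with the $\lawonall^{x \to y}$ law of the stopped pair. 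Without substitutes for these two blocks, your text is an outline of the paper's proof rather than a proof.
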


\subsection{Conditional independence in the limit}

We will need to have a version of the following statement in the proof of Theorem~\ref{thm::qle_symmetry}.  Suppose that we start with a doubly-marked quantum sphere $(\CS,x,y)$ together with a $\delta_k$-approximation of $\QLE(8/3,0)$ from $x$ to $y$ and a $\ol{\delta}_k$-approximation of $\QLE(8/3,0)$ from $y$ to $x$, taken to be conditionally independent given~$(\CS,x,y)$.  Then in the weak limit along sequences $(\delta_k)$ and $(\ol{\delta}_k)$ as in Section~\ref{subsec::qle_subsequential_limits} we have that the resulting $\QLE(8/3,0)$'s are conditionally independent given the underlying quantum surface, at least when the processes are grown up to a time at or before they first touch.

\begin{proposition}
\label{prop::qle_conditional_independence_in_limit_stopping_times}
Suppose that $(\delta_k)$, $(\ol{\delta}_k)$ are two sequences as in Section~\ref{subsec::qle_subsequential_limits}.  Suppose further that, for each $k$, we have a triple consisting of a doubly-marked quantum sphere $(\CS^k,x^k,y^k) = (\C,h^k,0,\infty)$, a $\delta_k$-approximation $\qlegrowthleft^k$ of $\QLE(8/3,0)$ from $0$ to $\infty$, and a $\ol{\delta}_k$-approximation $\qlegrowthright^k$ of $\QLE(8/3,0)$ from $\infty$ to $0$.  We assume further that $\qlegrowthleft^k$ and $\qlegrowthright^k$ are conditionally independent given $h^k$ and that both are parameterized by capacity as seen from their target point.  Let $(\CS,x,y) = (\C,h,0,\infty)$, $\qlegrowthleft$, $\qlegrowthright$ be distributed as the weak limit of the aforementioned triple (so both of the limiting $\QLE(8/3,0)$'s are parameterized by capacity as seen from their target point and the type of limit is as in Section~\ref{subsec::qle_subsequential_limits}).  Let $\stoppingleft$ be any stopping time for the filtration generated by $\qlegrowthleft$ and the restriction of $h$ to the interior of $\qlegrowthleft_t$ so that $\qlegrowth_{\stoppingleft}$ is a.s.\  bounded and let $\righthit$ be the first time that $\qlegrowthright$ hits $\qlegrowthleft_{\stoppingleft}$.  Then the joint distribution of $(\C,h,0,\infty)$, $\qlegrowthleft|_{[0,\stoppingleft]}$, and $\qlegrowthright|_{[0,\righthit]}$ is equal to the corresponding distribution if we had taken $\qlegrowthleft$ and $\qlegrowthright$ to be $\QLE(8/3,0)$ processes generated respectively from the limiting law associated with $(\delta_k)$ and $(\ol{\delta}_k)$ sampled conditionally independently given $h$.
\end{proposition}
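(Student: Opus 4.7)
The idea is to lift the conditional independence that is manifest at the discrete approximation level to the subsequential limit by augmenting the Skorohod-embedded tuple with the auxiliary randomness driving each approximation, and then to handle the stopping by discretizing $\stoppingleft$.

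First I would extend the Skorohod-embedding argument of Section~\ref{subsec::qle_subsequential_limits}, passing to a further diagonal subsequence so that the enlarged tuple $(\C, h^k, \qlegrowthleft^k, \qlegrowthright^k, \omega^{L,k}, \omega^{R,k})$ converges almost surely to $(\C, h, \qlegrowthleft, \qlegrowthright, \omega^L, \omega^R)$. By the explicit recursive construction of Section~\ref{subsec::qle_approximations}, one has $\qlegrowthleft^k = \Phi^L_k(h^k, \omega^{L,k})$ and $\qlegrowthright^k = \Phi^R_k(h^k, \omega^{R,k})$ for measurable $\Phi^L_k, \Phi^R_k$, with $\omega^{L,k}, \omega^{R,k}$ encoded in a $k$-independent Polish space (for instance, $C([0,\infty);\R) \times [0,1]^{\N}$, recording the Brownian motion driving the radial $\SLE_6$ segments and an i.i.d.\ sequence of uniform random variables used against the quantum boundary measure to perform the tip resamplings at times $j\delta_k$, $j \in \N$). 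Since $h^k, \omega^{L,k}, \omega^{R,k}$ are jointly independent for each $k$ and joint independence is preserved under weak limits, $h, \omega^L, \omega^R$ are jointly independent in the limit, which yields the conditional independence $\qlegrowthleft \perp \qlegrowthright \mid h$ in the limiting coupling.

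I would then discretize the stopping time by setting $\stoppingleft_n := 2^{-n}\lceil 2^n \stoppingleft \rceil$, which is a stopping time for the filtration generated by $\qlegrowthleft$ and $h|_{\qlegrowthleft_t}$, takes countably many values, and decreases to $\stoppingleft$. Put $\righthit_n := \inf\{t \geq 0 : \qlegrowthright_t \cap \qlegrowthleft_{\stoppingleft_n} \neq \emptyset\}$. On each event $\{\stoppingleft_n = v\}$ (with $v$ ranging over the countable range of $\stoppingleft_n$), the pair $(\qlegrowthleft|_{[0,\stoppingleft_n]}, \qlegrowthright|_{[0,\righthit_n]})$ is a measurable functional of $(h, \qlegrowthleft|_{[0,v]}, \qlegrowthright)$, so applying the conditional independence $\qlegrowthleft \perp \qlegrowthright \mid h$ event by event identifies the joint distribution of this pair with the one under a conditionally independent coupling of the limiting $(h, \qlegrowthleft)$- and $(h, \qlegrowthright)$-marginals. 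Sending $n \to \infty$, right-continuity of the radial Loewner hulls gives $\qlegrowthleft_{\stoppingleft_n} \downarrow \qlegrowthleft_{\stoppingleft}$ and hence $\righthit_n \to \righthit$ almost surely, and the identity of joint laws passes to the limit by dominated convergence.

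The main obstacle is passing the conditional independence through the weak limit: weak convergence does not in general preserve this property, and the fix relies on the measurable-functional representation $\qlegrowthleft^k = \Phi^L_k(h^k, \omega^{L,k})$ together with a fixed Polish encoding of the auxiliary data across all $k$, so that the joint independence of $h, \omega^L, \omega^R$ survives the limit and yields conditional independence of $\qlegrowthleft$ and $\qlegrowthright$ given $h$. A secondary subtlety is the continuity of the first-intersection map $s \mapsto \inf\{t : \qlegrowthright_t \cap \qlegrowthleft_s \neq \emptyset\}$ at $s = \stoppingleft$, which follows because $\stoppingleft$ is a stopping time in a filtration not involving $\qlegrowthright$ and thus almost surely avoids the at-most-countable set of discontinuity times of this non-increasing map.
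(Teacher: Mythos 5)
There is a genuine gap at the heart of your argument, namely the step ``$h,\omega^L,\omega^R$ are jointly independent in the limit, which yields the conditional independence $\qlegrowthleft \perp \qlegrowthright \mid h$.''  Joint independence of the limiting inputs does pass through the weak limit, but it buys you nothing unless you also know that in the limiting coupling $\qlegrowthleft$ is measurable with respect to $\sigma(h,\omega^L)$ and $\qlegrowthright$ with respect to $\sigma(h,\omega^R)$.  That is exactly what fails: the maps $\Phi_k^L$ change with $k$ and are not continuous, so the almost sure limit of $\Phi_k^L(h^k,\omega^{L,k})$ along a Skorohod coupling is only measurable with respect to the whole sequence and need not be a function of $(h,\omega^L)$ at all (indeed, whether the limiting $\QLE(8/3,0)$ is determined by the surface together with any explicit driving randomness is precisely the kind of statement this paper does not establish).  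A toy example shows the inference is false in general: let $Z_k$ be uniform on $[0,1]$, let $D_k$ be a union of dyadic intervals at scale $2^{-k}$ of total measure $1/2$, and set $X_k=Y_k=\one\{Z_k\in D_k\}$ with trivial auxiliary randomness.  For each $k$ the pair $(X_k,Y_k)$ is conditionally independent given $Z_k$ (each is a function of $Z_k$), and all the ``inputs'' are trivially independent in the limit, yet the weak limit is $(Z,X,X)$ with $X$ a fair coin independent of $Z$, so conditional independence given $Z$ fails.  Your closing remark correctly identifies that weak limits do not preserve conditional independence, but the proposed fix does not repair it, and the discretization of $\stoppingleft$ in the second half cannot rescue the argument since it presupposes the limiting conditional independence.

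The paper closes this gap by a different mechanism.  It first proves a general transfer criterion (Lemma~\ref{lem::cond_ind_limit}): if the Radon--Nikodym derivatives of the conditional laws with respect to the unconditioned laws are the \emph{same} for all $k$, continuous, and uniformly integrable, then conditional independence survives the weak limit.  It then verifies this hypothesis for the growths stopped upon exiting disjoint open sets $V\ni 0$ and $U\ni\infty$: by the locality of $\SLE_6$ (hence of the $\delta_k$-approximations), the Radon--Nikodym derivative of the conditional law of $(h|_U,\qlegrowthright^k|_{[0,\tau_U]})$ given $(h|_V,\qlegrowthleft^k|_{[0,\tau_V]})$ reduces to that of $h|_U$ given $h|_V$, which by the Markov property of the GFF (Lemma~\ref{lem::quantum_sphere_rn}, stated for the Bessel measure $\bsphere = c_{\rm LB}^{-1}\Mstwo$) is the explicit, $k$-independent, continuous functional $\CZ_{U,V} = Z_{U,V}^{-1}\exp\bigl((h|_U,g|_U)_\nabla\bigr)$ of the harmonic extension of the field data on $V$; this gives Lemma~\ref{lem::qle_conditional_independence_in_limit_neighborhoods}.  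Finally, the restriction to fixed neighborhoods is removed by letting $U$ increase to the interior of $\C\setminus V$ and then varying over all bounded open $V$, which yields the statement for the stopping times $\stoppingleft$ and the hitting time $\righthit$.  If you want to salvage your outline, the essential missing ingredient is some $k$-uniform, continuous control of the dependence between the two growths through the field, and the RN-derivative/locality route above is the way the paper supplies it.
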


In order to establish Proposition~\ref{prop::qle_conditional_independence_in_limit_stopping_times}, we will need to control the Radon-Nikodym derivative of the conditional law of the approximation of one of the $\QLE(8/3,0)$'s given the other.  It is not hard to find a counterexample which shows that if a sequence $(X_k,Y_k,Z_k)$ converges weakly to $(X,Y,Z)$ such that $X_k$, $Y_k$ are conditionally independent given $Z_k$ for each $k$ then it is not necessarily true that $X$, $Y$ are conditionally independent given $Z$.  We begin with Lemma~\ref{lem::cond_ind_limit} which gives a condition under which the conditional independence of $X_k,Y_k$ given $Z_k$ implies the conditional independence of $X,Y$ given $Z$.  We will then use this condition in Lemma~\ref{lem::qle_conditional_independence_in_limit_neighborhoods} to get the conditional independence of the limiting $\QLE(8/3,0)$'s stopped upon exiting a pair of disjoint open sets from which Proposition~\ref{prop::qle_conditional_independence_in_limit_stopping_times} will follow.

\begin{lemma}
\label{lem::cond_ind_limit}
Suppose that $(X_k,Y_k,Z_k)$ is a sequence of random variables such that $X_k$ and $Y_k$ are conditionally independent given $Z_k$ for each $k$ and $(X_k,Y_k,Z_k) \to (X,Y,Z)$ weakly as $k \to \infty$.  For each $k$, let $f_k$ (resp.\ $g_k$) be the Radon-Nikodym derivative of the conditional law of $X_k$ (resp.\ $Y_k$) given $Z_k$ with respect to its unconditioned law.  For each $k$, we let $m_k^X, m_k^Y,m_k^Z$ respectively denote the laws of $X_k,Y_k,Z_k$.  Assume that $f_k = f$ and $g_k = g$ do not depend on $k$, that $f,g$ are continuous, and that $f(X_k,Z_k) g(Y_k,Z_k)$ is uniformly integrable for $ dm_k^X d m_k^Y d m_k^Z$.  Then $X,Y$ are conditionally independent given~$Z$.
\end{lemma}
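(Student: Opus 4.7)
The plan is to show that the joint law of $(X, Y, Z)$ has density $f(x,z) g(y,z)$ with respect to $m^X \otimes m^Y \otimes m^Z$; the conditional independence of $X$ and $Y$ given $Z$ then follows by inspection, since this density factorizes in $(x,y)$ for each fixed $z$, so the conditional law of $(X,Y)$ given $Z = z$ is a product measure in $(x,y)$.

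For each finite $k$, the conditional independence of $X_k$ and $Y_k$ given $Z_k$ together with the hypothesis on the Radon-Nikodym derivatives gives that the joint law of $(X_k,Y_k,Z_k)$ has density $f(x,z)g(y,z)$ with respect to $m_k^X \otimes m_k^Y \otimes m_k^Z$. Hence for any bounded continuous $F$ on the product space,
\[ \E[F(X_k, Y_k, Z_k)] = \int F(x,y,z) f(x,z) g(y,z) \, dm_k^X(x) \, dm_k^Y(y) \, dm_k^Z(z). \]
The left-hand side converges to $\E[F(X,Y,Z)]$ as $k \to \infty$ by weak convergence of $(X_k,Y_k,Z_k)$. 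Weak convergence of the joint also implies weak convergence of each marginal, from which one obtains $m_k^X \otimes m_k^Y \otimes m_k^Z \to m^X \otimes m^Y \otimes m^Z$ weakly (a standard fact for product measures, seen by considering independent realizations).

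The only real issue is that the integrand $F \cdot fg$ on the right-hand side is not bounded in general, so weak convergence does not apply to it directly; this is where the uniform integrability hypothesis is used. For each $M > 0$, $F \cdot (fg \wedge M)$ is bounded continuous by continuity of $f$ and $g$, so weak convergence of the product measures gives
\[ \int F \cdot (fg \wedge M) \, dm_k^X dm_k^Y dm_k^Z \to \int F \cdot (fg \wedge M) \, dm^X dm^Y dm^Z \]
as $k \to \infty$. Uniform integrability yields $\sup_k \int_{\{fg > M\}} fg \, dm_k^X dm_k^Y dm_k^Z \to 0$ as $M \to \infty$, and by Fatou the same bound holds for the limit product measure. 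A standard truncation and $\varepsilon/3$ argument then gives
\[ \E[F(X,Y,Z)] = \int F(x,y,z) f(x,z) g(y,z) \, dm^X(x) \, dm^Y(y) \, dm^Z(z) \]
for every bounded continuous $F$, hence (by a monotone class argument) for every bounded measurable $F$. This identifies the joint density of $(X,Y,Z)$ as claimed, and the conditional independence of $X$ and $Y$ given $Z$ follows as in the first paragraph. The main (modest) technical step is the limit passage with the unbounded integrand $fg$, which is exactly what the uniform integrability hypothesis is tailored to handle.
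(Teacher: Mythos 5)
Your proof is correct and follows essentially the same route as the paper: identify the limiting joint law as $f(x,z)g(y,z)\,dm^X\,dm^Y\,dm^Z$ via weak convergence of the triple and of the product of marginals, then read off conditional independence from the factorized density. The only difference is that you carry out explicitly the truncation/uniform-integrability step that the paper dismisses as ``a simple truncation argument,'' which is a fair elaboration rather than a new approach.
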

\begin{proof}
For simplicity we will prove the result in the case that $f,g$ are bounded.  The result in the general case follows from a simple truncation argument.  Let $m^X,m^Y,m^Z$ be the respective laws of $X,Y,Z$.  Then the joint law of $(X_k,Y_k,Z_k)$ is given by
\[ f(X_k,Z_k) g(Y_k,Z_k) dm_k^X dm_k^Y dm_k^Z.\]
Suppose that $F$ is a bounded, continuous function of $(X_k,Y_k,Z_k)$.  Then the weak convergence of $(X_k,Y_k,Z_k)$ to $(X,Y,Z)$ as $k \to \infty$ implies that
\begin{align*}
 &\int F(X_k,Y_k,Z_k) f(X_k,Z_k) g(Y_k,Z_k) dm_k^X dm_k^Y dm_k^Z\\
 \to& \int F(X,Y,Z) f(X,Z) g(Y,Z) dm^X dm^Y dm^Z \quad\text{as}\quad k \to \infty.
 \end{align*}
Therefore $f(X,Z) g(Y,Z) dm^X dm^Y dm^Z$ gives the joint law of $(X,Y,Z)$.  From the form of the joint law, it is clear that $X,Y$ are conditionally independent given $Z$, as desired.
\end{proof}

In order to be able to apply Lemma~\ref{lem::cond_ind_limit}, we need to give the Radon-Nikodym derivative of the conditional law of a quantum sphere in a region conditional on its realization in a disjoint region.  We will state this result in the setting of the infinite measure $\bsphere$ on doubly-marked quantum spheres described in~\cite{dms2014mating, quantum_spheres} constructed using the excursion measure associated with a Bessel process because this will allow us to give the simplest formulation of the result.  We recall \cite[Theorem~1.4]{quantum_spheres} which states that there exists a constant $c_{\rm LB} > 0$ such that $\Mstwo = c_{\rm LB} \bsphere$.  It will be convenient in what follows to produce samples of quantum spheres using $\bsphere$ rather than $\Mstwo$ because the Bessel description is more amenable to using the Markov property for the GFF.  We will briefly recall this construction; see the introductions of \cite{quantum_spheres,dms2014mating} as well as \cite[Section~4.5, Appendix~A]{dms2014mating} for additional detail.  We let $\cyl = \R \times [0,2\pi]$ with the top and bottom identified be the infinite cylinder.  We then let $\CH(\cyl)$ be the Hilbert space closure of $C_0^\infty(\cyl)$ with respect to the Dirichlet inner product
\begin{equation}
\label{eqn::dirichlet_inner_product}
(f,g)_\nabla = \frac{1}{2\pi} \int \nabla f(x) \cdot \nabla g(x) dx
\end{equation}
and we let $\CH_1(\cyl)$ (resp.\ $\CH_2(\cyl)$) be the subspace of $\CH(\cyl)$ consisting of those functions which are constant (resp.\ have mean zero) on vertical lines.  Then $\CH_1(\cyl) \oplus \CH_2(\cyl)$ gives an orthogonal decomposition of $\CH(\cyl)$ \cite[Lemma~4.2]{dms2014mating}.  A sample can be produced from $\bsphere$ as follows.
\begin{itemize}
\item Take the projection of $h$ onto $\CH_1(\cyl)$ to be given by $\tfrac{2}{\gamma} \log Z$ reparameterized to have quadratic variation $du$ where $Z$ is picked from the It\^o excursion measure $\nu_\delta^\bes$ of a Bessel process of dimension $\delta = 4-\tfrac{8}{\gamma^2}$.  This defines $Z$ modulo horizontal translation.
\item Sample the projection onto $\CH_2(\cyl)$ independently from the law of the corresponding projection of a whole-plane GFF on $\cyl$.
\end{itemize}
(See \cite[Section~2.1.1]{quantum_spheres} for a reminder of the construction of $\nu_\delta^\bes$.)  Throughout, we let $\cyl_\pm \subseteq \cyl$ be the half-infinite cylinders given by $[0,2\pi] \times \R_\pm$ with the top and bottom identified.

Fix $r \in \R$ and suppose that $(\cyl,h,-\infty,+\infty)$ is sampled from $\bsphere$ conditioned on the supremum of the projection of $h$ onto $\CH_1(\cyl)$ exceeding $r$ (this defines a probability measure).  We take the horizontal translation so that the projection of~$h$ onto $\CH_1(\cyl)$ first hits $r$ at $u=0$.  We note that the projection of $h$ onto $\CH_1(\cyl)$ takes a simple form for $u \geq 0$: it is given by the function whose common value on $u + [0,2\pi i]$ is equal to $B_u + (\gamma-Q) u$ where $B$ is a standard Brownian motion with $B_0 = r$.

Suppose that $\wh{h}$ is a whole-plane GFF on $\cyl$ plus the function $(\gamma-Q) \re(z)$.  We assume that the additive constant for $\wh{h}$ is fixed so that its average on $[0, 2\pi i]$ is equal to $r$.  Then the restriction to $\cyl_+$ of the projection of $\wh{h}$ onto $\CH_1(\cyl)$ has the same law as for $h$ above.  This implies that the restrictions of $h$ and $\wh{h}$ to $\cyl_+$ have the same distribution (as the projections of $h,\wh{h}$ onto $\CH_2(\cyl)$ also have the same distribution).  In particular, by the Markov property of the GFF, the conditional law of $h$ given its values in any neighborhood $V$ of $\cyl_-$ is that of a GFF in $\cyl \setminus V$ with zero boundary values plus the function which is harmonic in $\cyl \setminus V$ which has the same boundary values on $\partial V$ and at $+\infty$ as $h$.

\begin{lemma}
\label{lem::quantum_sphere_rn}
Fix $r \in \R$.  Suppose that $(\cyl,h,-\infty,+\infty)$ is sampled from $\bsphere$ conditioned on the supremum of the projection of $h$ onto $\CH_1(\cyl)$ exceeding $r$.  We take the horizontal translation so that the projection of~$h$ onto $\CH_1(\cyl)$ first hits $r$ at $u=0$.  Let $U \subseteq \cyl$ be a neighborhood of $+\infty$ which is contained in $\cyl_+$ and let $V$ be a neighborhood of $\cyl_-$ such that $\dist(U,V) > 0$.  Let $g$ be equal to the harmonic extension of $f-h|_V$ from $V$ to $\cyl \setminus V$ and let $\wt{g} = g \phi$ where $\phi \in C^\infty(\CQ)$ is such that $\phi \equiv 0$ in a neighborhood of $V$ and $\phi|_U \equiv 1$.  Let $\CZ_{U,V}$ be defined by
\begin{equation}
\label{eqn::quantum_sphere_rn}
\CZ_{U,V} =  \E\!\left[ \exp\left((h , \wt{g})_\nabla - \| \wt{g} \|_\nabla^2/2 \right) \giv h|_U  \right].
\end{equation}
Then the law of a sample produced from the law of the restriction $h|_U$ of $h$ to $U$ weighted by $\CZ_{U,V}$ is equal to the law of $h|_U$ conditional on the restriction $h|_V$ of $h$ to $V$ being equal to $f$.
\end{lemma}
\begin{proof}
We first recall that if $h$ is a GFF on a domain $D \subseteq \C$ and $f \in H(D)$ then the Radon-Nikodym derivative of the law of $h+f$ with respect to the law of $h$ is given by $\exp((h,f)_\nabla -\| f \|_\nabla^2/2)$.  (This is proved by using that the Radon-Nikodym derivative of the law of a $N(\mu,1)$ random variable with respect to the law of a $N(0,1)$ random variable is given by $e^{x \mu - \mu^2/2}$.)

In order to make use of the aforementioned fact in the setting of the lemma, we first recall from above that from the construction of $\bsphere$ it follows that the conditional law of $h|_U$ given $h|_V$ is equal to that of a GFF on $\cyl \setminus V$ restricted to $U$ with Dirichlet boundary conditions on $\partial V$ and at $+\infty$ given by those of $h$.  Consequently, by the Markov property for the GFF, we can sample from the law of $h|_U$ conditioned on $h|_V = f$ by:
\begin{enumerate}
\item Sampling $h$ according to its unconditioned law and then
\item Adding to $h|_U$ the harmonic extension $g$ of $f-h|_V$ from $V$ to $\cyl \setminus V$.
\end{enumerate}
We can extract from this the result as follows.  We have that $(h+\wt{g})|_U$ has the law of $h|_U$ given $h|_V \equiv f$.  Moreover, we have that the Radon-Nikodym derivative of the conditional law of $h+\wt{g}$ given $h|_V$ with respect to the conditional law of $h$ given $h|_V$ is equal to
\begin{equation}
\label{eqn::rn_two_parts}
\exp( (h,\wt{g})_\nabla - \| \wt{g} \|_\nabla^2/2).
\end{equation}
This implies the result.
\end{proof}

\begin{lemma}
\label{lem::qle_conditional_independence_in_limit_neighborhoods}
Suppose that $(\CS,x,y) = (\C,h,0,\infty)$ is as in Lemma~\ref{lem::quantum_sphere_rn} where the embedding is given by applying the change of coordinates $\cyl \to \C$ given by $z \mapsto e^z$.  Let $V \subseteq \C$ be a bounded open neighborhood of $\D$ and $U \subseteq \C$ be an open neighborhood of $\infty$ such that $\dist(U,V) > 0$.  Let $(\delta_k)$, $(\ol{\delta}_k)$ be sequences as in Section~\ref{subsec::qle_subsequential_limits}.  Suppose that $\qlegrowthleft^k$ is a $\delta_k$-approximation to $\QLE(8/3,0)$ from $0$ to $\infty$ and $\qlegrowthright^k$ is a $\ol{\delta}_k$-approximation to $\QLE(8/3,0)$ from $\infty$ to $0$.  Assume that $\qlegrowthleft^k$ and $\qlegrowthright^k$ are conditionally independent given $h$ and that both have the capacity time parameterization as seen from their target point.  Let $\tau_V$ (resp.\ $\tau_U$) be the infimum of times $t$ that $\qlegrowthleft^k$ (resp.\ $\qlegrowthright^k$) is not in $V$ (resp.\ $U$).  Then the joint law of $(\C,h,0,\infty)$, $\qlegrowthleft^k|_{[0,\tau_V]}$, $\qlegrowthright^k|_{[0,\tau_U]}$ converges weakly as $k \to \infty$ to a triple which consists of a doubly-marked quantum sphere $(\C,h,0,\infty)$ conditioned as in Lemma~\ref{lem::quantum_sphere_rn}, a $\QLE(8/3,0)$ process $\qlegrowthleft$ from $0$ to $\infty$ stopped upon exiting $V$, and a $\QLE(8/3,0)$ process $\qlegrowthright$ from $\infty$ to $0$ stopped upon exiting $U$ with both $\qlegrowthleft$ and $\qlegrowthright$ parameterized by capacity as seen from their target point.  Moreover, $\qlegrowthleft$ and $\qlegrowthright$ are conditionally independent given $h$.
\end{lemma}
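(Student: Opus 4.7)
The plan is to pass the conditional independence of $\qlegrowthleft^k|_{[0,\tau_V]}$ and $\qlegrowthright^k|_{[0,\tau_U]}$ given $h^k$ through the weak limit using Lemma~\ref{lem::cond_ind_limit}, with the explicit Radon--Nikodym derivatives supplied by Lemma~\ref{lem::quantum_sphere_rn}. First, I would fix bounded open sets $V'$, $U'$ with $\ol V \subset V' \subset \ol{V'}$, $\ol U \subset U' \subset \ol{U'}$, and $\ol{V'} \cap \ol{U'} = \emptyset$, and set $W = \C \setminus (V' \cup U')$. By the locality of $\SLE_6$ and the fact that at each resampling time $j\delta_k$ the new tip of the $\delta_k$-approximation is sampled from the quantum boundary measure on $\partial \qlegrowthleft^k_{j\delta_k}$ (an intrinsic functional of $h^k$ in a neighborhood of that boundary), the process $\qlegrowthleft^k|_{[0,\tau_V]}$ is a measurable function of $h^k|_{V'}$ together with an independent countable collection of uniform random variables $\xi^L_k$ (the resampling seeds), and symmetrically $\qlegrowthright^k|_{[0,\tau_U]}$ depends only on $h^k|_{U'}$ and an independent noise $\xi^R_k$. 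The hypothesis that $\qlegrowthleft^k$ and $\qlegrowthright^k$ are conditionally independent given $h^k$ lets us take $\xi^L_k$ and $\xi^R_k$ independent of each other and of $h^k$.

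I would then apply Lemma~\ref{lem::cond_ind_limit} with $X_k = (h^k|_{V'}, \qlegrowthleft^k|_{[0,\tau_V]})$, $Y_k = (h^k|_{U'}, \qlegrowthright^k|_{[0,\tau_U]})$, and $Z_k = h^k|_W$. The Markov property of the GFF entering the construction of $\bsphere$ implies that $h|_{V'}$ and $h|_{U'}$ are conditionally independent given $h|_W$; combined with the independence of $\xi^L_k,\xi^R_k$, this gives conditional independence of $X_k$ and $Y_k$ given $Z_k$. Moreover, since the $\qlegrowthleft^k|_{[0,\tau_V]}$ component of $X_k$ is a function of $h^k|_{V'}$ and of $\xi^L_k$ (with $\xi^L_k$ independent of $Z_k$), the Radon--Nikodym derivative of the conditional law of $X_k$ given $Z_k$ with respect to the unconditional law of $X_k$ is exactly the Radon--Nikodym derivative of $h^k|_{V'}$ given $h^k|_W$ with respect to the unconditional law of $h^k|_{V'}$. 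Since $h^k$ has the same law for all $k$, this derivative has the explicit form $\CZ_{V',W}$ furnished by Lemma~\ref{lem::quantum_sphere_rn} (after the change of coordinates $z \mapsto e^z$), and in particular is independent of $k$; the analogous identification holds for $Y_k$. Tightness of the joint law $(h^k, \qlegrowthleft^k|_{[0,\tau_V]}, \qlegrowthright^k|_{[0,\tau_U]})$ follows from tightness of each marginal as established in Section~\ref{subsec::qle_subsequential_limits}, so a joint subsequential weak limit exists.

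The main technical step is verifying the uniform integrability of the product $\CZ_{V',W}(h|_{V'}, h|_W) \cdot \CZ_{U',W}(h|_{U'}, h|_W)$. These Radon--Nikodym derivatives are of the form $Z^{-1}\exp((h|_*, g|_*)_\nabla)$ with $g$ harmonic, so they are exponentials of centered Gaussian-type functionals of $h$, and $L^p$ bounds for every $p>1$ should follow from standard Cameron--Martin/GFF estimates; one needs to check that these bounds survive the Bessel-excursion conditioning built into $\bsphere$ and are uniform on the fixed compact regions $V',U',W$. Granting this, Lemma~\ref{lem::cond_ind_limit} yields that the limiting pair $X = (h|_{V'}, \qlegrowthleft|_{[0,\tau_V]})$ and $Y = (h|_{U'}, \qlegrowthright|_{[0,\tau_U]})$ is conditionally independent given $Z = h|_W$.

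Finally, because $\sigma(h) = \sigma(h|_{V'}) \vee \sigma(h|_{U'}) \vee \sigma(h|_W)$ and because $h|_{V'}$ (resp.\ $h|_{U'}$) is itself a coordinate of $X$ (resp.\ $Y$), conditional independence of $X$ and $Y$ given $h|_W$ upgrades immediately to conditional independence of $\qlegrowthleft|_{[0,\tau_V]}$ and $\qlegrowthright|_{[0,\tau_U]}$ given all of $h$. The identification of the limiting marginals as $\QLE(8/3,0)$ processes stopped on exiting $V$ and $U$ respectively is then immediate from Definition~\ref{def::qle_def} together with the observation that the exit times $\tau_V,\tau_U$ are almost surely continuity points of the respective growth processes, since by Lemma~\ref{lem::capacity_increases_qd} the processes grow strictly on every open time interval.
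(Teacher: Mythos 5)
Your overall strategy -- pass the conditional independence through the weak limit via Lemma~\ref{lem::cond_ind_limit}, using the locality of $\SLE_6$ to reduce the relevant Radon--Nikodym derivatives to field-only derivatives that are $k$-independent and continuous -- is exactly the paper's strategy. However, your specific three-region scheme has a genuine gap at its key step. You condition on $Z_k = h|_W$, where $W$ is the middle annular region separating $V'$ from $U'$, and you assert that the Radon--Nikodym derivative of the conditional law of $h|_{V'}$ given $h|_W$ (and likewise for $h|_{U'}$) ``has the explicit form $\CZ_{V',W}$ furnished by Lemma~\ref{lem::quantum_sphere_rn} after the change of coordinates.'' Lemma~\ref{lem::quantum_sphere_rn} does not furnish this: its hypotheses require the conditioning region to be a neighborhood of the entire end $\cyl_-$ (which also carries the horizontal-translation normalization) and the target region to be a neighborhood of $+\infty$ contained in $\cyl_+$. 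Conditioning instead on a compact middle annulus and describing the field near an end is a different configuration: the radial part of the $\bsphere$ field is a reparameterized Bessel excursion, and given its values on the middle window the law near the end is a bridge-type conditioning of the excursion, not a reweighting of the unconditioned marginal by an exponential of a Dirichlet pairing with a harmonic extension. For the same reason, the ``Markov property of the GFF'' you invoke to get conditional independence of $h|_{V'}$ and $h|_{U'}$ given $h|_W$ is not something established in the paper for the $\bsphere$ field (whose $\CH_1(\cyl)$ projection is a Bessel excursion with a translation fixing); it is plausibly true, but it needs an argument, and without it the continuity, $k$-independence and uniform integrability of your $f,g$ -- the hypotheses of Lemma~\ref{lem::cond_ind_limit} -- are unsupported. (A small additional slip: $U'$ cannot be both bounded and a neighborhood of $\infty$.)

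The repair is to use the two-region configuration that Lemma~\ref{lem::quantum_sphere_rn} actually covers, which is what the paper does: after reducing from $\Mstwo$ to $\bsphere$ (they differ by a constant factor), take the conditioning data to be the whole $V$-side package $(h|_V,\qlegrowthleft^k|_{[0,\tau_V]})$ and the other variable to be the $U$-side package $(h|_U,\qlegrowthright^k|_{[0,\tau_U]})$. Locality of $\SLE_6$ (together with the fact that the quantum natural time clock and the tip-resampling measure for the $\delta_k$-approximation are determined by the field near the hull) gives that the Radon--Nikodym derivative of the conditional law of the $U$-side data given the $V$-side data with respect to its unconditioned law equals the derivative of the conditional law of $h|_U$ given $h|_V$ with respect to the unconditioned law of $h|_U$, which is exactly the $\CZ_{U,V}$ of Lemma~\ref{lem::quantum_sphere_rn}: an explicit, $k$-independent, continuous functional of $h|_U$ and of the harmonic extension of $h|_V$ restricted to $U$. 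One then passes to a further subsequence along which this harmonic extension converges jointly with the other variables and applies Lemma~\ref{lem::cond_ind_limit}. Your closing observations (recovering conditional independence given all of $h$, and identifying the limiting marginals as the stopped $\QLE(8/3,0)$ processes) then go through as in the paper.
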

\begin{proof}
We know that the joint law of the restriction $h|_V$ of $h$ to $V$ along with $\qlegrowthleft^k$ converges weakly as $k \to \infty$.  We likewise know that the same is true for $h|_U$ along with~$\qlegrowthright^k$.  As mentioned earlier, by \cite[Theorem~1.4]{quantum_spheres}, the Radon-Nikodym derivative of $\Mstwo$ with respect to $\bsphere$ is given by a constant.  It therefore follows that we also have weak convergence as $k \to \infty$ when we produce our sample of $(\CS,x,y)$ using $\bsphere$ in place of $\Mstwo$ and it suffices to show that the weak limit which comes from $\bsphere$ has the desired conditional independence property.  So, in what follows, we assume that $(\CS,x,y)$ is sampled from $\bsphere$.

By the locality property for $\SLE_6$, observe that the Radon-Nikodym derivative $\CZ_{U,V}$ of the conditional law of $h|_U$ and $\qlegrowthright^k|_{[0,\tau_U]}$ given $h|_V$ and $\qlegrowthleft^k|_{[0,\tau_V]}$ with respect to the unconditioned law is equal to the Radon-Nikodym derivative of the conditional law of $h|_U$ given $h|_V$ with respect to the unconditioned law of $h|_U$.   By the explicit form of $\CZ_{U,V}$ given in~\eqref{eqn::quantum_sphere_rn} of Lemma~\ref{lem::quantum_sphere_rn}, we see that $\CZ_{U,V}$ is a continuous function of the harmonic extension of $h|_V$ to $\C \setminus V$ restricted to $U$.  By passing to subsequences of $(\delta_k)$ and $(\ol{\delta}_k)$ if necessary, we may assume that this harmonic extension converges weakly with along the other previously mentioned variables $k \to \infty$.  Combining everything with Lemma~\ref{lem::cond_ind_limit} implies the result.
\end{proof}

\begin{proof}[Proof of Proposition~\ref{prop::qle_conditional_independence_in_limit_stopping_times}]
Lemma~\ref{lem::qle_conditional_independence_in_limit_neighborhoods} implies that the assertion of the proposition holds if we replace $\tau$ with the first time $t$ that $\qlegrowthleft$ exits a bounded open neighborhood~$V$ of~$0$ and we replace $\righthit$ with the first time $t$ that $\qlegrowthright$ exits a neighborhood $U$ of $\infty$ with $\dist(U,V) > 0$.  By taking a limit as $U$ increases to the interior of the complement of $V$, it follows that the assertion of the lemma holds if we take $\tau$ to be the first time $t$ that $\qlegrowthleft$ exits $V$ and $\righthit$ to be the first time $t$ that $\qlegrowthright$ exits the interior of $\C \setminus V$.  The result follows because this holds for all $V$ which are bounded and open.
\end{proof}

\subsection{Reshuffling a pair of $\SLE_6$'s}
\label{subsec::sle_6_reshuffle}

\subsubsection{Setup}

We begin by describing the setup and notation that we will use throughout this subsection.  We first suppose that $\CX$, $\ol{\CX}$, $\CB$, $\eta'$, $\ol{\eta}'$,~$\ttime$, and~$\ol{\ttime}$ are as in Theorem~\ref{thm::typical_cut_point}.  (In the setting of the statement of Theorem~\ref{thm::qle_symmetry},~$\ttime$ plays the role of $\stoppingleft$ and $\ol{\ttime}$ plays the role of $\righthit$.)  Then we know that the $\MstwoD$ distribution of $(\CX,\ol{\CX},\CB)$ is invariant under the operation of swapping $\CX$ and $\ol{\CX}$.  Let $(\delta_k)$, $(\ol{\delta}_k)$ be sequences of positive numbers decreasing to $0$ as in Section~\ref{subsec::qle_subsequential_limits}.  Let $\qlegrowthleft^k|_{[0,\ttime]}$ be the $\delta_k$-approximation to $\QLE(8/3,0)$ which we take to be coupled with $\eta'|_{[0,\ttime]}$ so that the bubbles that it separates from its target are the same as quantum surfaces as the bubbles separated by $\eta'|_{[0,\ttime]}$ from its target point.  That is, in the construction of $\qlegrowthleft^k|_{[0,\ttime]}$ we take the time-reversal of the L\'evy excursion $e$ to be the same as the time-reversal of the L\'evy excursion for $\eta'$ up to time~$\ttime$ and we take the bubbles that $\qlegrowthleft^k|_{[0,\ttime]}$ separates from its target point up to time $\ttime$ to be the same as the bubbles that $\eta'$ separates from its target point up to time~$\ttime$.  We sample the rest of the process and quantum sphere conditionally independently given its realization of up to time~$\ttime$.

We likewise let $\qlegrowthright^k|_{[0,\ol{\ttime}]}$ be the $\ol{\delta}_k$-approximation to $\QLE(8/3,0)$ which we take to be coupled with $\ol{\eta}'|_{[0,\ol{\ttime}]}$ so that the bubbles it separates from its target are the same as quantum surfaces as the bubbles separated by $\ol{\eta}'|_{[0,\ol{\ttime}]}$ from its target point.  We sample the rest of the process and the quantum sphere on which it is growing conditionally independently given its realization up to time $\ol{\ttime}$.

Consider the doubly-marked quantum sphere $(\CS^k,x^k,y^k)$ which arises by starting with the doubly-marked sphere $(\CS,x,y)$ as above, cutting out the quantum surfaces separated by $\eta'|_{[0,\ttime]}$ and $\ol{\eta}'|_{[0,\ol{\ttime}]}$ from their respective target points, and then gluing in according to quantum boundary length the corresponding quantum surfaces for $\qlegrowthleft^k|_{[0,\ttime]}$ and $\qlegrowthright^k|_{[0,\ol{\ttime}]}$.  (See Figure~\ref{fig::qle_symmetry} for an illustration.)  We identify the tip of the final $\SLE_6$ segment used to build $\qlegrowthleft^k|_{[0,\ttime]}$ with $\eta'(\ttime)$ and likewise for $\qlegrowthright^k|_{[0,\ol{\ttime}]}$ and $\ol{\eta}'(\ol{\ttime})$.  Proposition~\ref{prop::figure_8_quantum_disk_and_removable} implies that the resulting quantum surface is uniquely determined by this gluing operation.  Moreover, the distribution of $(\CS^k,x^k,y^k)$ is equal to that of $(\CS,x,y)$.  We abuse notation and let $\qlegrowthleft^k$ and $\qlegrowthright^k$ be the resulting $\delta_k$ and $\ol{\delta}_k$-approximations to $\QLE(8/3,0)$ and let $\CB^k$ be the region in $\CS^k$ which is not in either $\qlegrowthleft_\ttime^k$ or $\qlegrowthright_{\ol{\ttime}}^k$.

Throughout, we will write $\stoppingleft$ (resp.\ $\righthit$) for $\ttime$ (resp.\ $\ol{\ttime}$) when referring to either $\qlegrowthleft^k$ or $\qlegrowthleft$ (resp.\ $\qlegrowthright^k$ or $\qlegrowthright$) and use $\ttime$ (resp.\ $\ol{\ttime}$) when referring to $\eta'$ (resp.\ $\ol{\eta}'$).

We let $\lawonall_k$ denote the joint distribution of $(\CS^k,x^k,y^k)$, $\qlegrowthleft^k$,  $\qlegrowthright^k$, $\stoppingleft$ and $\righthit$ and let $\lawonall_k^{x \to y}$ be given by $\lawonall_k$ weighted by the amount of time it takes $\qlegrowthleft^k$ to reach its target point.  We define $\lawonall_k^{y \to x}$ similarly except we weight by the amount of time it takes $\qlegrowthright^k$ to reach its target point and we write $\stoppingright$ for a uniform variable between $0$ and this time and let $\lefthit$ be the first time that $\qlegrowthleft_t^k$ hits $\qlegrowthright_\stoppingright^k$.  We write $\ol{\CB}^k$ for the surface which is the complement of $\qlegrowthleft_\lefthit^k$ and $\qlegrowthright_\stoppingright^k$.  Note that $\lawonall_k^{x \to y}$ and $\lawonall_k^{y \to x}$ are the analogs of $\lawonall^{x \to y}$ and $\lawonall^{y \to x}$ defined at the beginning of this section except we have used the approximations to $\QLE(8/3,0)$ instead of the limits.

Let $(\qlegrowthleft|_{[0,\stoppingleft]},\qlegrowthright|_{[0,\righthit]},\CB)$ be the configuration on the doubly-marked quantum sphere $(\CS,x,y)$ which arises by taking the limit of $(\qlegrowthleft^k|_{[0,\stoppingleft]},\qlegrowthright^k|_{[0,\righthit]},\CB^k)$ under $\lawonall_k^{x \to y}$ as $k \to \infty$.  The sense in which this limit is taken is as in Proposition~\ref{prop::qle_conditional_independence_in_limit_stopping_times}.  Proposition~\ref{prop::qle_conditional_independence_in_limit_stopping_times} implies its existence and that the distribution of $(\CS,x,y)$, $\qlegrowthleft|_{[0,\stoppingleft]}$, and $\qlegrowthright|_{[0,\righthit]}$ as constructed is equal to its distribution under $\lawonall^{x \to y}$.  We likewise let $(\qlegrowthleft|_{[0,\lefthit]},\qlegrowthright|_{[0,\stoppingright]},\ol{\CB})$ be the configuration on the doubly-marked quantum sphere $(\CS,x,y)$ which arises by taking the limit of $(\qlegrowthleft^k|_{[0,\lefthit]},\qlegrowthright^k|_{[0,\stoppingright]},\ol{\CB}^k)$ under $\lawonall_k^{y \to x}$ as $k \to \infty$.  Analogously, we can construct the law of $(\CS,x,y)$, $\qlegrowthleft|_{[0,\lefthit]}$, $\qlegrowthright|_{[0,\stoppingright]}$ under $\lawonall^{y \to x}$ as the same type of limit.

\subsubsection{Structure of the meeting $\QLE$s}

In this section we will prove two results regarding the structure of the triple $\qlegrowthleft|_{[0,\stoppingleft]}$, $\qlegrowthright|_{[0,\righthit]}$, and $\CB$ under $\lawonall^{x \to y}$ (as well as analogous results in the setting of $\lawonall^{y \to x}$).

\begin{figure}[ht!!]
\begin{center}
\includegraphics[scale=0.85]{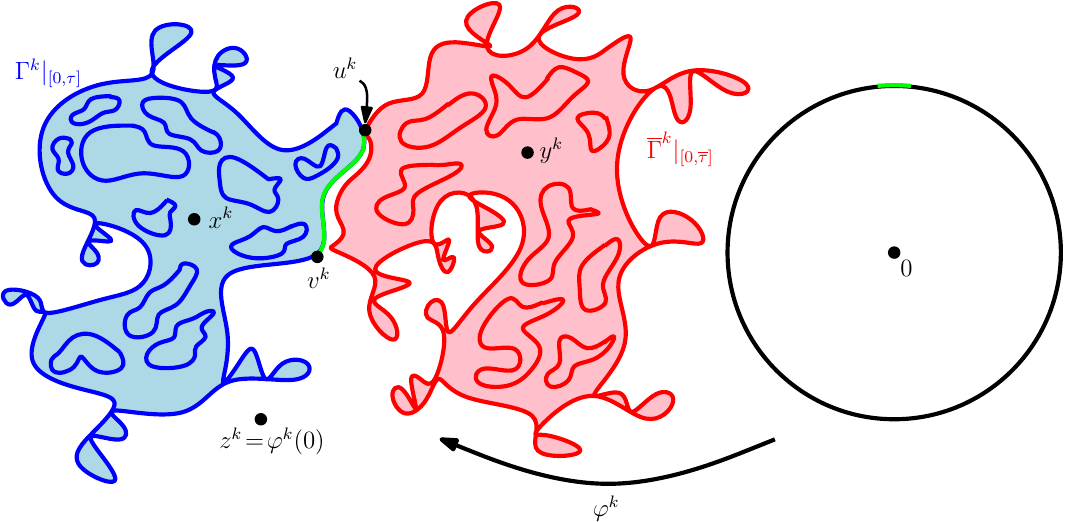}
\end{center}
\caption{\label{fig::hitting_set} Illustration of the setup and proof of Lemma~\ref{lem::hitting_set}.  Shown on the left are two conditionally independent $\delta_k$-approximations $\qlegrowthleft^k|_{[0,\stoppingleft]},\qlegrowthright^k|_{[0,\righthit]}$ of $\QLE(8/3,0)$ on a doubly-marked quantum sphere $(\CS^k,x^k,y^k)$ where $\qlegrowthleft^k|_{[0,\stoppingleft]}$ is grown up to a typical quantum distance time and $\qlegrowthright^k|_{[0,\righthit]}$ is grown up until the first time that it hits $\qlegrowthleft^k|_{[0,\stoppingleft]}$.  It is a.s.\  the case that these two processes intersect at exactly one point because radial $\SLE_6$ a.s.\  hits the boundary of any given simply connected domain for the first time at a single point; call the complementary region $\CB^k$ and let $z^k$ be a uniformly chosen point from the quantum measure on $\CB^k$.  By Proposition~\ref{prop::figure_8_quantum_disk_and_removable}, $\CB^k$ is a quantum disk conditionally on its boundary length and the distribution of $\CB^k$ does not depend on $k$.  If, in the limit as $k \to \infty$, there was a positive chance that the processes intersect at more than one point then there would be a positive chance that the $\delta_k$-approximations would form a narrow fjord the part of whose boundary which is in $\partial \qlegrowthleft_\stoppingleft^k$ has uniformly positive quantum length.  This leads to a contradiction because harmonic measure considerations imply that the inverse image of the green part of the fjord under the unique conformal map $\varphi^k \colon \D \to \CB^k$ with $\varphi^k(0) = z^k$ and $(\varphi^k)'(0) > 0$ would be a segment with a uniformly positive amount of quantum length (indicated in green on the right) but with arc length tending to $0$.  In particular, it would converge in the $k \to \infty$ limit to an atom of the boundary measure.}
\end{figure}

\begin{lemma}
\label{lem::hitting_set}
Under $\lawonall^{x \to y}$, we have that~$\qlegrowthleft|_{[0,\stoppingleft]}$ and~$\qlegrowthright|_{[0,\righthit]}$ a.s.\  intersect at a single point.  In particular, the region~$\CB$ of $\CS$ outside of~$\qlegrowthleft|_{[0,\stoppingleft]}$ and~$\qlegrowthright|_{[0,\righthit]}$ is a.s.\  connected.  Moreover, the conditional law of $\CB$ given its boundary length is that of a quantum disk and $\partial \CB$ is a.s.\  conformally removable in any embedding of $\CS$ into $\C$.  The same holds in the setting of $\lawonall^{y \to x}$ in place of $\lawonall^{x \to y}$.
\end{lemma}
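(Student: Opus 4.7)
\proofof{Lemma~\ref{lem::hitting_set} (plan)}

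The strategy is to transfer the relevant properties from the $\SLE_6$ picture to the $\QLE(8/3,0)$ picture by passing to the $k \to \infty$ limit in the reshuffling construction of Section~\ref{subsec::sle_6_reshuffle}, and to rule out pathological limiting behavior using the fact that the ``middle'' region remains a quantum disk.

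\textbf{Step 1: the single-point property at the approximation level.} First I would work with the $\delta_k$-approximations $(\CS^k,x^k,y^k)$, $\qlegrowthleft^k|_{[0,\stoppingleft]}$, $\qlegrowthright^k|_{[0,\righthit]}$ built in Section~\ref{subsec::sle_6_reshuffle}. By construction $\qlegrowthright^k|_{[0,\righthit]}$ terminates with a radial $\SLE_6$ segment that is stopped at the first capacity time at which it hits $\qlegrowthleft^k|_{[0,\stoppingleft]}$. Since the target-containing component of the complement of $\qlegrowthright^k$ is simply connected and $\qlegrowthleft^k_\stoppingleft$ is a closed set in its closure, radial $\SLE_6$ hits this set at a unique point almost surely. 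Hence $\qlegrowthleft^k|_{[0,\stoppingleft]}\cap\qlegrowthright^k|_{[0,\righthit]}$ is a single point and $\CB^k$ is disk-homeomorphic.

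\textbf{Step 2: identification of the middle region.} Under the gluing used to construct $(\CS^k,x^k,y^k)$, the beaded surfaces parameterized by the insides of $\qlegrowthleft^k|_{[0,\stoppingleft]}$ and $\qlegrowthright^k|_{[0,\righthit]}$ have the same quantum boundary lengths as the corresponding regions for $\eta'|_{[0,\ttime]}$ and $\ol{\eta}'|_{[0,\ol{\ttime}]}$, while the complementary surface $\CB^k$ is identified with the middle region~$\CB$ of Theorem~\ref{thm::typical_cut_point}. Proposition~\ref{prop::figure_8_quantum_disk_and_removable} therefore gives that conditionally on its boundary length, $\CB^k$ is a quantum disk with law independent of~$k$, and $\partial \CB^k$ is conformally removable in any embedding of $\CS^k$ into~$\C$.

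\textbf{Step 3: passing to the limit.} Proposition~\ref{prop::qle_conditional_independence_in_limit_stopping_times} (applied along the stopping times $\stoppingleft$ and $\righthit$, together with the coupling of Section~\ref{subsec::sle_6_reshuffle}) gives that the joint law under $\lawonall_k^{x\to y}$ of $(\CS^k,x^k,y^k,\qlegrowthleft^k|_{[0,\stoppingleft]},\qlegrowthright^k|_{[0,\righthit]},\CB^k)$ converges weakly to the corresponding law under $\lawonall^{x\to y}$. Weak convergence, together with the $k$-independence of the conditional law of $\CB^k$, transfers the quantum-disk statement to $\CB$. Conformal removability of $\partial\CB$ then follows as in the proof of Proposition~\ref{prop::figure_8_quantum_disk_and_removable}, using the H\"older regularity of a quantum disk boundary provided by \cite[Theorem~8.1]{ms2013qle} and \cite[Corollary~2]{js2000remove}.

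\textbf{Step 4: ruling out multiple intersection points in the limit (the main obstacle).} The delicate point is to show that the single-point intersection persists under the $k \to \infty$ limit. The approach (indicated in the caption of Figure~\ref{fig::hitting_set}) is to argue by contradiction: if with positive probability the limiting processes $\qlegrowthleft|_{[0,\stoppingleft]}$ and $\qlegrowthright|_{[0,\righthit]}$ met at more than one point, then for all large $k$ the pair $(\qlegrowthleft^k_\stoppingleft,\qlegrowthright^k_\righthit)$ would, with uniformly positive probability, bound a ``narrow fjord'' in $\CB^k$ whose neck closes up as $k\to\infty$ but whose boundary arc lying on $\partial\qlegrowthleft^k_\stoppingleft$ carries a uniformly positive amount of quantum length. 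Choosing $z^k\in\CB^k$ uniformly from the quantum area measure and letting $\varphi^k\colon\D\to\CB^k$ be the unique conformal map with $\varphi^k(0)=z^k$ and $(\varphi^k)'(0)>0$, a harmonic-measure estimate shows that the preimage of this fjord in $\partial\D$ has Euclidean length tending to $0$ while its quantum length stays uniformly positive. Passing to the limit, this would produce an atom in the quantum boundary length measure of $\CB$, contradicting the fact from Step~3 that $\CB$ is a quantum disk (whose boundary length measure is a.s.\ atomless). Hence $|\qlegrowthleft|_{[0,\stoppingleft]}\cap\qlegrowthright|_{[0,\righthit]}|=1$ a.s., and in particular $\CB$ is connected.

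The statement under $\lawonall^{y\to x}$ follows by the same argument with the roles of $(\qlegrowthleft,\stoppingleft)$ and $(\qlegrowthright,\stoppingright)$ exchanged; note that symmetry is built into the construction in Section~\ref{subsec::sle_6_reshuffle} via Theorem~\ref{thm::typical_cut_point}. The principal technical difficulty is making the harmonic-measure/atom argument in Step~4 quantitative: one must convert the hypothesis of a non-degenerate multi-point intersection into an explicit lower bound on the quantum length trapped in a short boundary arc of $\varphi^k(\D)$, uniformly in~$k$.
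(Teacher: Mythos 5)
Your plan follows essentially the same route as the paper's own proof: single-point intersection at the level of the $\delta_k$-approximations from the radial $\SLE_6$ hitting property, identification of $\CB^k$ as a quantum disk with $k$-independent law via Proposition~\ref{prop::figure_8_quantum_disk_and_removable}, and the fjord/harmonic-measure contradiction showing that a multi-point intersection in the limit would force an atom in the boundary length measure of the quantum disk, with removability handled as in Proposition~\ref{prop::figure_8_quantum_disk_and_removable}. This matches the paper's argument in all essential respects.
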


See Figure~\ref{fig::hitting_set} for an illustration of the setup and proof of Lemma~\ref{lem::hitting_set}.

\begin{proof}[Proof of Lemma~\ref{lem::hitting_set}]
We know that the statement of the lemma holds in the setting of each of the $\delta_k$-approximations; see also the proof of Lemma~\ref{lem::approximations_symmetric} just below.  Indeed, this follows because a radial $\SLE_6$ segment a.s.\  hits the boundary of any fixed domain for the first time at a single point.  For each $k \in \N$, we pick $z^k$ in $\CB^k$ according to the quantum area measure and then parameterize $\CB^k$ by $(\D,h^k)$ by using the change of coordinates given by the unique conformal map $\varphi^k \colon \D \to \CB^k$ with $\varphi^k(0) = z^k$ and $(\varphi^k)'(0) > 0$.  We note that the distribution of $(\D,h^k)$ does not depend on $k$ and that, by Proposition~\ref{prop::figure_8_quantum_disk_and_removable} its law conditionally on its boundary length is that of a quantum disk.

Fix $\ell > 0$ and assume that we are working on the event that the quantum boundary length of $\partial \qlegrowthleft_\stoppingleft^k$ is in $[\ell,\ell+1]$.  Since this event has positive and finite $\lawonall_k^{x \to y}$ mass, we can condition $\lawonall_k^{x \to y}$ on it and we get a probability measure.  Suppose for contradiction that there is a chance $p > 0$ that $\partial \qlegrowthleft_\stoppingleft \cap \partial \qlegrowthright_{\righthit}$ consists of more than a single point.  This would imply that the following is true.  Let $u^k$ be the unique element of $\partial \qlegrowthleft_\stoppingleft^k \cap \partial \qlegrowthright_{\righthit}^{k}$.  Then there exists $r > 0$ such that with probability at least $p/2$ there is a point $v^k$ on $\partial \qlegrowthleft_\stoppingleft^k$ such that the quantum length of either the clockwise or counterclockwise segment of $\partial \qlegrowthleft_\stoppingleft^k$ from $u^k$ to $v^k$ is at least $r$ and its harmonic measure as seen from $z^k$ is $o(1)$ as $k \to \infty$.  In particular, the arc length of the preimage of this interval under $\varphi^k$ is $o(1)$ as $k \to \infty$.  This implies that $(\D, h^k)$ converges as $k \to \infty$ to a field whose associated quantum boundary measure has a positive chance of having an atom on $\partial \D$.  This is a contradiction because the law of $(\D, h^k)$ does not depend on $k$ and its associated quantum boundary measure a.s.\  does not have an atom on $\partial \D$ (Proposition~\ref{prop::figure_8_quantum_disk_and_removable} and \cite{DS08}).  This proves the first assertion of the lemma.

The second assertion follows because the distribution of $\CB^k$ does not depend on $k$ and the final assertion follows by using the argument of Proposition~\ref{prop::figure_8_quantum_disk_and_removable} (i.e., any embedding of a quantum disk into a quantum sphere a.s.\  has a conformally removable boundary). 
\end{proof}

\begin{lemma}
\label{lem::pinch_point_glued}
Under $\lawonall^{x \to y}$, the point on $\partial \qlegrowthleft_{\stoppingleft}$ (resp.\ $\partial \qlegrowthright_{\righthit}$) which is glued to the a.s.\  unique pinch point of $\partial \CB$ is uniformly distributed from the quantum boundary measure on $\partial \qlegrowthleft_{\stoppingleft}$ (resp.\ $\partial \qlegrowthright_{\righthit}$).  The same holds in the setting of $\lawonall^{y \to x}$ in place of $\lawonall^{x \to y}$.
\end{lemma}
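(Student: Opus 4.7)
The plan is to prove the two uniformity statements separately, each by a distinct route. For the statement about $\partial \qlegrowthright_\righthit$, I will transport a symmetry of $\MstwoD$ through the reshuffling construction of Section~\ref{subsec::sle_6_reshuffle}. In each $\delta_k$-approximation there, the length-preserving gluing identifies $\qlegrowthright^k(\ol{\ttime})$ with $\ol{\eta}'(\ol{\ttime})$, and the pinch point viewed from the $\qlegrowthright^k$-side is precisely the tip $\qlegrowthright^k(\ol{\ttime})$ itself. It therefore suffices to show that $\ol{\eta}'(\ol{\ttime})$ is uniformly distributed on $\partial \ol{\CX}$ with respect to the quantum boundary measure under $\MstwoD$. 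This follows by combining two facts: first, under $\Mstwo$ the tip $\eta'(t)$ at each fixed $t$ is uniformly distributed on the boundary of the unexplored region (the standard fact recalled in Section~\ref{subsec::levy_spheres}), and Proposition~\ref{prop::measure_properties} extends this to $\MstwoD$ conditional on $(\ttime, X_\ttime)$; second, Theorem~\ref{thm::typical_cut_point} gives invariance of the $\MstwoD$ law of $(\CX, \ol{\CX}, \CB)$ under the $\CX \leftrightarrow \ol{\CX}$ swap, which in particular exchanges the marked tips of the two path-decorated beaded surfaces. Sending $k \to \infty$ using Proposition~\ref{prop::qle_conditional_independence_in_limit_stopping_times} concludes this case.

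For the statement about $\partial \qlegrowthleft_\stoppingleft$, I will instead argue directly in the $\QLE$ limit using Proposition~\ref{prop::qle_boundary_length}. Conditional on $\qlegrowthleft|_{[0,\stoppingleft]}$ and $\stoppingleft$, that proposition identifies the complementary component of $\CS \setminus \qlegrowthleft_\stoppingleft$ containing $y$ as a quantum disk weighted by area, marked at the interior point $y$ and carrying no distinguished boundary point; the extension from fixed time to the uniform random time $\stoppingleft$ follows by a Fubini argument using independence of $U$ from $(\CS, x, y, \qlegrowthleft)$. Since $\qlegrowthright$ is conditionally independent of $\qlegrowthleft$ given $(\CS, x, y)$, and inside this complementary disk the $\delta_k$-approximation $\qlegrowthright^k$ evolves via $\SLE_6$ segments whose behavior up to the first exit is determined only by the disk and the starting point $y$---by locality of $\SLE_6$, a property inherited by the subsequential $\QLE$ limit---the first exit point of $\qlegrowthright$ on $\partial \qlegrowthleft_\stoppingleft$ is measurable with respect to the disk and $y$. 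The rotational symmetry of the quantum disk (invariance of its law under boundary-length-preserving rotations of the boundary parameterization, since it has no marked boundary point) then forces this first exit point---which is exactly the pinch point viewed on $\partial \qlegrowthleft_\stoppingleft$---to be uniformly distributed with respect to quantum boundary measure.

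The main obstacle I anticipate is formalizing the locality-type assertion used in the second paragraph: namely that the first-exit distribution of the subsequential $\QLE(8/3,0)$ limit from a sub-surface depends only on that sub-surface and the starting point, and is not affected by the rest of the ambient sphere. This can be handled by establishing the analogous statement for each $\delta_k$-approximation---where each individual path segment is a genuine $\SLE_6$ with the classical locality property---and then passing to the limit via weak convergence of configurations, as in Proposition~\ref{prop::qle_conditional_independence_in_limit_stopping_times}, together with the continuity of quantum boundary length on the limiting disk. The analogous statements under $\lawonall^{y \to x}$ follow by interchanging the roles of $x$ and $y$ throughout.
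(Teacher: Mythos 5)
Your argument for the $\partial \qlegrowthright_{\righthit}$ side contains a genuine gap. You reduce the claim to the statement that $\ol{\eta}'(\ol{\ttime})$ is uniformly distributed on $\partial \ol{\CX}$ under $\MstwoD$ (which is true, via Theorem~\ref{thm::typical_cut_point} and the tip-uniformity of $\SLE_6$), and you transfer it through the gluing on the grounds that the gluing identifies the tip of $\qlegrowthright^k$ with $\ol{\eta}'(\ol{\ttime})$. But that identification is \emph{pinned by construction} to match these two points, so it cannot transfer the distributional statement: what the lemma requires is the conditional law of the tip of $\qlegrowthright^k$ at time $\righthit$ relative to the boundary measure of the \emph{reshuffled} surface $\qlegrowthright^k|_{[0,\righthit]}$, given that surface (and the other pieces), and this is governed by the internal construction of $\qlegrowthright^k$, not by where $\ol{\eta}'(\ol{\ttime})$ sits on $\partial \ol{\CX}$. (Compare: if two marked circles are glued by arc length so that the marked points coincide, uniformity of one marked point on its circle says nothing about the conditional law of the other marked point on its own circle.) Indeed, for finite $k$ the tip of $\qlegrowthright^k$ at the transferred time $\righthit$ is not claimed, and is not obviously, uniform --- $\righthit$ is a hitting time imported from the $\SLE_6$ picture, not a fixed quantum natural time --- and Proposition~\ref{prop::qle_conditional_independence_in_limit_stopping_times}, which concerns conditional independence in the limit, does not supply the missing step. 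The paper's actual argument avoids this entirely: the \emph{starting point} of the final $\SLE_6$ segment of $\qlegrowthright^k$ (at time $\delta_k \lfloor \righthit/\delta_k\rfloor$) is uniform on the boundary by the re-randomization built into the construction, and this final segment collapses to a point as $k \to \infty$, so the limiting pinch point inherits the uniformity.

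Your treatment of the $\partial \qlegrowthleft_{\stoppingleft}$ side is a legitimate alternative route (it is close in spirit to the observation used at the end of the proof of Theorem~\ref{thm::qle_metric}), but note two points. First, rotational invariance of the law of the complementary marked disk is not by itself enough; what you need is that, conditionally on the two pieces as quantum surfaces, the boundary-length-preserving identification between them is a uniform rotation (the $\QLE$ analog of the ``cut and spin'' invariance of Proposition~\ref{prop::cutandspin}, which is what the reshuffling construction provides), and the locality-type measurability of $\qlegrowthright$ inside the disk that you flag would indeed have to be proved at the level of the approximations. Second, this is considerably heavier than what the paper uses: there the pinch point on $\partial \qlegrowthleft_{\stoppingleft}$ corresponds, through the Section~\ref{subsec::sle_6_reshuffle} gluing, to the tip of the final $\SLE_6$ segment of $\qlegrowthleft^k$, whose location is uniform directly from the construction and the tip-uniformity of $\SLE_6$ under the conditioned measures.
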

\begin{proof}
It is easy to see that this is the case for $\qlegrowthleft|_{[0,\stoppingleft]}$ based on the construction.  Let $[\xi,\righthit]$ be the interval of time in which $\qlegrowthright^k|_{[0,\righthit]}$ is drawing its final segment of $\SLE_6$.  The claim in the case of $\qlegrowthright|_{[0,\righthit]}$ follows because for each of the approximations $\qlegrowthright^k|_{[0,\righthit]}$ we know that the starting point of the final $\SLE_6$ segment is uniformly distributed on $\partial \qlegrowthright_\xi^k$ and this final $\SLE_6$ segment collapses to a point as $k \to \infty$.
\end{proof}

\subsubsection{Preliminary symmetry statements}

In order to work towards the proof of Theorem~\ref{thm::qle_symmetry}, we will now:
\begin{itemize}
\item Show that the $\lawonall_k^{x \to y}$ distribution of $(\qlegrowthleft^k|_{[0,\stoppingleft]},\qlegrowthright^k|_{[0,\righthit]},\CB^k)$ is equal to the $\lawonall_k^{y \to x}$ distribution of $(\qlegrowthleft^k|_{[0,\lefthit]},\qlegrowthright^k|_{[0,\stoppingright]},\ol{\CB}^k)$ (Lemma~\ref{lem::approximations_symmetric})
\item Deduce from this that the $\lawonall^{x \to y}$ distribution of $(\qlegrowthleft|_{[0,\stoppingleft]},\qlegrowthright|_{[0,\righthit]},\CB)$ is equal to the $\lawonall^{y \to x}$ distribution of $(\qlegrowthleft|_{[0,\lefthit]},\qlegrowthright|_{[0,\stoppingright]},\ol{\CB})$ (Lemma~\ref{lem::continuum_equivalence})
\end{itemize}
The proof of Theorem~\ref{thm::qle_symmetry} will not require much additional work once we have established these lemmas.

\begin{lemma}
\label{lem::approximations_symmetric}
The $\lawonall_k^{x \to y}$ distribution of $(\qlegrowthleft^k|_{[0,\stoppingleft]},\qlegrowthright^k|_{[0,\righthit]},\CB^k)$ is the same as the $\lawonall_k^{y \to x}$ distribution of $(\qlegrowthleft^k|_{[0,\lefthit]},\qlegrowthright^k|_{[0,\stoppingright]},\ol{\CB}^k)$.
\end{lemma}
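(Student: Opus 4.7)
My plan is to transfer the symmetry for $\SLE_6$ given in Theorem~\ref{thm::typical_cut_point} up to the $\delta_k,\ol{\delta}_k$-approximation level via the bubble-preserving coupling defined at the start of this subsection.  Specifically, a sample of $(\qlegrowthleft^k|_{[0,\stoppingleft]},\qlegrowthright^k|_{[0,\righthit]},\CB^k)$ from $\lawonall_k^{x \to y}$ is produced by first sampling $((\CS,x,y),\eta',\ttime)$ from $\MstwoD$ --- note that with the change of variables $\stoppingleft = \int_0^\ttime X_u^{-1}\,du$, the density $X_\ttime^{-1}\one_{[0,T]}(\ttime)$ for $\ttime$ under $\MstwoD$ corresponds exactly to $\stoppingleft$ being uniform in $[0,\qdist(x,y)]$ under $\lawonall_k^{x \to y}$ --- then extracting $(\CX,\ol{\CX},\CB)$ and $\ol{\ttime}$ as in Theorem~\ref{thm::typical_cut_point}, applying to $\CX$ the iterated rotation of Proposition~\ref{prop::cutandspin} at the $\delta_k$-spaced quantum natural times to produce $\qlegrowthleft^k|_{[0,\stoppingleft]}$, independently applying the analogous $\ol{\delta}_k$-spaced reshuffling to $\ol{\CX}$ to produce $\qlegrowthright^k|_{[0,\righthit]}$, and gluing these back to $\CB$ along matching quantum boundary length to form $\CB^k$.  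The gluing is well-defined and preserves the relevant quantum surface structure because $\partial\CB$ is conformally removable by Proposition~\ref{prop::figure_8_quantum_disk_and_removable}.

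First I would observe that this map is symmetric in its first two inputs: swapping $\CX \leftrightarrow \ol{\CX}$ together with the corresponding swap of the two i.i.d.\ families of reshuffling angles exchanges the first two components of the output while fixing $\CB^k$.  Combined with the symmetry $(\CX,\ol{\CX},\CB) \overset{d}{=} (\ol{\CX},\CX,\CB)$ under $\MstwoD$ from Theorem~\ref{thm::typical_cut_point}, this yields that the triples $(\qlegrowthleft^k|_{[0,\stoppingleft]},\qlegrowthright^k|_{[0,\righthit]},\CB^k)$ and $(\qlegrowthright^k|_{[0,\righthit]},\qlegrowthleft^k|_{[0,\stoppingleft]},\CB^k)$ have the same $\lawonall_k^{x \to y}$-distribution.

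Next I would use the involution $\sigma$ which exchanges $x \leftrightarrow y$, $\qlegrowthleft^k \leftrightarrow \qlegrowthright^k$, $\stoppingleft \leftrightarrow \stoppingright$, $\righthit \leftrightarrow \lefthit$, and hence $\CB^k \leftrightarrow \ol{\CB}^k$.  The measure $\lawonall_k$ is $\sigma$-invariant because $\Mstwo$ is invariant under $x \leftrightarrow y$ (both marked points are i.i.d.\ quantum-typical) and the coupling of Section~\ref{subsec::sle_6_reshuffle} respects this symmetry via the reversibility of whole-plane $\SLE_6$ from \cite{MS_IMAG4} (one could equivalently start from an $\SLE_6$ $\ol{\eta}'$ from $y$ to $x$ and take $\eta'$ as its time-reversal with no change in joint law).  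Since $\qdist(x,y) \circ \sigma = \oqdist(y,x)$, the weighting transforms correctly, giving $\sigma_*\lawonall_k^{x \to y} = \lawonall_k^{y \to x}$; applying $\sigma$ to the second triple of the previous paragraph identifies its law with the $\lawonall_k^{y \to x}$-law of $(\qlegrowthleft^k|_{[0,\lefthit]},\qlegrowthright^k|_{[0,\stoppingright]},\ol{\CB}^k)$, as required.

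The hard part will be rigorously justifying the pushforward description given in the first paragraph: checking that the construction outlined really does produce a sample from $\lawonall_k^{x \to y}$ as defined in the section, rather than from some spuriously coupled variant.  This amounts to verifying, by applying Proposition~\ref{prop::cutandspin} inductively over the successive intervals $[j\delta_k,(j+1)\delta_k]$ of quantum natural time, that the iterated reshuffling of $\eta'|_{[0,\ttime]}$ genuinely has the law of a $\delta_k$-approximation to $\QLE(8/3,0)$ from $x$ stopped at quantum distance time $\stoppingleft$ (together with the corresponding unexplored disk on $(\CS,x,y)$), the analogous statement for $\ol{\eta}'|_{[0,\ol{\ttime}]}$, and that after regluing the full picture has the joint law of $(\CS^k,x^k,y^k,\qlegrowthleft^k|_{[0,\stoppingleft]},\qlegrowthright^k|_{[0,\righthit]})$ prescribed by $\lawonall_k$.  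This last point relies on the conformal removability of the $\SLE_6$ traces (hence of $\partial\CX$ and $\partial\ol{\CX}$) from \cite{RS05} in addition to the removability of $\partial\CB$ from Proposition~\ref{prop::figure_8_quantum_disk_and_removable}.
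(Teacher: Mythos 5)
Your overall strategy---pushing the swap symmetry of Theorem~\ref{thm::typical_cut_point} through the bubble-preserving reshuffling coupling, using the uniform re-randomization of seeds, the conditional independence of the complementary region given boundary lengths, and removability to make the regluing well defined---is the same as the paper's, and the ``hard part'' you flag in your last paragraph is indeed essentially the construction of Section~\ref{subsec::sle_6_reshuffle}. The gap is in your two symmetry moves, both of which silently assume $\delta_k = \ol{\delta}_k$, whereas the setup deliberately allows $(\delta_k)$ and $(\ol{\delta}_k)$ to be different subsequences (this matters: $\qlegrowthleft$ and $\qlegrowthright$ are a priori distinct subsequential limits). The reshuffling map always applies the $\delta_k$-mesh to its first argument and the $\ol{\delta}_k$-mesh to its second, so it is not equivariant under swapping its first two inputs: swapping $\CX \leftrightarrow \ol{\CX}$ (together with the seed families) produces the pair ($\delta_k$-reshuffle of $\ol{\CX}$, $\ol{\delta}_k$-reshuffle of $\CX$), which is not the swap of ($\delta_k$-reshuffle of $\CX$, $\ol{\delta}_k$-reshuffle of $\ol{\CX}$). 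Hence your intermediate claim that $(\qlegrowthleft^k|_{[0,\stoppingleft]},\qlegrowthright^k|_{[0,\righthit]},\CB^k)$ and $(\qlegrowthright^k|_{[0,\righthit]},\qlegrowthleft^k|_{[0,\stoppingleft]},\CB^k)$ have the same $\lawonall_k^{x\to y}$-law is unjustified (and false in general: one first coordinate is built from $\delta_k$-necklaces, the other from $\ol{\delta}_k$-necklaces). Likewise, the relabeling $\sigma$ exchanges which marked point carries the $\delta_k$- versus the $\ol{\delta}_k$-approximation, so $\lawonall_k$ is not $\sigma$-invariant; $\sigma_*\lawonall_k^{x\to y}$ is the mesh-swapped analog of $\lawonall_k^{y\to x}$, not $\lawonall_k^{y\to x}$ itself.

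The two errors cancel when chained, so your final identity is correct, but a proof cannot pass through two false intermediate statements. The repair is to perform the swap \emph{before} reshuffling, at the level of the $\SLE_6$ triple: by the invariance of $\Mstwo$ under exchanging $x$ and $y$ and reversing $\eta'$, the $\SLE_6$ triple underlying $\lawonall_k^{y\to x}$ (the $x$-attached region stopped at its hitting time, the $y$-attached region stopped at a uniform quantum natural time, and the complement) has the same $\MstwoD$-type law as $(\ol{\CX},\CX,\CB)$, which by Theorem~\ref{thm::typical_cut_point} has the same law as $(\CX,\ol{\CX},\CB)$; only then apply the single fixed asymmetric map that reshuffles the first component at mesh $\delta_k$, the second at mesh $\ol{\delta}_k$, and keeps the third, with independent uniform seeds. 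In this organization $\qlegrowthleft^k$ is a $\delta_k$-reshuffle under both measures and $\qlegrowthright^k$ an $\ol{\delta}_k$-reshuffle under both, which is exactly how the paper argues: it matches the bubbles and boundary lengths of each component across the two measures, notes the seeds are uniform in both, and finishes with the conditional independence of $\CB^k$, $\ol{\CB}^k$ given their boundary lengths---never exchanging which component carries which mesh.
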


In the statement of Lemma~\ref{lem::approximations_symmetric}, we view $\qlegrowthleft^k|_{[0,\stoppingleft]}$, $\qlegrowthright^k|_{[0,\righthit]}$, $\qlegrowthleft^k|_{[0,\lefthit]}$, $\qlegrowthright^k|_{[0,\stoppingright]}$ as random variables taking values in the space of beaded quantum surfaces with a single marked point which corresponds to the tip of the final $\SLE_6$ segment in the growth process and we view $\CB^k, \ol{\CB}^k$ as random variables taking values in the space of quantum surfaces.

\begin{remark}
\label{rem::qle_not_symmetric}
As in Remark~\ref{rem::sle_whole_picture_not_symmetric}, we emphasize that Lemma~\ref{lem::approximations_symmetric} does not imply that the $\lawonall_k^{x \to y}$ distribution of $(\qlegrowthleft^k|_{[0,\stoppingleft]},\qlegrowthright^k|_{[0,\righthit]},\CS^k)$ is equal to the $\lawonall_k^{y \to x}$ distribution of $(\qlegrowthleft^k|_{[0,\lefthit]},\qlegrowthright^k|_{[0,\stoppingright]},\CS^k)$ because of the asymmetry which arises since the tip of $\qlegrowthright^k|_{[0,\righthit]}$ is contained in $\partial \qlegrowthleft_\stoppingleft^k$ while the tip of $\qlegrowthleft^k|_{[0,\stoppingleft]}$ is uniformly distributed according to the quantum boundary measure on $\partial \qlegrowthleft_\stoppingleft^k$.
\end{remark}

\begin{proof}[Proof of Lemma~\ref{lem::approximations_symmetric}]
The result is a consequence of the following three observations.

First, the ordered sequence of bubbles separated by $\qlegrowthleft^k|_{[0,\stoppingleft]}$ from its target point is equal to the ordered sequence of bubbles separated by $\eta'|_{[0,\ttime]}$ from its target point and the same is true for $\qlegrowthright^k|_{[0,\righthit]}$ and $\ol{\eta}'|_{[0,\ol{\ttime}]}$.  Theorem~\ref{thm::typical_cut_point} thus implies that the collection of $\delta_k$-quantum natural time length $\SLE_6$ segments (viewed as beaded quantum surfaces) which make up $\qlegrowthleft^k|_{[0,\stoppingleft]}$ under $\lawonall_k^{x \to y}$ have the same joint law as the corresponding collection for $\qlegrowthleft^k|_{[0,\lefthit]}$ under $\lawonall_k^{y \to x}$.  It also implies that the quantum lengths of the outer boundary of $\qlegrowthleft^k|_{[0,\stoppingleft]}$ at the reshuffling times $j \delta_k$ for $1 \leq j \leq \lfloor \stoppingleft/\delta_k \rfloor$ have the same joint law under $\lawonall_k^{x \to y}$ as the quantum lengths of the outer boundary of $\qlegrowthleft^k|_{[0,\lefthit]}$ at the reshuffling times $j \delta_k$ for $1 \leq j \leq \lfloor \lefthit/\delta_k \rfloor$ under $\lawonall_k^{y \to x}$.  Indeed, this follows since these quantum boundary lengths are determined by the bubbles that the processes separate from their target points and the bubbles determine the $\SLE_6$ segments.

Second, the starting locations of each of the $\SLE_6$ segments which make up $\qlegrowthleft^k|_{[0,\stoppingleft]}$ have the same conditional law under $\lawonall_k^{x \to y}$ given the bubbles as in the case of $\qlegrowthleft^k|_{[0,\lefthit]}$ under $\lawonall_k^{y \to x}$.  Indeed, in both cases, these starting locations are distributed uniformly from the quantum boundary measure.

Both of the first two observations also apply for $\qlegrowthright^k|_{[0,\righthit]}$ under $\lawonall_k^{x \to y}$ and $\qlegrowthright^k|_{[0,\stoppingright]}$ under~$\lawonall_k^{y \to x}$.

Third, the previous two observations imply that the induced joint distribution of $\qlegrowthleft^k|_{[0,\stoppingleft]}$ and $\qlegrowthright^k|_{[0,\righthit]}$ under $\lawonall_k^{x \to y}$ is the same as that of $\qlegrowthleft^k|_{[0,\lefthit]}$ and $\qlegrowthright^k|_{[0,\stoppingright]}$ under $\lawonall_k^{y \to x}$.  We also note that $\CB^k$ is conditionally independent given its boundary length of $\qlegrowthleft^k|_{[0,\stoppingleft]}$ and $\qlegrowthright^k|_{[0,\righthit]}$ under $\lawonall_k^{x \to y}$.  The same is also true under $\lawonall^{y \to x}$, so combining implies the desired result.
\end{proof}

The next step is to show that the symmetry established in Lemma~\ref{lem::approximations_symmetric} also holds for $(\qlegrowthleft|_{[0,\stoppingleft]},\qlegrowthright|_{[0,\righthit]},\CB)$ under $\lawonall^{x \to y}$ and $(\qlegrowthleft|_{[0,\lefthit]},\qlegrowthright|_{[0,\stoppingright]},\ol{\CB})$ under $\lawonall^{y \to x}$.

Before we state this result, we need to introduce a certain notion of equivalence for $\QLE(8/3,0)$ processes.  We recall from Proposition~\ref{prop::qle_boundary_length} that the complementary region of a $\QLE(8/3,0)$ run up to a given quantum distance time has the same law as the corresponding region for an $\SLE_6$ and from Proposition~\ref{prop::qle_outer_boundary} that the boundary of a $\QLE(8/3,0)$ can be parameterized by a continuous curve.  This allows us to induce a quantum boundary length measure on the prime ends of the boundary of a $\QLE(8/3,0)$ even though it is not {\it a priori} clear at this point whether this boundary measure can be defined in a way which is intrinsic to the $\QLE(8/3,0)$.  (At this point, we do know that the total boundary length is intrinsic to the $\QLE(8/3,0)$ because it can be determined by boundary lengths of the bubbles it has cut off from its target point.  This follows because the value of a stable L\'evy process at a given time can be recovered from the jumps that it has made up to this time.  See \cite[Chapter~I, Theorem~1]{bertoin96levy}.)  

Suppose that $(\CS,x,y)$, $\qlegrowthleft$ and $(\wt{\CS},\wt{x},\wt{y})$, $\wt{\qlegrowthleft}$ are two $\QLE(8/3,0)$-decorated quantum surfaces which are defined on a common probability space.  Suppose further that $\tau$ (resp.\ $\wt{\tau}$) is a stopping time for $\qlegrowthleft$ (resp.\ $\wt{\qlegrowthleft}$).  Let $h$ (resp.\ $\wt{h}$) be the field which describes $\CS$ (resp.\ $\wt{\CS}$).  We say that $\qlegrowthleft|_{[0,\tau]}$ is equivalent to $\wt{\qlegrowthleft}|_{[0,\wt{\tau}]}$ if there exists a homeomorphism $\varphi$ taking $\qlegrowthleft_\tau$ to $\wt{\qlegrowthleft}_{\wt{\tau}}$ with $\varphi(\qlegrowthleft_t) = \wt{\qlegrowthleft}_t$ for all $t \leq \tau$, $\varphi(\qlegrowthleft_\tau) = \wt{\qlegrowthleft}_{\wt{\tau}}$, and which
\begin{itemize}
\item Takes the quantum boundary measure defined on the prime ends of the outer boundary of $\qlegrowthleft_\tau$ to the quantum boundary measure on the prime ends of the outer boundary of $\wt{\qlegrowthleft}_{\wt{\tau}}$, and is
\item Conformal on the interior of $\qlegrowthleft_\tau$ with $\wt{h} \circ \varphi + Q\log|\varphi'| = h$ on the interior of $\qlegrowthleft_\tau$.
\end{itemize}
We emphasize that, at this point, we have not proved that the boundary of a $\QLE(8/3,0)$ is equal to the boundary of its interior or that the boundary of a $\QLE(8/3,0)$ does not contain cut points or spikes.  This is the reason that we consider prime ends in this notion of equivalence.  (It will be a consequence of the subsequent work \cite{qle_continuity} that this type of behavior does not occur.)  As we will shall see later, this notion of equivalence is useful because it encodes the information necessary for the operation of welding according to quantum boundary length to be well-defined and uniquely determined (i.e., we will be in a setting in which we can apply the conformal removability of the boundary; recall Proposition~\ref{prop::removability_usage}).

\begin{lemma}
\label{lem::continuum_equivalence}
The $\lawonall^{x \to y}$ distribution of $(\qlegrowthleft|_{[0,\stoppingleft]},\qlegrowthright|_{[0,\righthit]},\CB)$ is equal to the $\lawonall^{y \to x}$ distribution of $(\qlegrowthleft|_{[0,\lefthit]},\qlegrowthright|_{[0,\stoppingright]},\ol{\CB})$ where we use the notion of equivalence for $\QLE(8/3,0)$ processes as described just above.
\end{lemma}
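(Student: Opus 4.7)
The plan is to deduce the lemma by taking $k \to \infty$ in Lemma~\ref{lem::approximations_symmetric}. For each $k$ that lemma already yields the desired equality for the $\delta_k$-approximations, so it suffices to show that (i) the $\lawonall_k^{x \to y}$-law of $(\qlegrowthleft^k|_{[0,\stoppingleft]},\qlegrowthright^k|_{[0,\righthit]},\CB^k)$ converges weakly, as $k \to \infty$, to the $\lawonall^{x \to y}$-law of $(\qlegrowthleft|_{[0,\stoppingleft]},\qlegrowthright|_{[0,\righthit]},\CB)$, and (ii) the analogous convergence holds on the $y \to x$ side, both with respect to a topology that is compatible with the notion of equivalence of $\QLE(8/3,0)$ processes described above the lemma. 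Once both convergences are established, the equality of limits follows immediately from the $k$-dependent equalities.

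I would first set up an appropriate topology. Each of $\qlegrowthleft|_{[0,\stoppingleft]}$ and $\qlegrowthright|_{[0,\righthit]}$ is a beaded quantum surface equipped with a marked boundary point (the tip of the final $\SLE_6$ segment in the approximation, whose image in the limit is the unique meeting point supplied by Lemma~\ref{lem::hitting_set}) together with the induced quantum boundary length measure on its outer prime ends, while $\CB$ is a quantum disk. I would metrize the space of such triples using a variant of the Prokhorov-type distance used in the proof of Theorem~\ref{thm::typical_cut_point}, augmented by a metric for the boundary length measures on the boundaries. The weak convergence of the quantum sphere together with the two approximating $\QLE$'s stopped at $\stoppingleft$ and at their mutual hitting time $\righthit$ follows from the construction of $\QLE(8/3,0)$ in Section~\ref{subsec::qle_subsequential_limits} combined with Proposition~\ref{prop::qle_conditional_independence_in_limit_stopping_times}. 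The $\qdist(x,y)$-weighting that defines $\lawonall^{x \to y}$ from $\lawonall$ (and similarly on the other side) is preserved under this weak convergence, after standard truncation to handle the Radon-Nikodym factor.

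The main obstacle is ensuring that the $\QLE$-equivalence relation, which remembers the quantum boundary length measure on the prime ends of $\partial\qlegrowthleft|_{[0,\stoppingleft]}$ and $\partial\qlegrowthright|_{[0,\righthit]}$, is actually captured by the limit; the underlying arguments from Theorem~\ref{thm::typical_cut_point} and Proposition~\ref{prop::qle_conditional_independence_in_limit_stopping_times} on their face only control the beaded surfaces and the sphere. My approach is to identify this boundary length measure via welding: $\CB$ is a quantum disk (Lemma~\ref{lem::hitting_set}), hence carries an intrinsic boundary length measure, and gluing $\CB$ to the union $\qlegrowthleft|_{[0,\stoppingleft]} \cup \qlegrowthright|_{[0,\righthit]}$ along matching quantum boundary length produces a surface that agrees with the ambient $(\CS,x,y)$. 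Because $\partial\CB$ is conformally removable (again Lemma~\ref{lem::hitting_set}), Proposition~\ref{prop::removability_usage} makes this welding uniquely determined and, in particular, a continuous operation in the topology above. Reading the welding in reverse pins down the boundary length measures on $\partial\qlegrowthleft|_{[0,\stoppingleft]}$ and $\partial\qlegrowthright|_{[0,\righthit]}$ as continuous functions of the triple, so they converge together with the rest of the data, and the desired equality of continuum laws follows. The symmetric argument on the $y \to x$ side is identical, using $\ol{\CB}$ and the corresponding stopping times.
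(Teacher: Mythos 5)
There is a genuine gap, and it is exactly the issue the paper flags in Remark~\ref{rem::qle_not_symmetric}. Lemma~\ref{lem::approximations_symmetric} gives equality in law of the two triples only when they are viewed as abstract (tip-marked) beaded quantum surfaces; it does \emph{not} give equality in law of the configurations as they sit in the sphere, because under $\lawonall_k^{x \to y}$ the tip of $\qlegrowthright^k|_{[0,\righthit]}$ lies on $\partial \qlegrowthleft_{\stoppingleft}^k$ while the tip of $\qlegrowthleft^k|_{[0,\stoppingleft]}$ is uniform on its boundary, and under $\lawonall_k^{y \to x}$ these roles are reversed. The quantum boundary length measures on the prime ends of the $\QLE$ hulls, which the equivalence in the lemma must preserve, are \emph{induced} from the embedding (they are not known to be intrinsic at this stage), i.e.\ they belong to precisely the part of the data whose level-$k$ law is not known to be symmetric. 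So the scheme ``symmetry at level $k$ plus weak convergence of the configurations'' does not close: the objects that converge (embedded configurations with induced boundary measures) and the objects whose laws are equal at level $k$ (abstract marked triples) are not the same, and bridging them is the whole difficulty. Your parenthetical that the tip of $\qlegrowthleft|_{[0,\stoppingleft]}$ limits to the meeting point of the two hulls is also incorrect --- that is the tip of the \emph{other} process; under $\lawonall^{x\to y}$ the tip of $\qlegrowthleft|_{[0,\stoppingleft]}$ is a uniform boundary point --- and this conflation is what hides the asymmetry in your write-up.

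Your proposed repair does not fill the gap. Conformal removability (Proposition~\ref{prop::removability_usage}) gives \emph{uniqueness} of the glued conformal structure once the gluing homeomorphism is specified; it does not make the welding a continuous operation in a Prokhorov-type topology, and no such continuity is available. Moreover, ``reading the welding in reverse'' is circular: to weld $\CB$ to $\qlegrowthleft|_{[0,\stoppingleft]} \cup \qlegrowthright|_{[0,\righthit]}$ ``along matching quantum boundary length'' you must already know the boundary length identification on the $\QLE$ side, and that identification is not determined by the abstract triple --- if it were, the boundary measure would be intrinsic to the $\QLE$, which is exactly what is not known here. The paper instead couples the $x\to y$ and $y\to x$ pictures so that the marked surfaces literally coincide, performs a local surgery near the tips (Lemma~\ref{lem::last_step_bl_change}) to match the boundary length increments, uses the near-uniformity of the tips and Lemma~\ref{lem::pinch_point_glued} to align the gluing points, pastes quantum-measure-preserving conformal maps on $\CB$ and on the hull across the common boundary via removability, and finally invokes the CSBP estimate of Lemma~\ref{lem::atom_cannot_form} to rule out a quantum-area atom forming at the surgered tip in the $k\to\infty$, $c\to 0$ limit, so that the limiting map is conformal and measure preserving. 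Some quantitative control of this kind on the effect of the tips and of the gluing data in the limit is what your argument is missing.
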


See Figure~\ref{fig::last_step_bl_change} and Figure~\ref{fig::continuum_equivalence} for an illustration of the proof.  Lemma~\ref{lem::approximations_symmetric} implies that one can construct a coupling of an instance $(\qlegrowthleft^{1,k}|_{[0,\stoppingleft^1]}, \qlegrowthright^{1,k}|_{[0,\righthit^1]},\CB^{1,k})$ from $\lawonall_k^{x \to y}$ and an instance $(\qlegrowthleft^{2,k}|_{[0,\lefthit^2]}, \qlegrowthright^{2,k}|_{[0,\stoppingright^2]},\ol{\CB}^{2,k})$ from $\lawonall_k^{y \to x}$ so that each of the components are equivalent as (marked and beaded) quantum surfaces.  In particular, there is a homeomorphism of $\qlegrowthleft_{\stoppingleft^1}^{1,k}$ to $\qlegrowthleft_{\lefthit^2}^{2,k}$ which is conformal on the interior of $\qlegrowthleft_{\stoppingleft^1}^{1,k}$.  Since we need to establish the equivalence of the limits as $k \to \infty$ in the sense described above, we need to know that the limit of this map is a homeomorphism and preserves the boundary length measure.  In order to show that this is the case, we will want construct the coupling so that we can conformally map an entire neighborhood of $\qlegrowthleft_{\stoppingleft^1}^{1,k}$ to a neighborhood of $\qlegrowthleft_{\lefthit^2}^{2,k}$.  The challenge is that this is not exactly possible due to the asymmetry in the locations of the tips of the two $\QLE$ approximations.

To explain this point in further detail, the tip of $\qlegrowthleft^{2,k}|_{[0,\lefthit^2]}$ is contained in the outer boundary of $\qlegrowthright^{2,k}|_{[0,\stoppingright^2]}$ while the the tip of $\qlegrowthleft^{1,k}|_{[0,\stoppingleft^1]}$ is uniformly random on its boundary (see Figure~\ref{fig::last_step_bl_change}).  The difficulty that this causes is that if we couple an instance $(\qlegrowthleft^{1,k}|_{[0,\stoppingleft^1]}, \qlegrowthright^{1,k}|_{[0,\righthit^1]},\CB^{1,k})$ from $\lawonall_k^{x \to y}$ and an instance $(\qlegrowthleft^{2,k}|_{[0,\lefthit^2]}, \qlegrowthright^{2,k}|_{[0,\stoppingright^2]},\ol{\CB}^{2,k})$ from $\lawonall_k^{y \to x}$ so that each of the components are equivalent as (marked and beaded) quantum surfaces, then it is not possible to arrange so that the map which takes $\qlegrowthleft_{\stoppingleft^1}^{1,k}$ to $\qlegrowthleft_{\lefthit^2}^{2,k}$ agrees with the map which takes $\CB^{1,k}$ to $\ol{\CB}^{2,k}$ on the boundary to define a welding hence be conformal in a neighborhood of $\qlegrowthleft_{\stoppingleft^1}^{1,k}$.  To handle the issue with the asymmetry, we will couple so that $(\qlegrowthleft^{1,k}|_{[0,\stoppingleft^1]}, \qlegrowthright^{1,k}|_{[0,\righthit^1]})$ and $(\qlegrowthleft^{2,k}|_{[0,\lefthit^2]}, \qlegrowthright^{2,k}|_{[0,\stoppingright^2]})$ are equivalent as (marked and beaded) quantum surfaces and only consider the map of the part of $\qlegrowthleft^{1,k}$ up to the time $\xi^1 = \delta_k \lfloor \stoppingleft^1/\delta_k \rfloor$ to the part of $\qlegrowthleft^{2,k}$ up to the time $\xi^2 = \delta_k \lfloor \lefthit^2/\delta_k \rfloor$.  We would like to then glue the points on $\partial \qlegrowthleft_{\stoppingleft^1}^{1,k} \setminus \partial \qlegrowthleft_{\xi^1}^{1,k}$ to the points on $\partial \qlegrowthleft_{\xi^1}^{1,k} \setminus \partial \qlegrowthleft_{\stoppingleft^1}^{1,k}$ according to boundary length (boundaries of the yellow region which are part of the outer boundary of $\qlegrowthleft^{1,k}$ at the times $\xi^1$, $\stoppingleft^1$).  If the boundary lengths of $\partial \qlegrowthleft_{\stoppingleft^1}^{1,k} \setminus \partial \qlegrowthleft_{\xi^1}^{1,k}$ and $\partial \qlegrowthleft_{\xi^1}^{1,k} \setminus \partial \qlegrowthleft_{\stoppingleft^1}^{1,k}$ are not the same (which is a.s.\ the case), then it is not possible to glue according to boundary length.  To circumvent this problem, we grow the segment of $\SLE_6$ being drawn by $\qlegrowthleft^{1,k}$ at time $\stoppingleft^1$ further until the first time $\zeta^1$ that the quantum length of the outer boundary of the surface formed is the same as the quantum length of the outer boundary of $\qlegrowthleft^{1,k}$ at time $\xi^1$.  We let $\CR^1$ be the surface that this $\SLE_6$ cuts off from $\infty$ in the interval $[\xi^1,\zeta^1]$ (union of yellow and green regions in Figure~\ref{fig::last_step_bl_change}) and $\wh{\CB}^{1,k} = \CB^{1,k} \setminus \CR^1$.  Note that the surface $\wh{\CB}^{1,k}$ a.s.\ does not have the same boundary length as $\ol{\CB}^{2,k}$, so they cannot be coupled to be the same.  We correct this by growing the segment $\ol{\eta}'$ of $\SLE_6$ being drawn by $\qlegrowthright^{2,k}$ at time $\stoppingright^2$ until the first time $\ol{\zeta}^2$ that the boundary length of the region $\wh{\CB}^{2,k}$ outside of $\qlegrowthleft_{\lefthit^2}^{2,k} \cup \qlegrowthright_{\stoppingright^2}^{2,k} \cup \ol{\eta}'([\stoppingright^2,\ol{\zeta}^2])$ is the same as that of $\wh{\CB}^{1,k}$.  If we couple so that $\wh{\CB}^{1,k} = \wh{\CB}^{2,k}$, then we can define a map $\qlegrowthleft_{\xi^1}^{1,k} \cup \wh{\CB}^{1,k} \to \qlegrowthleft_{\xi^2}^{2,k} \cup \wh{\CB}^{2,k}$ by gluing the part of $\qlegrowthleft_{\xi^1}^{1,k} \cap \partial \CR^1$ to $\partial \CR^1 \setminus \qlegrowthleft_{\xi^1}^{1,k}$ according to boundary length.  This yields a welding as the two boundary lengths agree, so this map will thus be conformal in a neighborhood of $\qlegrowthleft_{\xi^1}^{1,k}$ except near $\CR^1$ and where the two $\QLE(8/3,0)$'s meet.  The former does not cause a problem as $\diam(\CR^1) \to 0$ in probability as $k \to \infty$ and the latter does not cause a problem since the two $\QLE(8/3,0)$'s a.s.\ meet at a single point.

\begin{figure}[ht!]
\begin{center}
\includegraphics[scale=0.85, page=1, trim = {0 1.5cm 7.5cm 0},clip]{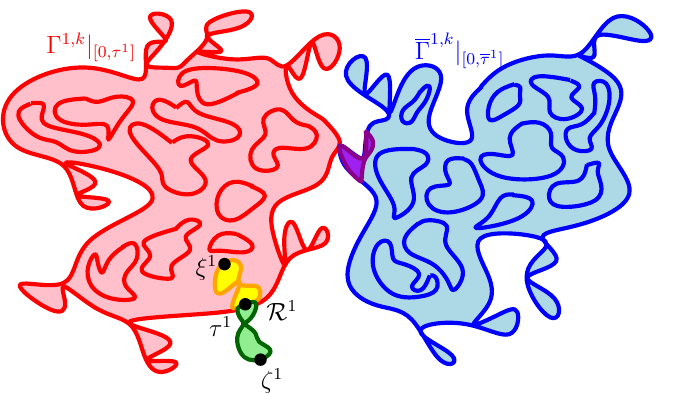}
\end{center}
\vspace{-0.03\textheight}
\caption{\label{fig::last_step_bl_change} Illustration of the setup for Lemma~\ref{lem::continuum_equivalence}.  In the figure and in the proof, we use a superscript $1$ (resp.\ $2$) to indicate a sample from the measure which is weighted by quantum distance time from $x$ to $y$ (resp.\ $y$ to $x$).  Here, $[\xi^1,\stoppingleft^1]$ is the interval of time in which $\qlegrowthleft^{1,k}|_{[0,\stoppingleft^{1}]}$ is drawing its final segment of $\SLE_6$ (shown in yellow).  The green region shows the surface which is cut off by running this $\SLE_6$ further until the first time that the boundary length of the surface which corresponds to $\qlegrowthleft_{\stoppingleft^{1}}^{1,k}$ and the additional $\SLE_6$ segment is equal to the boundary length of $\partial \qlegrowthleft_{\xi^{1}}^{1,k}$.  It is a high probability event that this time occurs very quickly after time $\stoppingleft^{1}$ (when $\delta_k > 0$ is small).  The purple region shows the final segment of $\SLE_6$ for $\qlegrowthright^{1,k}|_{[0,\righthit^{1}]}$.  The region $\CR^{1}$ is the union of the yellow and green regions.}
\end{figure}

\begin{figure}[ht!]
\begin{center}
\includegraphics[scale=0.8, page=2]{Figures/change_boundary_length_new}
\end{center}
\vspace{-0.025\textheight}
\caption{\label{fig::continuum_equivalence} (Continuation of Figure~\ref{fig::last_step_bl_change}.)  We construct a coupling under which $\qlegrowthleft^{1,k}|_{[0,\stoppingleft^1]}$ (resp.\ $\qlegrowthright^{1,k}|_{[0,\righthit^1]}$) is equal to $\qlegrowthleft^{2,k}|_{[0,\lefthit^2]}$ (resp.\ $\qlegrowthright^{2,k}|_{[0,\stoppingright^2]}$) as marked (by their tips) and beaded quantum surfaces.  The region $\CR^1$ is as in Figure~\ref{fig::last_step_bl_change}.  The quantum surface $\wh{\CB}^{1,k}$ parameterized by the complement of $\qlegrowthleft_{\stoppingleft^1}^{1,k} \cup \qlegrowthright_{\righthit^1}^{1,k} \cup \CR^1$ is a quantum disk given its quantum boundary length.  The region $\ol{\CR}^2$ is the continuation of the $\SLE_6$ being drawn by $\qlegrowthright^{2,k}$ at time $\stoppingright^2$ up until the first time that the quantum length of the outside component is equal to that of $\partial \wh{\CB}^{1,k}$; call this outside component $\wh{\CB}^{2,k}$.  Then $\wh{\CB}^{1,k}$, $\wh{\CB}^{2,k}$ are both quantum disks given their quantum boundary lengths.  Since these quantum boundary lengths are the same, we can therefore couple so that the two surfaces are the same.  We therefore have maps $\phi^k$ which take $\qlegrowthleft_{\xi^1}^{1,k}$ to $\qlegrowthleft_{\xi^2}^{2,k}$ and $\varphi^k$ which take $\wh{\CB}^{1,k}$ to $\wh{\CB}^{2,k}$ which are conformal on their interior and preserve quantum boundary length.  By removability, these maps together define a conformal transformation $\psi^k$ in the complement of $\CR^1 \cup \qlegrowthright_{\righthit^1}^{1,k}$.  By passing to a further subsequence (and recoupling) if necessary, $\psi^k$ converges in the limit as $k \to \infty$ to a map which is conformal in the complement of $\qlegrowthright^1|_{[0,\righthit^1]}$, preserves quantum area and boundary length on $\qlegrowthleft_{\stoppingleft^1}^1$, and takes $\qlegrowthleft^1|_{[0,\stoppingleft^1]}$ to $\qlegrowthleft^2|_{[0,\lefthit^2]}$.}
\end{figure}

\begin{proof}[Proof of Lemma~\ref{lem::continuum_equivalence}]\noindent{\it Step 1.  Coupling of the $\QLE$ approximations.}  We begin by starting off with samples $(\qlegrowthleft^{1,k}|_{[0,\stoppingleft^1]},\qlegrowthright^{1,k}|_{[0,\righthit^1]},\CB^{1,k})$ and $(\qlegrowthleft^{2,k}|_{[0,\lefthit^2]},\qlegrowthright^{2,k}|_{[0,\stoppingright^2]},\ol{\CB}^{2,k})$ respectively from $\lawonall_k^{x \to y}$ and $\lawonall_k^{y \to x}$.  We take these to be coupled together so that (as marked (by their tips) beaded quantum surfaces) we have $\qlegrowthleft^{1,k}|_{[0,\stoppingleft^1]} = \qlegrowthleft^{2,k}|_{[0,\lefthit^2]}$ and $\qlegrowthright^{1,k}|_{[0,\righthit^1]} = \qlegrowthright^{2,k}|_{[0,\stoppingright^2]}$.  We note that both $\CB^{1,k}$ and $\ol{\CB}^{2,k}$ are quantum disks conditional on their quantum boundary length.  Under this coupling, we have that $\stoppingleft^1 = \lefthit^2$ and $\righthit^1 = \stoppingleft^2$.

Let $\xi^1 = \delta_k \lfloor \tau^1/\delta_k \rfloor$ so that $[\xi^1,\tau^1]$ is the interval of time in which $\qlegrowthleft^{1,k}|_{[0,\stoppingleft^1]}$ is drawing its final segment of $\SLE_6$.  Let $\eta'$ be this $\SLE_6$ process (normalized so that it starts at time $\xi^1$) and let $\zeta^1$ be the first time after $\stoppingleft^1$ that the outer boundary of the cluster $\partial \qlegrowthleft_{\xi^1}^{1,k} \cup \eta'([\xi^1,\zeta^1])$ is equal to the boundary length of $\partial \qlegrowthleft_{\xi^1}^{1,k}$.  Since the boundary length evolves as the time-reversal of a $3/2$-stable L\'evy excursion (when parameterized by quantum natural time), we note that $\zeta^1$ converges to $\tau^1$ in probability as $k \to \infty$.  (In particular, the probability that $\zeta^1$ is finite tends to $1$ as $k \to \infty$.)  We let $\CR^1$ be the quantum surface which is cut out by $\eta'([\xi^1,\zeta^1])$.  On the event that $\qlegrowthleft_{\stoppingleft^1}^{1,k} \cup \CR^1$ is disjoint from $\qlegrowthright_{\righthit^1}^{1,k}$, the conditional law of the quantum surface $\wh{\CB}^{1,k}$ parameterized by the region outside of $\qlegrowthleft_{\stoppingleft^1}^{1,k} \cup \CR^1 \cup \qlegrowthright_{\righthit^1}^{1,k}$ is that of a quantum disk given its quantum boundary length.  We also let $\ol{\eta}'$ be the $\SLE_6$ process which is drawing the final segment of $\qlegrowthright^{2,k}|_{[0,\stoppingright^{2}]}$ (normalized so that it starts at time $\stoppingright^2$) and let $\ol{\zeta}^2$ be the first time after $\stoppingright^2$ that the quantum length of the surface outside of $\qlegrowthleft_{\lefthit^2}^{2,k} \cup \qlegrowthright_{\stoppingright^2}^{2,k} \cup \ol{\eta}'([\stoppingright^2,\ol{\zeta}^2])$ is equal to that of $\partial \wh{\CB}^{1,k}$ and let $\ol{\CR}^2$ be the surface which is cut out by $\ol{\eta}'([\stoppingright^2,\ol{\zeta}^2])$.  As before, we have that $\ol{\zeta}^2 \to \stoppingright^2$ in probability as $k \to \infty$ and the probability that $\ol{\CR}^2$ is disjoint from $\qlegrowthleft_{\lefthit^2}^{2,k}$ tends to $1$ as $k \to \infty$.  Let $\wh{\CB}^{2,k}$ be the quantum surface parameterized by the region outside of $\qlegrowthleft_{\lefthit^2}^{2,k} \cup \ol{\CR}^2 \cup \qlegrowthright_{\stoppingright^2}^{2,k}$.  On the event that $\ol{\CR}^2 \cap \qlegrowthleft_{\lefthit^2}^{2,k} = \emptyset$, we have that $\wh{\CB}^{2,k}$ given its boundary length is a quantum disk.  In particular, on the events $\CR^1 \cap \qlegrowthright_{\righthit^1}^{1,k} = \emptyset$ and $\ol{\CR}^2 \cap \qlegrowthleft_{\lefthit^2}^{2,k} = \emptyset$, we have that $\wh{\CB}^{1,k}$ and $\wh{\CB}^{2,k}$ have the same boundary length so we can recouple so that $\wh{\CB}^{1,k} = \wh{\CB}^{2,k}$.

Let $\xi^2 = \delta_k \lfloor \lefthit^2/\delta_k \rfloor$ so that $[\xi^2,\lefthit^2]$ is the interval of time in which $\qlegrowthleft^{2,k}|_{[0,\lefthit^2]}$ is drawing its final segment of $\SLE_6$.  We note that $\xi^1 = \xi^2$ as $\stoppingleft^1 = \lefthit^2$, but we will write $\xi^1$ when referring to $\qlegrowthleft^{1,k}$ and $\xi^2$ when referring to $\qlegrowthleft^{2,k}$.

Let $\phi^k$ be the equivalence map (as marked (by their tips) beaded quantum surfaces) which takes $\qlegrowthleft_{\xi^1}^{1,k}$ to $\qlegrowthleft_{\xi^2}^{2,k}$.  Then $\phi^k$ in particular takes the tip $z^{1,k}$ of $\qlegrowthleft^{1,k}$ at time $\xi^1$ to the tip $z^{2,k}$ of $\qlegrowthleft^{2,k}$ at time $\xi^2$.  Let $\varphi^k$ be the equivalence map which takes $\wh{\CB}^{1,k}$ to $\wh{\CB}^{2,k}$.  As $z^{1,k}$ is uniformly distributed according to the quantum measure on $\partial \wh{\CB}^{1,k} \cap \partial \qlegrowthleft_{\xi^1}^{1,k}$ and $z^{2,k}$ is uniformly distributed according to the quantum measure on $\partial \wh{\CB}^{2,k} \cap \qlegrowthleft_{\xi^2}^{2,k}$, it follows that we can take the coupling so that $\varphi^k(z^{1,k}) = z^{2,k}$ with probability tending to $1$ as $k \to \infty$.

\noindent{\it Step 2.  Welding inside and outside of the approximations.} Since $\varphi^k$ and $\phi^k$ both preserve the quantum boundary measure and both take $z^{1,k}$ to $z^{2,k}$, it therefore follows that $\varphi^k$ and $\phi^k$ agree on the component of $\partial \qlegrowthleft_{\stoppingleft^1}^{1,k} \setminus (\CR^1 \cup \partial \qlegrowthright_{\righthit^1}^{1,k})$ which contains $z^{1,k}$.  Note that $\partial \qlegrowthleft_{\stoppingleft^1}^{1,k} \setminus (\CR^1 \cup \partial \qlegrowthright_{\righthit^1}^{1,k})$ consists of two components and we want to argue that $\varphi^k$ and $\phi^k$ also agree on the other connected component.  This, however, follows since the boundary length of $\partial \qlegrowthleft_{\xi^1}^{1,k} \cap \partial \CR^1$ is equal to the boundary length of $\partial \CR^1 \setminus \partial \qlegrowthleft_{\xi^1}^{1,k}$.  That is, the map~$\psi^k$ which takes $\qlegrowthleft_{\xi^1}^{1,k} \cup \wh{\CB}^{1,k}$ to $\qlegrowthleft_{\xi^2}^{2,k} \cup \wh{\CB}^{2,k}$ given by~$\varphi^k$ on $\wh{\CB}^{1,k}$ and~$\phi^k$ on $\qlegrowthleft_{\xi^1}^{1,k}$ is conformal on the interiors of $\wh{\CB}^{1,k}$ and~$\qlegrowthleft_{\xi^1}^{1,k}$ and a homeomorphism on its entire domain.  Therefore by the conformal removability of~$\partial \qlegrowthleft_{\xi^1}^{1,k} \cup \partial \CR^1$, we have that~$\psi^k$ is conformal on the interior of its domain.

\noindent{\it Step 3. Limit of welding yields equivalence of $\QLE$'s.}  Since $\diam(\CR^1) \to 0$, $\diam(\ol{\CR}^2) \to 0$ in probability as $k \to \infty$, it therefore follows that, by passing to a further subsequence if necessary (and recoupling the laws using the Skorokhod representation theorem), $\psi^k$ converges a.s.\  as $k \to \infty$ with respect to the local uniform topology to a map $\psi \colon \qlegrowthleft_{\stoppingleft^1}^1 \cup \CB^1 \to \qlegrowthleft_{\lefthit^2}^2 \cup \ol{\CB}^2$ which is a homeomorphism and conformal in its interior.  By Lemma~\ref{lem::hitting_set}, the interior of the domain of $\psi$ includes all of $\qlegrowthleft_{\stoppingleft^1}^1$ except possibly a single point --- the unique point $w_0$ where it hits $\qlegrowthright|_{[0,\righthit^1]}^1$.  However, it is not difficult to see that $\psi$ is continuous at $w_0$.  Indeed, suppose that $(w_k)$ is a sequence in $\qlegrowthleft_{\stoppingleft^1}^1$ which converges to $w_0$.  Let $(w_{j_k})$ be a subsequence of $(w_k)$ and let $(w_{j_{k'}})$ be a further subsequence so that $(\psi(w_{j_k'}))$ converges (note that $\qlegrowthleft_{\lefthit^2}^2$ is compact).  If $\lim_{k \to \infty} \psi(w_{j_k'}) \neq \psi(w_0)$, then it follows since $\psi^{-1}$ is a homeomorphism away from $\psi(w_0)$ that $w_{j_k'} = \psi^{-1}(\psi(w_{j_k'}))$ converges to a point distinct from $w_0$.  This is a contradiction, and therefore $\psi$ is continuous at $w_0$.  It therefore follows that the distribution of $\qlegrowthleft|_{[0,\stoppingleft]}$ under $\lawonall^{x \to y}$ is the same as that of $\qlegrowthleft|_{[0,\lefthit]}$ under $\lawonall^{y \to x}$ (in the sense described just before the statement of the lemma).  The same argument also implies that the distribution of $\qlegrowthright|_{[0,\righthit]}$ under $\lawonall^{x \to y}$ is the same as the distribution of $\qlegrowthright|_{[0,\stoppingright]}$ under $\lawonall^{y \to x}$.  The result follows because $\qlegrowthleft|_{[0,\stoppingleft]},\qlegrowthright|_{[0,\righthit]},\CB$ are conditionally independent under $\lawonall^{x \to y}$ given their boundary lengths, the same holds under $\lawonall^{y \to x}$, and the joint law of the boundary lengths is the same as that of $\qlegrowthleft|_{[0,\lefthit]}$, $\qlegrowthright|_{[0,\stoppingright]}$, $\ol{\CB}$ under $\lawonall^{y \to x}$.
\end{proof}

\subsubsection{Proof of Theorem~\ref{thm::qle_symmetry}}

Lemma~\ref{lem::continuum_equivalence} implies that the $\lawonall^{x \to y}$ distribution of $(\qlegrowthleft|_{[0,\stoppingleft]},\qlegrowthright|_{[0,\righthit]},\CB)$ is equal to the $\lawonall^{y \to x}$ distribution of $(\qlegrowthleft|_{[0,\lefthit]},\qlegrowthright|_{[0,\stoppingright]},\ol{\CB})$ (where the sense of equivalence for the first two components is as described just before the statement of Lemma~\ref{lem::continuum_equivalence} and the third component is in the usual sense of a quantum surface).  This implies that we can construct a coupling of copies $(\qlegrowthleft^1|_{[0,\stoppingleft^1]},\qlegrowthright^1|_{[0,\righthit^1]},\CB^1)$ and $(\qlegrowthleft^2|_{[0,\lefthit^2]},\qlegrowthright^2|_{[0,\stoppingright^2]},\ol{\CB}^2)$ so that we have $\qlegrowthleft_{\stoppingleft^1}^1 = \qlegrowthleft_{\lefthit^2}^2$, $\qlegrowthright_{\righthit^1}^1 = \qlegrowthright_{\stoppingright^2}^2$, and $\CB^1 = \ol{\CB}^2$.  This implies the existence of homeomorphisms $\varphi \colon \qlegrowthleft_{\stoppingleft^1}^1 \to \qlegrowthleft_{\lefthit^2}^2$, $\ol{\varphi} \colon \qlegrowthright_{\righthit^1}^1 \to \qlegrowthright_{\stoppingright^2}^2$, and $\psi \colon \CB^1 \to \ol{\CB}^2$ which are each conformal in the interior of their domain and preserve the quantum boundary length.  By Lemma~\ref{lem::pinch_point_glued}, we know that the point on $\partial \qlegrowthleft_{\stoppingleft^1}^1$ which is glued to the a.s.\  unique pinch point on $\partial \CB^1$ is uniformly random from the quantum boundary measure on $\partial \qlegrowthleft_{\stoppingleft^1}^1$ and the same is likewise true for $\qlegrowthleft_{\lefthit^2}^2$.  Thus we can couple so that $\varphi$ (resp.\ $\ol{\varphi}$) sends this point to the corresponding point on $\partial \qlegrowthleft_{\lefthit^2}^2$ (resp.\ $\partial \qlegrowthright_{\stoppingright^2}^2$).  Let $\CS^j$ for $j=1,2$ be the doubly-marked quantum sphere associated with $(\qlegrowthleft_{\stoppingleft^1}^1,\qlegrowthright_{\righthit^1}^1,\CB^1)$ and $(\qlegrowthleft_{\lefthit^2}^2,\qlegrowthright_{\stoppingright^2}^2,\ol{\CB}^2)$, respectively.  Letting $\phi$ be the map $\CS^1 \to \CS^2$ which on $\qlegrowthleft_{\stoppingleft^1}^1$ (resp.\ $\qlegrowthright_{\righthit^1}^1$) is given by $\varphi$ (resp.\ $\ol{\varphi}$) and on $\CB^1$ is given by $\psi$, we thus see that $\phi$ is a homeomorphism which is conformal in the complement of $\partial \CB^1 = \partial \qlegrowthleft_{\stoppingleft^1}^1 \cup \partial \qlegrowthright_{\righthit^1}^1$.  By the conformal removability of $\partial \CB^1$ (Lemma~\ref{lem::hitting_set}), we thus have that $\phi$ is conformal everywhere hence must be a M\"obius transformation, which completes the proof. \qed

\subsubsection{Conditional law of the unexplored surface}

We finish this section by stating the analog of the final part of Proposition~\ref{prop::measure_properties} for $\QLE(8/3,0)$.

\begin{proposition}
\label{prop::m_d_l_conditional_law}
The $\lawonall^{x \to y}$ conditional distribution given $(\CS,x,y)$ and $\qlegrowth|_{[0,\stoppingleft]}$ is equal to the $\lawonall$ conditional distribution given $(\CS,x,y)$ and $\qlegrowth|_{[0,\stoppingleft]}$.
\end{proposition}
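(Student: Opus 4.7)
My plan is to exploit the identity $d\lawonall^{x\to y}/d\lawonall = \qdist(x,y)$ in tandem with $\stoppingleft = U\qdist(x,y)$. Writing $P_1(\cdot\mid \CS,x,y)$ and $P_2(\cdot\mid \CS,x,y)$ for the regular conditional laws of $\qlegrowth$ and $\qlegrowthright$ given $(\CS,x,y)$ under $\lawonall$ (these exist since both paths take values in standard Borel spaces, cf.\ the remark following Lemma~\ref{lem::lawgivenfivetuple}), the key algebraic point will be that the Radon--Nikodym weight $\qdist(x,y)$ exactly cancels the Jacobian $\qdist(x,y)^{-1}$ produced when one replaces $U$ by $\stoppingleft$ as the fundamental variable.

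Concretely, since for each fixed $\qlegrowth$ the variable $\stoppingleft = U\qdist(x,y)$ has conditional Lebesgue density $\qdist(x,y)^{-1}\one_{[0,\qdist(x,y)]}(\stoppingleft)$, multiplying through by $\qdist(x,y)$ will yield
\begin{equation*}
d\lawonall^{x\to y} \;=\; d\Mstwo(\CS,x,y)\otimes P_1(d\qlegrowth\mid\CS,x,y)\otimes P_2(d\qlegrowthright\mid\CS,x,y)\otimes \one_{[0,\qdist(x,y)]}(\stoppingleft)\,d\stoppingleft.
\end{equation*}
I will then disintegrate $P_1$ along the (Borel) restriction map $\qlegrowth\mapsto\qlegrowth|_{[0,s]}$ at each fixed $s$, writing $P_1(d\qlegrowth\mid \CS,x,y) = P_1(d\gamma\mid\CS,x,y)\otimes K_s(d\qlegrowth|_{[s,\infty)}\mid\CS,x,y,\gamma)$ for the associated conditional kernel $K_s$. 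This disintegration exists from the standard Borel structure of path space and, crucially, does not require $s$ to be a stopping time for $\qlegrowth$.

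Conditioning on the four-tuple $(\CS,x,y,\qlegrowth|_{[0,\stoppingleft]})$ fixes $(\CS,x,y)$, the past $\gamma = \qlegrowth|_{[0,s]}$ and the value $s=\stoppingleft$, and the indicator $\one_{[0,\qdist(x,y)]}(\stoppingleft)$ is automatically satisfied because $\qdist(x,y)\ge s$ for any continuation of an observed past of length $s$. Reading off the expression above, the $\lawonall^{x\to y}$ conditional law of $(\qlegrowth|_{[\stoppingleft,\infty)},\qlegrowthright)$ becomes the product $K_\stoppingleft(\,\cdot\mid\CS,x,y,\gamma)\otimes P_2(\,\cdot\mid\CS,x,y)$, which matches the analogous $\lawonall$ disintegration: the conditional independence of $\qlegrowthright$ from $\qlegrowth$ given $(\CS,x,y)$ is preserved by the $\qlegrowth$-only weighting, and the future of $\qlegrowth$ given its past is governed by the same kernel $K_s$.

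The main subtlety I expect to have to address is that $\stoppingleft$ is \emph{not} a stopping time for $\qlegrowth$ --- it is built from the random total duration $\qdist(x,y)$, which depends on the full trajectory --- so the statement cannot be derived from a strong Markov property in the usual sense. What will make the argument go through is precisely the algebraic cancellation in the display above: the $\qdist(x,y)$ weighting in $d\lawonall^{x\to y}/d\lawonall$ exactly kills the $\qdist(x,y)^{-1}$ Jacobian factor that a naive ``random stopping'' at $\stoppingleft$ would otherwise generate, so both conditional distributions are seen to be computed by the same standard Borel disintegration of $P_1\otimes P_2$ against the restriction map.
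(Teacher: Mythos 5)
Your computation on the weighted side is fine: written in the variable $\stoppingleft$ instead of $U$, one does have $d\lawonall^{x\to y} = d\Mstwo\otimes P_1\otimes P_2\otimes \one_{[0,\qdist(x,y)]}(\stoppingleft)\,d\stoppingleft$, and disintegrating $P_1$ against the restriction map identifies the $\lawonall^{x\to y}$ conditional law of the continuation given $(\CS,x,y,\qlegrowth|_{[0,\stoppingleft]})$ as $K_\stoppingleft\otimes P_2$. The gap is the phrase ``which matches the analogous $\lawonall$ disintegration.'' In the same variables, $d\lawonall = d\Mstwo\otimes P_1\otimes P_2\otimes \qdist(x,y)^{-1}\one_{[0,\qdist(x,y)]}(\stoppingleft)\,d\stoppingleft$, and the factor $\qdist(x,y)^{-1}$ is \emph{not} measurable with respect to the conditioning data: it depends on the unobserved future of $\qlegrowth$. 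So under $\lawonall$, Bayes' rule gives the continuation kernel $K_\stoppingleft$ \emph{tilted} by $\qdist(x,y)^{-1}$ (normalized), not $K_\stoppingleft$ itself; conditioning at the random time $\stoppingleft = U\qdist(x,y)$ size-biases the future, and under $\lawonall$ there is no weighting available to cancel that Jacobian --- the cancellation you emphasize acts only on the weighted measure. (Toy check: if the ``path'' is just its duration $D\in\{1,2\}$ with equal probability and $\stoppingleft=UD$, then for $s<1$ one finds $\P[D=2\giv\stoppingleft=s]=1/3$ under the unweighted law but $1/2$ under the $D$-weighted law.) Consequently the equality you assert for the continuation of $\qlegrowth$ is exactly equivalent to $\qdist(x,y)$ being a.s.\ determined by $(\CS,x,y)$ and $\qlegrowth|_{[0,\stoppingleft]}$, which is in essence the determination statement (Proposition~\ref{prop::distance_symmetric_and_determined}) that this proposition is an input to, so it cannot be taken for granted here. (Your conditional-independence remark does give the equality of the two conditional laws for functionals of $\qlegrowthright$ alone, since the weight is $\sigma(\CS,x,y,\qlegrowth)$-measurable, but that weaker statement is not the full claim.)

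This is also where your route diverges from the paper's. The paper does not disintegrate the limiting measure at all: it notes that the corresponding statement holds for each $\delta$-approximation as a consequence of part~(ii) of Proposition~\ref{prop::measure_properties}, i.e.\ of the explicit L\'evy-excursion description, under which the conditional law of the unexplored surface and of the continuation given $\ttime$ and $X_\ttime$ is identified with $\Mstwo$ conditioned on the excursion surviving to time $\ttime$ with the given boundary length (there the relevant weighting factors \emph{are} measurable functions of the conditioning data), and then passes this property through the subsequential limit defining $\QLE(8/3,0)$. To repair your argument you would need either to route through the approximations in this way, or to add an ingredient that controls the $\lawonall$ conditional law of the future at the random time $\stoppingleft$; the disintegration bookkeeping by itself does not close the proof.
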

\begin{proof}
The last part of Proposition~\ref{prop::measure_properties} implies that the analogous statement holds for each of the $\delta$-approximations to $\QLE(8/3,0)$.  Therefore it follows that this property holds in the limit.
\end{proof}

\section{Metric construction}
\label{sec::metric}

The purpose of this section is to show that $\QLE(8/3,0)$ equipped with the quantum distance parameterization defines a metric on a countable, dense subset consisting of a sequence of i.i.d.\ points chosen from the quantum measure of a $\sqrt{8/3}$-LQG sphere. The approach we take, as explained and outlined in Section~\ref{subsec::overview}, is based on ideas from a joint work by the second co-author, Sam Watson, and Hao Wu.

We begin by considering a pair of $\QLE(8/3,0)$ explorations $\qlegrowthleft$ and $\qlegrowthright$ on a doubly marked quantum sphere $(\CS,x,y)$ where the first (resp.\ second) process starts from $x$ (resp.\ $y$) and is targeted at~$y$ (resp.\ $x$).  We then use several results accumulated earlier in the paper to prove that the ``distances'' computed by these two explorations are a.s.\  the same.

The remainder of this section is structured as follows.  In Section~\ref{subsec::metric_setup}, we describe the setup and notation that we will use throughout the rest of the section.  We will complete the proof of Theorem~\ref{thm::qle_metric} in Section~\ref{subsec::coupling_explorations}.

\subsection{Setup and notation}
\label{subsec::metric_setup}

We will use the same setup and notation described in the beginning of Section~\ref{sec::qle_symmetry}.  Since $\lawonall^{x \to y}$ and $\lawonall^{y \to x}$ are infinite measures, we cannot normalize them to be probability measures.  However, these measures conditioned on certain quantities do yield probability measures.  In these cases, we will let $\p_{\lawonall^{x \to y}}[ \cdot \giv \cdot]$ (resp.\ $\p_{\lawonall^{y \to x}}[ \cdot \giv \cdot]$) and $\E_{\lawonall^{x \to y}}[ \cdot \giv \cdot]$ (resp.\ $\E_{\lawonall^{y \to x}}[ \cdot \giv \cdot]$) denote the corresponding probability and expectation.  We will similarly write $\p_\lawonall[ \cdot \giv \cdot]$ and $\E_\lawonall[ \cdot \giv \cdot]$ for the probability and expectation which correspond to $\lawonall$ conditioned on a quantity which leads to a probability measure.  For each $t \geq 0$, we also let $\CF_t = \sigma( \CS,x,y, \qlegrowthleft_s : s \leq t)$ and $\ol{\CF}_t = \sigma(\CS,x,y,\qlegrowthright_s : s \leq t)$.  That is, $\CF_t$ (resp.\ $\ol{\CF}_t$) is the filtration generated by $(\CS,x,y)$ and $\qlegrowthleft$ (resp.\ $\qlegrowthright$) with the quantum distance parameterization.

\subsection{Coupling explorations and the metric property}
\label{subsec::coupling_explorations}

We will complete the proof of Theorem~\ref{thm::qle_metric} in this section.  The first step is to prove the following proposition, which is in some sense the heart of the matter.

\begin{proposition}
\label{prop::symmetry}
The $\lawonall^{x \to y}$ distribution of $(\CS,x,y,\stoppingleft,\qlegrowthleft,\righthit,\qlegrowthright)$ is the same as the $\lawonall^{y \to x}$ distribution of $(\CS,x,y,\lefthit,\qlegrowthleft,\stoppingright,\qlegrowthright)$. 
\end{proposition}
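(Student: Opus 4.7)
The plan is to deduce Proposition~\ref{prop::symmetry} from Theorem~\ref{thm::qle_symmetry} (which gives the analogous symmetry for the truncated processes $\qlegrowthleft|_{[0,\stoppingleft]}$ and $\qlegrowthright|_{[0,\righthit]}$) together with the conditional law identification stated as Lemma~\ref{lem::lawgivenfivetuple}. Concretely, if the laws of the truncated five-tuples agree on both sides, and the conditional laws of the continuations given those five-tuples are described by the same measurable kernel on both sides, then the laws of the full seven-tuples agree.

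First I would establish Lemma~\ref{lem::lawgivenfivetuple}. One direction is almost immediate from the construction of $\lawonall^{x \to y}$: Proposition~\ref{prop::m_d_l_conditional_law} says that the $\lawonall^{x\to y}$ conditional law of $\qlegrowthleft$ given $(\CS,x,y,\qlegrowthleft|_{[0,\stoppingleft]})$ agrees with its $\lawonall$ conditional law, so it is a measurable function $G$ of that four-tuple alone, and in particular does not depend on the additional information in $\qlegrowthright|_{[0,\righthit]}$. The content lies on the opposite side: that under $\lawonall^{y\to x}$, the conditional law of $\qlegrowthleft$ given $(\CS,x,y,\qlegrowthleft|_{[0,\lefthit]},\qlegrowthright|_{[0,\stoppingright]})$ equals $G$ evaluated at $(\CS,x,y,\qlegrowthleft|_{[0,\lefthit]})$. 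Granted this, a symmetric argument handles $\qlegrowthright$, and then the a priori conditional independence of $\qlegrowthleft$ and $\qlegrowthright$ given $(\CS,x,y)$ under $\lawonall$ (inherited by $\lawonall^{x\to y}$ and $\lawonall^{y\to x}$ after the Radon-Nikodym weighting by $\qdist(x,y)$ or $\oqdist(y,x)$) delivers the joint conditional law.

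To prove the opposite-side identification, I would first observe that it follows immediately from Proposition~\ref{prop::m_d_l_conditional_law} applied atom-by-atom when $\lefthit$ is replaced by a stopping time taking only countably many values. For the actual $\stoppingleft$ and $\lefthit$, I would approximate from above by $\stoppingleft^{(n)} = 2^{-n}\lceil 2^n \stoppingleft\rceil$ and by an analogous discretization $\lefthit^{(n)}$, and then pass to the limit. For any bounded measurable $F$, the process
\begin{equation*}
r \mapsto \E_\lawonall\bigl[F(\qlegrowthleft)\giv \CS,x,y,\qlegrowthleft|_{[0,r]}\bigr]
\end{equation*}
is a c\`adl\`ag martingale and hence has only countably many jumps almost surely, so the limiting identification will go through provided one checks that the laws of $\stoppingleft$ under $\lawonall^{x \to y}$ and of $\lefthit$ under $\lawonall^{y \to x}$ assign zero mass to any fixed countable set, i.e.\ are atomless.

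The main obstacle is the atomlessness of $\lefthit$, since this is not manifest from its definition. For $\stoppingleft$ the claim is immediate from the uniform conditional distribution on $[0,\qdist(x,y)]$. For $\lefthit$, the map $\stoppingright \mapsto \lefthit$ is non-increasing by definition, so an atom of $\lefthit$ would correspond to an interval of $\stoppingright$-values producing the same value of $\lefthit$. On such an interval $\qlegrowthleft_\lefthit$ could touch $\qlegrowthright_\stoppingright$ only at the endpoint, contradicting the symmetry provided by Theorem~\ref{thm::qle_symmetry}, which, by comparing both sides of the identity for $\stoppingright$ approaching a putative atom from below and above, forces $\qlegrowthleft_\lefthit$ to first meet $\qlegrowthright_\stoppingright$ at exactly time $\lefthit$. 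Hence $\lefthit$ is atomless, the limiting argument goes through, Lemma~\ref{lem::lawgivenfivetuple} is established, and combining with Theorem~\ref{thm::qle_symmetry} plus the independent uniform sampling of $U$ on both sides yields the full equality of the seven-tuples asserted in Proposition~\ref{prop::symmetry}.
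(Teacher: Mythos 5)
Your overall strategy --- Theorem~\ref{thm::qle_symmetry} for the truncated data, then an identification of the conditional laws of the continuations, combined via the conditional independence of $\qlegrowthleft$ and $\qlegrowthright$ given $(\CS,x,y)$ --- is the same skeleton the paper uses (it is precisely the outline of Section~\ref{subsec::overview}), and the reduction itself is sound.  The problem is your implementation of the one genuinely technical point.  Your atomlessness argument for $\lefthit$ is asserted rather than proved: the statement that the symmetry ``forces $\qlegrowthleft$ to first meet $\qlegrowthright_\stoppingright$ at exactly time $\lefthit$'' is true by the very definition of $\lefthit$ as an infimum, so it cannot by itself exclude an interval of $\stoppingright$-values on which $\lefthit$ is constant.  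What is actually needed is a no-overshoot statement: if $\lefthit$ were constant on an interval $(a,b)\ni\stoppingright$, then at time $\lefthit$ the hull $\qlegrowthleft_\lefthit$ would already meet the strictly smaller hull $\qlegrowthright_a$ (strictness by Lemma~\ref{lem::capacity_increases_qd}), and one must show this is a.s.\ impossible --- e.g.\ that the single contact point provided by Lemma~\ref{lem::hitting_set} a.s.\ does not lie on $\qlegrowthright_a$ for any $a<\stoppingright$.  Theorem~\ref{thm::qle_symmetry} alone does not deliver this; some additional input (in the spirit of Lemma~\ref{lem::pinch_point_glued}, or a quantum-boundary-measure-zero estimate for $\partial\qlegrowthright_\stoppingright\cap\qlegrowthright_a$) is required.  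A smaller imprecision: the ``atom-by-atom'' step for countably valued stopping times is not literally an application of Proposition~\ref{prop::m_d_l_conditional_law} (which concerns the specific time $\stoppingleft$ under $\lawonall^{x\to y}$); what you need is the fixed-time statement under $\lawonall^{y\to x}$, which does hold because the weight $\oqdist(y,x)$ is a function of $(\CS,x,y,\qlegrowthright)$ and $\qlegrowthleft$, $\qlegrowthright$ are conditionally independent given $(\CS,x,y)$ under $\lawonall$ --- worth spelling out.

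It is instructive to compare with how the paper's Section~\ref{sec::qle_symmetry}--\ref{sec::metric} proof sidesteps exactly the step where you have a gap.  Instead of discretizing $\lefthit$ and proving its conditional law is atomless, the paper works with the $\lawonall$-martingales $M_t=\E_\lawonall[F(\qlegrowthleft)\giv\CF_t]$ and $\ol{M}_t=\E_\lawonall[F(\qlegrowthright)\giv\ol{\CF}_t]$, establishes $\ol{M}_t=\E_{\lawonall^{x\to y}}[F(\qlegrowthright)\giv\CF_\stoppingleft,\ol{\CF}_t]$ simultaneously for all rational $t$, and passes to the limit $t\uparrow\righthit$ (Lemma~\ref{lem::first_equivalence}).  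The two continuity facts needed at the random time --- that $\righthit$ is a.s.\ not a jump time of $\ol{M}$, and that $\ol{\CF}_\righthit=\bigvee_{t<\righthit}\ol{\CF}_t$ (no jump of the capacity/quantum-distance time change at $\righthit$) --- are not proved directly; they are \emph{transferred} from the corresponding statements at $\stoppingright$, which are trivial because $\stoppingright$ is conditionally uniform and there are only countably many exceptional times, using the equality in law of $\qlegrowthright|_{[0,\righthit]}$ under $\lawonall^{x\to y}$ and $\qlegrowthright|_{[0,\stoppingright]}$ under $\lawonall^{y\to x}$ from Theorem~\ref{thm::qle_symmetry}.  You could either adopt this transfer device, which replaces your atomlessness step entirely, or supply the missing no-overshoot argument; as written, your proposal is incomplete at that one point.
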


Theorem~\ref{thm::qle_symmetry} implies that the $\lawonall^{x \to y}$ distribution of $(\CS,x,y, \qlegrowthleft|_{[0,\stoppingleft]}, \qlegrowthright|_{[0,\righthit]})$ is equal to the $\lawonall^{y \to x}$ distribution of $(\CS,x,y, \qlegrowthleft|_{[0,\lefthit]}, \qlegrowthright|_{[0,\stoppingright]})$.  To prove Proposition~\ref{prop::symmetry}, we will try to understand the conditional law of $\qlegrowthleft$ and $\qlegrowthright$ given this information.

We provide here a sketch of the argument before we proceed to the details.  Because of the way that $\lawonall^{x \to y}$ was constructed, one can show quite easily that the $\lawonall^{x \to y}$ conditional law of $\qlegrowthleft$ {\em given} the five-tuple $(\CS,x,y,\qlegrowthleft|_{[0,\stoppingleft]}, \qlegrowthright |_{[0,\righthit]})$ is a function of the four-tuple $(\CS,x,y,\qlegrowthleft|_{[0,\stoppingleft]})$. (The existence of regular conditional probabilities follows from the fact that the GFF and associated growth processes can be defined as random variables in a standard Borel space; see further discussion for example in \cite{SchrammShe10, ms2013qle}.)
The proof of Proposition~\ref{prop::symmetry} will be essentially done once we establish the following claim: the $\lawonall^{y \to x}$ conditional law of $\qlegrowthleft$ {\em given} the five-tuple $(\CS,x,y,\qlegrowthleft|_{[0,\lefthit]},\qlegrowthright|_{[0,\stoppingright]})$ is described by the same function applied to the four-tuple $(\CS,x,y,\qlegrowthleft|_{[0, \lefthit]})$.  Indeed, a symmetric argument implies an analogous statement about the conditional law of $\qlegrowthright$ under $\lawonall^{x \to y}$ and $\lawonall^{y \to x}$ given the corresponding five-tuple. Proposition~\ref{prop::symmetry} will follow readily from this symmetric pair of statements and the {\it a priori} conditional independence of $\qlegrowthleft$ and $\qlegrowthright$ given $(\CS,x,y)$.

The claim stated just above may appear to be obvious, but there is still some subtlety arising from the fact that $\stoppingleft$ and $\lefthit$ are not defined in symmetric ways {\em a priori}, and in fact $\lefthit$ is a complicated stopping time for $\qlegrowthleft$ (which depends both on $(\CS,x,y)$ and on the additional randomness encoded in $\qlegrowthright$ and $\stoppingright$), and we have not proved anything like a strong Markov property for the $\QLE(8/3,0)$ growth that would hold for arbitrary stopping times. We begin by fixing a bounded function $F$ and let
\begin{equation}
\label{eqn::m_def}
M_t = \E_{\lawonall}[ F(\qlegrowthleft) \giv \CF_t] \quad\text{and}\quad \ol{M}_t = \E_{\lawonall}[ F(\qlegrowthright) \giv \ol{\CF}_t]  \quad\text{for each}\quad t \geq 0.
\end{equation}

We note that we can write $M_t = A(\CS,x,y,\qlegrowthleft|_{[0,t]})$ for some measurable function $A$.  Moreover, by varying $F$, this family of measurable functions determines the conditional law of $\qlegrowthleft$ given $(\CS,x,y,\qlegrowthleft|_{[0,t]})$.  We will check that $M$ is a.s.\ continuous (as a function of~$t$) at the time $\stoppingleft$. Since $M$ is a continuous-time martingale, it a.s.\ has only countably many discontinuities.  That is, the limits $\lim_{s \uparrow t} M_s$ and $\lim_{s \downarrow t} M_s$ exist for a.e.\ $t$ and necessarily coincide except at possibly a countable set of times.  It will thus suffice to prove that when $(\CS,x,y,\qlegrowthleft,\qlegrowthright)$ is given and $\stoppingleft$ is chosen uniformly from $[0,\qdist(x,y)]$ the probability that either $\stoppingleft$ assumes any {\em fixed} value is zero.  This, however, is obviously true from the definition of $\stoppingleft$.

 We likewise have that $\ol{M}_t = \ol{A}(\CS,x,y,\qlegrowthright|_{[0,t]})$ for some measurable function $\ol{A}$ and this family determines the conditional law of $\qlegrowthright$ given $(\CS,x,y,\qlegrowthright|_{[0,t]})$.  Moreover, $\ol{M}$ has the same continuity properties as $M$ since it is also a continuous-time martingale.  We will aim to show that $\ol{M}$ is a.s.\ continuous at $\lefthit$.  As in the case of $M$ and $\stoppingleft$, in the case of $\ol{M}$ and $\lefthit$ it suffices to show that the probability that $\lefthit$ assumes any {\em fixed} value is zero when $(\CS,x,y,\qlegrowthleft,\qlegrowthright)$ is given and $\stoppingright$ is chosen uniformly in $[0,\oqdist(y,x)]$.  This follows because the map $\stoppingright \to \lefthit$  (a random function that depends on $(\CS,x,y,\qlegrowth,\qlegrowthright)$) is non-increasing by definition, and the symmetry property implies that~$\qlegrowth$ a.s.\ first hits $\qlegrowthright_\stoppingright$ at exactly time~$\lefthit$, which implies that there a.s.\ cannot be a positive interval of~$\stoppingright$ values on which~$\lefthit$ is constant.

The following lemma is the main input into the proof of Proposition~\ref{prop::symmetry}.

\begin{lemma}
\label{lem::first_equivalence}
We have on a set of full $\lawonall$ measure that
\begin{equation}
\label{eqn::r_tau_r_equivalence}
\begin{split}
A(\CS,x,y,\qlegrowthleft|_{[0,\lefthit]}) &= \E_{\lawonall^{y \to x}}[ F(\qlegrowthleft) \giv \CF_\lefthit, \ol{\CF}_\stoppingright] \quad\text{and}\\
 \ol{A}(\CS,x,y,\qlegrowthright|_{[0,\righthit]}) &= \E_{\lawonall^{x \to y}}[ F(\qlegrowthright) \giv \CF_{\stoppingleft}, \ol{\CF}_{\righthit} ].
\end{split}
\end{equation}
\end{lemma}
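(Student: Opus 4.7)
The plan exploits two structural facts under $\lawonall$: $\qlegrowthleft$ and $\qlegrowthright$ are conditionally independent given $(\CS,x,y)$, and $d\lawonall^{y\to x}/d\lawonall = \oqdist(y,x)$ is $\sigma(\CS,x,y,\qlegrowthright)$-measurable. Combining these gives, for each deterministic $t \geq 0$,
\[ M_t \;=\; \E_{\lawonall^{y \to x}}\bigl[F(\qlegrowthleft) \giv \CF_t \vee \ol{\CF}_\stoppingright\bigr], \]
so $(M_t)_{t\geq 0}$ remains a bounded $\lawonall^{y\to x}$-martingale in the enlarged filtration $(\CF_t \vee \ol{\CF}_\stoppingright)_{t\geq 0}$, with terminal value $F(\qlegrowthleft)$. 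Since $\qlegrowthright_\stoppingright$ is $\ol{\CF}_\stoppingright$-measurable, $\lefthit$ is a stopping time for this enlarged filtration.

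If $\lefthit$ took only countably many values, the identity $M_\lefthit = \E_{\lawonall^{y \to x}}[F(\qlegrowthleft) \giv \CF_\lefthit \vee \ol{\CF}_\stoppingright]$ would follow by decomposing on each possible value of $\lefthit$ and invoking the martingale property at deterministic times. The plan is therefore to approximate $\lefthit$ from above by dyadic-valued stopping times $\lefthit_n = 2^{-n}\lceil 2^n \lefthit\rceil \downarrow \lefthit$, establish the identity for each $\lefthit_n$, and pass to the limit. The right-hand side of the approximate identity converges to $\E_{\lawonall^{y \to x}}[F(\qlegrowthleft) \giv \CF_{\lefthit+} \vee \ol{\CF}_\stoppingright]$ by reverse martingale convergence applied to the decreasing $\sigma$-algebras $\CF_{\lefthit_n} \vee \ol{\CF}_\stoppingright$; working in the right-continuous augmentation (which does not alter bounded c\`adl\`ag martingales) gives $\CF_{\lefthit+} = \CF_\lefthit$. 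The left-hand side satisfies $M_{\lefthit_n} \to M_{\lefthit+}$ by right-continuity of $M$, so it remains to show $M_{\lefthit+} = M_\lefthit$, i.e.\ that $M$ is almost surely continuous at $\lefthit$.

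This continuity step is the main obstacle. Since $M$ is a bounded c\`adl\`ag martingale it almost surely has only countably many jump times, so it suffices to show that for each fixed deterministic $t \geq 0$ one has $\lawonall^{y \to x}[\lefthit = t] = 0$; a Fubini-type exchange then converts this fixed-$t$ no-atom property into the required almost-sure non-coincidence of $\lefthit$ with any jump time of $M$. For the fixed-$t$ statement I would invoke the already-established Theorem~\ref{thm::qle_symmetry}: by that result the $\lawonall^{y \to x}$-marginal law of $\lefthit$ equals the $\lawonall^{x \to y}$-marginal law of $\stoppingleft$. But under $\lawonall^{x \to y}$, $\stoppingleft = U\,\qdist(x,y)$ with $U$ uniform on $[0,1]$ independent of $\qdist(x,y)$, so $\stoppingleft$ has no atom at any fixed value, and the same transfers to $\lefthit$.

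The second identity in~\eqref{eqn::r_tau_r_equivalence} is then proved by the completely symmetric argument, exchanging $\qlegrowthleft \leftrightarrow \qlegrowthright$, $\stoppingleft \leftrightarrow \stoppingright$, $\lefthit \leftrightarrow \righthit$, and $\lawonall^{x\to y} \leftrightarrow \lawonall^{y \to x}$; the requisite atom-free property of $\righthit$ under $\lawonall^{x \to y}$ follows from Theorem~\ref{thm::qle_symmetry} in the same way, using that under $\lawonall^{y \to x}$ the variable $\stoppingright$ is uniform on $[0,\oqdist(y,x)]$ and hence atom-free at every fixed value.
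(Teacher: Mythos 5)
Your overall skeleton parallels the paper's: establish the identity at deterministic times using the conditional independence of $\qlegrowthleft$ and $\qlegrowthright$ given $(\CS,x,y)$ together with the fact that $d\lawonall^{y\to x}/d\lawonall=\oqdist(y,x)$ depends only on the ``other'' process, then approximate the random time and identify the limit via continuity of the martingale at that time, with Theorem~\ref{thm::qle_symmetry} supplying the needed input (the paper approximates from below along rationals rather than from above along dyadics, but that difference is inessential). The genuine gap is in your continuity step. You claim that, since $M$ has only countably many jump times, it suffices to know $\lawonall^{y\to x}[\lefthit=t]=0$ for each fixed $t$, ``a Fubini-type exchange'' then giving that $\lefthit$ a.s.\ avoids the jump times of $M$. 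That exchange is invalid here: the jump times of $M$ are random and are functionals of $(\CS,x,y,\qlegrowthleft)$, while $\lefthit$ is itself built from $\qlegrowthleft$ (and $\qlegrowthright_{\stoppingright}$), so the two are strongly correlated; atomlessness of the marginal law of $\lefthit$ does not preclude $\lefthit$ sitting exactly on a jump (a random time with continuous law can coincide a.s.\ with the unique jump time of a process constructed from it). A Fubini argument needs the conditional law of the random time, given the data that determines the jump set, to be atomless. That is exactly why the paper first proves continuity at $\stoppingright$ (resp.\ $\stoppingleft$), where the argument is legitimate because $\stoppingright$ is conditionally uniform on $[0,\oqdist(y,x)]$ given $(\CS,x,y,\qlegrowthright)$, which determines $\ol{M}$ and its jump set; it then uses Theorem~\ref{thm::qle_symmetry} to transfer not the marginal law of the time but the law of the entire stopped path $\ol{M}|_{[0,\stoppingright]}$ (a measurable functional of $(\CS,x,y,\qlegrowthright|_{[0,\stoppingright]})$) to $\ol{M}|_{[0,\righthit]}$ under the other weighted measure, so that the event ``the left limit at the terminal time equals the terminal value'' carries over. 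Your use of Theorem~\ref{thm::qle_symmetry} only to match the one-dimensional laws of $\lefthit$ and $\stoppingleft$ throws away precisely the joint information that makes the argument work.

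A secondary point: passing to the right-continuous augmentation does not by itself give what you need. The decreasing limit of $\CF_{\lefthit_n}\vee\ol{\CF}_\stoppingright$ is the germ $\sigma$-algebra $\CF_{\lefthit+}\vee\ol{\CF}_\stoppingright$, whereas the lemma (as it is used in Proposition~\ref{prop::symmetry}) concerns conditioning on the $\sigma$-algebra generated by the truncated data $(\CS,x,y,\qlegrowthleft|_{[0,\lefthit]},\qlegrowthright|_{[0,\stoppingright]})$; redefining the filtration to be right-continuous changes the conditioning, and no Blumenthal-type zero--one law for $\QLE(8/3,0)$ is available (this is exactly the absence of a strong Markov property the paper is working around). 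This defect is repairable: once one knows $M$ is a.s.\ continuous at $\lefthit$ (so that $M_\lefthit$ is measurable with respect to the truncated data), a tower argument descends from the germ $\sigma$-algebra to the concrete one. But that again requires the continuity statement obtained as above, so the fix must go through the paper's transfer of the stopped-path law rather than through your marginal no-atom argument.
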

To further clarify the statement of Lemma~\ref{lem::first_equivalence}, we note that $\sigma$ is a stopping time for the filtration generated by $\CF_t, \ol{\CF}_{\ol{\sigma}}$.  Thus the conditional expectation in the first equality of~\eqref{eqn::r_tau_r_equivalence} is with respect to the stopped $\sigma$-algebra for this filtration at the time $\sigma$.  Similarly, $\ol{\tau}$ is a stopping time for the filtration generated by $\CF_\tau, \ol{\CF}_t$ and the conditional expectation in the second equality of~\eqref{eqn::r_tau_r_equivalence} is with respect to the stopped $\sigma$-algebra for this filtration at the time $\ol{\tau}$.

Lemma~\ref{lem::first_equivalence} implies that $\E_{\lawonall^{y \to x}}[ F(\qlegrowthleft) \giv \CF_\lefthit, \ol{\CF}_\stoppingright] = A(\CS,x,y,\qlegrowthleft|_{[0,\sigma]})$.  Since $\E_{\lawonall^{x \to y}}[F(\qlegrowthleft) \giv \CF_\tau] = \E_\lawonall[F(\qlegrowthleft) \giv \CF_\tau] = A(\CS,x,y,\qlegrowthleft|_{[0,\tau]})$, we thus see that Lemma~\ref{lem::first_equivalence} implies that the $\lawonall^{y \to x}$ conditional law of $\qlegrowthleft$ given $(\CS,x,y,\qlegrowthleft|_{[0,\lefthit]},\qlegrowthright|_{[0,\stoppingright]})$ is the same as the $\lawonall^{x \to y}$ conditional law of $\qlegrowthleft$ given $(\CS,x,y,\qlegrowthleft|_{[0,\stoppingleft]})$.

\begin{proof}[Proof of Lemma~\ref{lem::first_equivalence}]
We will only establish the second equality in~\eqref{eqn::r_tau_r_equivalence}; the proof of the first equality is analogous.  We first recall that on a set of full $\lawonall$ measure we have $\E_{\lawonall^{x \to y}}[ \cdot \giv \CF_{\stoppingleft}] =\E_\lawonall[ \cdot \giv \CF_{\stoppingleft}]$; see Proposition~\ref{prop::m_d_l_conditional_law}.  Throughout, we will write $dt$ for Lebesgue measure on~$\R_+$.  We now observe that $d\lawonall \otimes dt$ a.e.\ we have that
\begin{align}
\label{eqn::rt1}
\E_\lawonall[ F(\qlegrowthright) \giv \CF_{\stoppingleft}, \ol{\CF}_t] = \E_{\lawonall^{x \to y}}[ F(\qlegrowthright) \giv \CF_{\stoppingleft}, \ol{\CF}_t].
\end{align}
Since $\CF_{\stoppingleft}$ and $\ol{\CF}_t$ are conditionally independent given $(\CS,x,y)$ under $\lawonall$, we have $d\lawonall \otimes dt$ a.e.\ that
\begin{align}
\label{eqn::rt2}
\ol{M}_t = \E_\lawonall[ F(\qlegrowthright) \giv \CF_{\stoppingleft}, \ol{\CF}_t].
\end{align}
Combining~\eqref{eqn::rt1} and~\eqref{eqn::rt2}, we have $d \lawonall \otimes dt$ a.e.\ that
\begin{equation}
\label{eqn::rt3}
\ol{M}_t = \E_{\lawonall^{x \to y}}[ F(\qlegrowthright) \giv \CF_{\stoppingleft}, \ol{\CF}_t].
\end{equation}
In particular, the event $E$ that~\eqref{eqn::rt3} holds for all rational times simultaneously has full $\lawonall$ measure.

We will deduce~\eqref{eqn::r_tau_r_equivalence} from~\eqref{eqn::rt3} by showing that on a set of full $\lawonall$ measure we have both
\begin{align}
   \ol{M}_t &\to \ol{A}(\CS,x,y,\qlegrowthright|_{[0,\righthit]}) \quad\text{as}\quad t \uparrow \righthit,\ \ t \in \Q_+ \quad\text{and} \label{eqn::r_t_limit}\\
\E_{\lawonall^{x \to y}}[ F(\qlegrowthright) \giv \CF_{\stoppingleft}, \ol{\CF}_t] &\to \E_{\lawonall^{x \to y}}[ F(\qlegrowthright) \giv \CF_{\stoppingleft}, \ol{\CF}_{\righthit}] \quad\text{as}\quad t \uparrow \righthit,\ t \in \Q_+. \label{eqn::e_l_d_r_limit}
\end{align}
We emphasize that in~\eqref{eqn::r_t_limit} and~\eqref{eqn::e_l_d_r_limit} we take the limit along positive rational $t$.

To prove~\eqref{eqn::e_l_d_r_limit} it suffices to show that the $\sigma$-algebra generated by $\CF_\tau, \ol{\CF}_t$ for $t < \righthit$ is equal to the $\sigma$-algebra generated by $\CF_\tau, \ol{\CF}_{\righthit}$.  It in turn suffices to show that the closure of $\cup_{t < \righthit} \qlegrowthright_t$ is a.s.\  equal to~$\qlegrowthright_{\righthit}$.  This will follow by showing that $\righthit$ does not correspond to a jump in the time change from capacity to quantum distance time.  Since there can only be a countable number of such jump times and $\stoppingright$ is uniform in $[0,\oqdist(y,x)]$, it follows that $\stoppingright$ a.s.\  does not correspond to such a jump time for $\qlegrowthright$.  By Theorem~\ref{thm::qle_symmetry}, the $\lawonall^{x \to y}$ distribution of $\qlegrowthright|_{[0,\righthit]}$ is the same as the $\lawonall^{y \to x}$ distribution of $\qlegrowthright|_{[0,\stoppingright]}$.  We therefore conclude that $\righthit$ similarly a.s.\ does not correspond to a time at which the capacity of $\qlegrowthright$ jumps.

The existence of the limit in~\eqref{eqn::e_l_d_r_limit} combined with~\eqref{eqn::rt3} implies that $\ol{M}_t$ a.s.\  has a limit as $t \uparrow \righthit$ along rationals.  We need to show that this limit is a.s.\  equal to~$\ol{A}(\CS,x,y,\qlegrowthright|_{[0,\righthit]})$.  As we remarked above, we know that $\ol{M}_t$ can jump at most countably many times as it is a continuous-time martingale.  Since $\p_\lawonall[ \stoppingright = t \giv \ol{\CF}_{\oqdist(y,x)}] = 0$ a.s.\ for any fixed $t$, it follows that $\stoppingright$ is a.s.\ not a jump time for $\ol{M}_t = \ol{A}(\CS,x,y,\qlegrowthright|_{[0,t]})$.  Consequently, we have that
\begin{equation}
\label{eqn::l_t_to_tau_l}
\lim_{t \to \stoppingright^-} \ol{A}(\CS,x,y,\qlegrowthright|_{[0,t]}) = \lim_{t \to \stoppingright^-} \ol{M}_t = \ol{M}_{\stoppingright} = \ol{A}(\CS,x,y,\qlegrowthright|_{[0,\stoppingright]})
\end{equation}
on a set of full $\lawonall$ measure.  By Theorem~\ref{thm::qle_symmetry}, we know that the $\lawonall^{y \to x}$ distribution of $\ol{A}(\CS,x,y,\qlegrowthright|_{[0,t]})$ for $t \in [0,\stoppingright]$ is equal to the $\lawonall^{x \to y}$ distribution of $\ol{A}(\CS,x,y,\qlegrowthright|_{[0,t]})$ for $t \in [0,\righthit]$.  Therefore~\eqref{eqn::l_t_to_tau_l} implies that $\lim_{t \to \righthit^-} \ol{A}(\CS,x,y,\qlegrowthright|_{[0,t]}) = \ol{A}(\CS,x,y,\qlegrowthright|_{[0,\righthit]})$ on a set of full $\lawonall^{x \to y}$ measure.  Since $\lawonall$ is absolutely continuous with respect to $\lawonall^{x \to y}$, it therefore follows that $\lim_{t \to \righthit^-} \ol{A}(\CS,x,y,\qlegrowthright|_{[0,t]}) = \ol{A}(\CS,x,y,\qlegrowthright|_{[0,\righthit]})$ on a set of full $\lawonall$ measure.  Combining everything completes the proof.
\end{proof}

\begin{proof}[Proof of Proposition~\ref{prop::symmetry}]
Theorem~\ref{thm::qle_symmetry} implies that the $\lawonall^{x \to y}$ distribution of $(\CS,x,y)$, $\qlegrowthleft|_{[0,\stoppingleft]}$, and $\qlegrowthright|_{[0,\righthit]}$ is equal to the $\lawonall^{y \to x}$ distribution of $(\CS,x,y)$, $\qlegrowthleft|_{[0,\lefthit]}$, and $\qlegrowthright|_{[0,\stoppingright]}$.

We claim that the $\lawonall^{x \to y}$ conditional law of $\qlegrowthleft$ given $\CF_\stoppingleft,\ol{\CF}_{\righthit}$ is the same as the $\lawonall^{y \to x}$ conditional law of $\qlegrowthleft$ given $\CF_\lefthit, \ol{\CF}_{\stoppingright}$.  The same argument will give that the $\lawonall^{y \to x}$ conditional law of $\qlegrowthright$ given $\CF_\lefthit, \ol{\CF}_{\stoppingright}$ is the same as the $\lawonall^{x \to y}$ conditional law of $\qlegrowthright$ given $\CF_\stoppingleft,\ol{\CF}_{\righthit}$.  Upon showing this, the proof will be complete.  To see the claim, we first note that Proposition~\ref{prop::m_d_l_conditional_law} implies that the $\lawonall^{x \to y}$ conditional law of $\qlegrowthleft$ given $\CF_{\stoppingleft},\ol{\CF}_{\righthit}$ is the same as its $\lawonall$ conditional law given $\CF_{\stoppingleft},\ol{\CF}_{\righthit}$.  By the $\lawonall$ conditional independence of $\qlegrowthleft$ and $\qlegrowthright$ given $(\CS,x,y)$, this conditional law is in turn equal to the $\lawonall$ conditional law of $\qlegrowthleft$ given $\CF_\stoppingleft$.  Lemma~\ref{lem::first_equivalence} then implies that the $\lawonall$ conditional law of $\qlegrowthleft$ given $\CF_\stoppingleft$ is equal to the $\lawonall^{y \to x}$ conditional law of $\qlegrowthleft$ given $\CF_\lefthit, \ol{\CF}_\stoppingright$ (this follows from the first equation in~\eqref{eqn::r_tau_r_equivalence}).  This proves the claim, hence the proposition.
\end{proof}

\begin{proposition}
\label{prop::distance_symmetric_and_determined}
We have on a set of full $\lawonall$ measure that $\qdist(x,y) = \oqdist(y,x)$ and that the common value of $\qdist(x,y)$ and $\oqdist(y,x)$ is a.s.\  determined by $(\CS,x,y)$.
\end{proposition}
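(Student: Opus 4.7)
The plan is to deduce both claims almost immediately from Proposition~\ref{prop::symmetry}, together with the construction of $\lawonall$ which makes $\qlegrowthleft$ and $\qlegrowthright$ conditionally independent given $(\CS,x,y)$. The heavy lifting has been done upstream (in Theorem~\ref{thm::qle_symmetry} and Proposition~\ref{prop::symmetry}); what remains is a short measure-theoretic deduction in two steps.

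For the equality $\qdist(x,y) = \oqdist(y,x)$, I would first project the distributional identity of Proposition~\ref{prop::symmetry} onto the coordinates $(\CS,x,y,\qlegrowthleft,\qlegrowthright)$, obtaining that the marginal of this five-tuple agrees under $\lawonall^{x \to y}$ and under $\lawonall^{y \to x}$. Both $\qdist(x,y)$ and $\oqdist(y,x)$ are measurable with respect to the $\sigma$-algebra generated by this five-tuple, as they are intrinsic functions of $\qlegrowthleft$ and $\qlegrowthright$ respectively through the definition~\eqref{eqn::dl_def}. Since~\eqref{eqn::dm_d_q_def} identifies them as the Radon--Nikodym derivatives $d\lawonall^{x \to y}/d\lawonall$ and $d\lawonall^{y \to x}/d\lawonall$, uniqueness of Radon--Nikodym derivatives on the five-tuple marginal of the $\sigma$-finite measure $\lawonall$ forces $\qdist(x,y) = \oqdist(y,x)$ on a set of full $\lawonall$ measure.

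For the assertion that the common value is determined by $(\CS,x,y)$, I would next condition on $(\CS,x,y)$, which yields a genuine probability measure. Under this conditional law, $\qlegrowthleft$ and $\qlegrowthright$ are independent by construction of $\lawonall$. Since $\qdist(x,y)$ is a function of $\qlegrowthleft$ alone (given $(\CS,x,y)$), while $\oqdist(y,x)$ is a function of $\qlegrowthright$ alone, the almost sure equality of these two quantities, together with their conditional independence, forces each to be almost surely constant --- two independent random variables that agree almost surely must each be deterministic --- so their common value is a measurable function of $(\CS,x,y)$.

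I do not foresee a substantial obstacle, since Proposition~\ref{prop::symmetry} already supplies exactly the distributional identity we require. The only minor bookkeeping points are confirming $\sigma$-finiteness of $\lawonall$ (which holds because the total-area marginal is $\sigma$-finite) before invoking Radon--Nikodym uniqueness, and verifying that $\qdist(x,y)$ is measurable with respect to $\sigma(\qlegrowthleft)$ conditionally on $(\CS,x,y)$ via the explicit formula~\eqref{eqn::dl_def}; both are routine.
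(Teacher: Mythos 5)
Your proposal is correct and follows essentially the same route as the paper: the paper also reads off $\qdist(x,y) = d\lawonall^{x\to y}/d\lawonall = d\lawonall^{y\to x}/d\lawonall = \oqdist(y,x)$ from Proposition~\ref{prop::symmetry} via uniqueness of Radon--Nikodym derivatives, and then deduces determinacy from the conditional independence of $\qlegrowthleft$ and $\qlegrowthright$ given $(\CS,x,y)$, exactly as you do. Your added remarks on $\sigma$-finiteness and measurability are routine and consistent with the paper's (more terse) argument.
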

\begin{proof}
Proposition~\ref{prop::symmetry} implies that
\[ \qdist(x,y)  = \frac{d\lawonall^{x \to y}}{d \lawonall} = \frac{d\lawonall^{y \to x}}{d \lawonall} = \oqdist(y,x) .\]
This implies that $\qdist(x,y) = \oqdist(y,x)$ on a set of full $\lawonall$ measure.  Moreover, the common value of $\qdist(x,y)$ and $\oqdist(y,x)$ is a.s.\  determined by $(\CS,x,y)$ because we took $\qlegrowthleft$ and $\qlegrowthright$ to be conditionally independent given $(\CS,x,y)$ under $\lawonall$.  In particular, this implies that $\qdist(x,y)$ and $\oqdist(y,x)$ are conditionally independent given $(\CS,x,y)$ and the only way that two conditionally independent random variables given $(\CS,x,y)$ can be equal on a set of full measure is if they are a.s.\  determined by $(\CS,x,y)$.
\end{proof}

Proposition~\ref{prop::symmetry} implies that if $\CS$ has the law of a unit area quantum sphere and $x,y \in \CS$ are chosen independently from the quantum measure on $\CS$, then the amount of time that it takes a $\QLE(8/3,0)$ with the quantum distance parameterization starting from~$x$ to reach~$y$ is a.s.\  determined by $\CS$ and is equal to the amount of time that it takes for a $\QLE(8/3,0)$ with the quantum distance parameterization starting from~$y$ to reach~$x$.  Moreover, this quantity does not depend on the choice of sequence that we chose in the construction of the $\QLE(8/3,0)$.  Therefore from now on we will only write $\qdist$ (and not $\oqdist$).  Conditionally on $\CS$, we let $(x_n)$ be a sequence of i.i.d.\ points picked from the quantum area measure.  For each $i,j$, we let $\qdist(x_i,x_j)$ be the amount of quantum distance time that it takes for a $\QLE(8/3,0)$ starting from~$x_i$ to reach~$x_j$.  Then it follows that $\qdist(x_i,x_j) = \qdist(x_j,x_i)$ for all $i,j$ and $\qdist$ is a.s.\  determined by $\CS$.  Moreover, it is immediate from the construction that $\qdist(x_i,x_j) > 0$ a.s.\  for any $i \neq j$.

Our next goal is to establish Proposition~\ref{prop::meeting_distance_sums_up}, which will be used in the proof of Theorem~\ref{thm::qle_metric} to show that $\qdist$ satisfies the triangle inequality hence is a distance function on $(x_n)$.  Before we state and prove this result, we need to collect the following elementary lemma.

\begin{lemma}
\label{lem::uniform_distribution_determines_f}
Fix $D > 0$.  Suppose that $F \colon [0,D] \to [0,D]$ is a non-increasing function such that if $U$ is uniform in $[0,D]$ then $F(U)$ is uniform in $[0,D]$.  Then $F(d) = D-d$ for all $d \in [0,D]$.
\end{lemma}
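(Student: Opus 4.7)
My plan is to show that $F$ is a continuous, strictly decreasing bijection of $[0,D]$ onto itself and then to identify it directly with $d \mapsto D-d$. The key device will be the right-continuous inverse $\ell(y) = \sup\{u \in [0,D] : F(u) > y\}$ (with $\sup \emptyset = 0$). Because $F$ is non-increasing, the set $\{u \in [0,D] : F(u) > y\}$ is an interval with left endpoint $0$ and right endpoint $\ell(y)$, so its Lebesgue measure equals $\ell(y)$. Hence $\P[F(U) > y] = \ell(y)/D$, and the hypothesis that $F(U)$ is uniform on $[0,D]$ immediately forces $\ell(y) = D - y$ for every $y \in [0,D]$.

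With this identity in hand, I would rule out flat parts and jump discontinuities of $F$. If $F$ were constant equal to $y_0$ on an interval $(a,b)$, then $\ell$ would jump down at $y_0$ by an amount at least $b-a$; if $F$ had a jump at some $u_0$ from $F(u_0^-)$ to $F(u_0^+)$, then $\ell$ would be constant equal to $u_0$ on the interval $\bigl(F(u_0^+), F(u_0^-)\bigr)$. Either case contradicts the continuity and strict monotonicity of $y \mapsto D-y$, so $F$ must be continuous and strictly decreasing. The boundary values $\ell(0) = D$ and $\ell(D) = 0$ then force $F(0) = D$ and $F(D) = 0$, making $F$ a continuous strictly decreasing bijection of $[0,D]$ onto itself; the identity $\ell = F^{-1}$ (valid under these hypotheses) combined with $\ell(y) = D - y$ then yields $F(d) = D - d$ for every $d \in [0,D]$.

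I do not anticipate any real obstacle. The only point requiring some attention is the correct definition of $\ell$ via a supremum, so that the set-theoretic identity $\{u : F(u) > y\} = [0, \ell(y))$ (up to a single endpoint that does not affect Lebesgue measure) holds cleanly using only that $F$ is non-increasing. Once this is in place, the translation from the distributional hypothesis to the pointwise identity $\ell(y) = D - y$ is immediate, and all further steps are elementary.
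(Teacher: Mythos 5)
Your proof is correct, and it takes a mildly different route from the paper's. You compute the full tail function $\p[F(U) > y] = \ell(y)/D$ via the generalized inverse $\ell(y) = \sup\{u : F(u) > y\}$, deduce $\ell(y) = D-y$, and then invert $F$ after upgrading it to a continuous, strictly decreasing bijection of $[0,D]$. The paper never inverts $F$: it only notes that $F$ can have no interval of constancy (else $F(U)$ would have an atom), so that $\{F(U) \leq F(d)\} = \{U \geq d\}$ up to a null set, and then reads off $F(d)/D = \p[F(U) \leq F(d)] = \p[U \geq d] = 1 - d/D$ directly, with no need to exclude jump discontinuities or identify boundary values. Your route buys a complete structural description of $F$ along the way, at the cost of the extra regularity discussion; note that once $\ell(y)=D-y$ is known for all $y$, a two-line squeeze already finishes without any continuity argument (if $F(d) > D-d$, pick $y \in (D-d, F(d))$ to get $\ell(y) \geq d > D-y$; if $F(d) < D-d$, pick $y \in (F(d), D-d)$ to get $\ell(y) \leq d < D-y$). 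That shortcut would also repair the one loose step in your write-up: $F(0)=D$ and $F(D)=0$ do not follow from $\ell(0)=D$ and $\ell(D)=0$ alone ($\ell(D)=0$ holds vacuously since $F \leq D$), but they do follow immediately from $\ell(y)=D-y$ for $y$ near $D$ and near $0$, respectively.
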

\begin{proof}
This is essentially obvious, but let us explain the point to be clear.  Since $F(U)$ is uniform in $[0,D]$, we have $\p[F(U) \geq d] = 1-d/D$ for all $d \in [0,D]$.  Since $F$ is non-increasing and $F(U)$ is uniform it follows that there cannot be a non-empty open interval in $[0,D]$ on which $F$ is constant.  Consequently, $\p[F(U) \leq F(d)] = \p[U \geq d] = 1-d / D$. Since $V=F(U)$ is uniform and we have shown that $\p[V \leq F(d)] = 1 - d/D$ for all $d \in [0,D]$, we conclude that $F(d) = D - d$ for all $d \in [0,D]$.
\end{proof}

\begin{proposition}
\label{prop::meeting_distance_sums_up}
On a set of full $\lawonall$ measure, we have that $\stoppingleft + \righthit = D$ where $D$ is the common value of $\qdist(x,y)$ and $\qdist(y,x)$.
\end{proposition}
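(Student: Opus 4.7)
The plan is to combine the symmetry from Proposition~\ref{prop::symmetry} with the rigidity lemma (Lemma~\ref{lem::uniform_distribution_determines_f}) to force the relation $\righthit = D - \stoppingleft$. Let me first set up the key deterministic object. Given the quintuple $(\CS,x,y,\qlegrowth,\qlegrowthright)$, define
\[
F(s) = \inf\{t \geq 0 : \qlegrowthright_t \cap \qlegrowth_s \neq \emptyset\}, \qquad s \in [0,D].
\]
This function is non-increasing in $s$ (since $\qlegrowth_s$ grows in $s$), satisfies $F(0) \leq \qdist(y,x) = D$ (as $\qlegrowthright$ reaches $x$ by time $D$), and $F(D) = 0$ (since $\qlegrowth_D = \CS$ already contains $\qlegrowthright_0 = \{y\}$), so $F$ maps $[0,D]$ into $[0,D]$. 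By definition, $\righthit = F(\stoppingleft)$, and by Proposition~\ref{prop::distance_symmetric_and_determined} we have $\qdist(x,y) = \qdist(y,x) = D$ on a set of full $\lawonall$ measure.

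Next I would note that under $\lawonall$, $U$ is uniform in $[0,1]$ and independent of everything else. The Radon--Nikodym derivative $d\lawonall^{x \to y}/d\lawonall = \qdist(x,y) = D$ is $\sigma(\CS,x,y,\qlegrowth,\qlegrowthright)$-measurable and does not involve $U$, so under $\lawonall^{x \to y}$ the variable $U$ (and hence $\stoppingleft = UD$) remains uniform on $[0,D]$ conditional on $(\CS,x,y,\qlegrowth,\qlegrowthright)$. Exactly the same argument under $\lawonall^{y \to x}$ shows $\stoppingright$ is uniform on $[0,D]$ conditional on $(\CS,x,y,\qlegrowth,\qlegrowthright)$.

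Now I invoke Proposition~\ref{prop::symmetry}: the $\lawonall^{x \to y}$ law of the full $7$-tuple $(\CS,x,y,\stoppingleft,\qlegrowth,\righthit,\qlegrowthright)$ agrees with the $\lawonall^{y \to x}$ law of $(\CS,x,y,\lefthit,\qlegrowth,\stoppingright,\qlegrowthright)$. Taking conditional laws given $(\CS,x,y,\qlegrowth,\qlegrowthright)$, the pair $(\stoppingleft,\righthit)$ under $\lawonall^{x \to y}$ has the same conditional law as $(\lefthit,\stoppingright)$ under $\lawonall^{y \to x}$. Projecting onto the second coordinate and using the previous paragraph, $\righthit$ is conditionally uniform on $[0,D]$ under $\lawonall^{x \to y}$ given $(\CS,x,y,\qlegrowth,\qlegrowthright)$.

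Combining: conditionally on $(\CS,x,y,\qlegrowth,\qlegrowthright)$, we have $\stoppingleft$ uniform in $[0,D]$, $\righthit = F(\stoppingleft)$, $F$ non-increasing, and $F(\stoppingleft)$ uniform in $[0,D]$. Lemma~\ref{lem::uniform_distribution_determines_f} then forces $F(d) = D - d$ for all $d \in [0,D]$, so $\stoppingleft + \righthit = D$ a.s.~under $\lawonall^{x \to y}$. Since $\lawonall$ and $\lawonall^{x \to y}$ are mutually absolutely continuous on $\{D > 0\}$ (which has full $\lawonall$ measure because $x \neq y$ almost surely), the identity $\stoppingleft + \righthit = D$ holds $\lawonall$-a.e. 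The only mild subtlety to handle carefully is the passage from joint equality in distribution of the $7$-tuple to equality of conditional laws given $(\CS,x,y,\qlegrowth,\qlegrowthright)$; this is routine since the conditioning $\sigma$-algebra is a coordinate projection, and it is this step that propagates the symmetry to the deterministic function $F$.
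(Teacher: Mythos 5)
Your proposal is correct and follows essentially the same route as the paper's own proof: establish that both $\stoppingleft$ and $\righthit=F(\stoppingleft)$ are conditionally uniform on $[0,D]$ given $(\CS,x,y,\qlegrowthleft,\qlegrowthright)$ via Proposition~\ref{prop::symmetry}, and then apply the rigidity statement of Lemma~\ref{lem::uniform_distribution_determines_f} to the monotone hitting-time function $F$ to conclude $F(d)=D-d$. Your write-up is in fact slightly more careful on two minor points (checking that $F$ maps $[0,D]$ into $[0,D]$, and noting that $F$ is non-increasing, where the paper's text says ``non-decreasing''), but the argument is the same.
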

\begin{proof}
It suffices to show that $\stoppingleft + \righthit = D$ under $\lawonall^{x \to y}$ since this measure is mutually absolutely continuous with respect to $\lawonall$.

For each $d \in [0,D]$, let $F(d)$ be the first time that $\qlegrowthright$ hits $\qlegrowth_d$.  We emphasize that $F$ is determined by $\qlegrowthleft$, $\qlegrowthright$, and $\CS$.  We already know that $\qdist(x,y) = \qdist(y,x)$ a.s.\ and that this quantity is a.s.\ determined by $(\CS,x,y)$. We know that $U = \stoppingleft/\qdist(x,y)$ is uniform in $[0,1]$ conditionally on $(\CS,x,y)$ and $\qlegrowth$. The symmetry we have established in Proposition~\ref{prop::symmetry} then implies that $\ol{U} = \righthit / \qdist(y,x)$ is uniform in $[0,1]$ conditionally on $(\CS,x,y)$ and $\qlegrowthright$. 
In the case of $\stoppingleft$, it is clearly also uniform in $[0,D]$ once we condition on $(\CS,x,y,\qlegrowth, \qlegrowthright)$, which determines $F$.  That this is true for $\righthit$ as well follows from Proposition~\ref{prop::symmetry}, so that both~$\stoppingleft$ and~$\righthit$ are uniform in $[0,D]$ conditionally on~$F$.  To finish the proof of the proposition, it suffices to show that $F(d) = D-d$.  We know that $F \colon [0,D] \to [0,D]$ is non-decreasing and that $F(\stoppingleft) = \righthit$.  Therefore the result follows from Lemma~\ref{lem::uniform_distribution_determines_f}.
\end{proof}

\begin{proof}[Proof of Theorem~\ref{thm::qle_metric}]
Let $\qdist$ and $(x_n)$ be as defined just before the statement of Lemma~\ref{lem::uniform_distribution_determines_f}.  To finish the proof, it suffices to show that $\qdist$ a.s.\ satisfies the strict triangle inequality.  Suppose that $x,y,z \in (x_n)$ are distinct.  We will argue that a.s.\ we have
\begin{equation}
\label{eqn::triangle_inequality}
\qdist(x,z) < \qdist(x,y) + \qdist(y,z).
\end{equation}
We shall assume that we are working on the event that $\qdist(x,y) < \qdist(x,z)$, for otherwise~\eqref{eqn::triangle_inequality} is trivial.  Consider the $\QLE(8/3,0)$ growth $\Upsilon$ starting from $x$ and stopped upon hitting $y$.  Given this, we then consider the $\QLE(8/3,0)$ growth $\ol{\Upsilon}$ starting from $z$ and stopped upon hitting $\Upsilon$.  Then we must have that the radius of this growth is at most $\qdist(y,z)$ because $y \in \Upsilon$.  As it is easy to see that the a.s.\ unique point in $\ol{\Upsilon} \cap \Upsilon$ is uniformly distributed in $\partial \Upsilon$ according to the quantum measure, we a.s.\ have that $y \notin \ol{\Upsilon}$.  That is, the radius of $\ol{\Upsilon}$ a.s.\ is strictly less than $\qdist(y,z)$.  On the other hand, Proposition~\ref{prop::meeting_distance_sums_up} implies that the sum of $\qdist(x,y)$ and the radius of this growth is a.s.\ equal to $\qdist(x,z)$.  Combining proves~\eqref{eqn::triangle_inequality}, which completes the proof.
\end{proof}

\bibliographystyle{hmralphaabbrv}
\addcontentsline{toc}{section}{References}
\bibliography{sle_kappa_rho}

\bigskip

\filbreak
\begingroup
\small
\parindent=0pt

\bigskip
\vtop{
\hsize=5.3in
Department of Mathematics\\
Massachusetts Institute of Technology\\
Cambridge, MA, USA } \endgroup \filbreak

\end{document}